\documentclass{article}
\usepackage{arxiv}
\usepackage[utf8]{inputenc}
\usepackage[T1]{fontenc}
\usepackage{amsmath,amssymb,amsthm,mathtools}
\usepackage{graphicx}
\usepackage{booktabs}
\usepackage{makecell}
\usepackage{microtype}
\usepackage{enumitem}
\usepackage[numbers,sort&compress]{natbib}
\usepackage{algorithm}
\usepackage{algpseudocode}
\usepackage[hidelinks]{hyperref}

\theoremstyle{plain}
\newtheorem{theorem}{Theorem}[section]
\newtheorem{lemma}[theorem]{Lemma}
\newtheorem{proposition}[theorem]{Proposition}
\newtheorem{corollary}[theorem]{Corollary}

\theoremstyle{definition}
\newtheorem{definition}[theorem]{Definition}
\newtheorem{assumption}{Assumption}

\theoremstyle{remark}
\newtheorem{remark}[theorem]{Remark}

\numberwithin{equation}{section}

\newcommand{\R}{\mathbb R}

\DeclarePairedDelimiterX{\inner}[2]{\langle}{\rangle}{#1,#2}

\title{The Minimax Complexity of Federated Optimization under Interpolation: The Role of Oracle Models}
\author{%
  Kirill Novoselov \\
  Innopolis University \\
  \texttt{k.novoselov@innopolis.university}
  \And
  Yuriy Dorn \\
  Lomonosov Moscow State University 
  \AND
  Darina Dvinskikh \\
  HSE University \\
  \texttt{dmdvinskikh@hse.ru}
  \And
  Alexander Gasnikov \\
  Innopolis University, Trusted AI Research Center, RAS, \\
  Steklov Mathematical Institute, RAS \\
  \texttt{gasnikov@yandex.ru}
}
\date{}
\renewcommand{\shorttitle}{Federated Optimization under Interpolation}

\begin{document}
\maketitle
\makeatletter\global\@inlabelfalse\global\@newlistfalse\makeatother


\begin{abstract}
We study the minimax complexity of smooth convex stochastic optimization
with intermittent communication: $m$ workers run for $N$ rounds, and
between consecutive communication rounds each worker makes $K$
sequential stochastic gradient queries. We focus on the interpolation
regime, in which all sample losses share a common minimizer, so that the
gradient noise vanishes at the solution. Let $L$ be the smoothness of the
objective, $H_s\ge L$ the smoothness of the sample losses, and $R$ the
distance from the initial point to a minimizer. We consider three oracle
models that provide increasing information: an unbiased stochastic gradient oracle $g(x,\xi)$ satisfying $\mathbb{E}_{\xi}\|g(x,\xi)\|^2
    \leq 2H_s\bigl(F(x)-F(x^*)\bigr)$, gradients of the sample
losses at fresh samples, and repeated gradient queries to stored
samples. Up to logarithmic factors, all our results are described by two
rates:
\[
    R^2\min\left\{L,\;\frac{H_s}{KN},\;
        \frac{L}{N^2}+\frac{H_s}{mKN}\right\}
    \qquad\text{and}\qquad
    R^2\min\left\{L,\;
        \frac{\min\{L,H_s/K\}}{N^2}+\frac{H_s}{mKN}\right\},
\]
 Under the weak growth condition, the first rate is the
minimax rate, attained by the best of three simple baselines, as in the
classical bounded-variance setting. With repeated access to samples, the
second rate becomes the minimax rate for general convex objectives. For general convex
objectives with fresh sample gradients, the minimax rate lies between the
two rates. Our lower bounds apply to arbitrary randomized
intermittent-communication algorithms.
\end{abstract}
\section{Introduction}
\label{sec:introduction}

Modern machine learning models are overparameterized and interpolate the
training data: a single parameter vector minimizes the loss of every
example. In this regime the noise of stochastic gradients vanishes at the
solution, and stochastic methods attain faster rates than in the
classical setting. SGD with a constant step size converges at the rate of
gradient descent \citep{schmidt2013fast,ma2018power,vaswani2019fast},
momentum methods attain accelerated rates
\citep{vaswani2019fast,liu2020accelerating}, and minibatch accelerated
methods achieve a linear speedup in the minibatch size
\citep{woodworth2021even}. These results concern a single machine.

A natural question is how interpolation affects distributed training,
where communication between machines is the bottleneck, as in federated
learning \citep{mcmahan2017communication,kairouz2021advances}. The
standard model of this regime is intermittent communication
\citep{woodworth2018graph}: $m$ workers run for $N$ rounds, each worker
makes $K$ sequential stochastic gradient queries in every round, and the
workers communicate only at the end of rounds. For convex objectives with
$L$-Lipschitz gradients and stochastic gradients whose variance is
uniformly bounded by $\sigma^2$, \citet{woodworth2021minmax} determined
the minimax complexity of this model up to logarithmic factors: the
optimal method is the better of minibatch accelerated SGD
\citep{lan2012optimal}, with $N$ updates and batch size $mK$, and
accelerated SGD on a single worker. Under interpolation, the noise
vanishes at the solution and scales with the smoothness $H_s$ of the
individual losses, and the minimax complexity of intermittent
communication in this regime has remained open.

Besides interpolation itself, the answer depends on what the oracle
reveals. Discussing how their lower bound might be circumvented,
\citet[Sections~5.2 and~5.3]{woodworth2021minmax} pointed to two features
of statistical learning. First, stochastic gradients are usually
gradients $\nabla f(x;z)$ of smooth convex losses at random examples $z$.
The hard oracle of their lower bound is not of this form, and they asked
whether this structure yields better guarantees or whether the lower
bound persists. Second, an algorithm can often evaluate the gradient of
the same example at several points, as variance-reduced methods do, and
distributed variants of SVRG improve on their lower bound. We study
intermittent communication under interpolation in three oracle models
that isolate these features, ordered by the information that a query
provides.
\begin{itemize}
    \item \emph{Weak growth} (Assumption~\ref{ass:wg}): the oracle is
    unbiased, and its second moment is at most $2H_s$ times the
    suboptimality of the query. Nothing else is assumed. Interpolation
    implies this condition for sample gradients
    (Lemma~\ref{lem:relations}). The upper bounds in Sections ~\ref{sec:weak-growth} and ~\ref{sec:sample-gradients} rely only on this condition.
    \item \emph{Fresh sample gradients} (Assumption~\ref{ass:sg}): the
    oracle returns the gradient of an $H_s$-smooth convex sample loss at a
    fresh example, and all sample losses are minimized at a common point.
    This is the setting of \citet[Section~5.2]{woodworth2021minmax} under
    interpolation.
    \item \emph{Sample reuse} (Assumption~\ref{ass:pool}): the examples
    form a stored training set, and the gradient of the same example can
    be computed at several points. This is the setting of
    \citet[Section~5.3]{woodworth2021minmax} under interpolation.
\end{itemize}
Here $L\le H_s$ is the smoothness of the population objective. The resulting minimax bounds are summarized in 
Table~\ref{tab:results}.

\subsection{Contributions}

\begin{itemize}
    \item \emph{Weak growth} (Section~\ref{sec:weak-growth}). We determine
    the minimax rate up to the factor $\log^2(em)$, for general convex and
    for quadratic populations. For general convex populations, the lower
    bound extends the construction of \citet{woodworth2021minmax} to
    relative noise, and the optimal method is the best of three baselines:
    returning $x_0$, serial SGD on one worker, and minibatch AC-SA. For
    quadratic populations, local SGD steps reduce the effective smoothness
    to $\min\{L,H_s/K\}$, and an accelerated method on the resulting
    spectral surrogate is optimal.

    \item \emph{Fresh sample gradients} (Section~\ref{sec:sample-gradients}).
    We prove a lower bound with a new hard instance, since the
    construction of \citet{woodworth2021minmax} is not a sample-gradient
    oracle. This answers the question of
    \citet[Section~5.2]{woodworth2021minmax} for quadratic populations
    under interpolation: the structure of sample gradients gives the same
    minimax rate as the weak growth condition. For general convex
    populations a gap remains, and we show that closing it from above
    requires structure beyond the weak growth condition.

    \item \emph{Sample reuse} (Section~\ref{sec:reuse}). In the setting of
    \citet[Section~5.3]{woodworth2021minmax} under interpolation, we
    determine the minimax rate up to the factor $J\log^2(em)$ with
    $J=O(\log(KN))$.
    The upper bound is attained by an accelerated method on empirical
    Moreau envelopes, which every worker computes by SVRG on its own batch.
    The lower bound holds for arbitrary reuse of samples, and for an
    infinite pool it coincides with the lower bound for fresh sample
    gradients.
\end{itemize}
All lower bounds hold for arbitrary randomized algorithms in finite
dimension, without linear-span assumptions.

\begin{table}[h]
\centering
\small
\renewcommand{\arraystretch}{1.6}
\caption{Bounds on the minimax risk in the intermittent communication
model with $m$ workers, $N$ rounds, and $K$ queries per worker per
round, for convex objectives with $L$-Lipschitz gradients and
$\|x_0-x^*\|\le R$. Numerical constants are omitted. The result of
\citet{woodworth2021minmax} holds up to logarithmic factors. Here $H_s$ is
the constant of the weak growth condition (the smoothness of the sample
functions), $\sigma^2$ the variance, and $S$ the pool size.}
\label{tab:results}
\begin{tabular}{@{}lccc@{}}
\toprule
Oracle & Lower bound & Upper bound & Reference\\
\midrule
\makecell[l]{Bounded variance}
 & \multicolumn{2}{c}{$\frac{LR^2}{K^2N^2}+\min\Bigl\{LR^2,\frac{\sigma R}{\sqrt{mKN}}\Bigr\}
   +\min\Bigl\{\frac{LR^2}{N^2},\frac{\sigma R}{\sqrt{KN}}\Bigr\}$}
 & \citet{woodworth2021minmax}\\
\midrule
\makecell[l]{Weak growth,\\ quadratic $F$}
 & \multicolumn{2}{c}{$R^2\min\Bigl\{L,\frac{\min\{L,H_s/K\}}{N^2}
   +\frac{H_s}{mKN}\Bigr\}$}
 & \makecell{\textbf{This paper}\\ Theorem~\ref{thm:weak-growth-quadratic}}\\
\makecell[l]{Weak growth,\\ convex $F$}
 & \multicolumn{2}{c}{$R^2\min\Bigl\{L,\frac{H_s}{KN},\frac{L}{N^2}
   +\frac{H_s}{mKN}\Bigr\}$}
 & \makecell{\textbf{This paper}\\ Theorem~\ref{thm:weak-growth-convex}}\\
\midrule
\makecell[l]{Sample gradients,\\ quadratic $F$}
 & \multicolumn{2}{c}{$R^2\min\Bigl\{L,\frac{\min\{L,H_s/K\}}{N^2}
   +\frac{H_s}{mKN}\Bigr\}$}
 & \makecell{\textbf{This paper}\\ Theorem~\ref{thm:sample-gradient-quadratic}}\\
\makecell[l]{Sample gradients,\\ convex $F$}
 & $R^2\min\Bigl\{L,\frac{\min\{L,H_s/K\}}{N^2}+\frac{H_s}{mKN}\Bigr\}$
 & $R^2\min\Bigl\{L,\frac{H_s}{KN},\frac{L}{N^2}+\frac{H_s}{mKN}\Bigr\}$
 & \makecell{\textbf{This paper}\\ Theorem~\ref{thm:sample-gradient-convex}}\\
\midrule
\makecell[l]{Sample reuse,\\ convex $F$}
 & \multicolumn{2}{c}{$R^2\min\Bigl\{L,\frac{\min\{L,H_s/K\}}{N^2}
   +\frac{H_s}{\min\{S,mKN\}}\Bigr\}$}
 & \makecell{\textbf{This paper}\\ Theorem~\ref{thm:pool}}\\
\bottomrule
\end{tabular}
\end{table}

\subsection{Related work}

\paragraph{Intermittent communication and local updates.}
The intermittent communication model was studied by
\citet{woodworth2018graph} within a graph-oracle framework for parallel
stochastic optimization, together with lower bounds for it. Local SGD was analyzed by
\citet{stich2019local} and compared with minibatch SGD by
\citet{woodworth2020local}, who showed that local SGD strictly dominates
minibatch SGD for quadratic objectives, that its accelerated variant is
minimax optimal for quadratics, and that it does not dominate minibatch
SGD for general convex objectives; heterogeneous objectives were studied by
\citet{woodworth2020minibatch}. The min-max complexity under bounded
variance was determined by \citet{woodworth2021minmax}. We refer to
\citet{kairouz2021advances} for a survey of federated learning. Our
results show that the advantage of local updates on quadratics persists
under weak growth, and that sample reuse extends it to general convex
populations under interpolation.

\paragraph{Interpolation and growth conditions.}
Fast convergence of SGD under the strong growth condition, where the
second moment of the stochastic gradient is bounded by a multiple of
$\|\nabla F\|^2$, was shown by \citet{schmidt2013fast}. The benefits of
interpolation for SGD and its minibatch speedup were explained by
\citet{ma2018power}. \citet{vaswani2019fast} introduced the weak growth
condition, which interpolation implies for smooth finite sums, and showed
that SGD with Nesterov acceleration attains the deterministic accelerated
rate under strong growth.
For least squares, \citet{liu2020accelerating} accelerated SGD in the
interpolation regime. Rates that adapt to the minimal loss for smooth
nonnegative losses go back to \citet{srebro2010smoothness}, and accelerated
minibatch methods with such rates were given by \citet{cotter2011better}
and, with the optimal dependence on the minibatch size, by
\citet{woodworth2021even}. We use the accelerated scheme of the latter in
Appendix~\ref{app:appendix_C}. These works concern a single machine;
we study the intermittent communication model.

\paragraph{Local subproblems.}
Solving a proximal subproblem on a minibatch is the idea of minibatch-prox
\citep{wang2017memory}. Our method with sample reuse computes such
proximal points locally by SVRG \citep{johnson2013accelerating} and
uses them as gradients of empirical Moreau envelopes inside an
accelerated method; interpolation guarantees that the envelopes share the
minimizer of the population.

\paragraph{Lower bounds.}
Our lower bounds use hidden chains, which go back to
Nemirovsky and Yudin~\citep{nemirovsky1983problem},
combined with random rotations to avoid linear-span
assumptions, as in Woodworth et al.~\citep{woodworth2021minmax}.
For the weak growth condition, we extend the rare-progress
oracle of Woodworth et al.~\citep{woodworth2021minmax}
to relative noise.

For quadratic populations, we use a random-rotation
representation of adaptive matrix-vector algorithms,
building on techniques developed by
Braverman et al.~\citep{braverman2020gradient}
and closely related to those recently employed by
\citet{shamir2026simple} in the centralized setting.
Our analysis additionally accounts for the depth of
batched adaptive queries and uses a Chebyshev-type
certificate tailored to smooth convex quadratic objectives.
We combine these techniques with Bernoulli masking of
entire sample losses, an approach already used in our
previous work on federated variational
inequalities~\citep{novoselov2026optimal},
to establish lower bounds for interpolating
sample-gradient oracles under intermittent communication.

\section{Setting}
\label{sec:setting}

Throughout, $0<L\le H_s$ and $R>0$ are constants, $m,K,N\ge1$ are
integers, $d\ge1$ is the dimension, and $x_0\in\R^d$ is the initial
point. All norms and inner products are Euclidean, and all logarithms are
natural.

\paragraph{Intermittent communication.}

\begin{definition}[Intermittent communication]
\label{def:ic}
An algorithm in the class $\mathrm{IC}(m,K,N)$ runs $m$ workers for $N$
rounds. In every round, each worker makes at most $K$ sequential oracle
queries. Each query of a worker is a measurable function of the public
parameters, the random seed of the algorithm, all messages exchanged
before the current round, and the queries and replies of this worker in
the current round. At the end of every round the workers exchange
arbitrary messages. The output $\widehat x\in\R^d$ is a measurable
function of the seed and of all messages. The seed is independent of the
instance and of the oracle randomness.
\end{definition}

No restriction is imposed on memory, local computation, message size, or
the norms of queries and outputs; in particular, queries need not lie in
the span of previous replies. The public parameters are
$m,K,N,L,H_s,R,d,x_0$, and the pool size $S$ in
Assumption~\ref{ass:pool}. The objective, its minimizer, and the oracle
law are unknown to the algorithm.

\paragraph{Oracle assumptions.}
We consider three oracle models, ordered by the information that a
query provides.

\begin{assumption}[Weak growth condition]
\label{ass:wg}
The objective $F:\R^d\to\R$ is convex, has an $L$-Lipschitz gradient,
and has a minimizer $x^*$ with $\|x_0-x^*\|\le R$. There are a
probability distribution $\mathcal D$ on a measurable space $\Xi$ and a
jointly measurable map $g:\R^d\times\Xi\to\R^d$ such that, for every
$x\in\R^d$,
\begin{equation}
    \mathbb E_{\xi\sim\mathcal D}\, g(x,\xi)=\nabla F(x),
    \qquad
    \mathbb E_{\xi\sim\mathcal D}\|g(x,\xi)\|^2
    \le 2H_s\bigl(F(x)-F(x^*)\bigr).
    \label{eq:wg}
\end{equation}
On a query $x$, the oracle draws a fresh sample $\xi\sim\mathcal D$ and
returns only $g(x,\xi)$.
\end{assumption}

The map $g(\cdot,\xi)$ in Assumption~\ref{ass:wg} need not be a gradient
field or continuous.

\begin{assumption}[Sample gradients under interpolation]
\label{ass:sg}
There are a probability distribution $\mathcal D$ on a measurable space
$\Xi$ and differentiable convex functions $f_\xi:\R^d\to\R$, $\xi\in\Xi$,
such that $(x,\xi)\mapsto f_\xi(x)$ and $(x,\xi)\mapsto\nabla f_\xi(x)$
are jointly measurable and, for some $x^*\in\R^d$ with
$\|x_0-x^*\|\le R$ and $\mathcal D$-almost every $\xi$,
\begin{equation}
    \|\nabla f_\xi(x)-\nabla f_\xi(y)\|\le H_s\|x-y\|
    \quad\text{for all }x,y\in\R^d,
    \qquad
    \nabla f_\xi(x^*)=0.
    \label{eq:sg}
\end{equation}
The objective
\[
    F(x)=\mathbb E_{\xi\sim\mathcal D}\bigl[f_\xi(x)-f_\xi(x^*)\bigr]
\]
has an $L$-Lipschitz gradient. On a query $x$, the oracle draws a fresh
sample $\xi\sim\mathcal D$, independent of everything else, and returns
only the vector $g(x,\xi)=\nabla f_\xi(x)$.
\end{assumption}

The second condition in \eqref{eq:sg} is the interpolation assumption:
$x^*$ minimizes almost every sample function. In particular,
$F(x^*)=0=\min F$.

\begin{assumption}[Sample reuse]
\label{ass:pool}
The functions $f_\xi$ and $F$ satisfy the conditions of
Assumption~\ref{ass:sg}, and $S\in\mathbb N\cup\{\infty\}$, $S\ge1$.
Before the run, samples
$\xi_1,\xi_2,\ldots\stackrel{\mathrm{iid}}{\sim}\mathcal D$ are drawn
independently of the seed of the algorithm; only $\xi_1,\ldots,\xi_S$
are available. A query is a pair $(i,x)$ with $i\in\mathbb N$ and
$x\in\R^d$; the oracle returns $\nabla f_{\xi_i}(x)$ if $i\le S$ and a
failure symbol otherwise. Every query, including a repeated one, is one
of the at most $K$ queries of the worker in the current round. The label
$i$ carries no information about $\xi_i$, and labels may be included in
messages.
\end{assumption}

An \emph{instance} is a choice of the objects in an assumption. We denote
by $\mathcal W_d$ and $\mathcal S_d$ the sets of instances satisfying
Assumptions~\ref{ass:wg} and~\ref{ass:sg}, and by
$\mathcal W_d^{\mathrm{quad}}$ and $\mathcal S_d^{\mathrm{quad}}$ their
subsets with a quadratic objective
\[
    F(x)=\frac12\langle x-x^*,\mathsf A(x-x^*)\rangle,
    \qquad 0\preceq \mathsf A\preceq L\mathrm I;
\]
the sample functions $f_\xi$ need not be quadratic.

\paragraph{Minimax risk.}
For a set $\mathcal C$ of instances and a model of oracle access, the
minimax risk is
\[
    \mathfrak M_d(\mathcal C)
    =\inf_{\mathcal A\in\mathrm{IC}(m,K,N)}\;
     \sup_{\text{instances in }\mathcal C}\;
     \mathbb E\bigl[F(\widehat x)-F(x^*)\bigr],
\]
where the expectation is over the oracle randomness and the seed of
$\mathcal A$; the instance is fixed before both are drawn. We write
\[
    \mathfrak M_d^{\mathrm{wg}}=\mathfrak M_d(\mathcal W_d),\quad
    \mathfrak M_d^{\mathrm{wg,quad}}
        =\mathfrak M_d(\mathcal W_d^{\mathrm{quad}}),\quad
    \mathfrak M_d^{\mathrm{sg}}=\mathfrak M_d(\mathcal S_d),\quad
    \mathfrak M_d^{\mathrm{sg,quad}}
        =\mathfrak M_d(\mathcal S_d^{\mathrm{quad}})
\]
for the fresh oracles of Assumptions~\ref{ass:wg} and~\ref{ass:sg}, and
$\mathfrak M_d^{\mathrm{pool}}(S)$ for $\mathfrak M_d(\mathcal S_d)$
under Assumption~\ref{ass:pool}. The fresh oracle of
Assumption~\ref{ass:sg} is the special case $S=\infty$ of
Assumption~\ref{ass:pool} in which every label is queried at most once;
hence
\begin{equation}
    \mathfrak M_d^{\mathrm{pool}}(\infty)\le\mathfrak M_d^{\mathrm{sg}},
    \label{eq:pool-fresh}
\end{equation}
see Appendix~\ref{app:B-prelim}.

\section{Weak growth: tight minimax rates}
\label{sec:weak-growth}

We begin by considering the stochastic oracle satisfying
Assumption~\ref{ass:wg}, that satisfy the weak growth
condition (WGC)~\ref{eq:wg}. The WGC has been studied in
the context of stochastic optimization by
Vaswani et al.~\cite{vaswani2019fast}, who demonstrated
its relevance to statistical learning under interpolation.
In particular, they established the following result,
showing that the WGC arises naturally for smooth convex
finite-sum objectives whose component functions share
a common minimizer.

\begin{proposition}[Vaswani et al., 2019]
    Let
    \[
        f(w) = \frac{1}{n}\sum_{i=1}^{n} f_i(w)
    \]
    be a convex finite-sum objective, where each component
    $f_i$ is convex and $L_i$-smooth. Suppose that the model
    interpolates the data, i.e., there exists a common
    minimizer $w^*$ of all component functions. Then,
    for every $w$,
    \begin{equation}
        \mathbb{E}_i\left[\|\nabla f_i(w)\|^2\right]
        \leq
        2L_{\max}\bigl(f(w)-f(w^*)\bigr),
        \label{eq:wgc_interpolation}
    \end{equation}
    where $L_{\max}=\max_{i=1,\ldots,n}L_i$.
\end{proposition}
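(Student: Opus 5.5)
The plan is to prove a per-component inequality and then average over $i$. The key tool is the standard co-coercivity consequence of convexity plus $L_i$-smoothness. For a convex differentiable $h$ with $L_h$-Lipschitz gradient, every $w$ and every minimizer $w^*$ of $h$ satisfy
\[
    \|\nabla h(w)\|^2 \le 2L_h\bigl(h(w)-h(w^*)\bigr).
\]
Apply this to each $f_i$ with $L_h=L_i$. Interpolation is exactly what makes this legitimate: $w^*$ is a minimizer of every $f_i$ simultaneously, so $h(w^*)=f_i(w^*)$ is the minimum value of $f_i$.

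To justify the displayed inequality, I would use the descent lemma. Smoothness gives, for every $y$,
\[
    h(y)\le h(w)+\inner{\nabla h(w)}{y-w}+\tfrac{L_h}{2}\|y-w\|^2 .
\]
Choose $y=w-\nabla h(w)/L_h$. This yields $\min h\le h(y)\le h(w)-\frac{1}{2L_h}\|\nabla h(w)\|^2$. Since $h(w^*)=\min h$, rearranging gives the claim. Note that this step uses only that $w^*$ is a global minimizer of $h$. Convexity of $f_i$ ensures a stationary common point is a global minimizer, but it is not otherwise required.

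Next, bound $L_i\le L_{\max}$ and note that $f_i(w)-f_i(w^*)\ge 0$. This gives $\|\nabla f_i(w)\|^2\le 2L_{\max}\bigl(f_i(w)-f_i(w^*)\bigr)$ for every $i$. Averaging over $i$ uniform on $\{1,\dots,n\}$ and using linearity gives
\[
    \mathbb E_i\|\nabla f_i(w)\|^2\le 2L_{\max}\bigl(f(w)-f(w^*)\bigr).
\]
Here $f(w^*)=\frac1n\sum_i f_i(w^*)$. Moreover, $w^*$ minimizes $f$, so the right-hand side is indeed the suboptimality of $f$.

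There is no real obstacle. The only point needing care is the middle step: the inequality must be applied per component with that component's own minimum value. This requires the common minimizer; without interpolation, $f_i(w^*)$ could exceed $\min f_i$ and the bound would fail.
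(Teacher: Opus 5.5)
Your proof is correct and is essentially the paper's argument. The paper only cites Vaswani et al.\ for this proposition, but its proof of the general version, Lemma~\ref{lem:relations}, does exactly what you do: it applies the descent-lemma bound $\|\nabla\psi\|^2\le 2M\psi$ for nonnegative $M$-smooth $\psi$ (Lemma~\ref{lem:smooth}, Part~2, proved with the step $1/M$) to each $f_\xi-f_\xi(x^*)$, which interpolation makes nonnegative, and then takes the expectation over the sample.
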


The proof can be found in
Vaswani et al.~\cite{vaswani2019fast}.

As we will show in the subsequent sections, the WGC
follows from the stronger structural assumptions introduced
in Assumptions~\ref{ass:sg} and~\ref{ass:pool}. Thus, while the WGC is imposed
explicitly in the present section, it holds automatically
under the oracle models considered in Sections~\ref{sec:sample-gradients}
and~\ref{sec:reuse}.
Importantly, the upper bounds derived in this section
rely only on the WGC~\ref{eq:wg}, without exploiting the
additional structure of these oracle models. They
therefore provide a natural baseline for assessing whether
the stronger assumptions lead to improved convergence
guarantees.

\begin{theorem}[General convex populations under weak growth]
\label{thm:weak-growth-convex}
Let Assumption~\ref{ass:wg} hold. Then, in every dimension $d\ge1$,
\[
    \mathfrak M_d^{\mathrm{wg}}
    \le 288R^2\min\left\{L,\;\frac{H_s}{KN},\;
        \frac{L}{N^2}+\frac{H_s}{mKN}\right\},
\]
and, for every $d\ge d_{\mathrm{wg}}$,
\[
    \mathfrak M_d^{\mathrm{wg}}
    \ge\frac{R^2}{2^{24}\log^2(em)}\min\left\{L,\;\frac{H_s}{KN},\;
        \frac{L}{N^2}+\frac{H_s}{mKN}\right\},
\]
where $d_{\mathrm{wg}}<\infty$ is defined in \eqref{eq:d-wg} and depends
only on $m$, $K$, $N$, and $H_s/L$.
\end{theorem}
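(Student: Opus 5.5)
The upper bound splits into three algorithms, and the algorithm achieving the minimum among them is selected using the public parameters. The trivial method outputs $\widehat x=x_0$. By $L$-smoothness and $\nabla F(x^*)=0$ this gives $F(x_0)-F(x^*)\le LR^2/2$.

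The second method is serial SGD on a single worker for all $KN$ sequential queries, with constant step $\eta=1/(2H_s)$ and averaged iterates. The standard one-step expansion
\[
\mathbb E\|x_{t+1}-x^*\|^2\le \|x_t-x^*\|^2-2\eta\bigl(F(x_t)-F^*\bigr)+\eta^2\,2H_s\bigl(F(x_t)-F^*\bigr)
\]
combines convexity, unbiasedness and \eqref{eq:wg}. Telescoping it yields $\mathbb E F(\bar x)-F^*\le 2H_sR^2/(KN)$.

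The third method is minibatch accelerated SGD (AC-SA in the variant of \citet{woodworth2021even}, which the paper uses in Appendix~\ref{app:appendix_C}). It makes $N$ updates, each using a minibatch of size $b=mK$ assembled from all workers in one round. For minibatch gradients \eqref{eq:wg} gives
\[
\mathbb E\|\bar g-\nabla F\|^2\le 2H_s(F-F^*)/b .
\]
This is the relative-noise condition under which the accelerated minibatch rate $O\bigl(LR^2/N^2+H_sR^2/(bN)\bigr)$ holds. The main care needed is that the noise is evaluated at the extrapolated query points, where suboptimality must be controlled by a potential argument; the noise is absorbed by the step sizes $\gamma_t\propto\min\{t/L,\sqrt{bt}/H_s\dots\}$. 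Taking the best of these three methods, and tracking constants, gives the constant $288$.

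For the lower bound, I would treat each term of the minimum in separate regimes and build a hard instance for whichever term is active. The first tool is a deterministic term $L R^2/N^2$ coming from a rotated Nemirovski chain of length about $N$. The oracle returns exact gradients (zero noise satisfies \eqref{eq:wg}), but it is made useless within a round: progress along the chain is revealed only through rare events. The alternative, which I prefer, is to argue via the graph-oracle framework that with $N$ communication rounds only about $N$ ``depth'' steps happen unless local queries discover coordinates. To make local discovery costly, I would adapt the rare-progress oracle of \citet{woodworth2021minmax}. With probability $p$ it returns the gradient scaled by $1/p$, revealing the next coordinate, and otherwise it returns a gradient that hides it. The second moment is then $\|\nabla F\|^2/p$, so \eqref{eq:wg} requires
\[
\|\nabla F(x)\|^2/p\le 2H_s(F(x)-F^*).
\]
Because $\|\nabla F\|^2\le 2L(F-F^*)$, it suffices to take $p\approx L/H_s$. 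The delicate issue is that this must hold uniformly in $x$, including near $x^*$; a smoothed chain satisfying $\|\nabla F\|^2\le 2L(F-F^*)$ handles this.

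Counting with this oracle, a worker advances about $1+pK$ coordinates per round, and $m$ workers combined discover on the order of $\log m$ additional coordinates per round by a union/max argument over $m$ independent geometric progress variables. This is the source of the $\log^2(em)$ factor. Chain length $T\approx N\min\{1+KL/H_s,\dots\}$ together with the scaling $F\approx LR^2/T^2$ then produces $H_s/(KN)$ and $L/N^2$ in the respective regimes. The statistical term $H_sR^2/(mKN)$ comes from a separate two-point (or one-dimensional) testing instance. There, $g$ equals $\nabla F/q$ with probability $q$ and $0$ otherwise, with $q\approx L/H_s$; with $mKN$ total samples, distinguishing $\pm$ directions fails when $mKNq\lesssim1$. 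I would then rescale so that this instance realizes $H_sR^2/(mKN)$ whenever that term is below $LR^2$. The final step combines the instances by embedding them in orthogonal coordinate blocks, which is where $d_{\mathrm{wg}}$ enters together with the dimension needed for random rotations that defeat arbitrary randomized queries. I then check that the minimum over regimes matches, losing only constants and the $\log^2(em)$.

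The main obstacle is the rare-progress oracle under relative noise. It must satisfy \eqref{eq:wg} pointwise at every $x$ while still forcing the discovery probability per query to be as small as $L/H_s$, and the rotation argument must continue to hold against arbitrary adaptive randomized queries.
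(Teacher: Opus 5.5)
\paragraph{Upper bound.} Your upper bound is the paper's. It selects among $x_0$, serial SGD with step $1/(2H_s)$, and minibatch AC-SA with $N$ updates and batch $mK$, and it controls the noise at the extrapolated point through $F(y_t)\le2F(z_t)+L\alpha_t^2\|v_t-z_t\|^2$ inside a potential argument. Your statistical instance is Lemma~\ref{lem:stat}, once the masking probability is taken to be $\min\{L/H_s,1/(2mKN)\}$.

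\paragraph{The gap: the chain term.} You bound the second moment of the rare-progress reply by $\|\nabla F\|^2/p$ and then use $\|\nabla F\|^2\le2L(F-F^*)$. This forces $p\approx L/H_s$. (If the whole reply is scaled, unbiasedness forces the other reply to be $0$, and your oracle is just the whole-function masking $f_\xi=(\xi/p)F$ of Theorem~\ref{qlow:masked}.) With this $p$ the chain length is $T\approx N(\log m+KL/H_s)$. When $KL\ge H_s\max\{N,\log(em)\}$, the target rate is $H_sR^2/(KN)$, and you get
\[
    \frac{LR^2}{T^2}\approx\frac{H_s^2R^2}{LK^2N^2}
    =\frac{H_sR^2}{KN}\cdot\frac{H_s}{LKN}
    \le\frac{H_sR^2}{KN}\cdot\frac1{N^2},
\]
so the claim that this ``produces $H_s/(KN)$'' is false. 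Optimizing the chain smoothness $L'\le L$ only raises this to essentially the sample-gradient bound $R^2\min\{L,H_s/K\}/N^2$ of Theorem~\ref{thm:sample-gradient-convex}, up to logarithms. Together with the statistical term, that still misses the target by a factor of order $\min\{m,N\}$. For the whole-reply version this is not slack in the analysis. One worker can run accelerated gradient descent on its roughly $KL/H_s$ successful queries per round and already attain about $H_s^2R^2/(LK^2N^2)$.

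\paragraph{The missing idea.} Only the frontier coordinate should be hidden, and its size should be compared with the suboptimality of the whole unfinished chain, not with $\|\nabla F\|^2$. The paper's reply contains $P_t\nabla f(v)+\frac Bp[\nabla f(v)]_{t+1}e_{t+1}$ with $v=P_ty$ and $t=\operatorname{prog}(y)$. Its extra variance is therefore at most $4\ell^2\delta^2/p$. Meanwhile the Huber chain with $q$ edges has the tail gap $f(y)\ge\ell\delta^2q/16$ whenever $\operatorname{prog}(y)\le q/2$ (Lemma~\ref{wg:chain-lemma}).
\begin{itemize}
\item This admits $p=128\ell/(H_sq)$, a factor $q$ smaller than your choice.
\item Choosing $q\approx\max\{8N\log(em),64\sqrt{\ell KN/H_s}\}$ keeps the expected total progress $N\log m+(e-1)KNp$ well below $q/2$.
\item The resulting risk $\ell R^2/(32q^2)$ is of order $R^2\min\{L/(N^2\log^2(em)),H_s/(KN)\}$.
\end{itemize}

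\paragraph{Other omissions.}
\begin{itemize}
\item Replacing $\nabla f(y)$ by $\nabla f(P_ty)$ is biased. The paper restores unbiasedness with a second rare coin $C\sim\operatorname{Bernoulli}(\varepsilon)$, $\varepsilon=1/(16mKN)$, multiplying $(\nabla f(y)-\nabla f(P_ty))/\varepsilon$. The small threshold $\alpha$ keeps its variance within budget, and with probability at least $15/16$ it never fires.
\item Queries of large norm are handled by the radial extension of Lemma~\ref{lem:radial}. This makes the reply public far out, so the spherical-cap bound is applied only to bounded queries.
\item Near $x^*$ (progress beyond $q/2$) the oracle simply returns $\nabla\Phi_U$; the real tension is between the frontier component and the tail gap, not at $x^*$.
\end{itemize}
Embedding the instances in orthogonal blocks is unnecessary: the larger of the two separate lower bounds suffices.
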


\begin{theorem}[Quadratic populations under weak growth]
\label{thm:weak-growth-quadratic}
Let Assumption~\ref{ass:wg} hold with a quadratic objective $F$. Then,
in every dimension $d\ge1$,
\[
    \mathfrak M_d^{\mathrm{wg,quad}}
    \le 8R^2\min\left\{L,\;
        \frac{\min\{L,H_s/K\}}{N^2}+\frac{H_s}{mKN}\right\},
\]
and, for every $d\ge2048m(KN)^2$,
\[
    \mathfrak M_d^{\mathrm{wg,quad}}
    \ge\frac{R^2}{73728\log^2(em)}\min\left\{L,\;
        \frac{\min\{L,H_s/K\}}{N^2}+\frac{H_s}{mKN}\right\}.
\]
\end{theorem}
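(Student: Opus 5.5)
The statement immediately above is Theorem~\ref{thm:weak-growth-quadratic}. I would prove the upper bound and the lower bound separately.

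\emph{Upper bound.} Write $F(x)=\frac12\langle x-x^*,\mathsf A(x-x^*)\rangle$. The bound $R^2L$ is attained already by returning $x_0$, since $F(x_0)\le LR^2/2$. For the other branch I use local SGD with step $\eta=1/H_s$. Run $K$ local steps on each worker and average the iterates at the end of the round. Unbiasedness and the quadratic structure make the averaged round map affine in expectation: the mean of $x_{\mathrm{end}}-x^*$ equals $(\mathrm I-\eta\mathsf A)^K(x_{\mathrm{start}}-x^*)$. Define the spectral surrogate $\mathsf B=\eta^{-1}\bigl(\mathrm I-(\mathrm I-\eta\mathsf A)^K\bigr)$. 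Its eigenvalues $\eta^{-1}(1-(1-\eta\lambda)^K)$ are at most $\min\{K\lambda,1/\eta\}$ and at least $\tfrac12\min\{K\lambda,1/\eta\}$. Consequently $\mathsf B/K$ has smoothness at most $\min\{L,H_s/K\}$, and $F$ is equivalent to $\frac12\langle\cdot,\mathsf B\cdot\rangle/K$ up to a factor $2$.

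A round therefore returns an unbiased stochastic gradient of the surrogate $G(x)=\frac1{2K}\langle x-x^*,\mathsf B(x-x^*)\rangle$. Its second moment is controlled by the weak growth condition accumulated along the local trajectory. Each local step obeys $\mathbb E\|g\|^2\le 2H_s F$, and with $\eta=1/H_s$ the local iterates do not increase $F$ in expectation. Hence the variance of the averaged round direction is at most $\frac{2H_s}{mK}\cdot O(F(x_{\mathrm{start}}))$, which is $O(H_s G/(mK))$. I then run an accelerated relative-noise method (the scheme of \citet{woodworth2021even} in Appendix~\ref{app:appendix_C}) for $N$ outer steps on $G$. That scheme gives $R^2\bigl(\tilde L/N^2+\sigma_{\mathrm{rel}}/N\bigr)$ with $\tilde L=\min\{L,H_s/K\}$ and $\sigma_{\mathrm{rel}}=H_s/(mK)$, which is the claimed rate.

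\emph{Main obstacle in the upper bound.} Acceleration interleaves extrapolation points, so each round starts from a query point $y$ rather than $x$. I must verify that the round direction's second moment is bounded by the surrogate's suboptimality at $y$, not at $x$. This works because the variance bound above is stated pointwise at the start of the round, whatever that point is.

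\emph{Lower bound.} I would give three constructions and take the maximum, which is of the order of the minimum expression up to constants.
\begin{enumerate}[label=(\roman*)]
\item The term $LR^2$ regime is covered by the constructions below once $N$, $K$ or $m$ are small. In particular, when $H_s/(mKN)\ge L$, the noise construction in (iii) caps at $LR^2$.
\item For the term $\tilde L R^2/N^2$ I use a randomly rotated Nesterov quadratic chain in dimension $\gtrsim(KN)^2$. The oracle reveals $\nabla F$ only with a rare-progress mask, $g=\nabla F$ on a random event of probability $p$ rescaled by $1/p$, with $p\approx L/H_s$, as permitted by weak growth. This limits each worker to about one new chain coordinate per $\min\{K,H_s/L\}$ queries, and the per-round advance is further bounded by adaptive depth. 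I expect this to be the main technical step: handling arbitrary randomized, non-span algorithms. Following the random-rotation/Chebyshev-certificate argument, I would show that after $N$ rounds the explored Krylov depth is at most $O(N\log(em))$, the logarithm coming from a union over $m$ workers racing for the rare event. I would then lower-bound the residual $F$ via a Chebyshev polynomial of that degree on spectrum $[0,\tilde L]$; this step produces the $\log^2(em)$ factor.
\item For the term $H_sR^2/(mKN)$ I use a one-dimensional, or randomly signed planted-direction, quadratic. Here the oracle returns $H_s$-scaled gradients with probability $L/H_s$ and zero otherwise. A total of $mKN$ samples is then insufficient to locate $x^*$ at the stated accuracy, by a two-point Le Cam argument.
\end{enumerate}
Combining (ii) and (iii) by embedding them in orthogonal blocks yields the sum, and the dimension requirement $d\ge2048m(KN)^2$ comes from the rotation concentration in (ii).
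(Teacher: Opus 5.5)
\paragraph{Upper bound.} Your upper bound has a genuine gap at the claim that $F$ is equivalent, up to a factor $2$, to the surrogate $F_K(y)=\frac1{2K}\langle y-x^*,\mathsf B(y-x^*)\rangle$. Your own eigenvalue bounds send an eigenvalue $\lambda$ of $\mathsf A$ to an eigenvalue of $\mathsf B/K$ of order $\min\{\lambda,H_s/K\}$. So $F\lesssim F_K$ holds only when $L\lesssim H_s/K$, that is, exactly when $\min\{L,H_s/K\}=L$ and minibatch acceleration already attains the rate. In the regime the theorem is about, $F/F_K$ can be as large as $LK/H_s$.

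Two of your steps rely on this equivalence.
\begin{itemize}
\item Your variance bound $O(H_sF(y)/(mK))$ for a round direction is true, though not for the reason you give: under weak growth alone, one local step with step size $1/H_s$ can multiply $\mathbb E F$ by about $1+L/H_s$, if the oracle puts its noise along an eigendirection of curvature $L$ while $y-x^*$ lies along one of tiny curvature. But the bound is relative to $F(y)$, whereas an accelerated method run on $F_K$ needs noise relative to $F_K(y)$.
\item A small $\mathbb E F_K$ at an outer iterate gives no useful control of $F$ there.
\end{itemize}
The extrapolated query points you single out are harmless. What is missing is the energy certificate of one block (Lemmas~\ref{quad:energy} and~\ref{quad:certificate}). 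Sum $\mathbb E\|z_{j+1}-x^*\|^2\le\|z_j-x^*\|^2-\frac2{H_s}F(z_j)$ over the $K$ local steps and substitute $z_K=y-\frac K{H_s}G$. This gives $\Phi(y)+\frac K{2H_s}\mathbb E\|G\|^2\le\langle\mathbb E G,y-x^*\rangle=2F_K(y)$, where $\Phi(y)$ is the average of $\mathbb E F$ along the local trajectory. This single inequality does two things:
\begin{itemize}
\item it bounds the second moment of $G$ by $\frac{4H_s}{K}F_K(y)$, which is the surrogate-relative noise the accelerated potential argument needs;
\item it bounds $\Phi(y)$ by $2F_K(y)$, which is why the paper outputs the weighted average $\frac1{A_N}\sum_ta_t\hat z_t$ of the workers' local-iterate averages instead of an outer iterate.
\end{itemize}

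\paragraph{Lower bound.} Your plan has the paper's architecture: masking whole functions $f_\xi=\frac\xi pF$, replaying the run with $\Delta=\sum_n\max_wZ_{w,n}$ batches of exact queries, a rotated quadratic with a Chebyshev certificate, and a separate two-point term. Three points need repair.
\begin{itemize}
\item $p$ must be tied to the smoothness of the hard quadratic. With $p\approx L/H_s$ and smoothness $L$, each worker has about $KL/H_s$ successes per round, and you only get $H_s^2R^2/(K^2LN^2)$. This is smaller than $H_sR^2/(KN^2)$ when $L>H_s/K$. Take $p=\min\{L/H_s,\log(em)/K\}$ and scale the quadratic to smoothness $pH_s\ge\min\{L,H_s/K\}$, as your later use of the spectrum $[0,\min\{L,H_s/K\}]$ implicitly requires.
\item In (iii), success probability $L/H_s$ lets the algorithm see a nonzero gradient, which reveals $x^*$, as soon as $mKNL/H_s\gtrsim1$. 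You need $p_0=\min\{L/H_s,1/(2mKN)\}$.
\item For non-span algorithms, the information cannot be confined to a Krylov space of depth $O(N\log(em))$ generated by the data. Replies are exact and linear, so an algorithm can subtract known parts and obtain $\mathsf Av$ for any $v$ it likes, including random $v$. Rotating a Nesterov chain does not by itself control this.
\end{itemize}
The paper instead applies a Haar rotation to a spectrum placed at Chebyshev nodes, each node repeated more often than the number ($\approx2ms$ for $s$ batches) of auxiliary Gaussian vectors. It shows that the algorithm's output lies in the degree-bounded Krylov space of $b_0$ plus Krylov spaces of these Gaussian vectors (Lemma~\ref{qlow:haar}). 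It then uses the multiplicity to show that the Gaussian part cannot cancel the certificate (Lemmas~\ref{qlow:projection} and~\ref{qlow:residual}). This multiplicity, not rotation concentration for a chain, is the source of the requirement $d\gtrsim m(KN)^2$.
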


The upper bound in Theorem~\ref{thm:weak-growth-convex} is achieved by a
parameter-based selection among returning $x_0$, serial averaged SGD on
one worker, and common-query minibatch AC-SA \citep{lan2012optimal} with
$N$ accelerated updates and batch size $mK$. The upper bound in
Theorem~\ref{thm:weak-growth-quadratic} is achieved by a parameter-based
selection between returning $x_0$ and Algorithm~\ref{alg:spectral-blocks},
in which every worker runs $K$ local SGD steps from a common point and
the server performs an accelerated update with the averaged block
displacements. The upper bounds are proved in
Appendices~\ref{app:appendix_B_convex} and~\ref{app:appendix_B_quadratic},
the lower bounds in Appendices~\ref{app:lower-wg}
and~\ref{app:lower-quadratic}, and the proofs are completed in
Appendix~\ref{app:main-proofs}.

\paragraph{General convex populations.}
The lower bound in Theorem~\ref{thm:weak-growth-convex} extends the
lower bound of \citet{woodworth2021minmax} from bounded variance to
relative noise. As in their construction, the hard oracle reveals the
next direction of a hidden chain only with a small probability $p$, so
that the progress of the whole system within a round is the maximum of
$m$ binomial counts, which is of order $\log m$. 

Together with the statistical term, Theorem~\ref{thm:weak-growth-convex}
shows that under weak growth the minimax rate is attained by the best of
three baselines: returning $x_0$, serial SGD on one worker, and minibatch
AC-SA. This mirrors the conclusion of \citet{woodworth2021minmax} for
bounded variance, where the better of minibatch and single-machine
accelerated SGD is optimal: for general convex populations, local updates
beyond these baselines do not help.

\paragraph{Quadratic populations.}
For quadratic populations, local updates do help, because the gradients
are linear. The expected displacement of $K$ local SGD steps with step
size $1/H_s$ from a common point $y$ equals the gradient at $y$ of the
spectral surrogate
\[
    F_K(y)=\frac12\langle y-x^*,\mathsf A_K(y-x^*)\rangle,
    \qquad
    \mathsf A_K=\frac{H_s}K\Bigl[\mathrm I-\Bigl(\mathrm I
        -\frac{\mathsf A}{H_s}\Bigr)^K\Bigr],
\]
whose smoothness is $\min\{L,H_s/K\}$ (Lemma~\ref{quad:certificate}). An
accelerated outer method on $F_K$ therefore replaces $L$ by
$\min\{L,H_s/K\}$ in the communication term, while the averages of the
local iterates control the original objective $F$. For least squares,
linearity of the gradients was used to accelerate SGD by
\citet{liu2020accelerating}. The lower bound in
Theorem~\ref{thm:weak-growth-quadratic} is attained by sample-gradient
instances, which we describe in Section~\ref{sec:sample-gradients}.

We also note that our lower-bound construction may be useful
for establishing corresponding complexity lower bounds in
the centralized setting, thereby complementing the study
of stochastic optimization under the weak growth condition
initiated by Vaswani et al.~\cite{vaswani2019fast}.

\section{Fresh sample gradients: an open gap}
\label{sec:sample-gradients}

In this section we consider the oracle of Assumption~\ref{ass:sg}. It is
motivated by statistical learning, where $F(x)=\mathbb E_{z}\,\ell(x;z)$
is the expected loss over the data distribution and a stochastic
gradient is the gradient of the loss at a fresh example,
$g(x,z)=\nabla\ell(x;z)$; a minibatch gradient is the average of such
sample gradients. The interpolation condition $\nabla f_\xi(x^*)=0$
describes overparameterized models, in which one parameter vector fits
every example \citep{ma2018power,vaswani2019fast}.

This assumption gives the oracle a nontrivial structure. Every reply is
the gradient of a convex $H_s$-smooth random function that is minimized
at the same point $x^*$. We ask whether this structure leads to faster
rates than the weak growth condition alone, that is, whether the bounds
of Section~\ref{sec:weak-growth} can be improved. The basic consequence of
the structure is the weak growth condition itself; it is the workhorse of
all our upper bounds.

\begin{lemma}[Interpolation implies weak growth]
\label{lem:relations}
Let Assumption~\ref{ass:sg} hold. Then, for every $x\in\R^d$,
\[
    \mathbb E\,\nabla f_\xi(x)=\nabla F(x),
    \qquad
    \mathbb E\|\nabla f_\xi(x)\|^2\le2H_s\bigl(F(x)-F(x^*)\bigr),
\]
that is, the oracle $g(x,\xi)=\nabla f_\xi(x)$ satisfies
Assumption~\ref{ass:wg} with the same $F$ and $x^*$. Consequently,
$\mathfrak M_d^{\mathrm{sg}}\le\mathfrak M_d^{\mathrm{wg}}$ and
$\mathfrak M_d^{\mathrm{sg,quad}}\le\mathfrak M_d^{\mathrm{wg,quad}}$.
\end{lemma}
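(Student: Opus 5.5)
We prove the two displayed relations pointwise in $x$ and then deduce the comparison of minimax risks by an inclusion of instance classes. Throughout, fix $x\in\R^d$. For $\mathcal D$-almost every $\xi$, the function $h_\xi(y)=f_\xi(y)-f_\xi(x^*)$ is convex, $H_s$-smooth, and satisfies $\nabla h_\xi(x^*)=0$. Hence $x^*$ minimizes $h_\xi$ and $h_\xi\ge 0$.

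\emph{Second moment.} The key inequality is the standard consequence of $H_s$-smoothness for a function bounded below by its minimum value $0$. Applying the descent lemma at the point $y=x-\nabla h_\xi(x)/H_s$ gives
\[
    0\le h_\xi\Bigl(x-\tfrac1{H_s}\nabla h_\xi(x)\Bigr)
    \le h_\xi(x)-\tfrac1{2H_s}\|\nabla f_\xi(x)\|^2 ,
\]
so that $\|\nabla f_\xi(x)\|^2\le 2H_s\bigl(f_\xi(x)-f_\xi(x^*)\bigr)$ almost surely. Taking expectations and using the definition of $F$ yields $\mathbb E\|\nabla f_\xi(x)\|^2\le 2H_s F(x)=2H_s\bigl(F(x)-F(x^*)\bigr)$, because $F(x^*)=0$.

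\emph{Unbiasedness.} This step is the only delicate one, since we must differentiate under the expectation. We use the bounds $0\le h_\xi(y)\le \frac{H_s}{2}\|y-x^*\|^2$ and $\|\nabla h_\xi(y)\|\le H_s\|y-x^*\|$, which follow from smoothness together with $\nabla h_\xi(x^*)=0$. Consequently $F$ is finite everywhere. For a fixed direction $v$, the difference quotients $\bigl(h_\xi(x+tv)-h_\xi(x)\bigr)/t$ with $|t|\le1$ are bounded, by the mean value theorem, by $H_s(\|x-x^*\|+\|v\|)\|v\|$. This bound is a constant, hence integrable. Dominated convergence then gives that the directional derivative of $F$ at $x$ in direction $v$ equals $\mathbb E\langle\nabla f_\xi(x),v\rangle$. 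The expectation $\mathbb E\,\nabla f_\xi(x)$ is well defined by joint measurability and the same integrable bound. Since $F$ is differentiable by assumption, we conclude that $\nabla F(x)=\mathbb E\,\nabla f_\xi(x)$.

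\emph{Consequences.} Convexity of $F$ follows from convexity of each $f_\xi$. The gradient of $F$ is $L$-Lipschitz by assumption, $x^*$ minimizes $F$ because $F\ge0=F(x^*)$, and $\|x_0-x^*\|\le R$. Thus every instance in $\mathcal S_d$, with oracle $g(x,\xi)=\nabla f_\xi(x)$, is an instance in $\mathcal W_d$; it returns the same replies and has the same $F$ and $x^*$. The same holds for the quadratic subclasses, because the quadratic condition concerns only $F$. Every algorithm in $\mathrm{IC}(m,K,N)$ therefore has a worst-case risk over $\mathcal S_d$ that is at most its worst-case risk over $\mathcal W_d$. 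Taking the infimum over algorithms gives $\mathfrak M_d^{\mathrm{sg}}\le\mathfrak M_d^{\mathrm{wg}}$ and $\mathfrak M_d^{\mathrm{sg,quad}}\le\mathfrak M_d^{\mathrm{wg,quad}}$.
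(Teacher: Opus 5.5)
Your proposal is correct and follows essentially the same route as the paper. You derive the pointwise bounds from $H_s$-smoothness and $\nabla f_\xi(x^*)=0$ inline, where the paper cites Lemma~\ref{lem:smooth}, Parts~2 and~4. Unbiasedness then comes from dominated convergence with the same constant bound $H_s(\|x-x^*\|+\|v\|)\|v\|$ together with the differentiability of $F$. Finally, every sample-gradient instance is a weak-growth instance producing identical replies, so every algorithm has the same risk under both readings and the minimax comparisons follow.
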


The proof is given in Appendix~\ref{app:B-prelim}.

\begin{theorem}[Quadratic populations with sample gradients]
\label{thm:sample-gradient-quadratic}
Let Assumption~\ref{ass:sg} hold with a quadratic objective $F$. Then,
in every dimension $d\ge1$,
\[
    \mathfrak M_d^{\mathrm{sg,quad}}
    \le 8R^2\min\left\{L,\;
        \frac{\min\{L,H_s/K\}}{N^2}+\frac{H_s}{mKN}\right\},
\]
and, for every $d\ge2048m(KN)^2$,
\[
    \mathfrak M_d^{\mathrm{sg,quad}}
    \ge\frac{R^2}{73728\log^2(em)}\min\left\{L,\;
        \frac{\min\{L,H_s/K\}}{N^2}+\frac{H_s}{mKN}\right\}.
\]
\end{theorem}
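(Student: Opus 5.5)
The upper bound is immediate from results already stated. By Lemma~\ref{lem:relations}, every instance in $\mathcal S_d^{\mathrm{quad}}$ induces an oracle $g(x,\xi)=\nabla f_\xi(x)$ that satisfies Assumption~\ref{ass:wg} with the same quadratic $F$ and $x^*$. Hence $\mathfrak M_d^{\mathrm{sg,quad}}\le\mathfrak M_d^{\mathrm{wg,quad}}$, and the upper bound of Theorem~\ref{thm:weak-growth-quadratic} transfers with the constant $8$. In practice this means running the selection between returning $x_0$ and Algorithm~\ref{alg:spectral-blocks}. That algorithm uses only unbiasedness and the weak growth bound, so it needs no modification.

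For the lower bound, note that the reverse inclusion also holds: a sample-gradient quadratic instance is a weak-growth quadratic instance. So a lower bound for $\mathcal S_d^{\mathrm{quad}}$ implies the lower bound of Theorem~\ref{thm:weak-growth-quadratic}, which the paper says is attained by sample-gradient instances. The real work is therefore to construct a sample-gradient hard instance in dimension $d\ge 2048m(KN)^2$. I would split into the two regimes of the rate, taking $R^2\min\{L,\cdot\}$ as the minimum of two separate bounds.

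\emph{Statistical term} $R^2\min\{L,H_s/(mKN)\}$. Use Bernoulli masking of whole sample losses. Take $f_\xi(x)=\frac{\beta_\xi H_s}{2}\langle x-x^*,P(x-x^*)\rangle$, with $\beta_\xi\sim\mathrm{Bernoulli}(p)$, $p=\min\{1,L/H_s\}$, and $P$ a projector onto a hidden direction, or a rotated family of directions. Then $F$ is quadratic with $\mathsf A=pH_sP\preceq L\mathrm I$, and every reply is either $0$ or a full gradient of an $H_s$-smooth sample. With total budget $mKN$ queries, at most about $pmKN$ of them are informative. Choosing $p\approx 1/(mKN)$ in the regime $H_s/(mKN)\le L$ keeps the hidden direction undiscovered with constant probability, which gives error $\gtrsim R^2 pH_s$.

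\emph{Communication term} $R^2\min\{L,H_s/K\}/N^2$. Use a Nesterov tridiagonal chain $\mathsf A$ with smoothness $\min\{L,H_s/K\}$, hidden by a random rotation, and realize its gradient through masked sample losses $f_\xi=\frac{\beta_\xi H_s}{2}\|B(x-x^*)\|^2$ with $\mathbb E\beta_\xi H_s B^\top B=\mathsf A$. Each reply then reveals one matrix–vector product with an $H_s$-scaled copy. The argument, following Braverman et al.\ via the random-rotation representation of adaptive matrix-vector algorithms, goes as follows. Within a round, $K$ sequential products with an operator of norm $\le H_s$ can advance along the chain by at most $K$ steps. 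The effective operator $H_s\cdot(\text{chain})/K$ then behaves like a polynomial of degree at most $N$ in an operator of norm $\min\{L,H_s/K\}$, where $N$ counts the rounds, because workers only share information at communication times. A Chebyshev-type certificate (Lemma~\ref{quad:certificate}) then yields $F(\widehat x)\gtrsim R^2\min\{L,H_s/K\}/N^2$. The $\log^2(em)$ loss comes from a union over $m$ workers of Bernoulli-masked progress counts.

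I expect the main obstacle to be this communication term. The difficulty is bounding the \emph{depth} of batched adaptive queries: showing that across $m$ parallel workers the rotated chain advances only $O(\log m)$ blocks per round, uniformly over adaptive randomized query rules. It also requires controlling the leakage of the random rotation in dimension $2048m(KN)^2$, which would be handled by concentration of projections of fresh directions.
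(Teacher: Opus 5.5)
Your upper bound and your statistical term are fine and essentially follow the paper (Lemma~\ref{lem:relations} with Corollary~\ref{quad:selector}; masking of one hidden direction with $p=\min\{L/H_s,1/(2mKN)\}$ and a zero-run coupling). The two lower bounds are then combined through their maximum, which is at least their average, not their minimum. The gap is the communication term $R^2\min\{L,H_s/K\}/(N^2\log^2(em))$. You flag it as the main obstacle but do not prove it, and the construction you sketch fails.

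Take the regime $L\ge H_s/K$, where your masking needs $p\ge1/K$. Suppose the rotated chain has length $q$ of the order of the depth $N\log(em)$, as in Nesterov's construction. Then its Hessian has rank $q$, and the first round already returns at least about $m$ exact scaled gradients $p^{-1}\mathsf A(x_i-x^*)$ at generic query points. Once $m\gtrsim q$, the point $\widehat x=\sum_i\lambda_ix_i$ with $\sum_i\lambda_i=1$ and $\sum_i\lambda_i\mathsf A(x_i-x^*)=0$ is an exact minimizer after a single round, whatever the rotation.

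If instead you make the rank large, your leakage control breaks. ``Concentration of projections of fresh directions'' works in the paper only when replies depend exactly on the revealed frame vectors: the dead zone of $\psi_\alpha$ in Appendix~\ref{app:lower-pool}, or the rare variable $C$ of the non-gradient weak-growth oracle. A masked quadratic gradient $(\xi/p)\nabla F(x)$ is linear in every coordinate of $U^\top x$. Tiny unrevealed components therefore come back exactly, and an arbitrary measurable algorithm can amplify them. So there is no coupling with a virtual run, and the support argument of the chain is lost.

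The paper uses a different hard law (Lemma~\ref{qlow:depth}). The spectrum sits at Chebyshev nodes, each with multiplicity $h=128ms$, so the Hessian rank far exceeds the at most $ms$ informative gradients. Instead of a support argument, it uses the exact representation of Lemma~\ref{qlow:haar}: $U^\top\widehat x$ lies in a degree-$2s$ Krylov space of $b_0$ plus Krylov spaces of $2ms+1$ Gaussian vectors. Since $\Lambda$ is a scalar on each block, the Gaussian part meets each block in a subspace of dimension at most $2ms+1\le h/16$. By Lemma~\ref{qlow:projection}, with probability at least $3/4$ this subspace captures at most half of the squared norm of every target direction. The Chebyshev certificate of Lemma~\ref{qlow:scalar} then bounds the remaining polynomial residual; note this is not Lemma~\ref{quad:certificate}, which concerns the upper-bound surrogate $F_K$.

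Your depth heuristic is also not a lower-bound argument. $K$ exact products per round allow Krylov degree $K$ per round, and the smoothness $\min\{L,H_s/K\}$ of $F_K$ only describes what local SGD achieves. Depth is limited by the masking alone, through a reduction you should state explicitly:
\begin{itemize}
\item Reveal the mask array in advance; masked replies are then the known vector $0$.
\item Put the $j$-th informative query of every worker in round $n$ into one batch. The run is then reproduced by $\Delta=\sum_n\max_wZ_{w,n}$ batches of width $m$, with $Z_{w,n}\sim\operatorname{Bin}(K,p)$.
\item Lemma~\ref{lem:max-binomial} gives $\mathbb E\Delta\le N(\log m+(e-1)Kp)$.
\item Because the hard law is independent of the masks, Markov's inequality on $\{\Delta\le s\}$ gives Theorem~\ref{qlow:masked}.
\end{itemize}
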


\begin{theorem}[General convex populations with sample gradients]
\label{thm:sample-gradient-convex}
Let Assumption~\ref{ass:sg} hold. Then, in every dimension $d\ge1$,
\[
    \mathfrak M_d^{\mathrm{sg}}
    \le 288R^2\min\left\{L,\;\frac{H_s}{KN},\;
        \frac{L}{N^2}+\frac{H_s}{mKN}\right\},
\]
and, for every $d\ge2048m(KN)^2$,
\[
    \mathfrak M_d^{\mathrm{sg}}
    \ge\frac{R^2}{73728\log^2(em)}\min\left\{L,\;
        \frac{\min\{L,H_s/K\}}{N^2}+\frac{H_s}{mKN}\right\}.
\]
\end{theorem}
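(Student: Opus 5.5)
The upper bound follows by reduction. By Lemma~\ref{lem:relations}, every instance in $\mathcal S_d$, with oracle $g(x,\xi)=\nabla f_\xi(x)$, is an instance of $\mathcal W_d$ with the same $F$ and $x^*$. Hence $\mathfrak M_d^{\mathrm{sg}}\le\mathfrak M_d^{\mathrm{wg}}$. The upper half of Theorem~\ref{thm:weak-growth-convex} then gives the constant $288$ in every dimension. No new algorithm is needed: the selection among $x_0$, serial averaged SGD, and minibatch AC-SA uses only the weak growth condition.

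For the lower bound I would restrict to quadratic populations. Since $\mathcal S_d^{\mathrm{quad}}\subseteq\mathcal S_d$ and the supremum in the minimax risk is over a larger class, we have $\mathfrak M_d^{\mathrm{sg}}\ge\mathfrak M_d^{\mathrm{sg,quad}}$. The lower half of Theorem~\ref{thm:sample-gradient-quadratic} then yields, for $d\ge 2048m(KN)^2$, the bound
\[
\frac{R^2}{73728\log^2(em)}\min\Bigl\{L,\frac{\min\{L,H_s/K\}}{N^2}+\frac{H_s}{mKN}\Bigr\}.
\]
This is exactly the stated bound, with the same constant and the same dimension threshold.

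All the real work therefore sits in Theorem~\ref{thm:sample-gradient-quadratic}, which I take as given. The statement itself has no obstacle beyond checking two points. First, the minimax risk is monotone under inclusion of instance classes. Second, the oracle model of Assumption~\ref{ass:sg} is the same for both classes: a fresh sample per query, returning only $\nabla f_\xi(x)$.

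The two bounds do not match because the upper rate contains $H_s/(KN)$ and $L/N^2$, while the lower rate contains $\min\{L,H_s/K\}/N^2$. Closing that gap is not part of this statement.
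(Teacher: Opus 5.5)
Your proposal is correct and follows the paper's proof: the upper bound comes from Lemma~\ref{lem:relations} combined with the weak-growth selector (Corollary~\ref{cor:selector}, the upper half of Theorem~\ref{thm:weak-growth-convex}). The lower bound comes from $\mathcal S_d^{\mathrm{quad}}\subseteq\mathcal S_d$ together with the quadratic lower bound of Theorem~\ref{qlow:allbudget}, which is the same constant and dimension threshold you cite from Theorem~\ref{thm:sample-gradient-quadratic}.
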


By Lemma~\ref{lem:relations}, the upper bounds are those of
Theorems~\ref{thm:weak-growth-quadratic} and~\ref{thm:weak-growth-convex},
attained by the same algorithms. The lower bound is the same in both
theorems; it is proved in Appendix~\ref{app:lower-quadratic}, and the
proofs are completed in Appendix~\ref{app:main-proofs}.

\paragraph{The lower bound.}
The lower bound of \citet{woodworth2021minmax} for intermittent
communication does not apply under Assumption~\ref{ass:sg}. It is proved
for oracles whose variance is bounded by $\sigma^2$, and its hard oracle
returns a randomly truncated or rescaled gradient of a hidden chain, where
the truncation depends on the query. Such replies are not gradients of
fixed convex sample functions minimized at a common point, as
Assumption~\ref{ass:sg} requires. The same applies to the hard oracle of
Theorem~\ref{thm:weak-growth-convex}. We therefore build a new hard instance. The idea of
Bernoulli masking of entire sample losses was previously
used in our work on composite variational inequalities
\citep{novoselov2026optimal}. Related random-rotation
and polynomial approximation techniques were also employed
by \citet{shamir2026simple} to establish lower bounds
for centralized matrix-vector oracle algorithms. The sample functions are whole quadratics $f_\xi=(\xi/p)F$
masked by Bernoulli variables $\xi\sim\operatorname{Bernoulli}(p)$, so
that a worker receives the exact gradient of $F$ only at the queries with
$\xi=1$, and $F$ is a randomly rotated quadratic whose spectrum is placed
at Chebyshev nodes. A depth argument for adaptive matrix queries, without
any linear-span assumption, gives the term $\min\{L,H_s/K\}/N^2$, and a statistical term $H_s/(mKN)$
is a simple consequence of lower bound from \cite{woodworth2021even}.

\paragraph{The gap.}
For quadratic populations, Theorems~\ref{thm:weak-growth-quadratic}
and~\ref{thm:sample-gradient-quadratic} give the same rate: the structure
of sample gradients adds nothing to the weak growth condition, and the
improvement over general convex populations comes from the linearity of
the gradients. For general convex populations,
Theorem~\ref{thm:sample-gradient-convex} leaves a gap between the rates
\[
    R^2\min\left\{L,\frac{H_s}{KN},\frac{L}{N^2}+\frac{H_s}{mKN}\right\}
    \qquad\text{and}\qquad
    R^2\min\left\{L,\frac{\min\{L,H_s/K\}}{N^2}+\frac{H_s}{mKN}\right\}.
\]
Theorem~\ref{thm:weak-growth-convex} shows that the upper bound cannot be
improved by an algorithm whose guarantee relies only on
Lemma~\ref{lem:relations}: any improvement must use the structure of
Assumption~\ref{ass:sg} beyond the weak growth condition. On the other
hand, Theorem~\ref{thm:pool} shows that the lower bound is attained, up
to logarithmic factors, when samples can be reused. The gap is therefore
the question of whether fresh access to samples is strictly weaker than
reuse for general convex populations.

\section{Sample reuse: closing the gap}
\label{sec:reuse}

In statistical learning the training set is usually stored, and the loss
of the same example can be differentiated at several points, as in
multi-epoch training. Assumption~\ref{ass:pool} models this situation:
the samples form a pool of size $S$, and a query may reuse any label.
With fresh samples, each sample function is observed through a single
gradient. With reuse, a worker can query its own batch of samples
repeatedly within a round and therefore optimize the empirical objective
of the batch, rather than only estimate its gradient.

This possibility gives a new upper bound. Within one round, a worker
takes a batch $\mathsf S$ of $n$ samples and approximately computes, by
SVRG \citep{johnson2013accelerating} on this batch, the proximal point
\[
    p_{\mathsf S}(y)=\arg\min_u\Bigl\{f_{\mathsf S}(u)+\frac\tau2\|u-y\|^2\Bigr\},
    \qquad
    f_{\mathsf S}=\frac1n\sum_{\xi\in\mathsf S}f_\xi,
\]
at the common point $y$, using gradient queries only. The vector
$\tau(y-p_{\mathsf S}(y))$ is the gradient of the empirical Moreau
envelope of the batch, which is $\tau$-smooth with $\tau=8H_s/n$ and, by
interpolation, is minimized at the same point $x^*$ as every sample
function. The accelerated method of \citet{woodworth2021even}, applied to
these envelopes, therefore sees the smoothness $\tau$, of order
$H_s\log(KN)/K$, instead of $L$, and a final proximal step on a fresh
batch transfers the guarantee from the envelope back to $F$. Thus reuse
plays for general convex populations the role that the linearity of the
gradients plays for quadratic populations in
Section~\ref{sec:weak-growth}: it replaces $L$ by roughly $H_s/K$ in the
communication term. The scheme is related to minibatch-prox
\citep{wang2017memory}, where every machine solves a regularized
subproblem on its minibatch. It is stated as
Algorithm~\ref{alg:envelope}.

\begin{theorem}[Minimax bounds with sample reuse]
\label{thm:pool}
Let Assumption~\ref{ass:pool} hold, and let
\[
    J=\left\lceil\log_2\bigl(625K(K+8)(N+1)^4\bigr)\right\rceil.
\]
Then, in every dimension $d\ge1$,
\[
    \mathfrak M_d^{\mathrm{pool}}(S)
    \le10^7J\,R^2\min\left\{L,\;
        \frac{\min\{L,H_s/K\}}{N^2}+\frac{H_s}{\min\{S,mKN\}}\right\},
\]
and, for every $d\ge d_{\mathrm{pool}}$,
\[
    \mathfrak M_d^{\mathrm{pool}}(S)
    \ge\frac{R^2}{10^6\log^2(em)}\min\left\{L,\;
        \frac{\min\{L,H_s/K\}}{N^2}+\frac{H_s}{\min\{S,mKN\}}\right\},
\]
where $d_{\mathrm{pool}}<\infty$ is defined in \eqref{eq:d-pool} and
depends only on $m$, $K$, $N$, and $H_s/L$.
\end{theorem}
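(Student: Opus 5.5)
The proof splits into an upper bound, obtained from a case analysis over three algorithms, and a lower bound, obtained by reduction to the fresh-gradient construction plus a finite-pool statistical argument.

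\emph{Upper bound.} We select among three methods according to which term attains the minimum.
\begin{enumerate}
\item Returning $x_0$ costs at most $LR^2/2$.
\item If $\min\{L,H_s/K\}=L$, or the pool is the bottleneck in the sense $S\ge mKN$ fails, we can often fall back on the weak growth baselines. Their risk is $R^2(L/N^2+H_s/(mKN))$ by Theorem~\ref{thm:weak-growth-convex} and Lemma~\ref{lem:relations}, since every pool query on a fresh label is a fresh sample gradient.
\item The main case is $H_s/K<L$. Here we run Algorithm~\ref{alg:envelope}. In every round each worker draws a batch $\mathsf S$ of $n\asymp K/J$ fresh labels and runs $J$ epochs of SVRG on the strongly convex proximal subproblem with modulus $\tau=8H_s/n$. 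The condition number of this subproblem is $(H_s+\tau)/\tau=O(n)$, so each epoch costs $O(n)$ queries and contracts the error by a constant factor. This yields an inexact envelope gradient $\tau(y-\tilde p_{\mathsf S}(y))$ with relative error $2^{-J}$, and the choice of $J$ makes this error polynomially small in $KN$.
\end{enumerate}

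\emph{Properties of the envelopes.} We need two facts about the empirical Moreau envelope $M_{\mathsf S}$. First, it is convex and $\tau$-smooth. Second, by interpolation $\nabla f_\xi(x^*)=0$, so $x^*$ minimizes every $M_{\mathsf S}$. Together these imply that the $mK/n$-fold averaged envelope gradients form an oracle for $\mathbb E M_{\mathsf S}$ satisfying a weak growth condition with constant $O(\tau)$; the proof mirrors the argument of Lemma~\ref{lem:relations}.

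\emph{Outer method and transfer back to $F$.} We feed this oracle into the accelerated scheme of \citet{woodworth2021even}, which is tolerant of multiplicative or inexact errors. With batch $mJ$ envelope samples per round, this gives
\[
\mathbb E M(\hat y)\lesssim \frac{\tau R^2}{N^2}+\frac{\tau R^2}{mN}\asymp JR^2\Bigl(\frac{H_s}{KN^2}+\frac{H_s}{mKN}\Bigr).
\]
A final proximal step on a fresh batch converts this into a bound on $F$. We use $F(p_{\mathsf S}(y))\le$ (generalization term) $+\,M_{\mathsf S}(y)$, controlled through stability of regularized ERM under $\tau$-strong convexity.

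\emph{Finite pool.} When $S<mKN$ the labels must be reused, which changes the statistical term to $H_s/S$. We handle this by replacing fresh batches with the whole pool, i.e.\ minibatch-prox on the empirical risk. The generalization bound for $\tau$-regularized ERM on $S$ samples under interpolation then gives $H_sR^2/S$. Losing constant factors in each case yields the stated $10^7J$ bound.

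\emph{Lower bound.} Reusing a label at several points can only add information. However, the hard instance of Appendix~\ref{app:lower-quadratic} is $f_\xi=(\xi/p)F$, where the sample function is determined by a single bit. Repeated queries of label $i$ therefore reveal nothing beyond whether $\xi_i=1$, after which exact gradients of $F$ are available. The plan is to show that, in this construction, reuse amounts to the ability to compute exact gradients of $F$ repeatedly once some label with $\xi_i=1$ is found.

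This is precisely the delicate point, and I expect it to be the main obstacle. The $\min\{L,H_s/K\}/N^2$ term must survive even though a worker holding a good label could take $K$ exact gradient steps per round, which would seemingly give $L/(KN)^2$ rather than $L/N^2$. I would therefore modify the instance. Either each sample function reveals only part of a chain, with Bernoulli masking of blocks of the hidden chain and a depth argument showing that each round advances $O(\log m)$ links per worker, or we combine rotations with a per-sample random subspace.

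For the statistical term $H_s/\min\{S,mKN\}$, we use a two-point argument with $S$ samples. Write $F=(\xi/p)$-masked quadratics with $p\asymp1/\min\{S,mKN\}$. With constant probability no sample carries the signal, so the algorithm cannot locate $x^*$ in the relevant direction. This costs $H_sR^2p$, and the construction yields $d_{\mathrm{pool}}$ from the dimension requirements of the chain and the rotation.
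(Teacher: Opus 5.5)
Your upper bound in the main regime follows the paper's route: one fresh batch of $n\asymp K/J$ labels per worker and round, $J$ SVRG epochs on the $\tau$-strongly convex prox subproblem with $\tau=8H_s/n$, envelopes minimized at $x^*$ by interpolation, the biased accelerated scheme of \citet{woodworth2021even}, and a final prox step controlled by replace-one stability.

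\textbf{The genuine gap is the communication term of the lower bound.} You correctly see that $f_\xi=(\xi/p)F$ collapses under reuse: one label with $\xi_i=1$ is an exact gradient oracle for $F$ for the rest of the run. Since \eqref{eq:pool-fresh} reads $\mathfrak M_d^{\mathrm{pool}}(\infty)\le\mathfrak M_d^{\mathrm{sg}}$, nothing transfers from the fresh model either. But you then only name two candidate modifications and carry out neither. So the term $\min\{L,H_s/K\}/N^2$, which is the substance of this lower bound, is unproved. The missing ingredients are:
\begin{enumerate}
\item[(i)] A single randomly rotated chain $v_1,\dots,v_r$ in which each sample has its own independent $\operatorname{Bernoulli}(p)$ coin $b_l$ on every edge. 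The edge terms are dead-zone terms $b_l\psi_\alpha(\langle v_l-v_{l+1},x\rangle)$, and a deterministic anchor $p\,\psi_0(\langle v_1,x\rangle-a)$ is added. Then all samples are $O(H_s)$-smooth, vanish at the same $x^*$, and average to $p$ times the full chain, with smoothness of order $pH_s\le L$. The whole thing is wrapped in the radial extension of Lemma~\ref{lem:radial}, so that far queries receive public replies.
\item[(ii)] The reason reuse cannot help. If a worker already knows the coin of some sample on its current frontier edge, that coin must be $0$, since otherwise its prefix would have advanced. Hence every query, repeated or not, advances the prefix with conditional probability at most $p$. This is exactly the hypothesis of Lemma~\ref{lem:max-binomial}, and it bounds the expected total progress by $N(\log m+(e-1)Kp)$.
\item[(iii)] The tuning $p=\min\{L/H_s,\log(em)/K\}$. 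It keeps the progress at $O(\log(em))$ per round while $pH_s\ge\min\{L,H_s/K\}$. With $p=L/H_s$ the progress is of order $KL/H_s$ per round, and you only get $H_s^2/(LK^2N^2)$ instead of $H_s/(KN^2)$.
\item[(iv)] A simulated run that uses only disclosed frame vectors, coupled to the true run by spherical caps on the random frame. You also need a loss gap $f_V(x)\gtrsim H_sR^2/s^2$ whenever $|\langle v_{s+1},x\rangle|$ is small. Together with $F_V\ge pf_V$, this gives a risk of order $pH_sR^2/(N\log(em))^2$.
\end{enumerate}
Your two-point statistical argument is fine. It is Lemma~\ref{lem:stat}, with $p$ capped at $L/H_s$ and a zero-run argument to handle adaptively chosen labels.

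\textbf{The finite-pool part of your upper bound also fails as stated.} Running minibatch-prox on the whole pool reuses the same samples in every round, so $y_t$ depends on them. This destroys the unbiasedness of the envelope-gradient estimates that the outer method needs. It also destroys the independence of center and batch that the stability transfer needs. Moreover, a worker cannot even touch $S>K$ samples within one round. Your item~2 has a related problem: it proposes the baselines with risk $R^2(L/N^2+H_s/(mKN))$ exactly when $S<mKN$, where $mKN$ distinct labels do not exist.

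The fix is never to reuse labels across batches and simply to use fewer of them:
\begin{itemize}
\item For $S\le KN$, serial SGD on one worker over $S$ distinct labels gives $2H_sR^2/S$.
\item For $S>KN$, activate only $m'=\min\{m,\lfloor S/(KN)\rfloor\}$ workers in the envelope method, and use batch size $b=\min\{mK,\lfloor S/N\rfloor\}$ in the accelerated minibatch baseline. Then at most $S$ labels are used, while $m'KN$ and $bN$ are both at least $\min\{S,mKN\}/2$.
\end{itemize}

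Two minor points. The outer batch is $m$ envelope samples per round, one per worker, not $mJ$. And the auxiliary sequence must be projected onto $\{x:\|x-x_0\|\le R\}$, so that every center satisfies $\|y_t-x^*\|\le2R$. This bounds the SVRG initial gap by $2H_sR^2$ and the bias term by $2R\eta_t\|e_t\|$.
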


The upper bound is achieved by a parameter-based selection among
returning $x_0$, serial averaged SGD on one worker, common-query
minibatch AC-SA, and Algorithm~\ref{alg:envelope}; it is proved in
Appendix~\ref{app:appendix_C}. The proof is completed in
Appendix~\ref{app:main-proofs}.

\paragraph{The lower bound.}
The lower bound holds for arbitrary adaptive reuse of labels. In the hard
instance every sample function carries its own Bernoulli coins on the
edges of a hidden chain, so repeated queries of a sample reveal nothing
beyond its coins (Appendix~\ref{app:lower-pool}). The statistical term
depends on the number $\min\{S,mKN\}$ of distinct samples that can be
inspected: with a small pool, more queries do not help.

For $S=\infty$, both bounds of Theorem~\ref{thm:pool} coincide with the
lower bound of Theorem~\ref{thm:sample-gradient-convex} up to the factor
$J\log^2(em)$, while
$\mathfrak M_d^{\mathrm{pool}}(\infty)\le\mathfrak M_d^{\mathrm{sg}}$ by
\eqref{eq:pool-fresh}. The lower bound of
Theorem~\ref{thm:sample-gradient-convex} is therefore the minimax rate
with reuse, and the open question of Section~\ref{sec:sample-gradients} is
whether the same rate can be attained with fresh samples.

\section{Discussion and Future work}
\label{sec:discussion}

In this work, we establish upper and lower complexity bounds
for distributed stochastic convex optimization under several
oracle models motivated by statistical learning. Our results
demonstrate the importance of oracle assumptions in determining
the minimax complexity of intermittent communication.

Under the weakest oracle model, which assumes only the weak
growth condition, we show that for general convex objectives,
local updates cannot improve the minimax rate beyond that
achieved by simple baselines. However, for quadratic population
objectives, local computation becomes beneficial and leads to
an improved communication-dependent term. The sample-reuse
oracle extends this improvement to general convex objectives,
allowing an accelerated rate through repeated access to
individual sample functions.

These findings highlight that the weak growth condition alone
does not fully capture the potential benefits of local
computation. Additional structure, either in the objective
function or in the oracle's access to individual samples,
can fundamentally change the achievable convergence rates.
Nevertheless, several important questions remain open.

\paragraph{The gap for fresh sample gradients.}
For general convex population objectives with fresh sample
gradients, Theorem~\ref{thm:sample-gradient-convex} leaves
a gap between the upper and lower bounds.
By Theorem~\ref{thm:weak-growth-convex}, any improvement
of the upper bound must exploit properties of sample
gradients beyond the weak growth condition.
On the other hand, Theorem~\ref{thm:pool} shows that
the rate appearing in the lower bound can be attained,
up to logarithmic factors, when repeated access to
samples is allowed.
It remains open whether the gap can be closed by an
algorithm using only fresh samples, or whether repeated
sample access leads to a genuine separation between
the two oracle models.

\paragraph{Beyond convexity.}
Convex optimization provides a natural starting point,
but many overparameterized learning models are inherently
nonconvex. Nevertheless, certain overparameterized
nonlinear systems, including sufficiently wide neural
networks, have been shown to satisfy variants of the
Polyak--{\L}ojasiewicz condition, such as PL$^*$,
under suitable assumptions and in appropriate regions
of the parameter space~\citep{liu2022loss};
see also \citet{karimi2016linear}.

This motivates studying the minimax complexity of
intermittent communication for nonconvex interpolating
problems satisfying PL-type conditions.
In particular, it would be interesting to determine
whether the advantages of local computation observed
for quadratic objectives and under sample reuse
persist beyond convexity, and how the choice of
stochastic oracle affects the resulting minimax rates.

\bibliographystyle{plainnat}
\bibliography{references}

\clearpage
\appendix
\section{Lower bounds}
\label{app:appendix_A}

\paragraph{Conventions.}
By translation, $x_0=0$ in all constructions. Expectations of
nonnegative random variables are understood in the extended sense, so an
algorithm with infinite expected risk satisfies every lower bound. We
use two elementary reductions.
\begin{enumerate}
    \item
    \emph{Averaging over instances.} Let a random instance be drawn from
    a law that may depend on the algorithm but is independent of its seed
    and of the oracle randomness. If the expected risk, averaged also
    over the instance, is at least $c$, then some fixed instance has
    expected risk at least $c$.

    \item
    \emph{Padding.} An instance in $\R^{d_0}$ is extended to $\R^d$,
    $d\ge d_0$, by letting all functions and oracle maps depend only on
    the first $d_0$ coordinates, with zero replies in the remaining
    coordinates. The extension belongs to the same class with the same
    constants, and an algorithm in $\R^d$ is simulated in $\R^{d_0}$ by
    projecting its queries and its output onto the first $d_0$
    coordinates. Hence a lower bound in dimension $d_0$ holds in every
    dimension $d\ge d_0$.
\end{enumerate}

\subsection{Tools}
\label{app:lower-tools}

\begin{lemma}[Spherical caps]
\label{lem:sphere}
Let $u$ be uniformly distributed on the unit sphere of an
$n$-dimensional Euclidean space, $n\ge2$, and let $z$ be a fixed vector
with $\|z\|\le\bar r$. Then, for every $b>0$,
\[
    \mathbb{P}\bigl(|\langle u,z\rangle|>b\bigr)
    \le2\exp\left(-\frac{(n-1)b^2}{2\bar r^2}\right).
\]
The same bound holds conditionally if $z$ is measurable with respect to
a $\sigma$-algebra given which $u$ is uniform on such a sphere.
\end{lemma}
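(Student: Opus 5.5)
We reduce to a statement about the first coordinate of a uniform vector on the sphere. By rotational invariance of the uniform law, $\langle u,z\rangle$ has the same distribution as $\|z\|\,u_1$, where $u_1$ is the first coordinate of $u$. If $z=0$ there is nothing to prove. Otherwise $\|z\|\le\bar r$ gives
\[
    \mathbb P\bigl(|\langle u,z\rangle|>b\bigr)\le\mathbb P\bigl(|u_1|>b/\bar r\bigr),
\]
so it suffices to show that $\mathbb P(|u_1|>t)\le2\exp(-(n-1)t^2/2)$ for every $t>0$. For $t\ge1$ the probability is zero, so assume $0<t<1$.

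The density of $u_1$ on $[-1,1]$ is $c_n(1-s^2)^{(n-3)/2}$, with normalizing constant $c_n=1/B(\tfrac12,\tfrac{n-1}2)$. By symmetry,
\[
    \mathbb P(|u_1|>t)=2c_n\int_t^1(1-s^2)^{(n-3)/2}\,ds.
\]
For $n\ge3$ we will use the bound $1-s^2\le e^{-s^2}$. This gives $\mathbb P(|u_1|>t)\le2c_n\int_t^\infty e^{-(n-3)s^2/2}\,ds$. This Gaussian tail has the wrong exponent $(n-3)$ and a constant that must still be compared with $c_n$. The cleaner route is the standard cap-measure argument, which we adopt instead.

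\textbf{Cap-measure argument.} Write $u=g/\|g\|$ with $g$ standard Gaussian, or equivalently use the geometric fact that the cap $\{u_1>t\}$ lies inside a ball of radius $\sqrt{1-t^2}$. Comparing surface areas gives
\[
    \mathbb P(u_1>t)\le(1-t^2)^{(n-1)/2}\le\exp\bigl(-(n-1)t^2/2\bigr).
\]
The first inequality holds for the uniform measure on $S^{n-1}$. The proof goes through cones: the set of rays from the origin through the cap has the same normalized measure as the cone region inside the unit ball. For $t\ge1/\sqrt2$, this region is contained in the ball of radius $\sqrt{1-t^2}$ centred at $te_1$. For $t<1/\sqrt2$, use the ball centred at $(1/(2t))e_1$ of radius $1/(2t)$, which yields the same exponent after a computation; this is the classical Ball-type argument. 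Doubling for the two caps then gives the claim. We expect this step, establishing the cap inequality with exactly the constant $(n-1)/2$ in the exponent and uniformly in $t$, to be the only real obstacle. As a fallback, a direct check via the density works: the substitution $s=\sqrt{t^2+v}$ combined with monotonicity shows that $\int_t^1(1-s^2)^{(n-3)/2}ds\big/\int_0^1(1-s^2)^{(n-3)/2}ds$ is at most $(1-t^2)^{(n-1)/2}$. The case $n=2$, where $u_1=\cos\theta$, can be checked by hand.

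\textbf{Conditional version.} Let $\mathcal G$ be the $\sigma$-algebra in the statement, so that given $\mathcal G$ the vector $z$ is fixed and $u$ is uniform on the sphere. Apply the unconditional bound with $z$ frozen to obtain the inequality almost surely for the conditional probability. Joint measurability justifies this via the regular conditional distribution. Taking expectations recovers the unconditional statement if it is needed.
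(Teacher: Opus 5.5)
Your cone paragraph fails as written, because its two cases are swapped. For $t>1/\sqrt2$ the cone over the cap contains the origin, which lies at distance $t>\sqrt{1-t^2}$ from $te_1$, so the cone is not inside the ball of radius $\sqrt{1-t^2}$ centred at $te_1$. For $t<1/\sqrt2$, the ball of radius $1/(2t)$ centred at $e_1/(2t)$ gives the volume ratio $(2t)^{-n}$, which is useless for $t\le1/2$. With the cases in the right order, the classical argument gives $(1-t^2)^{n/2}$ for $t\le1/\sqrt2$ and $(2t)^{-n}\le e^{-nt^2/2}$ for $t>1/\sqrt2$. That does prove the lemma. It does not prove the intermediate inequality $\mathbb P(u_1>t)\le(1-t^2)^{(n-1)/2}$ you state, since $(2t)^{-n}\to2^{-n}$ while $(1-t^2)^{(n-1)/2}\to0$ as $t\to1$. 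Your surface-area sentence, by contrast, holds for every $t$ if justified properly. The convex segment $\{x:\|x\|\le1,\ x_1\ge t\}$ lies in the ball of radius $\sqrt{1-t^2}$ centred at $te_1$, and surface area is monotone under inclusion of convex bodies.

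Your fallback is the argument to keep, and it is correct if the comparison is made after rescaling. With $s^2=t^2+(1-t^2)w$,
\[
    \int_t^1(1-s^2)^{\frac{n-3}2}\,ds
    =(1-t^2)^{\frac{n-1}2}\int_0^1\frac{(1-w)^{\frac{n-3}2}}{2\sqrt{t^2+(1-t^2)w}}\,dw
    \le(1-t^2)^{\frac{n-1}2}\int_0^1(1-s^2)^{\frac{n-3}2}\,ds .
\]
The inequality uses $t^2+(1-t^2)w\ge w$, and the right-hand integral is the middle one evaluated at $t=0$. If you instead compare $\sqrt{t^2+v}\ge\sqrt v$ before rescaling, you lose a factor $\sqrt{1-t^2}$ and only get the exponent $(n-2)/2$. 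Done correctly, this yields $\mathbb P(|u_1|>t)\le(1-t^2)^{(n-1)/2}$ for every $n\ge2$, including $n=2$. So you need neither the doubling step nor a separate check of $n=2$.

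This is a genuinely different route from the paper's. The paper writes $u=Z/\|Z\|$ with $Z$ standard Gaussian and rewrites $|u_1|>c$ as $|Z_1|>\sqrt{c^2W/(1-c^2)}$, where $W=\sum_{j\ge2}Z_j^2$ is independent of $Z_1$. It then applies the tail bound $2e^{-a^2/2}$ conditionally on $W$ and evaluates the $\chi^2_{n-1}$ Laplace transform, which equals exactly $(1-c^2)^{(n-1)/2}$. That avoids both the density of $u_1$ and any geometry, and takes two lines. Your density route needs the explicit law of $u_1$, but its bound is smaller by the factor $2$. Your treatment of the conditional version coincides with the paper's.
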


\begin{proof}
If $b\ge\|z\|$, the probability is zero. Otherwise put
$c=b/\|z\|\in(0,1)$; by rotation invariance the probability equals
$\mathbb{P}\left(|u_1|>c\right)$. Write $u=Z/\|Z\|$ with a standard Gaussian vector
$Z\in\R^n$, and let $W=\sum_{j=2}^nZ_j^2$. Then
$|u_1|>c$ if and only if $|Z_1|>\sqrt{c^2W/(1-c^2)}$. Since $Z_1$ is
independent of $W$ and $\mathbb{P}\left(|Z_1|>a\right)\le2e^{-a^2/2}$ for $a\ge0$,
\[
    \mathbb{P}\left(|u_1|>c\right)
    \le2\,\mathbb E\exp\left(-\frac{c^2W}{2(1-c^2)}\right)
    =2(1-c^2)^{(n-1)/2}
    \le2\exp\left(-\frac{(n-1)c^2}{2}\right),
\]
where the equality is the Laplace transform of the $\chi^2_{n-1}$
distribution. It remains to use $c\ge b/\bar r$. The conditional version is
the same computation for every fixed value of the conditioning
information.
\end{proof}

\begin{lemma}[Maximum of success counts]
\label{lem:max-binomial}
Let $p\in[0,1]$. For every $w=1,\ldots,m$, let
$\mathcal G\subseteq\mathcal G_{w,0}\subseteq\mathcal G_{w,1}
\subseteq\cdots\subseteq\mathcal G_{w,K}$ be $\sigma$-algebras and let
$I_{w,k}\in\{0,1\}$ be $\mathcal G_{w,k}$-measurable with
\[
    \mathbb E\bigl[e^{I_{w,k}}\mid\mathcal G_{w,k-1}\bigr]\le1-p+pe,
    \qquad k=1,\ldots,K.
\]
Let $S_w=\sum_{k=1}^KI_{w,k}$. Then
\[
    \mathbb E\Bigl[e^{\max_wS_w}\,\Big|\,\mathcal G\Bigr]\le me^{(e-1)Kp},
    \qquad
    \mathbb E\Bigl[\max_wS_w\,\Big|\,\mathcal G\Bigr]\le\log m+(e-1)Kp.
\]
In particular, this holds for independent
$S_1,\ldots,S_m\sim\operatorname{Bin}(K,p)$.
\end{lemma}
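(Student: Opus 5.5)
The plan is a supermartingale-type bound via the exponential moment, followed by a union bound (sum) over workers and Jensen's inequality.

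\emph{Step 1: a single worker.} Fix $w$ and put $M_{w,k}=e^{\sum_{j\le k}I_{w,j}}$, which is $\mathcal G_{w,k}$-measurable. By the tower property over the filtration $\mathcal G_{w,0}\subseteq\cdots\subseteq\mathcal G_{w,K}$ and the hypothesis,
\[
    \mathbb E[M_{w,k}\mid\mathcal G_{w,k-1}]
    =M_{w,k-1}\,\mathbb E\bigl[e^{I_{w,k}}\mid\mathcal G_{w,k-1}\bigr]
    \le(1-p+pe)M_{w,k-1}.
\]
Since $M_{w,k-1}\ge0$ is measurable with respect to $\mathcal G_{w,k-1}$, pulling it out of the conditional expectation is legitimate. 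Iterating down to $\mathcal G_{w,0}$ and then conditioning on $\mathcal G\subseteq\mathcal G_{w,0}$ gives
\[
    \mathbb E[e^{S_w}\mid\mathcal G]\le(1-p+pe)^K.
\]
Finally, $1-p+pe=1+(e-1)p\le e^{(e-1)p}$, so this is at most $e^{(e-1)Kp}$.

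\emph{Step 2: the maximum.} Use $e^{\max_wS_w}\le\sum_{w=1}^me^{S_w}$. Taking conditional expectations given $\mathcal G$ and summing the bounds from Step 1 yields $\mathbb E[e^{\max_w S_w}\mid\mathcal G]\le me^{(e-1)Kp}$. No joint structure across workers is needed: each worker's filtration only has to contain $\mathcal G$, which is exactly why the lemma allows separate filtrations.

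\emph{Step 3: the first moment.} By conditional Jensen for the concave logarithm,
\[
    \mathbb E\Bigl[\max_wS_w\,\Big|\,\mathcal G\Bigr]\le\log\mathbb E\bigl[e^{\max_wS_w}\mid\mathcal G\bigr]\le\log m+(e-1)Kp.
\]
The variables are bounded by $K$, so all expectations are finite.

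\emph{The independent binomial case.} Take $\mathcal G$ trivial and $\mathcal G_{w,k}=\sigma(I_{w,1},\dots,I_{w,k})$. By independence, $\mathbb E[e^{I_{w,k}}\mid\mathcal G_{w,k-1}]=1-p+pe$ exactly.

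There is no real obstacle here. The only point needing care is the measurability bookkeeping in Step 1, namely that the conditioning $\sigma$-algebra $\mathcal G$ sits below every $\mathcal G_{w,0}$, so that the tower property applies worker by worker.
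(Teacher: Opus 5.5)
Your proposal is correct and follows the paper's proof essentially step for step. You use the same three ingredients: iterated conditioning down each worker's filtration together with $1-p+pe\le e^{(e-1)p}$, the bound $e^{\max_w S_w}\le\sum_w e^{S_w}$, and conditional Jensen for the logarithm.
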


\begin{proof}
Conditioning successively on $\mathcal G_{w,K-1},\ldots,\mathcal G_{w,0}$
and using $1-p+pe\le e^{(e-1)p}$ gives
$\mathbb E[e^{S_w}\mid\mathcal G_{w,0}]\le e^{(e-1)Kp}$, and the same
bound holds given $\mathcal G$. Since $e^{\max_wS_w}\le\sum_we^{S_w}$,
the first inequality follows. The second follows from the first by
Jensen's inequality,
$\mathbb E[\max_wS_w\mid\mathcal G]
\le\log\mathbb E[e^{\max_wS_w}\mid\mathcal G]$.
\end{proof}

The next lemma turns a convex function with bounded gradients into a
globally convex function whose values and gradients far from the origin
do not depend on it. Let
\[
    \Gamma_0(t)=
    \begin{cases}
        0,& t\le0,\\
        3t^2-2t^3,& 0<t<1,\\
        1,& t\ge1 .
    \end{cases}
\]

\begin{lemma}[Radial extension]
\label{lem:radial}
Let $\varphi:\R^d\to[0,\infty)$ be convex and differentiable, with an
$L_\varphi$-Lipschitz gradient and $\|\nabla\varphi\|\le M_\varphi$
everywhere, and let $\varphi(x^*)=0$ for some $\|x^*\|\le R$. Let $c>0$
and $\rho\ge\max\{16R,64M_\varphi/c\}$, and define
\begin{equation}
    \Gamma(x)=\Gamma_0\Bigl(\frac{\|x\|-2R}{\rho}\Bigr),
    \qquad
    \mathcal T\varphi(x)
    =\bigl(1-\Gamma(x)\bigr)\varphi(x)+2c\bigl(\|x\|-R\bigr)_+^2 .
    \label{eq:radial}
\end{equation}
Then:
\begin{enumerate}
    \item
    $\mathcal T\varphi$ is convex and has an $(L_\varphi+5c)$-Lipschitz
    gradient;

    \item
    $\varphi\le\mathcal T\varphi$ and $\mathcal T\varphi(x^*)=0$;

    \item
    $\|\nabla\Gamma\|\le\frac3{2\rho}$ and
    $\nabla\mathcal T\varphi
    =(1-\Gamma)\nabla\varphi-\varphi\nabla\Gamma
    +4c(\|x\|-R)_+\frac{x}{\|x\|}$, with the last term equal to zero at
    $x=0$;

    \item
    if $\|x\|\ge2R+\rho$, then
    $\mathcal T\varphi(x)=2c(\|x\|-R)^2$ and
    $\nabla\mathcal T\varphi(x)=4c(\|x\|-R)\frac{x}{\|x\|}$; these do not
    depend on $\varphi$.
\end{enumerate}
\end{lemma}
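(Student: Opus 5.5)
I would verify the four items directly from the formula \eqref{eq:radial}, starting with the elementary facts about $\Gamma_0$: it is $C^1$, nondecreasing, takes values in $[0,1]$, and satisfies $|\Gamma_0'|\le 3/2$ and $|\Gamma_0''|\le 6$. For $\|x\|\le 2R$ we have $\Gamma\equiv 0$. Hence $\Gamma$ is smooth away from the sphere $\|x\|=2R$ and is $C^1$ across it, with $\nabla\Gamma(x)=\rho^{-1}\Gamma_0'\bigl((\|x\|-2R)/\rho\bigr)\,x/\|x\|$. This gives $\|\nabla\Gamma\|\le 3/(2\rho)$.

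The gradient formula in item~3 is then the product rule. The term $2c(\|x\|-R)_+^2$ is $C^1$ with gradient $4c(\|x\|-R)_+x/\|x\|$, which vanishes near the origin.

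Item~4 is immediate: if $\|x\|\ge 2R+\rho$ then $\Gamma=1$ on a neighborhood of $x$, so the formula reduces to the quadratic term.

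For item~2, $\varphi\ge 0$ and $\Gamma\le 1$ give $(1-\Gamma)\varphi\ge\varphi-\Gamma\varphi$. It is then convenient to show $\mathcal T\varphi\ge\varphi$ by bounding the loss $\Gamma\varphi$ by the penalty. On the support of $\Gamma$ we have $\|x\|\ge 2R$, so $\|x\|-R\ge\|x\|/2$ and the penalty is at least $c\|x\|^2/2$. On the other hand, $\varphi(x)\le M_\varphi\|x-x^*\|\le M_\varphi(\|x\|+R)$, using $\varphi(x^*)=0$ and the gradient bound. Within the transition shell $2R\le\|x\|\le 2R+\rho$ one has $\Gamma\le 1$, and in general $\Gamma\varphi\le M_\varphi(\|x\|+R)$. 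The condition $\rho\ge 64M_\varphi/c$ alone does not give $c\|x\|^2/2\ge M_\varphi(\|x\|+R)$ for $\|x\|$ near $2R$. However, $\Gamma$ is tiny there: $\Gamma\le 3t^2$ with $t=(\|x\|-2R)/\rho$. I would therefore prove the pointwise inequality
\[
\Gamma(x)\,M_\varphi(\|x\|+R)\le 2c(\|x\|-R)^2
\]
by splitting into $t\le 1$ and $t\ge 1$, using $\rho\ge 16R$ and $\rho\ge 64M_\varphi/c$. Finally, $\mathcal T\varphi(x^*)=0$ because $\|x^*\|\le R$ makes both terms vanish.

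Item~1 is the main obstacle. For the Lipschitz gradient, I would differentiate once more (piecewise, with $C^1$ gluing) and bound the Hessian of each term:
\begin{itemize}
\item $(1-\Gamma)\nabla^2\varphi$ has norm at most $L_\varphi$;
\item each cross term $\nabla\varphi\otimes\nabla\Gamma$ has norm at most $\tfrac{3M_\varphi}{2\rho}\le\tfrac{3c}{128}$;
\item $\varphi\nabla^2\Gamma$ is bounded using $\|\nabla^2\Gamma\|\le 6/\rho^2+\tfrac{3}{2\rho\|x\|}$ together with $\varphi\le M_\varphi(\|x\|+R)$, so that for $\|x\|\le 2R+\rho$ it is $O(M_\varphi/\rho)\le O(c)$;
\item the quadratic term has Hessian at most $4c$.
\end{itemize}
Collecting constants should give at most $L_\varphi+5c$. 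The delicate part is convexity, because $-\Gamma\varphi$ is concave-ish. I would show that the Hessian is positive semidefinite: $(1-\Gamma)\nabla^2\varphi\succeq 0$, and the cross and curvature terms must be dominated by the positive curvature of the quadratic penalty. That curvature is $4c$ in the radial direction and $4c(\|x\|-R)/\|x\|\ge 2c$ in the tangential directions on $\|x\|\ge 2R$. Since all the negative contributions are $O(c/30)$ by the bounds above, the sum is $\succeq 0$. Convexity then follows because the gradient is locally Lipschitz and the Hessian is positive semidefinite almost everywhere.

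If $\varphi$ is only $C^{1,1}$ rather than $C^2$, I would run the same estimates on the monotonicity inequality $\langle\nabla\mathcal T\varphi(x)-\nabla\mathcal T\varphi(y),x-y\rangle\ge 0$ along segments, or mollify $\varphi$ and pass to the limit.
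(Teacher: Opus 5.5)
Your proposal is correct and follows essentially the same route as the paper. Both arguments bound the Hessian almost everywhere, letting the penalty's tangential curvature $4c(1-R/\|x\|)\ge 2c$ on $\|x\|\ge 2R$ absorb the cross terms and the $\varphi\nabla^2\Gamma$ term; both prove $\Gamma\varphi\le 2c(\|x\|-R)_+^2$ by splitting into the annulus (via $\Gamma\le 3t^2$) and the region $\|x\|\ge 2R+\rho$; and both mollify to pass from the a.e.\ Hessian bounds to convexity and the Lipschitz constant. The paper mollifies $\mathcal T\varphi$ itself, which spares you from tracking $\varphi_\varepsilon(x^*)\neq 0$. The only slip is numerical: the negative contributions are about $\frac{81}{8}\cdot\frac{c}{64}\approx 0.16c$ (about $0.19c$ with your additive bound on $\nabla^2\Gamma$), not of order $c/30$. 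This is still well within the $2c$ margin for convexity and the $c$ of slack in the upper bound $L_\varphi+5c$.
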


\begin{proof}
Write $r=\|x\|$ and $\pi(x)=2c(r-R)_+^2$. The function $\pi$ is convex,
being a nondecreasing convex function of the norm, and for $r>R$
\[
    \nabla\pi(x)=4c(r-R)\frac x r,
    \qquad
    \nabla^2\pi(x)=4c\frac{xx^\top}{r^2}
       +4c\Bigl(1-\frac Rr\Bigr)\Bigl(\mathrm I-\frac{xx^\top}{r^2}\Bigr).
\]
Hence the eigenvalues of $\nabla^2\pi$ lie in $[0,4c]$, and they are at
least $2c$ when $r\ge2R$.

\emph{Bounds on the cutoff.}
On the annulus $2R<r<2R+\rho$ put $t=(r-2R)/\rho$. Then
\[
    \nabla\Gamma(x)=\frac{\Gamma_0'(t)}{\rho}\frac xr,
    \qquad
    \nabla^2\Gamma(x)=\frac{\Gamma_0''(t)}{\rho^2}\frac{xx^\top}{r^2}
       +\frac{\Gamma_0'(t)}{\rho r}\Bigl(\mathrm I-\frac{xx^\top}{r^2}\Bigr).
\]
Since $0\le\Gamma_0'(t)=6t(1-t)\le\min\{3/2,6t\}$ and $|\Gamma_0''(t)|\le6$, and since
$\frac{\Gamma_0'(t)}{\rho r}\le\frac{6(r-2R)}{\rho^2r}\le\frac6{\rho^2}$, we get
\[
    \|\nabla\Gamma\|\le\frac3{2\rho},
    \qquad
    \|\nabla^2\Gamma\|_{\mathrm{op}}\le\frac6{\rho^2}.
\]
Outside the annulus $\Gamma$ is constant. This proves the bound in
Part~3; the formula for $\nabla\mathcal T\varphi$ is the product rule.

\emph{Hessian bounds.}
The gradient of $\varphi$ is Lipschitz, hence differentiable almost
everywhere, and at every such point
$0\preceq\nabla^2\varphi\preceq L_\varphi\mathrm I$ by convexity. Since
$\varphi(x^*)=0$ and $\|\nabla\varphi\|\le M_\varphi$, on the annulus
$\varphi(x)\le M_\varphi\|x-x^*\|\le M_\varphi(3R+\rho)$. The product
rule gives, almost everywhere on the annulus,
\[
    \nabla^2\bigl[(1-\Gamma)\varphi\bigr]
    =(1-\Gamma)\nabla^2\varphi
     -\nabla\Gamma\,\nabla\varphi^\top-\nabla\varphi\,\nabla\Gamma^\top
     -\varphi\nabla^2\Gamma ,
\]
and the last three terms have operator norm at most
\[
    2\cdot\frac3{2\rho}M_\varphi+M_\varphi(3R+\rho)\frac6{\rho^2}
    =\Bigl(9+\frac{18R}{\rho}\Bigr)\frac{M_\varphi}{\rho}
    \le\frac{81}8\cdot\frac c{64}<\frac c4,
\]
where we used $\rho\ge16R$ and $\rho\ge64M_\varphi/c$. Combining this
with the eigenvalue bounds for $\nabla^2\pi$, we obtain almost
everywhere: for $r<2R$, where $\Gamma=0$,
$0\preceq\nabla^2\mathcal T\varphi\preceq(L_\varphi+4c)\mathrm I$; on
the annulus,
$(2c-\frac c4)\mathrm I\preceq\nabla^2\mathcal T\varphi
\preceq(L_\varphi+\frac c4+4c)\mathrm I$; for $r>2R+\rho$, where
$\Gamma=1$, $0\preceq\nabla^2\mathcal T\varphi\preceq4c\mathrm I$. Thus
\begin{equation}
    0\preceq\nabla^2\mathcal T\varphi\preceq(L_\varphi+5c)\mathrm I
    \qquad\text{almost everywhere.}
    \label{eq:radial-hessian}
\end{equation}

\emph{Part 1.}
The function $\psi=\mathcal T\varphi$ is continuously differentiable,
and $\nabla\psi$ is locally Lipschitz. Hence the distributional
derivative of $\nabla\psi$ is its almost everywhere derivative. For a
standard mollifier $\kappa_\varepsilon$, the function
$\psi_\varepsilon=\psi*\kappa_\varepsilon$ is smooth with
$\nabla^2\psi_\varepsilon=(\nabla^2\psi)*\kappa_\varepsilon$, so
\eqref{eq:radial-hessian} gives
$0\preceq\nabla^2\psi_\varepsilon\preceq(L_\varphi+5c)\mathrm I$
everywhere. Thus $\psi_\varepsilon$ is convex with an
$(L_\varphi+5c)$-Lipschitz gradient. As $\varepsilon\to0$,
$\psi_\varepsilon\to\psi$ and $\nabla\psi_\varepsilon\to\nabla\psi$
pointwise, and both properties pass to the limit.

\emph{Part 2.}
We show $\Gamma\varphi\le\pi$. For $r\le2R$ this holds since $\Gamma=0$.
On the annulus, $\Gamma_0(t)\le3t^2$ and $t\le(r-R)/\rho$ give
$\Gamma\le3(r-R)^2/\rho^2$, hence
\[
    \Gamma\varphi
    \le\Bigl(3+\frac{9R}\rho\Bigr)\frac{M_\varphi}{\rho}(r-R)^2
    \le\frac{57}{16}\cdot\frac c{64}(r-R)^2\le2c(r-R)^2 .
\]
For $r\ge2R+\rho$, we have $r-R\ge r/2$, $r+R\le3r/2$, and
$r\ge64M_\varphi/c$, so
$\varphi(x)\le M_\varphi(r+R)\le\frac32M_\varphi r\le\frac c2r^2\le\pi(x)$.
Therefore $\mathcal T\varphi-\varphi=\pi-\Gamma\varphi\ge0$. Since
$\|x^*\|\le R$, both $\Gamma$ and $\pi$ vanish at $x^*$, and
$\mathcal T\varphi(x^*)=\varphi(x^*)=0$.

\emph{Part 4.}
This follows from $\Gamma=1$ for $r\ge2R+\rho$.
\end{proof}

The map $\mathcal T$ is affine in $\varphi$: if a measurable family
$\varphi_\zeta$ satisfies the hypotheses of Lemma~\ref{lem:radial} with
common $x^*$, $L_\varphi$, $M_\varphi$, then so does
$\mathbb E\varphi_\zeta$, and
$\mathbb E\,\mathcal T\varphi_\zeta=\mathcal T(\mathbb E\varphi_\zeta)$.

\subsection{The statistical term}
\label{app:lower-stat}

\begin{lemma}[Statistical lower bound]
\label{lem:stat}
Let $S\in\mathbb N\cup\{\infty\}$, $S\ge1$, and $d\ge1$. For every
algorithm in $\mathrm{IC}(m,K,N)$ with access to a pool of size $S$
there is an instance of $\mathcal S_d^{\mathrm{quad}}$ such that
\begin{equation}
    \mathbb E F(\widehat x)
    \ge\frac{R^2}8\min\left\{L,\frac{H_s}{\min\{S,mKN\}}\right\}.
    \label{eq:stat}
\end{equation}
Consequently, $\mathfrak M_d^{\mathrm{pool}}(S)$ is at least the
right-hand side of \eqref{eq:stat}, and each of
$\mathfrak M_d^{\mathrm{sg,quad}}$, $\mathfrak M_d^{\mathrm{sg}}$,
$\mathfrak M_d^{\mathrm{wg,quad}}$, $\mathfrak M_d^{\mathrm{wg}}$ is at
least $\frac{R^2}8\min\{L,\frac{H_s}{mKN}\}$.
\end{lemma}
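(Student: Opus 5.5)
We prove the bound with a one-dimensional two-point construction, padded to $\R^d$ by the padding reduction. The idea is that the algorithm sees at most $n:=\min\{S,mKN\}$ distinct samples, and we choose a sample law under which, with constant probability, none of them carries any information about where the minimizer lies.

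Put $q=\min\{1,L/H_s,1/n\}$ and work in $\R^1$ with $x_0=0$. Let $\xi\sim\operatorname{Bernoulli}(q)$. For a sign $\sigma\in\{\pm1\}$ set $x^*_\sigma=\sigma R$ and
\[
    f_\xi(x)=\xi\,\frac{H_s}2(x-x^*_\sigma)^2 .
\]
Each $f_\xi$ is convex and $H_s$-smooth with $\nabla f_\xi(x^*_\sigma)=0$, so interpolation holds. The population objective is $F_\sigma(x)=\frac{qH_s}2(x-\sigma R)^2$, which is quadratic with curvature $qH_s\le L$. Hence the instance lies in $\mathcal S_1^{\mathrm{quad}}$, and padding places it in $\mathcal S_d^{\mathrm{quad}}$. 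The sign $\sigma$ is drawn uniformly and independently of the seed and of the samples, so the averaging reduction applies.

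Now consider the event $E$ that every sample the algorithm can ever touch has $\xi=0$. Under the pool model, only $\xi_1,\dots,\xi_{\min\{S,\cdot\}}$ are reachable, and at most $mKN$ queries are made. So at most $n$ distinct labels are inspected, and the queried labels are determined adaptively. On $E$ every reply is $0$, so conditionally on $E$ the transcript, and hence $\widehat x$, does not depend on $\sigma$.

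To make this rigorous, we couple two runs that share the seed and the samples but use $\sigma=+1$ and $\sigma=-1$. Replies are zero whenever the queried $\xi_i=0$, so the two runs coincide up to the first query of a label with $\xi_i=1$. The set of labels queried before that moment is the same in both runs. A union bound over the at most $n$ distinct labels inspected then gives
\[
    \mathbb P(E)\ge(1-q)^n\ge 1-nq .
\]
When $q=1/n$ this is useless, so we use the sharper form instead: the labels are inspected in some adaptive order, and each newly inspected label is independent of the past, so $\mathbb P(E)\ge(1-q)^n$. When $q\le 1/n$ and $n\ge2$ this is at least $1/4$. When $n=1$ we take $q=\min\{1/2,L/H_s\}$ instead, which changes only constants; if the constant $1/8$ must be kept exactly, the value of $q$ can be tuned.

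On $E$ we have a single output $\widehat x$ serving both signs, and
\[
    F_{+}(\widehat x)+F_{-}(\widehat x)\ge qH_sR^2 ,
\]
since $(x-R)^2+(x+R)^2\ge2R^2$. Averaging over $\sigma$ therefore gives
\[
    \mathbb E F(\widehat x)\ge \mathbb P(E)\cdot\frac{qH_sR^2}{2}\gtrsim R^2\min\{qH_s\},
\]
which is of order $R^2\min\{L,H_s/n\}$ because $H_s\ge L$. Tracking constants, $(1-q)^n\cdot\frac12\ge\frac14$ for $q\le\frac1{2n}$; choosing $q=\min\{L/H_s,\frac1{2n}\}$ then yields $\frac{R^2}{8}\min\{L,\frac{H_s}{n}\}$ up to a factor of $2$ in the second entry. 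To recover exactly $1/8$, one can use $(1-1/n)^n\ge1/4$ for $n\ge2$ with $q=1/n$ and handle $n=1$ separately.

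The consequences follow directly. The fresh oracle is the case $S=\infty$, so by \eqref{eq:pool-fresh} the bound transfers to $\mathfrak M^{\mathrm{sg}}_d$ and $\mathfrak M^{\mathrm{sg,quad}}_d$. Lemma~\ref{lem:relations} then transfers it to the weak-growth classes, since those classes contain these instances.

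The main obstacle is the adaptivity of label reuse in the bound on $\mathbb P(E)$. Labels carry no information about samples, and a label's value is revealed only once it is queried. This means the inspected labels form a sequence in which each new label is a fresh Bernoulli draw, so $E$ has probability $(1-q)^{\#\text{distinct}}\ge(1-q)^n$ exactly, and the coupling across signs is valid up to the first success.
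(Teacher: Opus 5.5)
Your proof is correct and follows the paper's argument. It uses the same Bernoulli-masked one-dimensional quadratics with a random sign, the same event that no inspected label has $\xi=1$, and the same two-point averaging; the paper phrases your coupling as a ``zero run'' determined by the seed alone, whose label set $\mathcal I$ gives $\mathbb P(E_0\mid\text{seed})=(1-p_0)^{|\mathcal I|}$ directly. Your choice $q=\min\{L/H_s,\frac1{2n}\}$ is exactly the paper's $p_0$ and already gives the stated constant, since $\frac{R^2}4\min\{L,\frac{H_s}{2n}\}\ge\frac{R^2}8\min\{L,\frac{H_s}n\}$; the detour through $(1-1/n)^n\ge1/4$ and the separate case $n=1$ is therefore unnecessary.
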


\begin{proof}
Put $n_S=\min\{S,mKN\}$ and $p_0=\min\{L/H_s,1/(2n_S)\}$. Let
$\theta\in\{-1,1\}$ be a uniform sign, independent of the seed, and
consider the sample functions
\[
    f_\xi(x)=\frac{\xi H_s}{2}(x_1-\theta R)^2,
    \qquad \xi\sim\operatorname{Bernoulli}(p_0).
\]
They are convex, $H_s$-smooth, and minimized at $x^*=\theta Re_1$, and
$F_\theta(x)=\frac{p_0H_s}2(x_1-\theta R)^2$ is a quadratic with
smoothness $p_0H_s\le L$. Hence, for each $\theta$, this is an instance
of $\mathcal S_d^{\mathrm{quad}}$.

Consider the \emph{zero run}: the run of the algorithm in which every
query with a valid label receives the reply $0$. It is a function of
the seed only; let $\widehat X$ be its output and $\mathcal I$ the set of
valid labels it queries. Since there are at most $mKN$ queries and at
most $S$ valid labels, $|\mathcal I|\le n_S$. On the event
$E_0=\{\xi_i=0\text{ for all }i\in\mathcal I\}$, induction over the
queries shows that the actual run coincides with the zero run: every
query is the same, and its reply is $0$. Since $\mathcal I$ depends only
on the seed, which is independent of the pool and of $\theta$,
\[
    \mathbb{P}(E_0\mid\text{seed},\theta)
    =(1-p_0)^{|\mathcal I|}\ge1-n_Sp_0\ge\frac12 .
\]
Therefore, using $\widehat x=\widehat X$ on $E_0$,
\[
    \mathbb E F_\theta(\widehat x)
    \ge\mathbb E\bigl[F_\theta(\widehat X)\,
        \mathbb{P}(E_0\mid\text{seed},\theta)\bigr]
    \ge\frac12\,\mathbb E F_\theta(\widehat X).
\]
Since $\widehat X$ is independent of $\theta$,
\[
    \mathbb E_\theta F_\theta(\widehat X)
    =\frac{p_0H_s}{4}\bigl[(\widehat X_1-R)^2+(\widehat X_1+R)^2\bigr]
    =\frac{p_0H_s}{2}\bigl(\widehat X_1^2+R^2\bigr)
    \ge\frac{p_0H_sR^2}2 .
\]
Averaged over $\theta$, the risk is at least
$\frac{p_0H_sR^2}4=\frac{R^2}4\min\{L,\frac{H_s}{2n_S}\}$, which is at
least the right-hand side of \eqref{eq:stat}. One of the two signs
attains the average.

Fresh access is the case $S=\infty$ with distinct labels, see
\eqref{eq:pool-fresh}, for which $n_S=mKN$. Since the hard instance
belongs to $\mathcal S_d^{\mathrm{quad}}$, the bound holds for
$\mathfrak M_d^{\mathrm{sg,quad}}$, and Lemma~\ref{lem:relations}
together with $\mathcal S_d^{\mathrm{quad}}\subseteq\mathcal S_d$ and
$\mathcal W_d^{\mathrm{quad}}\subseteq\mathcal W_d$ transfers it to the
other three minimax risks.
\end{proof}

\subsection{Quadratic populations}
\label{app:lower-quadratic}

The construction first gives one law of convex quadratics that is hard
for every algorithm with a bounded number of batches of exact gradient
queries, and then masks whole sample functions by Bernoulli variables
to convert the number of successful local queries into that number of
batches. No linear-span assumption is used.

\paragraph{Adaptive matrix queries.}

\begin{lemma}[Batched Haar representation]
\label{qlow:haar}
Let $\Lambda\in\R^{d\times d}$ be symmetric and $b_0\in\R^d$. Let $U$ be
Haar distributed on the orthogonal group $O(d)$, and put
$A=U\Lambda U^\top$ and $b=Ub_0$. Consider a procedure that knows
$\Lambda$ and $b_0$, receives $b$, then in $D$ adaptive batches submits
at most $m$ vectors per batch and receives their images under $A$, and
finally outputs a vector $z$; its seed is independent of $U$ and all its
maps are measurable. Then on one probability space one can construct
$U$, the seed, the transcript of the procedure, and independent standard
Gaussian vectors $g_{r,i}\in\R^d$, $1\le r\le D$, $1\le i\le m$, and
$g_0\in\R^d$, such that $(U,\text{seed},\text{transcript})$ has the same
law as in the original experiment, the Gaussian vectors are independent
of the seed, and
\begin{equation}
    U^\top z\in
    \operatorname{span}\{\Lambda^jb_0:0\le j\le D\}
    +\sum_{r=1}^D\sum_{i=1}^m
     \operatorname{span}\{\Lambda^jg_{r,i}:0\le j\le D-r+1\}
    +\operatorname{span}\{g_0\}.
    \label{qlow:representation}
\end{equation}
\end{lemma}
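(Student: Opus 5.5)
We construct the coupling by revealing $U$ gradually, in the spirit of \citet{braverman2020gradient}. Work with $V=U^\top$, so that a query $v$ returns $Av=U\Lambda U^\top v$, and in the rotated frame $U^\top(Av)=\Lambda(U^\top v)$. Every vector the procedure holds will be tracked in the form $U^\top(\cdot)$. The aim is to show inductively that after batch $r$ all information about $U$ seen so far is captured by a finite set of vectors $W_r=\{U^\top q : q\text{ queried or received}\}$. We want each element of $W_r$ to lie in the subspace
\[
  \mathcal K_r=\operatorname{span}\{\Lambda^jb_0:0\le j\le r\}
  +\sum_{s\le r}\sum_{i}\operatorname{span}\{\Lambda^jg_{s,i}:0\le j\le r-s+1\},
\]
and we want $U$, conditionally on the transcript, to be Haar on the stabilizer of the revealed frame.

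\emph{Step 1 (Gram--Schmidt revelation).} Knowing $b$ amounts to knowing $Ub_0$, so the conditional law of $U$ given $b$ is uniform on $\{U:Ub_0=b\}$. Let $P_r$ be the span of all vectors $x$ such that both $x$ and $Ux$ are known after batch $r$; initially $P_0=\operatorname{span}\{b_0\}$. Consider a new query $q$, which is a measurable function of the transcript and the seed. Decompose $q=q_\parallel+q_\perp$ relative to $UP_{r-1}$. Then $U^\top q_\parallel$ is determined by the known data and lies in $P_{r-1}$. By the conditional Haar invariance on $(UP_{r-1})^\perp$, the vector $U^\top q_\perp$ is uniformly distributed on the sphere of radius $\|q_\perp\|$ in $P_{r-1}^\perp$.

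\emph{Step 2 (Gaussian realization).} On an enlarged space, take an independent standard Gaussian $g_{r,i}$ and realize $U^\top q_\perp$ as $\|q_\perp\|$ times the normalized projection of $g_{r,i}$ onto $P_{r-1}^\perp$. This is legitimate because that normalized projection is uniform on the sphere of $P_{r-1}^\perp$, and we then resample the remaining freedom of $U$ from its conditional Haar law, which preserves the joint law. Two issues arise. First, several queries in one batch are handled sequentially by Gram--Schmidt, each using its own $g_{r,i}$ projected off the span accumulated so far. Second, the projection onto $P_{r-1}^\perp$ subtracts a vector in $P_{r-1}$, so $U^\top q\in P_{r-1}+\operatorname{span}\{g_{r,i}\}$. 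The reply is $U^\top(Aq)=\Lambda U^\top q$, which therefore lies in $\Lambda P_{r-1}+\operatorname{span}\{\Lambda g_{r,i}\}$. After the batch we set $P_r=P_{r-1}+\operatorname{span}\{U^\top q,\Lambda U^\top q\}$. Independence of the Gaussians from the seed, and from each other, holds because each $g_{r,i}$ is a fresh draw whose normalized projection is independent of its norm and of the previously constructed objects.

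\emph{Step 3 (degree count).} By induction, $P_r\subseteq\mathcal K_r$. The base case is $P_0=\operatorname{span}\{b_0\}$. Batch $r$ adds $g_{r,i}$ and $\Lambda g_{r,i}$, and it multiplies earlier contributions by $\Lambda$ at most once, which raises every degree by at most one per batch. Since $P_{r-1}\subseteq\mathcal K_{r-1}$, the vector $\Lambda P_{r-1}$ lies in $\mathcal K_r$ by the exponent bookkeeping: $b_0$ reaches degree $r$, and $g_{s,i}$ reaches degree $r-s+1$. After $D$ batches the procedure outputs $z$. Applying the same decomposition once more, with an extra Gaussian $g_0$ for the component of $z$ outside $UP_D$, gives $U^\top z\in P_D+\operatorname{span}\{g_0\}\subseteq\mathcal K_D+\operatorname{span}\{g_0\}$, which is \eqref{qlow:representation}.

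\emph{Main obstacle.} The delicate point is the measure-theoretic justification that this sequential construction reproduces the joint law of $(U,\text{seed},\text{transcript})$. This requires that, given the seed and the transcript up to a point, $U$ is Haar on the coset fixing $UP$ pointwise. I will prove this by showing that the conditional law is invariant under $U\mapsto QU$ for $Q$ fixing $UP$. Such $Q$ leave every past reply unchanged, so the transcript is unchanged, and left-invariance of the Haar measure gives the claim. The degenerate case in which $q_\perp=0$ or the dimension is exhausted is handled by letting the corresponding Gaussian go unused.
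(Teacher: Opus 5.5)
Your proposal is correct and is essentially the paper's argument: you expose $U$ progressively under the invariant that it is uniform on extensions of the exposed isometry, realize the uniform preimages by Gram--Schmidt of projected fresh Gaussians, prove the inclusion $P_r\subseteq P_{r-1}+\Lambda P_{r-1}+\operatorname{span}\{g_{r,i},\Lambda g_{r,i}\}$ with the same degree induction, and use one extra Gaussian $g_0$ for the output. When you write it up, state two things explicitly. First, the images under $U$ of the new directions $\Lambda U^\top q$, which are needed to produce the replies, must be drawn uniformly in the complement of the exposed image (the paper's fact (ii)) only after all preimages of the batch are fixed; otherwise the degree count breaks. Second, the uniform-extension invariant must be conditioned on everything constructed so far, including the exposed inner vectors $U^\top q$, not just on the seed and the transcript, because the transcript alone does not determine $U|_{P}$.
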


\begin{proof}
\emph{Uniform extensions.}
Let $E\subseteq\R^d$ be a subspace and $W:E\to\R^d$ a linear isometry,
and let $O$ be uniformly distributed on the set $\mathcal O(W)$ of
orthogonal maps that coincide with $W$ on $E$; equivalently, the
restriction of $O$ to $E^\perp$ is a Haar distributed isometry of
$E^\perp$ onto $(WE)^\perp$. Invariance of this Haar measure under
rotations of $E^\perp$ and of $(WE)^\perp$ gives the following two
facts.
\begin{enumerate}
    \item[(i)]
    If $w_1,\ldots,w_k$ is an orthonormal family in $(WE)^\perp$, then
    $(O^\top w_1,\ldots,O^\top w_k)$ is a uniformly distributed
    orthonormal $k$-frame in $E^\perp$, and conditional on this frame
    $O$ is uniform on the set of orthogonal maps that extend $W$ and map
    the frame to $(w_1,\ldots,w_k)$. The same frame law is obtained by
    the Gram--Schmidt orthonormalization of
    $\Pi_{E^\perp}\gamma_1,\ldots,\Pi_{E^\perp}\gamma_k$ for independent
    standard Gaussian vectors $\gamma_l\in\R^d$; every vector of that
    frame lies in $E+\operatorname{span}\{\gamma_1,\ldots,\gamma_k\}$.

    \item[(ii)]
    If $e_1,\ldots,e_k$ is an orthonormal family in $E^\perp$, then
    $(Oe_1,\ldots,Oe_k)$ is a uniformly distributed orthonormal
    $k$-frame in $(WE)^\perp$, and conditional on it $O$ is uniform on the
    set of orthogonal maps that extend $W$ and map $(e_l)$ to this
    frame.
\end{enumerate}

\emph{Construction.}
Fix the seed. We reveal $U$ progressively. At every stage there is an
exposed subspace $E$ and the isometry $W=U|_E$, and the following
invariant holds: conditional on everything constructed so far, $U$ is
uniform on $\mathcal O(W)$.

Initially, draw $b$ uniformly on the sphere of radius $\|b_0\|$ and
expose $E_0=\operatorname{span}\{b_0\}$ with $Ub_0=b$. This is the law
of $Ub_0$, and conditional on $Ub_0=b$ the map $U$ is uniform on
$\mathcal O(W)$.

Consider batch $r\in\{1,\ldots,D\}$, and let $E_{r-1}$ be the exposed
subspace before it. The queries $v_1,\ldots,v_m$ (padded by zeros) are
functions of the seed and of the previous replies. Let
$w_1,\ldots,w_k$, $k\le m$, be an orthonormal basis, chosen by a fixed
measurable rule, of the orthogonal projection of
$\operatorname{span}\{v_1,\ldots,v_m\}$ onto $(UE_{r-1})^\perp$. Draw
standard Gaussian vectors $g_{r,1},\ldots,g_{r,m}$ independently of
everything constructed so far, let $q_1,\ldots,q_k$ be the Gram--Schmidt
orthonormalization of $\Pi_{E_{r-1}^\perp}g_{r,1},\ldots,
\Pi_{E_{r-1}^\perp}g_{r,k}$, and expose $Uq_l=w_l$. By~(i), this has the
correct conditional law and preserves the invariant with
$E'=E_{r-1}+\operatorname{span}\{q_1,\ldots,q_k\}$. Every $v_i$ lies in
$UE'$, so $U^\top v_i\in E'\subseteq E_{r-1}+
\operatorname{span}\{g_{r,1},\ldots,g_{r,m}\}$. The replies are
$Av_i=U(\Lambda U^\top v_i)$ with $\Lambda U^\top v_i\in\Lambda E'$. Let
$e_1,\ldots,e_{k'}$ be an orthonormal basis of the projection of
$\Lambda E'$ onto $E'^\perp$, and expose $Ue_1,\ldots,Ue_{k'}$ by~(ii),
using auxiliary randomness independent of everything else. The exposed
subspace becomes $E_r=E'+\Lambda E'$, the isometry on $E_r$ determines
all replies of the batch, and
\begin{equation}
    E_r\subseteq E_{r-1}+\Lambda E_{r-1}
    +\operatorname{span}\{g_{r,i},\Lambda g_{r,i}:1\le i\le m\}.
    \label{qlow:exposed}
\end{equation}

After the last batch, the output $z$ is a function of the seed and of
all replies. Draw $g_0$ independently of everything. If the projection
of $z$ onto $(UE_D)^\perp$ is nonzero, let $w$ be its normalization, let
$q$ be the normalization of $\Pi_{E_D^\perp}g_0$, and expose $Uq=w$
by~(i). In both cases $U^\top z\in E_D+\operatorname{span}\{g_0\}$.
Finally, draw $U$ from the uniform law on $\mathcal O(W)$ for the final
exposed isometry $W$. On null events where a Gram--Schmidt step
degenerates, any fixed measurable convention may be used.

\emph{Verification.}
By the invariant, every exposed value has the conditional law prescribed
by the original experiment given the previous ones, and the final
completion has the correct conditional law. Hence, for every fixed seed,
$(U,\text{transcript})$ has the original joint law; in particular $U$ is
Haar distributed and independent of the seed. Each Gaussian vector is
drawn independently of everything constructed before it, so the Gaussian
vectors are independent standard Gaussian vectors, independent of the
seed. Finally, \eqref{qlow:exposed} and $E_0=\operatorname{span}\{b_0\}$
give by induction
\[
    E_r\subseteq\operatorname{span}\{\Lambda^jb_0:0\le j\le r\}
    +\sum_{r'=1}^r\sum_{i=1}^m
     \operatorname{span}\{\Lambda^jg_{r',i}:0\le j\le r-r'+1\},
\]
because multiplication by $\Lambda$ raises every degree by one and the
new terms of batch $r$ have degrees $0$ and $1=r-r+1$. Taking $r=D$ and
using $U^\top z\in E_D+\operatorname{span}\{g_0\}$ proves
\eqref{qlow:representation}.
\end{proof}

\paragraph{Two certificates.}

\begin{lemma}[Chebyshev derivative certificate]
\label{qlow:scalar}
Let $n$ be odd, $a>0$, and $\lambda_j=a\cos(j\pi/n)$, $0\le j\le n$.
There are positive weights $w_j$ with $\sum_{j=0}^nw_j=1$ such that
\[
    \sum_{j=0}^n w_j\,\chi(\lambda_j)^2\ge \frac{a^2}{n^2}
    \qquad\text{for every polynomial $\chi$ with}\quad
    \deg\chi\le n,\quad\chi'(0)=1.
\]
\end{lemma}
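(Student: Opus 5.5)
The plan is to dualize the constraint $\chi'(0)=1$ through Lagrange interpolation at the $n+1$ nodes, use Cauchy--Schwarz with weights proportional to the absolute values of the interpolation coefficients, and evaluate the sum of those absolute values with the Chebyshev polynomial.

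\emph{Step 1: the nodes and the node polynomial.} Since $n$ is odd, $\cos(j\pi/n)=0$ would force $j\pi/n=\pi/2$, which is impossible. Hence $0$ is not a node. The $n+1$ nodes are distinct, strictly decreasing in $j$, and symmetric under $\lambda\mapsto-\lambda$, because $\lambda_{n-j}=-\lambda_j$. Let $\omega(x)=\prod_{k=0}^n(x-\lambda_k)$. It has even degree $n+1$ and symmetric roots, so $\omega$ is even and $\omega'(0)=0$. Also $\omega(0)\neq0$.

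\emph{Step 2: representing the functional $\chi\mapsto\chi'(0)$.} Every polynomial $\chi$ of degree at most $n$ equals its interpolant, $\chi=\sum_j\chi(\lambda_j)\ell_j$, with $\ell_j(x)=\omega(x)/\bigl((x-\lambda_j)\omega'(\lambda_j)\bigr)$. Therefore
\[
    \chi'(0)=\sum_j c_j\chi(\lambda_j),\qquad c_j=\ell_j'(0).
\]
A direct differentiation, using $\omega'(0)=0$, gives
\[
    c_j=-\frac{\omega(0)}{\lambda_j^2\,\omega'(\lambda_j)}.
\]
Because the roots of $\omega$ are simple and ordered, $\omega'(\lambda_j)$ alternates in sign with $j$. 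Hence every $c_j$ is nonzero and $\operatorname{sign}c_j=\varepsilon(-1)^j$ for a fixed $\varepsilon\in\{\pm1\}$.

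\emph{Step 3: the weights.} Put $C=\sum_j|c_j|$ and $w_j=|c_j|/C$; these weights are positive and sum to one. For any admissible $\chi$, Cauchy--Schwarz gives
\[
    1=\Bigl(\sum_j c_j\chi(\lambda_j)\Bigr)^2\le\Bigl(\sum_j\frac{c_j^2}{w_j}\Bigr)\Bigl(\sum_j w_j\chi(\lambda_j)^2\Bigr)=C^2\sum_j w_j\chi(\lambda_j)^2 .
\]
So it suffices to show $C=n/a$.

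\emph{Step 4: computing $C$ with Chebyshev.} Apply the representation of Step 2 to $T(x)=T_n(x/a)$, which has degree $n$. Since $T(\lambda_j)=\cos(j\pi)=(-1)^j$, the sign pattern from Step 2 gives
\[
    \sum_j c_jT(\lambda_j)=\sum_j c_j(-1)^j=\varepsilon C .
\]
On the other hand, $T'(0)=\frac1aT_n'(0)=\frac naU_{n-1}(0)=\pm\frac na$, since $n-1$ is even. Hence $C=|T'(0)|=n/a$, and the displayed inequality becomes $\sum_jw_j\chi(\lambda_j)^2\ge a^2/n^2$.

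The only delicate point is the sign bookkeeping in Step 2, namely that $\omega'(0)=0$ collapses $c_j$ to a single term whose sign is that of $1/\omega'(\lambda_j)$, since $\lambda_j^2>0$. This is exactly where the oddness of $n$ enters: it keeps $0$ off the node set and makes $\omega$ even. The rest is routine.
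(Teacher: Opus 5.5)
Your proposal is correct and follows essentially the same route as the paper: Lagrange interpolation at the nodes with $c_j=\ell_j'(0)=-P(0)/\bigl(\lambda_j^2P'(\lambda_j)\bigr)$ via evenness of the node polynomial, alternating signs, evaluation of $\sum_j|c_j|=n/a$ through $T_n(z/a)$, and Cauchy--Schwarz with weights $w_j=|c_j|a/n$. The only cosmetic difference is that you phrase the last step as weighted Cauchy--Schwarz with $w_j=|c_j|/C$ and then show $C=n/a$, which is the same computation.
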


\begin{proof}
The nodes are distinct, strictly decreasing, nonzero because $n$ is odd,
and symmetric: $\lambda_{n-j}=-\lambda_j$. Let $\ell_j$ be the Lagrange
basis polynomials of degree $n$ on these nodes, $c_j=\ell_j'(0)$, and
$P(z)=\prod_{j=0}^n(z-\lambda_j)$. The polynomial $P$ has $n+1$ roots
symmetric about zero, so it is even and $P'(0)=0$. Differentiating
$\ell_j(z)=P(z)/[(z-\lambda_j)P'(\lambda_j)]$ at zero gives
\[
    c_j=\frac{-\lambda_jP'(0)-P(0)}{\lambda_j^2P'(\lambda_j)}
    =-\frac{P(0)}{\lambda_j^2P'(\lambda_j)} .
\]
Since the nodes decrease, $P'(\lambda_j)=\prod_{i\ne j}(\lambda_j-\lambda_i)$
has sign $(-1)^j$, so all products $(-1)^jc_j$ have the same sign. The
Chebyshev polynomial $T_n$, defined by $T_n(\cos\vartheta)=\cos(n\vartheta)$,
satisfies $T_n(\lambda_j/a)=(-1)^j$. Interpolation of $T_n(z/a)$, a
polynomial of degree $n$, is exact, hence
\[
    \sum_{j=0}^n|c_j|
    =\Bigl|\sum_{j=0}^n(-1)^jc_j\Bigr|
    =\Bigl|\frac{d}{dz}T_n(z/a)\Big|_{z=0}\Bigr|
    =\frac{|T_n'(0)|}a=\frac na,
\]
where $|T_n'(0)|=n|\sin(n\pi/2)|=n$ follows by differentiating
$T_n(\cos\vartheta)=\cos(n\vartheta)$ at $\vartheta=\pi/2$. Put
$w_j=|c_j|a/n$. For $\chi$ as in the statement, exact interpolation and
the Cauchy--Schwarz inequality give
\[
    1=\chi'(0)^2=\Bigl(\sum_{j=0}^nc_j\chi(\lambda_j)\Bigr)^2
    \le\Bigl(\sum_{j=0}^n|c_j|\Bigr)\sum_{j=0}^n|c_j|\chi(\lambda_j)^2
    =\frac{n^2}{a^2}\sum_{j=0}^nw_j\chi(\lambda_j)^2 .
\]
\end{proof}

\begin{lemma}[Gaussian projection loss]
\label{qlow:projection}
Let $t,k,h\ge1$ be integers with $h\ge\max\{16k,32\log(8t)\}$. For
$j=1,\ldots,t$, let $S_j$ be the span of $k$ independent standard
Gaussian vectors in $\R^h$ (dependence between different $j$ is allowed),
and let $e_j\in\R^h$ be fixed unit vectors. Then
\[
    \mathbb E\sqrt{1-\max_{j\le t}\|\Pi_{S_j}e_j\|^2}
    \ge\frac3{4\sqrt2}>\frac12 .
\]
\end{lemma}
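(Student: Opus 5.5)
The plan is to bound the maximum projection $M=\max_{j\le t}\|\Pi_{S_j}e_j\|^2$ with high probability and then use $\sqrt{1-M}\ge\sqrt{1-\beta}\,\mathbf 1\{M\le\beta\}$. This gives
\[
    \mathbb E\sqrt{1-M}\ge\sqrt{1-\beta}\,\bigl(1-\mathbb P(M>\beta)\bigr).
\]
For example, we may take $\beta=7/16$ and aim for $\mathbb P(M>\beta)\le 1/4$ or better. The target constant $\frac3{4\sqrt2}=\sqrt{9/32}$ suggests a split such as $\sqrt{1-\beta}\cdot(1-\delta)$ with $\beta=1/2$ and $\delta=1/4$: then $\sqrt{1/2}\cdot 3/4=\frac3{4\sqrt2}$ exactly. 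So the goal is $\mathbb P(M>1/2)\le 1/4$, which by a union bound follows from $\mathbb P(\|\Pi_{S_j}e_j\|^2>1/2)\le 1/(4t)$ for each fixed $j$. Only the marginal law of each $S_j$ matters here, so dependence between different $j$ is harmless.

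For fixed $j$, rotation invariance of the Gaussian span makes $\|\Pi_{S_j}e_j\|^2$ distributed as $\|\Pi_{E}u\|^2$, where $E$ is a fixed $k$-dimensional coordinate subspace and $u$ is uniform on the sphere $S^{h-1}$. Equivalently, it is $\mathrm{Beta}(k/2,(h-k)/2)$, with mean $k/h\le 1/16$. I would write $u=Z/\|Z\|$ with $Z\sim N(0,\mathrm I_h)$. The event $\|\Pi_Eu\|^2>1/2$ is then $\sum_{i\le k}Z_i^2>\sum_{i>k}Z_i^2$.

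This event can be bounded by Laplace transforms as in Lemma~\ref{lem:sphere}. For $0<\lambda<1/2$,
\[
    \mathbb P\Bigl(\sum_{i\le k}Z_i^2-\sum_{i>k}Z_i^2>0\Bigr)
    \le(1-2\lambda)^{-k/2}(1+2\lambda)^{-(h-k)/2}.
\]
Choosing $\lambda=1/4$ gives the bound $2^{k/2}(3/2)^{-(h-k)/2}$. Since $h\ge16k$, we have $h-k\ge\frac{15}{16}h$ and $k\le h/16$. The exponent is therefore at most
\[
    \frac h2\Bigl(\frac{\log2}{16}-\frac{15}{16}\log\frac32\Bigr)\le -c\,h,
\]
with $c\approx\frac12(0.380-0.043)\approx0.168$. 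Using $h\ge32\log(8t)$, this yields $e^{-ch}\le(8t)^{-5}\le 1/(4t)$. The union bound then gives $\mathbb P(M>1/2)\le 1/4$, and the displayed inequality yields the claim, with $\frac3{4\sqrt2}>\frac12$ being arithmetic.

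The main obstacle is only bookkeeping of constants in the chi-square tail. There is ample slack, so any reasonable $\lambda$ works. The one subtle point is the reduction of $\|\Pi_{S_j}e_j\|$ for a random Gaussian span to a fixed subspace with a uniform vector, which follows from the $O(h)$-invariance of the law of $S_j$. The null event where the Gaussian vectors are dependent, so that $\dim S_j<k$, only decreases the projection and can be ignored.
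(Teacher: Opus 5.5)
Your proposal is correct and follows essentially the paper's argument. You reduce each $j$ to $X/Z$ by rotation invariance, take $\mathbb P(\|\Pi_{S_j}e_j\|^2>1/2)$ of order $1/t$ by a Gaussian Chernoff bound, apply a union bound to get $\mathbb P(M>1/2)\le 1/4$, and conclude with $\frac1{\sqrt2}\cdot\frac34$. The only difference is cosmetic: the paper splits $\{X/Z>1/2\}\subseteq\{X>h/4\}\cup\{Z<h/2\}$ and bounds the two events by $e^{-h/32}$ and $e^{-h/12}$, whereas your single Chernoff bound on $\sum_{i\le k}Z_i^2-\sum_{i>k}Z_i^2$ is a bit cleaner and gives a better exponent.
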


\begin{proof}
Fix $j$. By rotation invariance, $\|\Pi_{S_j}e_j\|^2$ has the law of
$X/Z$, where $X=\sum_{i=1}^kG_i^2$ and $Z=\sum_{i=1}^hG_i^2$ for
independent standard Gaussian $G_i$. The event $\{X/Z>1/2\}$ is contained
in $\{X>h/4\}\cup\{Z<h/2\}$. By the Chernoff bound with the Gaussian
moment generating function, and using $k\le h/16$ and
$\frac23\le\log2\le1$,
\[
    \mathbb{P}\left(X>h/4\right)\le e^{-h/16}\,2^{k/2}\le e^{-h/16+h/32}=e^{-h/32},
    \qquad
    \mathbb{P}\left(Z<h/2\right)\le e^{h/4}\,2^{-h/2}\le e^{-h/12}.
\]
A union bound over $j$ shows that some $\|\Pi_{S_j}e_j\|^2$ exceeds $1/2$
with probability at most $2te^{-h/32}\le1/4$. On the complementary event
the square root is at least $1/\sqrt2$.
\end{proof}

\begin{lemma}[Residual after Gaussian polynomial information]
\label{qlow:residual}
Let $n$, $a$, $\lambda_j$, and $w_j$ be as in
Lemma~\ref{qlow:scalar}. Let $\R^{(n+1)h}=V_0\oplus\cdots\oplus V_n$ be an
orthogonal decomposition into blocks of dimension $h$, let
$\Lambda|_{V_j}=\lambda_j\mathrm I$, and fix unit vectors $e_j\in V_j$.
Put
\[
    z_0=R\sum_{j=0}^n\sqrt{w_j}\,e_j,
    \qquad b_0=\Lambda z_0 ,
\]
let $g_1,\ldots,g_k$ be independent standard Gaussian vectors in
$\R^{(n+1)h}$, and assume $h\ge\max\{16k,32\log(8(n+1))\}$. Let
\[
    v=p(\Lambda)b_0+\sum_{i=1}^kp_i(\Lambda)g_i,
\]
where $p$ and $p_i$ are polynomials whose coefficients may depend
arbitrarily on the Gaussian vectors and on further randomness, with
$\deg p\le n-2$ and arbitrary $\deg p_i$. Then
$\mathbb E\|\Lambda v+b_0\|\ge\frac{aR}{2n}$.
\end{lemma}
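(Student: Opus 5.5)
The plan is to reduce the statement to the two certificates already proved: Lemma~\ref{qlow:scalar} for the deterministic part, and Lemma~\ref{qlow:projection} for the Gaussian part. The key observation is that the residual splits block by block. On each block $V_j$, the $b_0$-part is a scalar multiple of $e_j$, and the Gaussian part lies in a $k$-dimensional random subspace of $V_j$.

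First I rewrite the deterministic part. Since $b_0=\Lambda z_0$,
\[
    \Lambda p(\Lambda)b_0+b_0=\bigl(\Lambda+\Lambda^2p(\Lambda)\bigr)z_0=\chi(\Lambda)z_0,
    \qquad \chi(z)=z+z^2p(z).
\]
Because $\deg p\le n-2$, we have $\deg\chi\le n$ and $\chi'(0)=1$. This holds for every realization of the (random) coefficients of $p$.

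Next I project onto $V_j$, where $\Lambda$ acts as $\lambda_j\mathrm I$. Writing $\Pi_j$ for the orthogonal projection onto $V_j$,
\[
    \Pi_j(\Lambda v+b_0)=R\sqrt{w_j}\,\chi(\lambda_j)\,e_j+y_j,
    \qquad y_j=\sum_{i=1}^k\lambda_jp_i(\lambda_j)\,\Pi_jg_i .
\]
Here $y_j$ lies in $S_j=\operatorname{span}\{\Pi_jg_1,\ldots,\Pi_jg_k\}$. The vectors $\Pi_jg_i$ are independent standard Gaussian vectors in $V_j\cong\R^h$, so $S_j$ is exactly the kind of subspace treated in Lemma~\ref{qlow:projection}. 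Whatever the coefficients are, the norm of this block is at least the distance from $R\sqrt{w_j}\chi(\lambda_j)e_j$ to $S_j$. That distance equals
\[
    R\sqrt{w_j}\,|\chi(\lambda_j)|\,\sqrt{1-\|\Pi_{S_j}e_j\|^2}.
\]

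Summing the squared block norms and bounding each factor $1-\|\Pi_{S_j}e_j\|^2$ below by $1-\max_{j}\|\Pi_{S_j}e_j\|^2$ gives
\[
    \|\Lambda v+b_0\|\ge R\sqrt{1-\max_j\|\Pi_{S_j}e_j\|^2}\,\Bigl(\sum_{j=0}^nw_j\chi(\lambda_j)^2\Bigr)^{1/2}.
\]
Lemma~\ref{qlow:scalar} bounds the last factor by $a/n$. This holds pointwise for every admissible $\chi$, so the dependence of $p$ on the Gaussians causes no difficulty. The remaining factor depends only on the Gaussian subspaces and the fixed vectors $e_j$.

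Taking expectations, Lemma~\ref{qlow:projection} with $t=n+1$ applies, since its hypothesis $h\ge\max\{16k,32\log(8(n+1))\}$ is exactly the one assumed here. It gives expectation greater than $1/2$, hence $\mathbb E\|\Lambda v+b_0\|\ge aR/(2n)$.

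The only delicate point is to apply the distance bound before taking expectations. This way the arbitrary, Gaussian-dependent coefficients of $p$ and $p_i$ never enter the expectation. What enters is only the subspaces $S_j$, whose law is fixed.
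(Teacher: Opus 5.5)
Your proposal is correct and follows the paper's proof essentially line by line. It uses the same blockwise decomposition with $\chi(\lambda)=\lambda+\lambda^2p(\lambda)$, the same pointwise lower bound through the distance to $S_j$ combined with Lemma~\ref{qlow:scalar}, and the same final application of Lemma~\ref{qlow:projection} with $t=n+1$, with the pointwise step making the dependence of the coefficients on the Gaussians irrelevant.
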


\begin{proof}
Let $S_j\subseteq V_j$ be the span of the projections of
$g_1,\ldots,g_k$ onto $V_j$; these projections are independent standard
Gaussian vectors in $V_j$. Since $\Lambda$ acts on $V_j$ as the scalar
$\lambda_j$, the $V_j$-component of $\Lambda v+b_0$ equals
\[
    R\sqrt{w_j}\,\chi(\lambda_j)e_j+s_j,
    \qquad s_j\in S_j,
    \qquad \chi(\lambda)=\lambda+\lambda^2p(\lambda).
\]
For every realization, $\deg\chi\le n$ and $\chi'(0)=1$. Projecting each
component onto $S_j^\perp\cap V_j$ and using Lemma~\ref{qlow:scalar},
\[
    \|\Lambda v+b_0\|^2
    \ge R^2\sum_{j=0}^nw_j\chi(\lambda_j)^2
        \bigl(1-\|\Pi_{S_j}e_j\|^2\bigr)
    \ge\frac{a^2R^2}{n^2}\Bigl(1-\max_j\|\Pi_{S_j}e_j\|^2\Bigr).
\]
Taking square roots and applying Lemma~\ref{qlow:projection} with
$t=n+1$ proves the claim. The argument is pointwise in the polynomial
coefficients, so their dependence on the Gaussian vectors is irrelevant.
\end{proof}

\paragraph{A hard law of quadratics.}

\begin{lemma}[Quadratic depth bound]
\label{qlow:depth}
Let $m,s\ge1$ be integers and $L',R>0$. There is a distribution of
convex quadratic functions in dimension at most $768ms^2$, each with an
$L'$-Lipschitz gradient, minimum value zero, and a unique minimizer of
norm $R$, such that every randomized algorithm that makes $s$ adaptive
batches of at most $m$ simultaneous queries, each returning $F(x)$ and
$\nabla F(x)$, and then outputs $\widehat x$, satisfies
\begin{equation}
    \mathbb E F(\widehat x)\ge\frac{L'R^2}{288s^2}.
    \label{qlow:depth-rate}
\end{equation}
The distribution does not depend on the algorithm.
\end{lemma}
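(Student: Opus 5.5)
The plan is to realize the hard law as a randomly rotated least-squares problem whose \emph{square root} has Chebyshev spectrum, so that Lemma~\ref{qlow:scalar} applies directly. Fix an odd integer $n$ (tentatively $n=2s+3$), put $a=\sqrt{L'}$ and $\lambda_j=a\cos(j\pi/n)$, and take the block-diagonal $\Lambda$, the unit vectors $e_j$, and $z_0,b_0$ exactly as in Lemma~\ref{qlow:residual}. Let $U$ be Haar on $O(d)$ with $d=(n+1)h$, and set $B=U\Lambda U^\top$, $b=Ub_0$, and
\[
F(x)=\tfrac12\|Bx+b\|^2,\qquad \nabla F(x)=B(Bx+b).
\]
Then $\nabla^2F=B^2=U\Lambda^2U^\top$ has spectrum in $[0,a^2]=[0,L']$, so $F$ is a convex quadratic with $L'$-Lipschitz gradient. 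Since $n$ is odd, $\Lambda$ is invertible, and the unique minimizer is $x^*=-B^{-1}b=-Uz_0$. Its norm is $\|z_0\|=R\bigl(\sum_jw_j\bigr)^{1/2}=R$, and the minimum value is $0$. The law does not depend on the algorithm.

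Next I reduce the algorithm to the setting of Lemma~\ref{qlow:haar} with matrix $\Lambda$, vector $b_0$, and $D=2s$ batches. I give the simulating procedure the vector $b$ for free. Each batch of $m$ queries $x$ is simulated by two matrix batches. The first submits the $x$'s and receives $Bx$, which gives $r=Bx+b$ and hence $F(x)=\tfrac12\|r\|^2$. The second submits the $r$'s and receives $Br=\nabla F(x)$. Measurability is clear, and the seed is independent of $U$. Lemma~\ref{qlow:haar} then yields Gaussian vectors (at most $k=2sm+1$ of them, counting $g_0$) such that
\[
U^\top\widehat x=p(\Lambda)b_0+\sum_i p_i(\Lambda)g_i,\qquad \deg p\le 2s=n-3\le n-2 .
\]

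Now I apply Lemma~\ref{qlow:residual} to $v=U^\top\widehat x$. Because $\|B\widehat x+b\|=\|\Lambda U^\top\widehat x+b_0\|$, it gives $\mathbb E\|B\widehat x+b\|\ge aR/(2n)$. By Jensen's inequality,
\[
\mathbb E F(\widehat x)\ge\tfrac12\bigl(\mathbb E\|B\widehat x+b\|\bigr)^2\ge\frac{L'R^2}{8n^2}.
\]
With $n=2s+3\le5s$ this is at least $L'R^2/(200s^2)\ge L'R^2/(288s^2)$. For the case $s=1$ I must double-check whether a tighter choice of $n$ is needed; $n=5$ still gives $L'R^2/200$.

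The remaining bookkeeping is the dimension. We need $h\ge\max\{16k,\,32\log(8(n+1))\}$ with $k\le 2sm+1\le3sm$, and then $d=(n+1)h$ must be at most $768ms^2$. This is the step I expect to be tight. The naive count is $(2s+4)\cdot48sm$, which exceeds $768ms^2$ only by constant factors for small $s$. To fix it I may shave the Gaussian count: for example, generate the $x$-batch and the $r$-batch vectors so that the second batch adds no new Gaussian directions, since $r-b\in B\cdot\mathrm{span}$ is already exposed. I would also bound the logarithmic term by $16k$ whenever $m s$ is moderate. If the constant cannot be met exactly, the padding reduction shows that any fixed $O(ms^2)$ dimension suffices for the subsequent use.
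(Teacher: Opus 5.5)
Your construction and reduction are the paper's proof. You use the rotated least-squares instance $F(x)=\frac12\|U\Lambda U^\top x+Ub_0\|^2$ with nodes $\lambda_j=\sqrt{L'}\cos(j\pi/n)$ and $n=2s+3$. You simulate each $(F,\nabla F)$ batch by two matrix batches, so $D=2s$. You then apply Lemma~\ref{qlow:haar} with $k=2ms+1$ Gaussian vectors and $\deg p\le n-3$, followed by Lemma~\ref{qlow:residual} and Jensen. Your bound $n\le5s$ even improves $288$ to $200$.

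The one unfinished step is the dimension, and you have misdiagnosed it. With $h=48ms$ the count $(2s+4)\cdot48ms=96ms^2+192ms$ is well below $768ms^2$. What actually fails is the other hypothesis of Lemma~\ref{qlow:residual}: $32\log(8(n+1))=32\log(16s+32)$ exceeds $48ms$ for small $ms$ (for $m=s=1$ it is about $124$).

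The remedy is just a larger block, as in the paper. Taking $h=128ms$ gives $16k=32ms+16\le48ms\le h$. It also gives $32\log(16s+32)\le128s\le h$, because $\log48<4$ and $4s-\log(16s+32)$ is increasing. Meanwhile $(n+1)h=256ms^2+512ms\le768ms^2$ for $s\ge1$.

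No reduction of the Gaussian count is needed. The padding fallback you mention would not prove the lemma as stated, since it asserts the explicit bound $768ms^2$.
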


\begin{proof}
Set $n=2s+3$, $k=2ms+1$, $h=128ms$, and $a=\sqrt{L'}$, and let $\Lambda$,
$b_0$, $z_0$ be as in Lemma~\ref{qlow:residual}; the dimension is
$(n+1)h=(2s+4)\cdot128ms\le768ms^2$. The assumptions of
Lemma~\ref{qlow:residual} hold: $16k\le48ms\le h$, and
$32\log(8(n+1))=32\log(16s+32)\le128s\le h$, because
$\log(16s+32)\le4s$ holds at $s=1$ and the difference of the two sides
increases with $s$.

Let $U$ be Haar distributed, independent of the algorithm, and define
\[
    A=U\Lambda U^\top,\qquad b=Ub_0,\qquad
    F(x)=\frac12\|Ax+b\|^2 .
\]
The Hessian $A^2=U\Lambda^2U^\top$ is positive definite, because no node
vanishes, and has norm $a^2=L'$. The unique minimizer
$-A^{-1}b=-Uz_0$ has norm $\|z_0\|=R$, and the minimum value is zero.

Give $b$ to the algorithm. A batch of $m$ queries of $F$ and $\nabla F$
is answered by two batches of $m$ matrix products, first $Ax$ and then
$A(Ax+b)$, which also give $F(x)=\frac12\|Ax+b\|^2$. Hence the algorithm
is a procedure as in Lemma~\ref{qlow:haar} with $D=2s$ batches of width
$m$. By that lemma, after granting all polynomial powers of the
$2sm+1=k$ Gaussian vectors,
$U^\top\widehat x=p(\Lambda)b_0+\sum_{i=1}^kp_i(\Lambda)g_i$ with
$\deg p\le2s=n-3$. Lemma~\ref{qlow:residual} gives
\[
    \mathbb E\|A\widehat x+b\|
    =\mathbb E\|\Lambda U^\top\widehat x+b_0\|
    \ge\frac{\sqrt{L'}R}{2n}.
\]
By Jensen's inequality,
$\mathbb E F(\widehat x)\ge\frac12(\mathbb E\|A\widehat x+b\|)^2
\ge\frac{L'R^2}{8n^2}$, and $n=2s+3\le6s$ proves
\eqref{qlow:depth-rate}. The law of $F$ is the pushforward of the Haar
law of $U$, which by Lemma~\ref{qlow:haar} is the same for every
algorithm.
\end{proof}

\paragraph{Masking whole sample functions.}

\begin{theorem}[Quadratic communication bound]
\label{qlow:masked}
Let $0<L'\le L$, $p=L'/H_s$, and
\[
    b_p=\mathbb E\max_{w\le m}Z_w,
    \qquad Z_1,\ldots,Z_m\stackrel{\mathrm{iid}}{\sim}
    \operatorname{Bin}(K,p).
\]
For every $d\ge768m(KN)^2$ and every algorithm in $\mathrm{IC}(m,K,N)$
with fresh access, there is an instance of
$\mathcal S_d^{\mathrm{quad}}$ such that
\begin{equation}
    \mathbb E F(\widehat x)\ge\frac{L'R^2}{2304(1+Nb_p)^2}.
    \label{qlow:masked-rate}
\end{equation}
\end{theorem}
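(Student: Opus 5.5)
We reduce Theorem~\ref{qlow:masked} to the depth bound of Lemma~\ref{qlow:depth} by Bernoulli masking of whole sample functions. Draw $F$ from the hard law of Lemma~\ref{qlow:depth} with smoothness $L'$ and a depth parameter $s$ to be fixed. Set $f_\xi=(\xi/p)F$ with $\xi\sim\operatorname{Bernoulli}(p)$ and $p=L'/H_s$. Each $f_\xi$ is convex and $H_s$-smooth, since $\nabla^2 f_1=F''/p\preceq (L'/p)\mathrm I=H_s\mathrm I$. Each $f_\xi$ is also minimized at the minimizer of $F$, and the population objective $\mathbb E f_\xi=F$ is $L'$-smooth, with $L'\le L$. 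So every realization is an instance of $\mathcal S_d^{\mathrm{quad}}$, and by the averaging reduction it suffices to bound the risk averaged over the law of $F$ and the masks.

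The core step simulates an IC algorithm by a batched exact-gradient procedure. A query with $\xi=0$ returns $0$ and carries no information, while a query with $\xi=1$ returns $\nabla F(x)/p$. The masks are independent of the seed and of $F$, so one can pre-draw them per worker and per local step. Within a round, worker $w$ receives informative replies only at its successful steps, and each such reply is the exact gradient at a point determined by earlier replies. Organize the informative queries of round $r$ by their index among worker $w$'s successes in that round. The $j$-th successful query of every worker depends only on the messages before round $r$ and on that worker's first $j-1$ successes. Hence the queries with index $j$ across all workers form one batch of at most $m$ simultaneous queries.

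Round $r$ thus uses $M_r=\max_w S_{w,r}$ batches, where $S_{w,r}\sim\operatorname{Bin}(K,p)$ are independent of $F$ and of the seed. The whole run is then a procedure with $D=\sum_r M_r$ adaptive batches of width $m$. Add one extra batch, or rather let the final output count as a query, to produce $\widehat x$.

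The remaining issue is that Lemma~\ref{qlow:depth} assumes a fixed number of batches $s$, whereas $D$ is random, though it is independent of $F$ and of the seed. The hard law must be fixed before $D$ is known, so we cannot simply take $s=D$. The cleanest fix is to condition on the mask realization, which fixes $D$, and then apply Lemma~\ref{qlow:depth}. That lemma uses $n=2s+3$ Chebyshev nodes, so its construction depends on $s$. Instead we exploit the fact that the law built with parameter $s$ is hard for every algorithm with at most $s$ batches, and define
\[
s=\lceil 2(1+Nb_p)\rceil .
\]
By Markov's inequality, $\mathbb P(D\le s)\ge 1/2$, since $\mathbb E D\le Nb_p$. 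On the event $\{D\le s\}$, the algorithm is simulated by an $s$-batch procedure: pad with dummy batches, and note that whether $D\le s$ is decided by masks that are independent of $F$.

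To finish, one must make the event-restricted argument rigorous. Define a modified procedure that runs the algorithm as long as at most $s$ batches are used, and outputs $\widehat x$ on $\{D\le s\}$ and an arbitrary point otherwise, with the masks treated as part of its seed. Lemma~\ref{qlow:depth} applies to this procedure, but it only lower-bounds the total expectation and not the part on $\{D\le s\}$. We handle this by conditioning on the masks directly. Given masks with $D\le s$, the procedure is a deterministic-seed $s$-batch algorithm, and the law of $F$ is unchanged, so the conditional risk is at least $L'R^2/(288s^2)$. Multiplying by $\mathbb P(D\le s)\ge1/2$ and using $s\le 2(1+Nb_p)+1\le 3(1+Nb_p)$ gives a bound of order $L'R^2/(2304(1+Nb_p)^2)$, after adjusting constants, for example through a tighter Markov threshold.

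For the dimension, $s\le 3(1+KN)$ since $b_p\le K$, so $768ms^2$ must fit below $d$ via padding. This needs some care with the constants against the hypothesis $768m(KN)^2$. If the constants do not fit, we handle the degenerate case $Nb_p<1$ separately. The main obstacle is exactly this bookkeeping: conditioning on the masks while keeping the law independent of the algorithm, and matching the constants and dimension.
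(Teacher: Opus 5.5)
Your reduction is the paper's own: mask whole functions, $f_\xi=(\xi/p)F$; reveal the mask array in advance; group the $j$-th successful query of every worker in a round into one batch, so that round $n$ costs $\max_w Z_{w,n}$ batches; condition on masks with $D\le s$ and apply Lemma~\ref{qlow:depth} to the resulting randomized $s$-batch algorithm, whose law of $F$ is unchanged; finish with Markov. The gap is your choice $s=\lceil 2(1+Nb_p)\rceil$. It is why you cannot reach the stated constant or the stated dimension threshold.

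First, it only gives $s\le 3(1+Nb_p)$, hence the constant $576\cdot 9=5184$ rather than $2304$. More seriously, $s$ can exceed $KN$. For $K=N=1$ you already have $s\ge 3$. For $p$ close to $1$ you have $b_p\approx K$ and $s\approx 2KN+2$. In these cases the hard law of Lemma~\ref{qlow:depth} lives in dimension up to $768ms^2>768m(KN)^2$, which the hypothesis $d\ge 768m(KN)^2$ does not cover. Your proposed remedy of treating $Nb_p<1$ separately does not help, because the overflow also occurs when $Nb_p$ is large.

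The missing observation is that the number of batches is bounded deterministically: $D=\sum_{n}\max_w Z_{w,n}\le KN$. So cap $s$ by taking $s=\min\{KN,\max\{1,\lceil 2Nb_p\rceil\}\}$.
\begin{itemize}
    \item If the cap binds, $D\le s$ holds surely and no Markov step is needed.
    \item Otherwise $s\ge 2Nb_p=2\,\mathbb E D$, so $\mathbb P(D>s)\le 1/2$.
\end{itemize}
In both cases $\mathbb E F(\widehat x)\ge L'R^2/(576s^2)$. Since $\max\{1,\lceil x\rceil\}\le 1+x$, we get $s\le 1+2Nb_p\le 2(1+Nb_p)$, which yields exactly $2304$. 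The dimension of the hard law is then $768ms^2\le 768m(KN)^2$, and padding gives the result for every $d\ge768m(KN)^2$. Finally, you need no extra batch for the output: Lemma~\ref{qlow:depth} already lets the procedure output $\widehat x$ after its $s$ batches, so adding one would only enlarge $s$.
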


\begin{proof}
Let $s=\min\{KN,\max\{1,\lceil2Nb_p\rceil\}\}$ and draw $F$ from the law
of Lemma~\ref{qlow:depth} with parameters $(m,s,L',R)$, padded to
$\R^d$. The sample functions are
\[
    f_\xi=\frac\xi pF,
    \qquad \xi\sim\operatorname{Bernoulli}(p),
\]
with $\xi$ independent of $F$. Each $f_\xi$ is a nonnegative convex
quadratic with an $H_s$-Lipschitz gradient, minimized at the minimizer of
$F$, and $\mathbb E f_\xi=F$. Thus every $F$ in the support defines an
instance of $\mathcal S_d^{\mathrm{quad}}$.

With fresh access, the $k$-th query of worker $w$ in round $n$ uses an
independent variable $\xi_{w,n,k}$. Give the whole array of these
variables to the algorithm before the run; this can only help it. Let
$Z_{w,n}=\sum_{k=1}^K\xi_{w,n,k}$ and
$\Delta=\sum_{n=1}^N\max_{w\le m}Z_{w,n}$.

For a fixed array, the run can be reproduced by an algorithm that makes
$\Delta$ batches of at most $m$ exact queries of $(F,\nabla F)$. Indeed, a
query with $\xi=0$ has the known reply $0$, and a query with $\xi=1$ has
the reply $\nabla F(x)/p$. In round $n$, let the $j$-th batch contain
the $j$-th query with $\xi=1$ of every worker that has at least $j$ such
queries in this round. This query depends only on the seed, on the
messages before round $n$, and on the earlier replies of the same
worker in round $n$, all of which are available after the first $j-1$
batches of the round. Hence round $n$ needs $\max_wZ_{w,n}$ batches, and
the whole run needs $\Delta$ batches.

On the event $\{\Delta\le s\}$, which is determined by the array, pad the
reproducing algorithm to exactly $s$ batches. For every fixed array in
this event it is an $s$-batch algorithm, and the array is independent of
$F$, so Lemma~\ref{qlow:depth} gives
$\mathbb E[F(\widehat x)\mid\text{array}]\ge\frac{L'R^2}{288s^2}$ on
$\{\Delta\le s\}$. We have $\mathbb E \Delta=Nb_p$ and $\Delta\le KN$. If $s=KN$, then
$\Delta\le s$ surely; otherwise $s\ge2Nb_p$, and Markov's inequality gives
$\mathbb{P}\left(\Delta>s\right)\le Nb_p/s\le1/2$. Therefore
$\mathbb E F(\widehat x)\ge\frac{L'R^2}{576s^2}$, and $s\le1+2Nb_p\le
2(1+Nb_p)$ proves \eqref{qlow:masked-rate} for the random instance;
averaging selects a fixed one. The dimension of the law is at most
$768ms^2\le768m(KN)^2$.
\end{proof}

\begin{theorem}[Lower bound for quadratic populations]
\label{qlow:allbudget}
For every $d\ge2048m(KN)^2$,
\[
    \mathfrak M_d^{\mathrm{sg,quad}}
    \ge\frac{R^2}{73728\log^2(em)}\min\left\{L,\;
        \frac{\min\{L,H_s/K\}}{N^2}+\frac{H_s}{mKN}\right\}.
\]
\end{theorem}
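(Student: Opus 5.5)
The plan is to combine the statistical bound of Lemma~\ref{lem:stat} with the communication bound of Theorem~\ref{qlow:masked}, using $\max\{x,y\}\ge\frac12(x+y)$. Lemma~\ref{lem:stat} with $S=\infty$ gives $\mathfrak M_d^{\mathrm{sg,quad}}\ge\frac{R^2}{8}\min\{L,H_s/(mKN)\}$ in every dimension. It then remains to show
\[
    \mathfrak M_d^{\mathrm{sg,quad}}\ge\frac{cR^2}{\log^2(em)}\min\Bigl\{L,\frac{\min\{L,H_s/K\}}{N^2}\Bigr\}
\]
for $d\ge2048m(KN)^2$, which exceeds the dimension $768m(KN)^2$ required by Theorem~\ref{qlow:masked}. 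The two terms are then combined via
\[
    \min\{L,a+b\}\le\min\{L,a\}+\min\{L,b\}\le2\max\bigl\{\min\{L,a\},\min\{L,b\}\bigr\}.
\]

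For the communication term, I would apply Theorem~\ref{qlow:masked} with a choice of $L'$ and estimate $b_p$ by Lemma~\ref{lem:max-binomial}:
\[
    b_p\le\log m+(e-1)Kp=\log m+(e-1)KL'/H_s.
\]
Choose $L'=\min\{L,H_s/K\}$, so that $Kp\le1$ and $b_p\le\log m+2\le2\log(em)$. Then
\[
    1+Nb_p\le1+2N\log(em)\le3N\log(em),
\]
and Theorem~\ref{qlow:masked} yields
\[
    \mathbb E F(\widehat x)\ge\frac{\min\{L,H_s/K\}\,R^2}{2304\cdot9\,N^2\log^2(em)}.
\]
Since $\min\{L,H_s/K\}/N^2\ge\min\{L,\min\{L,H_s/K\}/N^2\}$, this already handles the second term, and no case split on $N$ is needed.

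Finally, collect constants. The statistical bound is at least $\frac{R^2}{8\log^2(em)}\min\{L,H_s/(mKN)\}$, and the communication bound has constant $1/20736$. Taking the maximum and using the factor $2$ from the combination gives a constant of order $1/41472$, which is better than the claimed $1/73728 = 1/(2304\cdot32)$. There is ample room even if $b_p$ is bounded more crudely, for instance by $b_p\le\log m+2$ and $1+Nb_p\le4N\log(em)$, which gives $2304\cdot16\cdot2=73728$; this matches the stated constant exactly.

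There is no real obstacle here, since all the work lies in Theorem~\ref{qlow:masked} and Lemma~\ref{lem:stat}. The only points needing care are the following:
\begin{itemize}
    \item the elementary inequality $1+N(\log m+2)\le4N\log(em)$, which holds since $\log(em)\ge1$;
    \item checking that $L'=\min\{L,H_s/K\}$ satisfies $0<L'\le L$;
    \item checking that the stated dimension threshold covers $768m(KN)^2$, with padding handling larger $d$.
\end{itemize}
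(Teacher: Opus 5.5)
Your proof is correct and follows the paper's argument. It combines the communication bound of Theorem~\ref{qlow:masked} (with $b_p$ controlled by Lemma~\ref{lem:max-binomial}) with the statistical bound of Lemma~\ref{lem:stat}, and merges the two by averaging. The only difference is that the paper splits into the cases $\log(em)\le K$, taking $p=\min\{L/H_s,\log(em)/K\}$, and $\log(em)>K$, using deterministic samples and Lemma~\ref{qlow:depth} with $s=KN$; your uniform choice $L'=\min\{L,H_s/K\}$ gives $Kp\le1$ directly, makes that split unnecessary, and yields a slightly better constant.
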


\begin{proof}
We first show that
\begin{equation}
    \mathfrak M_d^{\mathrm{sg,quad}}
    \ge\frac{R^2\min\{L,H_s/K\}}{36864\,N^2\log^2(em)} .
    \label{qlow:comm}
\end{equation}
If $\log(em)\le K$, let $p=\min\{L/H_s,\log(em)/K\}$ and $L'=pH_s\le L$.
By Lemma~\ref{lem:max-binomial} and $e-1\le2$,
$b_p\le\log m+2Kp\le3\log(em)$, so $1+Nb_p\le4N\log(em)$.
Theorem~\ref{qlow:masked} gives the lower bound
$\frac{L'R^2}{36864N^2\log^2(em)}$, and
$L'=\min\{L,H_s\log(em)/K\}\ge\min\{L,H_s/K\}$.

If $\log(em)>K$, take deterministic samples $f_\xi=F$, where $F$ is drawn
from the law of Lemma~\ref{qlow:depth} with $L'=L$ and $s=KN$; these are
admissible because $L\le H_s$. An IC algorithm with exact gradients is
reproduced by $KN$ batches of width $m$, the $(n,k)$-th batch consisting
of the $k$-th queries of all workers in round $n$. Lemma~\ref{qlow:depth}
gives the lower bound $\frac{LR^2}{288K^2N^2}\ge
\frac{LR^2}{288N^2\log^2(em)}$, which implies \eqref{qlow:comm}.

By Lemma~\ref{lem:stat},
$\mathfrak M_d^{\mathrm{sg,quad}}\ge\frac{R^2}8\min\{L,\frac{H_s}{mKN}\}$.
The maximum of two lower bounds is at least their average, and
$\frac1{16}\ge\frac1{73728\log^2(em)}$, so
\begin{align*}
    \mathfrak M_d^{\mathrm{sg,quad}}
    &\ge\frac{R^2}{73728\log^2(em)}
    \left[\frac{\min\{L,H_s/K\}}{N^2}
          +\min\left\{L,\frac{H_s}{mKN}\right\}\right]\\
    &\ge\frac{R^2}{73728\log^2(em)}\min\left\{L,\;
        \frac{\min\{L,H_s/K\}}{N^2}+\frac{H_s}{mKN}\right\},
\end{align*}
where the last inequality holds because the bracket equals
$\frac{\min\{L,H_s/K\}}{N^2}+\frac{H_s}{mKN}$ if $\frac{H_s}{mKN}\le L$
and is at least $L$ otherwise. All constructions have dimension at most $768m(KN)^2$, so by
padding the bound holds for every $d\ge768m(KN)^2$, in particular for
every $d\ge2048m(KN)^2$.
\end{proof}

\subsection{Weak growth condition}
\label{app:lower-wg}

The hard oracle reveals the next direction of a hidden chain only with
probability $p$ per query, and corrects the remaining bias with a rare
Bernoulli variable. Throughout this subsection,
\begin{equation}
\begin{gathered}
    \ell=\frac L{1024},
    \qquad
    q=\left\lceil\max\left\{8N\log(em),\;
        64\sqrt{\frac{\ell KN}{H_s}}\right\}\right\rceil,
    \qquad
    \delta=R\Bigl(\sum_{j=1}^qj^2\Bigr)^{-1/2},\\
    p=\frac{128\ell}{H_sq},
    \qquad
    \varepsilon=\frac1{16mKN},
    \qquad
    \alpha=\min\left\{\frac\delta8,\;
        \delta\sqrt{\frac{H_s\varepsilon}{800\ell}}\right\},
\end{gathered}
\label{eq:wg-parameters}
\end{equation}
and
\begin{equation}
    d_{\mathrm{wg}}
    =q+2+\left\lceil\frac{2(130R)^2}{\alpha^2}
        \log\bigl(16q(mKN+1)\bigr)\right\rceil .
    \label{eq:d-wg}
\end{equation}
Since $R/\alpha=\bigl(\sum_{j\le q}j^2\bigr)^{1/2}
\max\{8,\sqrt{800\ell/(H_s\varepsilon)}\}$, the threshold
$d_{\mathrm{wg}}$ depends only on $m$, $K$, $N$, and $H_s/L$. Note that
$q\ge8$, that $p\le\frac1{8q}$ because $1024\ell=L\le H_s$, and that
$\sum_{j\le q}j^2\in[q,q^3]$ gives
\begin{equation}
    \delta\sqrt q\le R,
    \qquad
    \delta^2q\ge\frac{R^2}{q^2}.
    \label{eq:wg-delta}
\end{equation}

\paragraph{The chain.}
Let $h_\delta(u)=u^2/2$ for $|u|\le\delta$ and
$h_\delta(u)=\delta|u|-\delta^2/2$ otherwise; its derivative $h_\delta'$
is the clipping onto $[-\delta,\delta]$. For $y\in\R^q$ put
$y_{q+1}=0$ and
\begin{equation}
    f(y)=\ell\sum_{i=1}^q h_\delta(y_i-y_{i+1}-\delta),
    \qquad
    \operatorname{prog}(y)=\max\bigl(\{i\le q:|y_i|>\alpha\}\cup\{0\}\bigr).
    \label{eq:wg-chain}
\end{equation}
For $0\le t\le q$, let $P_t$ keep the first $t$ coordinates of a vector
in $\R^q$ and set the others to zero, and let $e_1,\ldots,e_q$ be the
standard basis.

\begin{lemma}[The chain]
\label{wg:chain-lemma}
The function $f$ is nonnegative and convex, has a $4\ell$-Lipschitz
gradient with $\|\nabla f\|\le2\ell R$, and vanishes at
$r_0=\delta(q,q-1,\ldots,1)$, where $\|r_0\|=R$. If
$t=\operatorname{prog}(y)\le q/2$, then
\begin{equation}
    f(y)\ge\frac{\ell\delta^2q}{16},
    \label{wg:tail-gap}
\end{equation}
and $v=P_ty$ satisfies
\begin{equation}
\begin{gathered}
    \nabla f(v)=P_t\nabla f(v)+[\nabla f(v)]_{t+1}e_{t+1},
    \qquad
    \bigl|[\nabla f(v)]_{t+1}\bigr|\le2\ell\delta,\\
    \|\nabla f(y)-\nabla f(v)\|\le4\ell\sqrt q\,\alpha,
    \qquad
    |f(y)-f(v)|\le2\ell\delta q\alpha,
\end{gathered}
    \label{wg:correction-bounds}
\end{equation}
Moreover, $f(v)$ and $\nabla f(v)$ depend only on $y_1,\ldots,y_t$.
\end{lemma}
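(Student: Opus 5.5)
The plan is direct verification from the explicit formula \eqref{eq:wg-chain}, treating $f$ as $\ell\sum_i h_\delta(\langle a_i,y\rangle-\delta)$ with $a_i=e_i-e_{i+1}$ (and $a_q=e_q$).

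\emph{Global properties.} Since $h_\delta$ is convex, nonnegative, $1$-smooth and has $|h_\delta'|\le\delta$, the function $f$ is a sum of convex nonnegative terms. Its gradient is $\nabla f(y)=\ell\sum_i h_\delta'(\cdot)\,a_i=\ell D^\top c$, where $D$ is the bidiagonal difference operator with $\|D\|_{\mathrm{op}}\le2$ and $c$ is the vector of clipped residuals. This gives the $4\ell$ Lipschitz constant, from $\|D\|^2\le4$. It also gives $\|\nabla f\|\le2\ell\|c\|\le2\ell\delta\sqrt q\le2\ell R$ by \eqref{eq:wg-delta}. At $r_0$ every residual $y_i-y_{i+1}-\delta$ is zero, so $f(r_0)=0$, and $\|r_0\|^2=\delta^2\sum j^2=R^2$.

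\emph{Tail gap.} If $t=\operatorname{prog}(y)\le q/2$, then $|y_i|\le\alpha\le\delta/8$ for all $i>t$. Hence each residual $y_i-y_{i+1}-\delta$ with $i>t$ lies below $-\delta+2\alpha\le-3\delta/4$, whose absolute value is at least $3\delta/4$. So each such term of $h_\delta$ is at least $\delta\cdot\frac{3\delta}4-\frac{\delta^2}2=\frac{\delta^2}4$. There are at least $q/2$ such indices, giving $f(y)\ge\ell\delta^2q/8\ge\ell\delta^2q/16$.

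\emph{Truncation.} For $v=P_ty$, the residual of index $i$ involves only $v_i,v_{i+1}$. For $i<t$ it depends on $y_i,y_{i+1}$; for $i=t$ it equals $y_t-\delta$; for $i>t$ it is the constant $-\delta$, clipped to $-\delta$. Thus $f(v)$ depends only on $y_1,\ldots,y_t$. In the gradient, the terms with $i>t+1$ cancel in pairs? Not quite: each $i>t$ contributes $-\ell\delta(e_i-e_{i+1})$, and these telescope to $-\ell\delta e_{t+1}$, using $a_q=e_q$ and $y_{q+1}=0$. The only coordinate beyond $t$ is therefore $t+1$, with value $\ell(-h_\delta'(r_t)\cdot0\ldots)$: precisely $-\ell h_\delta'(\text{residual}_t)\cdot[\text{coefficient}]-\ell\delta$, bounded by $2\ell\delta$. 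This gives the support statement and its dependence only on $y_{\le t}$.

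\emph{Perturbation bounds.} The vector $y-v$ is supported on coordinates $>t$ with entries at most $\alpha$, so $\|y-v\|\le\sqrt q\,\alpha$. The $4\ell$-Lipschitz gradient then yields the gradient bound. For the function values, $|h_\delta|$ is $\delta$-Lipschitz and each residual changes by at most $2\alpha$, so $|f(y)-f(v)|\le\ell q\cdot\delta\cdot2\alpha$.

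The only delicate step is the telescoping bookkeeping of the coordinate $t+1$ and the boundary convention $y_{q+1}=0$; everything else is routine.
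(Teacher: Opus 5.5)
Your proof is correct and follows essentially the paper's argument: the same representation $\nabla f=\ell D^\top h_\delta'(Dy-\delta\mathbf 1)$ with $\|D\|\le2$; the same count of $q-t\ge q/2$ tail residuals bounded away from zero; the same telescoping of the constant clipped residuals $-\delta$ (the paper's $[\nabla f(v)]_j=\ell(w_j-w_{j-1})$, which gives $[\nabla f(v)]_{t+1}=-\ell h_\delta'(y_t-\delta)-\ell\delta$ for $t\ge1$, a formula you should state cleanly instead of the garbled line you wrote); and the same Lipschitz estimates with $\|y-v\|\le\sqrt q\,\alpha$, where your termwise bound on $|f(y)-f(v)|$ gives the same constant $2\ell\delta q\alpha$. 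One small fix: at $|u|=3\delta/4$ you are on the quadratic branch of $h_\delta$, so the inequality $h_\delta(u)\ge\delta|u|-\delta^2/2$ should be justified as the tangent-line bound for the convex $h_\delta$, or replaced by $h_\delta(u)\ge h_\delta(3\delta/4)=9\delta^2/32$.
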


\begin{proof}
Let $D$ be the matrix with $(Dy)_i=y_i-y_{i+1}$. Since
$(y_i-y_{i+1})^2\le2y_i^2+2y_{i+1}^2$, we have $\|D\|\le2$. Then
$\nabla f(y)=\ell D^\top h_\delta'(Dy-\delta\mathbf 1)$, where
$h_\delta'$ acts coordinatewise. As $h_\delta$ is convex with a
$1$-Lipschitz derivative bounded by $\delta$, the function $f$ is convex
with a $4\ell$-Lipschitz gradient, and
$\|\nabla f\|\le2\ell\delta\sqrt q\le2\ell R$ by \eqref{eq:wg-delta}.
Since $Dr_0=\delta\mathbf 1$, $f(r_0)=0$, and
$\|r_0\|^2=\delta^2\sum_jj^2=R^2$.

Let $t=\operatorname{prog}(y)\le q/2$. For $i>t$ we have $|y_i|\le\alpha$
and $|y_{i+1}|\le\alpha$, hence
$|y_i-y_{i+1}-\delta|\ge\delta-2\alpha\ge\delta/2$ and
$h_\delta(y_i-y_{i+1}-\delta)\ge h_\delta(\delta/2)=\delta^2/8$. There are
$q-t\ge q/2$ such indices, which gives \eqref{wg:tail-gap}.

Let $w=h_\delta'(Dv-\delta\mathbf 1)$. Since $v_i=0$ for $i>t$, we have
$w_i=-\delta$ for $i>t$. With $w_0=0$,
$[\nabla f(v)]_j=\ell(w_j-w_{j-1})$, which vanishes for $j\ge t+2$ and
has absolute value at most $2\ell\delta$ for $j=t+1$. This proves the
identity and the first inequality in \eqref{wg:correction-bounds}. The
coordinates of $y-v$ are the $y_i$ with $i>t$, so
$\|y-v\|\le\sqrt q\,\alpha$; the last two inequalities follow from the
Lipschitz property of $\nabla f$ and the bound on $\|\nabla f\|$.
Finally, $v$ is determined by $y_1,\ldots,y_t$.
\end{proof}

\paragraph{The instance.}
Let $U=(u_1,\ldots,u_q)\in\R^{d\times q}$ have orthonormal columns, and
put $F_U(x)=f(U^\top x)$. By Lemma~\ref{wg:chain-lemma}, $F_U$ satisfies
the hypotheses of Lemma~\ref{lem:radial} with $x^*=Ur_0$,
$L_\varphi=4\ell$, and $M_\varphi=2\ell R$. Apply that lemma with $c=\ell$
and $\rho=128R$, and let $\Gamma$ be the corresponding cutoff and
\[
    \Phi_U=\mathcal T F_U .
\]
Then $\Phi_U$ is convex with a $9\ell$-Lipschitz gradient, $9\ell\le L$,
$\Phi_U\ge F_U$, $\Phi_U(Ur_0)=0$, and for $\|x\|\ge130R$,
$\Phi_U(x)=2\ell(\|x\|-R)^2$ with a gradient that does not depend on $U$.

Each oracle call draws independent $B\sim\operatorname{Bernoulli}(p)$ and
$C\sim\operatorname{Bernoulli}(\varepsilon)$. For $y\in\R^q$ with
$t=\operatorname{prog}(y)\le q/2$ and $v=P_ty$, put
\begin{align*}
    \widehat{\nabla f}(y;B,C)
    &=P_t\nabla f(v)+\frac Bp\,[\nabla f(v)]_{t+1}e_{t+1}
      +\frac C\varepsilon\bigl(\nabla f(y)-\nabla f(v)\bigr),\\
    \widehat f(y;C)
    &=f(v)+\frac C\varepsilon\bigl(f(y)-f(v)\bigr).
\end{align*}
The oracle $G_U(x;B,C)$ is defined as follows. If $\|x\|\ge130R$, it
returns $4\ell(\|x\|-R)\frac x{\|x\|}$. If $\|x\|<130R$ and
$\operatorname{prog}(U^\top x)>q/2$, it returns $\nabla\Phi_U(x)$.
Otherwise, it returns
\begin{equation}
    G_U(x;B,C)
    =(1-\Gamma(x))\,U\,\widehat{\nabla f}(U^\top x;B,C)
     -\widehat f(U^\top x;C)\,\nabla\Gamma(x)
     +4\ell(\|x\|-R)_+\frac x{\|x\|},
    \label{wg:oracle}
\end{equation}
with the radial term equal to zero at $x=0$. The oracle is jointly
measurable.

\begin{lemma}[Admissibility]
\label{wg:moment-lemma}
For every $U$, the pair $(\Phi_U,G_U)$ is an instance of $\mathcal W_d$:
for every $x\in\R^d$,
\[
    \mathbb E\,G_U(x;B,C)=\nabla\Phi_U(x),
    \qquad
    \mathbb E\|G_U(x;B,C)\|^2\le2H_s\Phi_U(x).
\]
\end{lemma}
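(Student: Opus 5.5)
The proof splits into the three cases of the oracle definition. In each case we check unbiasedness and then bound the second moment by $2H_s\Phi_U(x)$.

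\emph{Outer cases.} If $\|x\|\ge130R$, Lemma~\ref{lem:radial}(4) with $2R+\rho=130R$ shows that the reply equals $\nabla\Phi_U(x)$ deterministically. If $\|x\|<130R$ and $\operatorname{prog}(U^\top x)>q/2$, the reply is also deterministic and equal to $\nabla\Phi_U(x)$. In both cases $\Phi_U$ is convex and $9\ell$-smooth with $\min\Phi_U=0$. The standard inequality $\|\nabla\Phi_U\|^2\le2\cdot9\ell\,\Phi_U\le2H_s\Phi_U$ then holds, since $9\ell\le L\le H_s$.

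\emph{Main case: unbiasedness.} Take $y=U^\top x$ with $t=\operatorname{prog}(y)\le q/2$ and $v=P_ty$. Since $\mathbb EB/p=\mathbb EC/\varepsilon=1$, the decomposition \eqref{wg:correction-bounds} gives $\mathbb E\widehat{\nabla f}=P_t\nabla f(v)+[\nabla f(v)]_{t+1}e_{t+1}+\nabla f(y)-\nabla f(v)=\nabla f(y)$ and $\mathbb E\widehat f=f(y)$. Moreover $\nabla F_U(x)=U\nabla f(y)$, so the product-rule formula of Lemma~\ref{lem:radial}(3) shows $\mathbb EG_U=\nabla\Phi_U(x)$. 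The reply is affine in $(B,C)$, which is why this works.

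\emph{Main case: second moment.} Split $G_U=\nabla\Phi_U(x)+\Delta$ with $\Delta$ mean zero, so $\mathbb E\|G_U\|^2=\|\nabla\Phi_U\|^2+\mathbb E\|\Delta\|^2$. The deterministic term satisfies $\|\nabla\Phi_U\|^2\le18\ell\,\Phi_U$ as above. The fluctuation $\Delta$ has two independent parts.
\begin{itemize}
\item The $B$-part has variance at most $\frac{1-p}{p}\,|[\nabla f(v)]_{t+1}|^2\le\frac{4\ell^2\delta^2}{p}=\frac{\ell\delta^2H_sq}{32}$, using $p=128\ell/(H_sq)$ and $0\le1-\Gamma\le1$.
\item The $C$-part has variance at most $\frac{1}{\varepsilon}\bigl(\|\nabla f(y)-\nabla f(v)\|+|f(y)-f(v)|\,\|\nabla\Gamma\|\bigr)^2$. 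By \eqref{wg:correction-bounds} and $\|\nabla\Gamma\|\le3/(2\rho)$, this is at most $\frac{C_0\ell^2q\alpha^2}{\varepsilon}$ for an absolute $C_0$. Here $\delta\sqrt q\le R$ and $\rho=128R$ keep the second summand of order $\ell\sqrt q\,\alpha$. By the choice $\alpha\le\delta\sqrt{H_s\varepsilon/(800\ell)}$, this becomes $\le c\,\ell\delta^2 qH_s$ with a small constant $c$.
\end{itemize}

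\emph{Comparison with $\Phi_U$.} We need both variances to be at most a fraction of $H_s\Phi_U(x)$. Here $\Phi_U(x)\ge F_U(x)=f(y)\ge\ell\delta^2q/16$ by Lemma~\ref{lem:radial}(2) and the tail gap \eqref{wg:tail-gap}. Hence the $B$-part is at most $\frac12H_s\Phi_U$. If $800$ is tuned correctly, the $C$-part is at most roughly $\frac14H_s\Phi_U$. Together with $18\ell\Phi_U\le\frac{18}{1024}H_s\Phi_U$, the total stays below $2H_s\Phi_U$.

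The expected obstacle is the constant bookkeeping in the $C$-part, specifically the cross term $f(y)-f(v)$ times $\nabla\Gamma$ combined with $\frac1\varepsilon$. One must verify that $(4\ell\sqrt q\alpha+2\ell\delta q\alpha\cdot\frac{3}{256R})^2/\varepsilon$ fits within the slack. Using $\delta q\le R\sqrt q$, this is $\le(4.1)^2\ell^2q\alpha^2/\varepsilon\le\frac{17}{800}\ell\delta^2qH_s$, comfortably below $\frac{1}{16}\ell\delta^2qH_s$. Joint measurability is already noted in the paper.
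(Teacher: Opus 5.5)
Your proposal is correct and follows essentially the same route as the paper. The two outer cases reduce to $\|\nabla\Phi_U\|^2\le18\ell\,\Phi_U$. In the main case, unbiasedness comes from the identity in \eqref{wg:correction-bounds} together with Lemma~\ref{lem:radial}, Part~3. The second moment splits into $\|\nabla\Phi_U\|^2$ plus the variances of the independent centered factors $B/p-1$ and $C/\varepsilon-1$, which are controlled by the choices of $p$ and $\alpha$ and by the tail gap \eqref{wg:tail-gap}. The differences are only cosmetic: you bound the $C$-bracket by $4.1\,\ell\sqrt q\,\alpha$ rather than the paper's $5\ell\sqrt q\,\alpha$, and your ``roughly $\frac14$'' for the $C$-part is in fact about $0.34\,H_s\Phi_U$, which still leaves ample slack.
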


\begin{proof}
The function $\Phi_U$ is convex, has an $L$-Lipschitz gradient, and is
minimized at $Ur_0$, which has norm $R$, with $\Phi_U(Ur_0)=0$. By
Lemma~\ref{lem:smooth}, Part~2,
$\|\nabla\Phi_U\|^2\le18\ell\,\Phi_U\le H_s\Phi_U$. In the first two
cases of the definition of $G_U$ the reply equals $\nabla\Phi_U(x)$, by
Lemma~\ref{lem:radial}, Part~4, and the claim follows.

In the remaining case, where $G_U$ is given by \eqref{wg:oracle}, let $y=U^\top x$, $t=\operatorname{prog}(y)\le q/2$, and
$v=P_ty$. By the identity in \eqref{wg:correction-bounds},
$\mathbb E\widehat{\nabla f}(y;B,C)=\nabla f(y)$, and
$\mathbb E\widehat f(y;C)=f(y)$. Since $\nabla F_U(x)=U\nabla f(y)$,
Lemma~\ref{lem:radial}, Part~3, gives
$\mathbb E G_U(x;B,C)=\nabla\Phi_U(x)$. The centered reply equals
\[
    \Bigl(\frac Bp-1\Bigr)(1-\Gamma)[\nabla f(v)]_{t+1}Ue_{t+1}
    +\Bigl(\frac C\varepsilon-1\Bigr)
     \Bigl[(1-\Gamma)U\bigl(\nabla f(y)-\nabla f(v)\bigr)
          -\bigl(f(y)-f(v)\bigr)\nabla\Gamma\Bigr].
\]
The two scalar factors are independent and centered, with variances at
most $1/p$ and $1/\varepsilon$. By \eqref{wg:correction-bounds}, the
bound $\|\nabla\Gamma\|\le\frac3{2\rho}=\frac3{256R}$ of
Lemma~\ref{lem:radial}, Part~3, and \eqref{eq:wg-delta}, the norm of the
bracket is at most
$4\ell\sqrt q\alpha+2\ell\delta q\alpha\frac3{256R}\le5\ell\sqrt q\alpha$.
Using the definitions of $p$ and $\alpha$ and then \eqref{wg:tail-gap},
\[
    \mathbb E\|G_U(x;B,C)-\nabla\Phi_U(x)\|^2
    \le\frac{4\ell^2\delta^2}{p}+\frac{25\ell^2q\alpha^2}{\varepsilon}
    \le\frac{H_s\ell\delta^2q}{32}+\frac{H_s\ell\delta^2q}{32}
    \le H_sF_U(x)\le H_s\Phi_U(x).
\]
Adding $\|\nabla\Phi_U(x)\|^2\le H_s\Phi_U(x)$ proves the second claim.
\end{proof}

\begin{remark}[Information revealed by one reply]
\label{wg:information}
Let $\|x\|<130R$, fix an integer $t\le q/2$, and evaluate the right-hand
side of \eqref{wg:oracle} with $C=0$ and with this $t$ in place of
$\operatorname{prog}(U^\top x)$ in the definition of $\widehat{\nabla f}$
and $\widehat f$, where $v=P_tU^\top x$. By the last assertion of
Lemma~\ref{wg:chain-lemma}, the numbers $f(v)$ and $[\nabla f(v)]_j$ are
functions of $\langle u_1,x\rangle,\ldots,\langle u_t,x\rangle$, and
$UP_t\nabla f(v)=\sum_{j\le t}[\nabla f(v)]_ju_j$, $Ue_{t+1}=u_{t+1}$.
Hence the result is a function of $x$, $B$, and $u_1,\ldots,u_t$, and,
if $B=1$, also of $u_{t+1}$. For $t=\operatorname{prog}(U^\top x)$ it
equals $G_U(x;B,0)$.
\end{remark}

\paragraph{The lower bound.}

\begin{theorem}[Lower bound under weak growth]
\label{wg:lower-theorem}
For every $d\ge d_{\mathrm{wg}}$,
\[
    \mathfrak M_d^{\mathrm{wg}}
    \ge\frac{R^2}{2^{24}\log^2(em)}\min\left\{L,\;\frac{H_s}{KN},\;
        \frac{L}{N^2}+\frac{H_s}{mKN}\right\}.
\]
\end{theorem}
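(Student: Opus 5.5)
The bound combines two pieces. The statistical part comes directly from Lemma~\ref{lem:stat}: $\mathfrak M_d^{\mathrm{wg}}\ge\frac{R^2}{8}\min\{L,\frac{H_s}{mKN}\}$. For the communication part I will show that the random instance $(\Phi_U,G_U)$, with $U$ a Haar-random orthonormal $q$-frame, forces
\[
    \mathbb E\,\Phi_U(\widehat x)\ \gtrsim\ \ell\delta^2q\ \ge\ \frac{\ell R^2}{q^2},
\]
using \eqref{wg:tail-gap} and \eqref{eq:wg-delta}. By the choice of $q$ in \eqref{eq:wg-parameters}, $q^2\lesssim N^2\log^2(em)+\ell KN/H_s$. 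Hence $\ell R^2/q^2\gtrsim R^2\min\{L/(N^2\log^2(em)),\,H_s/(KN)\}$. Averaging the two lower bounds and using case analysis of the same kind as at the end of Theorem~\ref{qlow:allbudget} then gives $\min\{L,H_s/KN,L/N^2+H_s/(mKN)\}/\log^2(em)$. A short check is needed here: the $H_s/(mKN)$ term matters only when it is at most $L$, and $\min\{L/N^2,H_s/KN\}+\min\{L,H_s/mKN\}$ dominates the target minimum.

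The main step is to show that, with constant probability, $\operatorname{prog}(U^\top\widehat x)\le q/2$ for the output and $\|\widehat x\|<130R$ (or else the radial part is large), so that $\Phi_U(\widehat x)\ge F_U(\widehat x)\ge\ell\delta^2q/16$. I will reveal the columns $u_1,u_2,\ldots$ progressively.

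First I remove the $C$-coins. All $mKN$ queries see $C=0$ with probability at least $1-mKN\varepsilon=15/16$. On that event every reply is $G_U(x;B,0)$, which by Remark~\ref{wg:information} depends only on $x$, $B$, and $u_1,\ldots,u_t$, plus $u_{t+1}$ when $B=1$.

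Next I define a "known prefix" $T$ that increases only when a query has $B=1$. Within a round, each worker can raise its own prefix by at most its number of $B=1$ coins. By Lemma~\ref{lem:max-binomial}, the round's maximum gain has conditional expectation at most $\log m+(e-1)Kp$. Summing over $N$ rounds bounds $\mathbb E T_{\mathrm{final}}$ by $N\log m+2KNp$. With $p=128\ell/(H_sq)$ and $q\ge64\sqrt{\ell KN/H_s}$, the second term is $O(q/\ell\cdot\ldots)$, which is small relative to $q$; more precisely $KNp\le 128\ell KN/(H_sq)\le q/32$. Together with $N\log m\le q/8$, Markov's inequality gives $T_{\mathrm{final}}\le q/4$ with good probability.

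The step I expect to be the main obstacle is the union bound via Lemma~\ref{lem:sphere}. It must show that every query and the output, all with norm below $130R$, have $|\langle u_j,x\rangle|\le\alpha$ for every unrevealed $j>T+1$. This works because, conditionally on the transcript, those $u_j$ are uniform on a sphere of dimension about $d-q-mKN$. With the threshold $d_{\mathrm{wg}}$ in \eqref{eq:d-wg}, the failure probability is $2q(mKN+1)\exp(-(d')\alpha^2/(2(130R)^2))\le1/8$. This also justifies the consistency claim: the actual $\operatorname{prog}$ equals the simulated prefix, so the replies really are the simulated ones. Large-norm queries reveal nothing.

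For the output, if $\|\widehat x\|\ge130R$, then $\Phi_U(\widehat x)\ge2\ell(129R)^2$, which is larger still. Combining the events gives probability at least $1/2$, and so $\mathbb E\Phi_U(\widehat x)\ge\ell\delta^2q/32$. Tracking constants with $\ell=L/1024$ should give the stated $2^{24}$.
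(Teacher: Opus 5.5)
Your architecture is the paper's: the statistical term from Lemma~\ref{lem:stat}, a uniformly random $q$-frame, discarding the $C$-coins with probability $15/16$, a $B$-driven prefix controlled by Lemma~\ref{lem:max-binomial}, spherical caps with a union bound, and the same final case analysis. Two steps, however, do not go through as written.

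The first is the depth tail. From $\mathbb E\,T\le N\log m+(e-1)KNp\le 3q/16$, Markov's inequality gives only $\mathbb P(T>q/4)\le 3/4$ (and $\mathbb P(T>q/2)\le 3/8$). So the claimed probability $1/2$ for your combined good event does not follow. The paper instead uses the exponential-moment half of Lemma~\ref{lem:max-binomial} together with independence across rounds. This gives $\mathbb E e^{\Delta}\le m^N e^{(e-1)KNp}$, hence $\mathbb P(\Delta>q/2)\le\exp(-q/2+N\log m+2KNp)\le e^{-5q/16}<1/8$, since $q\ge 8$.

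The second, more serious, is the conditioning step. Conditioning on the actual transcript does not make the unrevealed $u_j$ uniform, because the real replies depend on $U$ through $\operatorname{prog}(U^\top x)$. Moreover, a sphere of dimension $d-q-mKN$ is not guaranteed by $d\ge d_{\mathrm{wg}}$. For instance, when $L/H_s\le 1/(mKN)$, the excess $d_{\mathrm{wg}}-q$ in \eqref{eq:d-wg} grows only logarithmically in $K$.

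What makes the argument work is a virtual run with the following features.
\begin{itemize}
  \item Each worker's prefix length advances by $B_{w,n,k}$ at every slot, whether or not the query actually makes progress, and is replaced by the maximum at the end of each round. Prefix lengths are therefore functions of the $B$-array alone.
  \item Virtual replies use $\tilde t=\max(\{j\le s:|\langle u_j,x\rangle|>\alpha\}\cup\{0\})$ in place of $\operatorname{prog}$.
\end{itemize}
Now fix the seed and the arrays. The virtual query at a slot with prefix length $s\le q/2$ is then a function of $u_1,\ldots,u_s$. Each $u_j$ with $j>s$ is uniform on a sphere of dimension $d-s\ge d-q$, which is exactly what \eqref{eq:d-wg} is calibrated for. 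The real and virtual runs coincide on the good event by induction over the slots.

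Finally, the condition you need is $|\langle u_j,x\rangle|\le\alpha$ for all $j>s$, not only for $j>s+1$ as in your proposal. If $\operatorname{prog}(U^\top x)=s+1$, the reply involves $u_{s+1}$, and also $u_{s+2}$ when $B=1$ (Remark~\ref{wg:information}). A simulation that knows only $u_1,\ldots,u_s$ cannot produce that reply.
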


\begin{proof}
Fix an algorithm. Let $U$ be uniformly distributed on the set of
orthonormal $q$-frames in $\R^d$, independently of the seed. Index the
oracle calls by slots $(w,n,k)$ (worker, round, call) and predraw
independent pairs $(B_{w,n,k},C_{w,n,k})$ for all $mKN$ slots; a worker
that makes fewer than $K$ queries in a round leaves the remaining slots
unused.

\emph{Step 1: a virtual run.}
Each worker carries a prefix length $s_w$, initially zero. After the
slot $(w,n,k)$, whether or not it is used, $s_w$ increases by
$B_{w,n,k}$; at the end of every round all $s_w$ are replaced by their
maximum. The prefix lengths are thus functions of the array of $B$'s
only. The virtual run applies the maps of the algorithm to the following
virtual replies. At a query $x$ in a slot where the current prefix
length is $s$:
\begin{itemize}
    \item if $s>q/2$, the reply is $0$;
    \item if $s\le q/2$ and $\|x\|\ge130R$, the reply is
    $4\ell(\|x\|-R)x/\|x\|$;
    \item if $s\le q/2$ and $\|x\|<130R$, the reply is given by
    \eqref{wg:oracle} with $C=0$, the $B$ of the slot, and
    $t=\operatorname{prog}(U^\top x)$ in the definition of
    $\widehat{\nabla f}$ and $\widehat f$ replaced by
    $\tilde t=\max(\{j\le s:|\langle u_j,x\rangle|>\alpha\}\cup\{0\})$.
\end{itemize}
By Remark~\ref{wg:information}, a virtual reply at a slot with prefix
length $s\le q/2$ uses only $u_1,\ldots,u_s$, and also $u_{s+1}$ if
$B=1$, in which case the prefix length after the slot is $s+1$.

\emph{Step 2: the depth of the virtual run.}
Let $Z_{w,n}=\sum_kB_{w,n,k}$ and $\Delta=\sum_{n=1}^N\max_wZ_{w,n}$. Every
prefix length at any time is at most $\Delta$. The variables $Z_{w,n}$ are
independent $\operatorname{Bin}(K,p)$, so Lemma~\ref{lem:max-binomial},
independence over rounds, and Markov's inequality give
\begin{equation}
    \mathbb{P}\left(\Delta>q/2\right)
    \le e^{-q/2}\,\mathbb E e^\Delta
    \le\exp\Bigl(-\frac q2+N\log m+2KNp\Bigr)
    \le e^{-5q/16}<\frac18 ,
    \label{wg:depth-tail}
\end{equation}
because $N\log m\le q/8$ and $2KNp=\frac{256\ell KN}{H_sq}\le\frac q{16}$
by \eqref{eq:wg-parameters}, and $q\ge8$.

\emph{Step 3: hidden projections.}
Fix the seed and the arrays of $B$'s and $C$'s. Consider a slot where the
prefix length is $s\le q/2$. The virtual query $\tilde x$ at this slot is
a function of $u_1,\ldots,u_s$: it depends on the messages of previous
rounds and on the earlier virtual replies of the same worker in the
current round, and each of these replies was given at a slot whose
prefix length after the slot is at most $s$, so by Step~1 it uses only
$u_1,\ldots,u_s$. In the same way, the virtual output $\tilde x_{\rm out}$
is a function of $u_1,\ldots,u_\Delta$ when $\Delta\le q/2$. Conditional on the
seed, the arrays, and $u_1,\ldots,u_s$, each $u_j$ with $j>s$ is uniform
on the unit sphere of the $(d-s)$-dimensional space
$\operatorname{span}\{u_1,\ldots,u_s\}^\perp$. Lemma~\ref{lem:sphere},
applied to $\tilde x\mathbf 1\{\|\tilde x\|<130R\}$, gives
\[
    \mathbb{P}\Bigl(|\langle u_j,\tilde x\rangle|>\alpha,\ \|\tilde x\|<130R
       \,\Big|\,\text{seed, arrays},u_1,\ldots,u_s\Bigr)
    \le2\exp\left(-\frac{(d-q-1)\alpha^2}{2(130R)^2}\right),
\]
and the same bound holds for $\tilde x_{\rm out}$ with $s=\Delta\le q/2$. Let
$E_{\rm proj}$ be the event that $|\langle u_j,\tilde x\rangle|\le\alpha$
for every virtual query $\tilde x$ with $\|\tilde x\|<130R$ at a slot
with prefix length $s\le q/2$ and every $j>s$, and the same for
$\tilde x_{\rm out}$ if $\Delta\le q/2$ and $\|\tilde x_{\rm out}\|<130R$, with
$j>\Delta$. A union bound over at most $mKN+1$ vectors and $q$ indices and
$d\ge d_{\mathrm{wg}}$ give
\begin{equation}
    \mathbb{P}(E_{\rm proj}^c)
    \le2q(mKN+1)\exp\left(-\frac{(d-q-1)\alpha^2}{2(130R)^2}\right)
    \le\frac18 .
    \label{wg:projection-union}
\end{equation}

\emph{Step 4: coupling.}
Let $E$ be the intersection of $\{C_{w,n,k}=0\text{ for all slots}\}$,
$\{\Delta\le q/2\}$, and $E_{\rm proj}$. Since $mKN\varepsilon=\frac1{16}$, the
bounds \eqref{wg:depth-tail} and \eqref{wg:projection-union} give
$\mathbb{P}(E)\ge1-\frac1{16}-\frac18-\frac18\ge\frac12$. On $E$ the actual run
coincides with the virtual run. Indeed, suppose all earlier replies
coincide, so that the current queries coincide, and let $x$ be the query
at a slot with prefix length $s\le \Delta\le q/2$. If $\|x\|\ge130R$, both
replies equal the public vector. Otherwise, $|\langle u_j,x\rangle|\le
\alpha$ for all $j>s$ by $E_{\rm proj}$, so
$\operatorname{prog}(U^\top x)=\tilde t\le q/2$, and both replies are
given by \eqref{wg:oracle} with the same $t$, the same
$B$, and $C=0$. By induction over the slots, the two runs and their
outputs coincide.

\emph{Step 5: the risk.}
On $E$, let $\widehat x$ be the common output. If $\|\widehat x\|<130R$,
then $\operatorname{prog}(U^\top\widehat x)\le \Delta\le q/2$ by $E_{\rm proj}$,
and \eqref{wg:tail-gap} gives
$\Phi_U(\widehat x)\ge F_U(\widehat x)\ge\frac{\ell\delta^2q}{16}$. If
$\|\widehat x\|\ge130R$, then
$\Phi_U(\widehat x)=2\ell(\|\widehat x\|-R)^2\ge\ell R^2
\ge\frac{\ell\delta^2q}{16}$. Since $\Phi_U\ge0$, \eqref{eq:wg-delta}
gives
\begin{equation}
    \mathbb E\,\Phi_U(\widehat x)\ge\frac{\ell\delta^2q}{32}
    \ge\frac{\ell R^2}{32q^2},
    \label{wg:chain-risk}
\end{equation}
where the expectation is over $U$, the seed, and the oracle variables.
Averaging over $U$ selects a fixed frame, that is, a fixed instance of
$\mathcal W_d$ by Lemma~\ref{wg:moment-lemma}.

\emph{Step 6: the rate.}
The quantity inside the ceiling defining $q$ is at least one, so
$q^2\le4\max\{64N^2\log^2(em),4096\ell KN/H_s\}$. Substituting this and
$\ell=L/1024$ into \eqref{wg:chain-risk},
\begin{equation}
    \mathfrak M_d^{\mathrm{wg}}
    \ge R^2\min\left\{\frac{L}{2^{23}N^2\log^2(em)},
                      \frac{H_s}{2^{19}KN}\right\}
    \ge\frac{R^2}{2^{23}\log^2(em)}
     \min\left\{\frac{L}{N^2},\frac{H_s}{KN}\right\}.
    \label{wg:communication-lower}
\end{equation}
Lemma~\ref{lem:stat} gives
$\mathfrak M_d^{\mathrm{wg}}\ge\frac{R^2}8\min\{L,\frac{H_s}{mKN}\}$, and
$\frac18\ge\frac{2^{-23}}{\log^2(em)}$. It remains to note that
\begin{align*}
    \min\left\{L,\frac{H_s}{KN},\frac L{N^2}+\frac{H_s}{mKN}\right\}
    &\le\min\left\{\frac{H_s}{KN},\frac L{N^2}\right\}
      +\min\left\{L,\frac{H_s}{mKN}\right\}\\
    &\le2\max\left\{\min\left\{\frac{H_s}{KN},\frac L{N^2}\right\},\;
      \min\left\{L,\frac{H_s}{mKN}\right\}\right\}.
\end{align*}
Indeed, if $\frac{H_s}{KN}\le\frac L{N^2}$, the left-hand side is at most
$\frac{H_s}{KN}$; otherwise it is at most
$\min\{L,\frac L{N^2}+\frac{H_s}{mKN}\}\le\frac L{N^2}
+\min\{L,\frac{H_s}{mKN}\}$. Hence the larger of the two lower bounds is
at least $\frac{R^2}{2^{24}\log^2(em)}
\min\{L,\frac{H_s}{KN},\frac L{N^2}+\frac{H_s}{mKN}\}$.
\end{proof}

\subsection{Sample reuse}
\label{app:lower-pool}

In this construction every sample function carries its own independent
Bernoulli coins on the edges of a hidden chain. Repeated queries of the
same label therefore reveal nothing beyond these coins. Throughout this
subsection,
\begin{equation}
\begin{gathered}
    p=\min\left\{\frac L{H_s},\frac{\log(em)}K\right\},
    \qquad
    \nu=N\bigl[\log m+(e-1)Kp\bigr],
    \qquad
    s=1+\min\{KN,\lceil4\nu\rceil\},\\
    r=2(s+1),
    \qquad
    a=\frac R{\sqrt r},
    \qquad
    \alpha=\frac a{4(s+1)},
    \qquad
    \rho=\frac{32H_sR}{L},
\end{gathered}
\label{eq:pool-parameters}
\end{equation}
and
\begin{equation}
    d_{\mathrm{pool}}
    =r+2+\left\lceil\frac{8(2R+\rho)^2}{\alpha^2}
        \log\bigl(8(mKN+1)r\bigr)\right\rceil .
    \label{eq:d-pool}
\end{equation}
Since $(2R+\rho)/\alpha=4(s+1)\sqrt r\,(2+32H_s/L)$, the threshold
$d_{\mathrm{pool}}$ depends only on $m$, $K$, $N$, and $H_s/L$.

\paragraph{The chain.}
For $t\ge0$ let $\eta_a(t)=t^2$ if $t\le a$ and $\eta_a(t)=2at-a^2$ if
$t>a$, and for $\beta\ge0$ let $\psi_\beta(t)=\eta_a\bigl((|t|-\beta)_+\bigr)$.
The function $\psi_\beta$ is convex and continuously differentiable,
$\psi_\beta'$ is $2$-Lipschitz, $|\psi_\beta'|\le2a$, and $\psi_\beta$ and
$\psi_\beta'$ vanish on $[-\beta,\beta]$. Let $V=(v_1,\ldots,v_r)$ be an
orthonormal frame in $\R^d$. A sample is a vector of coins
$\xi=(b_1,\ldots,b_{r-1})$ with independent
$\operatorname{Bernoulli}(p)$ coordinates, and
\begin{equation}
    f_{V,\xi}(x)=\frac{H_s}{256}\left[
        p\,\psi_0\bigl(\langle v_1,x\rangle-a\bigr)
        +\sum_{l=1}^{r-1}b_l\,\psi_\alpha\bigl(\langle v_l-v_{l+1},x\rangle\bigr)
    \right].
    \label{eq:pool-component}
\end{equation}
Let $f_V$ denote the same expression with $p$ and all $b_l$ replaced by
one, so that $\mathbb E_\xi f_{V,\xi}=pf_V$.

\begin{lemma}[The chain with coins]
\label{lo:chain-properties}
Every function \eqref{eq:pool-component} is nonnegative and convex, has an
$\frac{H_s}{32}$-Lipschitz gradient with
$\|\nabla f_{V,\xi}\|\le\frac{H_sR}{64}$, and vanishes at
$x^*=a\sum_{l=1}^rv_l$, where $\|x^*\|=R$.
\end{lemma}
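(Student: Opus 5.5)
The plan is a direct computation on the explicit formula \eqref{eq:pool-component}, term by term, using only the scalar properties of $\psi_\beta$ listed before the lemma: it is convex, $C^1$, $\psi_\beta'$ is $2$-Lipschitz, $|\psi_\beta'|\le2a$, and $\psi_\beta$ vanishes on $[-\beta,\beta]$.

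\emph{Nonnegativity and convexity.} Each summand is a nonnegative coefficient ($p$ or $b_l\in\{0,1\}$) times $\psi_\beta$ composed with an affine function of $x$. Since $\eta_a\ge0$, every summand is nonnegative and convex, and so is the sum.

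\emph{Gradient bounds.} Write the bracket as $\sum_l c_l\,\psi_{\beta_l}(\langle w_l,x\rangle-\gamma_l)$ with $w_1=v_1$ and $w_{l+1}=v_l-v_{l+1}$. Then
\[
\nabla f_{V,\xi}(x)=\frac{H_s}{256}\,M^\top\theta(x),\qquad M=[w_1,\ldots,w_r]^\top,\qquad \theta_l=c_l\psi'_{\beta_l}(\cdot).
\]
The key estimate is $\|M\|\le\sqrt5$ (I will use the crude bound $\|M\|\le 1+2=3$ if that is enough). To get it, express $M$ in the orthonormal basis $V$: it is a bidiagonal matrix whose rows are $e_1$ and $e_l-e_{l+1}$. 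Then
\[
\|My\|^2=y_1^2+\sum_{l}(y_l-y_{l+1})^2\le y_1^2+2\sum_l\bigl(y_l^2+y_{l+1}^2\bigr)\le5\|y\|^2 .
\]
\emph{Lipschitz constant.} The map $\theta$ is coordinatewise $2$-Lipschitz in $Mx$, so $\nabla f$ is $\frac{H_s}{256}\cdot2\|M\|^2$-Lipschitz. This gives $\frac{10H_s}{256}\le\frac{H_s}{16}$, and the target is $\frac{H_s}{32}$. To reach $\frac{H_s}{32}$ I need $\|M\|^2\le4$. I would compute this exactly: $MM^\top$ (equivalently $M^\top M$) is the tridiagonal matrix of a path graph with boundary terms, whose top eigenvalue is $2+2\cos(\pi/(2r+1))<4$. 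This boundary-condition eigenvalue bookkeeping is the main, though minor, obstacle.

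\emph{Gradient norm.} We have $\|\theta\|\le2a\sqrt r=2R$, using $p\le1$ and $a\sqrt r=R$. Hence
\[
\|\nabla f_{V,\xi}\|\le\frac{H_s}{256}\cdot2\cdot2R=\frac{H_sR}{64}.
\]

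\emph{The zero.} At $x^*=a\sum_l v_l$ we get $\langle v_1,x^*\rangle-a=0$ and $\langle v_l-v_{l+1},x^*\rangle=0$, so every $\psi$ vanishes and $f_{V,\xi}(x^*)=0$. Finally, $\|x^*\|^2=a^2r=R^2$ by orthonormality of $V$.
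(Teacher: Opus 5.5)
Your proposal is correct and follows the paper's proof. The paper writes $\nabla f_{V,\xi}=\frac{H_s}{256}W^\top\Psi$, bounds $\|W\|\le2$, and uses that the entries of $\Psi$ are $2$-Lipschitz and bounded by $2a$, which is exactly your argument; the one difference is that your eigenvalue detour is unnecessary, because your own elementary estimate already gives $\|My\|^2\le y_1^2+2\sum_{l<r}(y_l^2+y_{l+1}^2)=3y_1^2+4\sum_{l=2}^{r-1}y_l^2+2y_r^2\le4\|y\|^2$, which is how the paper gets $\|W\|\le2$ (incidentally, the top eigenvalue of $M^\top M$ is $2+2\cos(2\pi/(2r+1))$, not $2+2\cos(\pi/(2r+1))$, though both are below $4$, so your conclusion stands).
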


\begin{proof}
Let $W:\R^d\to\R^r$ map $x$ to
$(\langle v_1,x\rangle,\langle v_1-v_2,x\rangle,\ldots,
\langle v_{r-1}-v_r,x\rangle)$. In the coordinates of the frame,
$w_1^2+\sum_{l<r}(w_l-w_{l+1})^2\le4\sum_lw_l^2$, so $\|W\|\le2$. The
gradient of \eqref{eq:pool-component} is $\frac{H_s}{256}W^\top\Psi(x)$,
where $\Psi(x)\in\R^r$ collects the derivatives of the scalar terms
multiplied by their coefficients in $[0,1]$. Each entry of $\Psi$ is
$2$-Lipschitz as a function of the corresponding entry of $Wx$ and is
bounded by $2a$, so
$\|\nabla f_{V,\xi}(x)-\nabla f_{V,\xi}(y)\|\le\frac{H_s}{256}\cdot2\cdot2
\cdot2\|x-y\|=\frac{H_s}{32}\|x-y\|$ and
$\|\nabla f_{V,\xi}\|\le\frac{H_s}{256}\cdot2\cdot2a\sqrt r
=\frac{H_sR}{64}$. Convexity and nonnegativity follow from the scalar
functions. Finally, $\langle v_1,x^*\rangle=a$ and
$\langle v_l-v_{l+1},x^*\rangle=0$, and $\|x^*\|=a\sqrt r=R$.
\end{proof}

\paragraph{The instance.}
Apply Lemma~\ref{lem:radial} with $c=L/32$, $L_\varphi=H_s/32$,
$M_\varphi=H_sR/64$, and $\rho$ from \eqref{eq:pool-parameters}; indeed
$64M_\varphi/c=\rho\ge32R$. The sample functions are
$\widetilde f_{V,\xi}=\mathcal Tf_{V,\xi}$. They are nonnegative, convex,
minimized at $x^*$, and have $\frac{6H_s}{32}$-Lipschitz gradients. Since
$\mathcal T$ is affine, the population is
$F_V=\mathbb E_\xi\widetilde f_{V,\xi}=\mathcal T(pf_V)$, and
Lemma~\ref{lem:radial} applied to $pf_V$ (with $L_\varphi=pH_s/32\le L/32$)
shows that $F_V$ has an $L$-Lipschitz gradient and
\begin{equation}
    F_V\ge pf_V .
    \label{lo:population-domination}
\end{equation}
Thus, for every frame, $(\widetilde f_{V,\xi})$ is an instance of
$\mathcal S_d$. For $\|x\|\ge2R+\rho$, all sample values and gradients
equal those of $\frac L{16}(\|x\|-R)^2$.

\paragraph{A simulated run.}
Fix an algorithm, and let $V$ be uniformly distributed on the orthonormal
$r$-frames, independently of the pool and of the seed. We strengthen the
algorithm in three ways, each of which can only reduce its risk: every
reply also contains the sample value; complete local histories are
exchanged at the end of every round; and the oracle makes the following
disclosures. Each worker $w$ has a prefix length $j_w$, meaning that it
knows $v_1,\ldots,v_{j_w}$; initially $j_w=1$ for all $w$, and at the end
of every round all disclosures are shared and every $j_w$ is replaced by
$\max_wj_w$. At a query $(i,x)$ of worker $w$ with a valid label and
pre-query prefix length $j<r$, the oracle discloses to $w$ the coins
$b_l$ of $\xi_i$ with $l<j$ and inspects $b_j$; if $b_j=1$, it discloses
$v_{j+1}$ and sets $j_w=j+1$. A query with a valid label and $j=r$
discloses all coins of $\xi_i$, and a query with an invalid label
discloses nothing.

The \emph{true} strengthened oracle returns the value and gradient of
$\widetilde f_{V,\xi_i}$ at $x$. The \emph{simulated} oracle makes the
same disclosures and returns: the public radial reply if
$\|x\|\ge2R+\rho$; the true reply if $j=r$; and otherwise the value and
gradient at $x$ of $\mathcal Tf^{[j]}_{V,\xi_i}$, where
\[
    f^{[j]}_{V,\xi}(x)=\frac{H_s}{256}\left[
        p\,\psi_0\bigl(\langle v_1,x\rangle-a\bigr)
        +\sum_{l=1}^{j}b_l\,\psi_\alpha\bigl(\langle v_l-v_{l+1},x\rangle\bigr)
    \right],
\]
the term $l=j$ being omitted when $b_j=0$. A simulated reply uses only
the frame vectors and coins disclosed to the worker after the query. If
$\|x\|\ge2R+\rho$, or if
\begin{equation}
    |\langle v_l,x\rangle|\le\frac\alpha2\quad\text{for every }l>j,
    \label{lo:local-geometric-condition}
\end{equation}
the simulated and true replies coincide. Indeed, under
\eqref{lo:local-geometric-condition},
$|\langle v_l-v_{l+1},x\rangle|\le\alpha$ for $l>j$, so the terms with
$l>j$ in \eqref{eq:pool-component} vanish together with their gradients
at $x$; hence $f_{V,\xi}$ and $f^{[j]}_{V,\xi}$ have the same value and
gradient at $x$, and by Lemma~\ref{lem:radial}, Part~3, so do their
images under $\mathcal T$. The \emph{simulated run} is the run of the
strengthened algorithm with the simulated oracle; it is defined for every
frame, pool, and seed.

\begin{lemma}[Progress of the simulated run]
\label{lo:progress}
Let $Z$ be the total increase of the common prefix length over the $N$
rounds of the simulated run. Then $\mathbb E Z\le\nu$ and $Z\le KN$.
\end{lemma}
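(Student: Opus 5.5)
The plan is to write $Z=\sum_{n=1}^N Z_n$, where $Z_n$ is the increase of the common prefix length in round $n$, and to bound each $Z_n$ by the maximum over workers of per-worker success counts. Lemma~\ref{lem:max-binomial} will then control these maxima. Within round $n$ every worker starts from the common prefix length $j$ set at the end of round $n-1$. Worker $w$'s prefix increases by one exactly at a valid-label query at which the inspected coin $b_{j_w}$ of the queried sample equals one, and at most by one per query. Hence $Z_n=\max_w S_{w,n}$ with $S_{w,n}=\sum_{k=1}^K I_{w,n,k}$, where $I_{w,n,k}\in\{0,1\}$ indicates an increase at the $k$-th query of worker $w$ in round $n$. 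This immediately gives $Z_n\le K$ and $Z\le KN$, which is the second claim.

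For the expectation I would fix the round and build the filtrations required by Lemma~\ref{lem:max-binomial}. Let $\mathcal G$ be generated by the frame $V$, the seed, and everything disclosed up to the end of round $n-1$. Let $\mathcal G_{w,k}$ add worker $w$'s first $k$ queries of round $n$, its replies, and its disclosures. The key claim is
\[
\mathbb P(I_{w,n,k}=1\mid\mathcal G_{w,k-1})\le p,
\]
which gives $\mathbb E[e^{I}\mid\mathcal G_{w,k-1}]\le 1-p+pe$. To prove it, note that the query $(i,x)$ and the current prefix length $j$ are $\mathcal G_{w,k-1}$-measurable, since by construction a simulated reply uses only disclosed frame vectors and coins. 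The event $I=1$ is $\{i\le S,\ j<r,\ b_j(\xi_i)=1\}$.

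The main obstacle is showing that $b_j(\xi_i)$ is still an unseen Bernoulli$(p)$ coin given all prior information. Coin $b_l$ of a sample is disclosed only in two situations: at a query by a worker with prefix length greater than $l$, or when it is inspected at prefix length $l$. An inspection with result one raises that worker's prefix beyond $l$, and at the end of the round all prefixes are synchronized to the maximum. I would argue by induction that a coin $b_j$ with $j$ at least the current prefix of $w$ has been inspected before only with outcome zero, or not at all.

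This needs care, because another worker may have inspected $b_j(\xi_i)$ earlier in the same round, and the worker processes are not coordinated within a round. To handle this I would instead bound $I_{w,n,k}$ by a fresh indicator as follows. Predraw, for every slot $(w,n,k)$ and every sample, the coins lazily. Then use the observation that each worker, within a round, inspects coin index $j$ of any sample only while its own prefix equals $j$. Across workers, conditioning on the other workers' information could reveal $b_j(\xi_i)=0$, which only helps the bound. It could also reveal $b_j(\xi_i)=1$, which is the dangerous case, since reuse of the same label by another worker could reveal a coin equal to one. Because $\mathcal G_{w,k}$ involves only worker $w$'s own information plus $\mathcal G$, and within a round worker $w$ does not see other workers' disclosures, the only information about $b_j(\xi_i)$ in $\mathcal G_{w,k-1}$ comes from round-$(n-1)$ disclosures, which are all at prefix length below the common $j$, or are inspections of index $j$ with outcome zero, together with $w$'s own zero inspections. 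The coins are independent of $V$ and of the seed. So conditionally $b_j(\xi_i)$ is either Bernoulli$(p)$ or known to be zero, and the probability is at most $p$.

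Applying Lemma~\ref{lem:max-binomial} then gives $\mathbb E[Z_n\mid\mathcal G]\le\log m+(e-1)Kp$. Summing over $n$ by the tower property yields $\mathbb E Z\le N[\log m+(e-1)Kp]=\nu$.
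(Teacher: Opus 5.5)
Your proof is correct and is essentially the paper's argument: you write $Z$ as a sum of per-round maxima $\max_w\sum_k I_{w,k}$, and you condition only on the shared information $\mathcal G$ plus worker $w$'s own current-round history. Under that conditioning the frontier coin $b_j$ of the queried sample is either already disclosed to $w$ (hence zero) or still conditionally $\operatorname{Bernoulli}(p)$, after which you apply Lemma~\ref{lem:max-binomial} and sum over rounds.

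The half-started detour about predrawing coins lazily and dominating $I_{w,n,k}$ by fresh indicators is unnecessary and can be dropped. The conditioning argument you settle on already handles other workers' same-round inspections.
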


\begin{proof}
\emph{Known frontier coins are zero.} Let $j$ be the current prefix
length of worker $w$, and suppose a coin $b_j$ of some sample has been
disclosed to $w$. It was disclosed either at a query with pre-query
prefix length $j'>j$, which is impossible because prefix lengths do not
decrease and the prefix length of $w$ would exceed $j$, or as the
inspected coin at a query with pre-query prefix length $j$; in the
latter case, if it were one, that prefix length would have become $j+1$,
and the prefix length of $w$ would exceed $j$ as well. Coins disclosed to
other workers in the current round are not disclosed to $w$. Hence
$b_j=0$.

\emph{One round.} Fix a round, let $\mathcal G$ be the shared information
at its start, and let $\mathcal G_{w,k}$ be generated by $\mathcal G$ and
the first $k$ queries of worker $w$ in the round, with their replies and
disclosures. Let $I_{w,k}\in\{0,1\}$ indicate that the $k$-th query of $w$
advances its prefix ($I_{w,k}=0$ if the query is not made). The
information in $\mathcal G_{w,k-1}$ is a function of the seed, of $V$, and
of the coins disclosed to $w$, and the label $i$ and the frontier $j$ of
the $k$-th query are $\mathcal G_{w,k-1}$-measurable. If the coin $b_j$
of $\xi_i$ has been disclosed to $w$, it is zero and $I_{w,k}=0$.
Otherwise, since the coins are independent of each other, of $V$, and of
the seed, and the coins disclosed to $w$ were chosen sequentially from
previously disclosed information, $b_j$ is
$\operatorname{Bernoulli}(p)$ conditional on $\mathcal G_{w,k-1}$. In both
cases
$\mathbb E[e^{I_{w,k}}\mid\mathcal G_{w,k-1}]\le1-p+pe$. All workers start
the round with the same prefix length, so the increase of the common
prefix length in the round is $\max_w\sum_kI_{w,k}$, and
Lemma~\ref{lem:max-binomial} bounds its conditional expectation given
$\mathcal G$ by $\log m+(e-1)Kp$. Summing over the rounds gives
$\mathbb E Z\le\nu$. Each round increases the common prefix length by at
most $K$, so $Z\le KN$.
\end{proof}

\begin{lemma}[Hidden projections]
\label{lo:geometry}
Let $d\ge d_{\mathrm{pool}}$. With probability at least $3/4$, every
query $x$ of the simulated run with $\|x\|<2R+\rho$ satisfies
\eqref{lo:local-geometric-condition} with its pre-query prefix length
$j$, and its output $\widehat x$, if $\|\widehat x\|<2R+\rho$, satisfies
$|\langle v_l,\widehat x\rangle|\le\alpha/2$ for every $l$ larger than the
final common prefix length. On this event, the simulated run and the run
with the true strengthened oracle coincide.
\end{lemma}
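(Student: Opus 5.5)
The plan mirrors Steps~3 and~4 of the proof of Theorem~\ref{wg:lower-theorem}: a hidden-projection argument on the simulated run, then an induction over queries. Fix the seed and the pool. The run is driven by the simulated oracle, so at each query the disclosed frame vectors are exactly $v_1,\ldots,v_j$ for the current prefix length $j$ of the querying worker. Because histories are exchanged at the end of rounds, the vectors shared across workers are $v_1,\ldots,v_{j}$ with $j$ the common prefix length.

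\textbf{Step 1 (measurability).} I would first check that every simulated query $x$ of worker $w$ with pre-query prefix length $j$ is a measurable function of the seed, the pool, and $v_1,\ldots,v_j$. Every earlier reply seen by $w$ (its own in the current round, and those shared in previous rounds) was produced at a moment when the relevant prefix length was at most $j$. By the construction of the simulated oracle it used only disclosed frame vectors, hence only $v_1,\ldots,v_j$, since prefix lengths are nondecreasing and $w$'s current prefix bounds all of them. The disclosure of $v_{j+1}$ at a query with $b_j=1$ happens after the reply is formed, so it affects only later queries, which have prefix $j+1$. One subtlety needs care here: the event $\{b_j=1\}$ determines the stopping of the prefix. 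It is handled by conditioning on the whole pool, which is independent of $V$. Likewise, the output $\widehat x$ is a function of the seed, the pool, and $v_1,\ldots,v_{J}$, where $J$ is the final common prefix length.

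\textbf{Step 2 (union bound).} Conditionally on the seed, the pool, and $v_1,\ldots,v_j$, each $v_l$ with $l>j$ is uniform on the unit sphere of the $(d-j)$-dimensional complement. I would apply Lemma~\ref{lem:sphere} (conditional form) to $x\mathbf 1\{\|x\|<2R+\rho\}$ with $\bar r=2R+\rho$, $b=\alpha/2$, and $n=d-j\ge d-r$. Each failure then has probability at most $2\exp\bigl(-(d-r-1)\alpha^2/(8(2R+\rho)^2)\bigr)$. There are at most $mKN+1$ vectors (the queries and the output) and at most $r$ indices $l$. The choice \eqref{eq:d-pool} gives $d-r-1\ge\frac{8(2R+\rho)^2}{\alpha^2}\log(8(mKN+1)r)$, so the total failure probability is at most $2(mKN+1)r\cdot\frac1{8(mKN+1)r}=\frac14$. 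A technical point is that the slot at which a query occurs and its prefix length are random. I would index the queries by slots $(w,n,k)$ and apply the bound slot by slot, conditioning on the prefix length at that slot, which is a stopping-time-type variable determined by the pool and $v_1,\ldots,v_j$.

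\textbf{Step 3 (coincidence).} On the resulting event, I would argue by induction over the queries in chronological order that the two runs agree. If all earlier replies agree, then the current query and its prefix length $j$ agree, since the disclosures are identical by definition. If $\|x\|\ge2R+\rho$, both replies are the public radial reply. Otherwise \eqref{lo:local-geometric-condition} holds, and the discussion preceding the lemma shows that the simulated and true replies coincide. When $j=r$ they coincide trivially. Hence both runs, and therefore their outputs, coincide.

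I expect the main obstacle to be Step~1. One must make rigorous that adaptively chosen queries depend only on the disclosed prefix of the frame despite random prefix advancement and label reuse. The induction also requires the simulated reply at prefix $j$ to use $v_{j+1}$ only through its disclosure after the query, which is exactly where the definition of $f^{[j]}$ (omitting the term $l=j$ when $b_j=0$) matters.
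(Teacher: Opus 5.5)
Your proposal is correct and follows the paper's proof essentially step for step. You fix the seed and pool, observe that the event $\{\bar j\le j\}$ and the query on it are determined by $v_1,\ldots,v_j$, apply Lemma~\ref{lem:sphere} conditionally with threshold $\alpha/2$ and radius $2R+\rho$, take the union bound over at most $mKN+1$ vectors and $r$ indices using \eqref{eq:d-pool}, and finish by induction over the queries. The paper makes your ``stopping-time-type'' claim rigorous by reconstructing the simulated run round by round from $v_1,\ldots,v_j$ alone, with workers simulated separately within a round, and declaring $\bar j>j$ as soon as some reply on the path to the slot would require $v_{j+1}$.
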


\begin{proof}
Fix the seed and the pool. Consider a query slot, let $\bar j$ be the
pre-query prefix length of the querying worker in the simulated run, and
let $j\le r$. We claim that the event $\{\bar j\le j\}$, and on this event the
query, are determined by $v_1,\ldots,v_j$. To see this, reconstruct the
simulated run from $v_1,\ldots,v_j$ round by round, simulating the
workers of a round separately, which is possible because they do not
interact within a round. If in a round before the round of the slot some
worker would need $v_{j+1}$, the common prefix length at the start of the
round of the slot exceeds $j$, so $\bar j>j$. If in the round of the slot the
querying worker would need $v_{j+1}$ before the slot, again $\bar j>j$.
Otherwise all replies that the query depends on are computable from
$v_1,\ldots,v_j$, so is the query, and $\bar j\le j$. The same argument
applies to the output and the final common prefix length.

For a uniformly distributed frame, conditional on $v_1,\ldots,v_j$ every
$v_l$ with $l>j$ is uniformly distributed on the unit sphere of the
$(d-j)$-dimensional space $\operatorname{span}\{v_1,\ldots,v_j\}^\perp$.
Since the event $\{\bar j=j\}$ and the query on it are determined by
$v_1,\ldots,v_j$, Lemma~\ref{lem:sphere}, applied to the query capped to
$\{\|x\|<2R+\rho\}$ with the threshold $\alpha/2$, gives for every $l>\bar j$
\[
    \mathbb{P}\Bigl(|\langle v_l,x\rangle|>\frac\alpha2,\
        \|x\|<2R+\rho\Bigr)
    \le2\exp\left(-\frac{(d-r-1)\alpha^2}{8(2R+\rho)^2}\right).
\]
The same bound holds for the output. There are at most $mKN$ queries,
one output, and at most $r$ indices $l$ for each, so by a union bound and
\eqref{eq:d-pool} the failure probability is at most
$2(mKN+1)r\exp\bigl(-\frac{(d-r-1)\alpha^2}{8(2R+\rho)^2}\bigr)\le\frac14$.
Integrating over the seed and the pool gives the same bound.

On this event, suppose the two runs coincide before some query. Then the
query is the same in both runs and, since either $\|x\|\ge2R+\rho$ or
\eqref{lo:local-geometric-condition} holds, the two replies coincide;
the disclosures coincide because they follow the same rule. By induction,
the two runs coincide.
\end{proof}

\begin{lemma}[Loss gap]
\label{lo:gap}
If $|\langle v_{s+1},x\rangle|\le\alpha/2$, then
\[
    f_V(x)\ge\frac{9H_sR^2}{8192(s+1)^2}.
\]
\end{lemma}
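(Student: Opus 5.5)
The plan is to lower bound $f_V(x)$ by a one-dimensional chain argument in the coordinates $w_l=\langle v_l,x\rangle$, $l=1,\ldots,s+1$. With all coefficients set to one,
\[
    f_V(x)=\frac{H_s}{256}\Bigl[\psi_0(w_1-a)+\sum_{l=1}^{r-1}\psi_\alpha(w_l-w_{l+1})\Bigr],
\]
where the inner products with $v_l-v_{l+1}$ equal $w_l-w_{l+1}$ by orthonormality. We drop the terms with $l\ge s+1$, since they are nonnegative, and keep only the first term and the links $l=1,\ldots,s$. Along this chain $w_1$ must travel from near $a$ down to $w_{s+1}$, with $|w_{s+1}|\le\alpha/2$. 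Hence
\[
    (a-w_1)+\sum_{l=1}^s(w_l-w_{l+1})=a-w_{s+1}\ge a-\frac\alpha2 .
\]

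Put $t_0=|w_1-a|$ and $t_l=(|w_l-w_{l+1}|-\alpha)_+$ for $1\le l\le s$. Then
\[
    t_0+\sum_{l=1}^s t_l\ge a-\frac\alpha2-s\alpha .
\]
Since $\alpha=a/(4(s+1))$, the right-hand side is at least $a-(s+1)\alpha=\tfrac34a$. The terms are $\eta_a(t_0)$ and $\eta_a(t_l)$, and $\eta_a(t)\ge\min\{t^2,\,a t\}$; in fact $\eta_a(t)\ge t^2$ for $t\le a$ and $\eta_a(t)\ge at$ for $t>a$.

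There are two cases. If some $t_l>a$, that single term already gives $\eta_a(t_l)\ge a^2$. Otherwise all $t_l\le a$, and by Cauchy--Schwarz over the $s+1$ terms,
\[
    \sum_l\eta_a(t_l)=\sum_l t_l^2\ge\frac{(\tfrac34a)^2}{s+1}=\frac{9a^2}{16(s+1)}.
\]
The second case is the binding one, because $9/(16(s+1))<1$. Substituting $a^2=R^2/r=R^2/(2(s+1))$ gives
\[
    f_V(x)\ge\frac{H_s}{256}\cdot\frac{9R^2}{32(s+1)^2}=\frac{9H_sR^2}{8192(s+1)^2},
\]
which is exactly the claim. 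This confirms that the constants were chosen for this bound.

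The only delicate point is the sign handling: we must use $|w_l-w_{l+1}|\ge w_l-w_{l+1}$, and $t_0\ge a-w_1$, to make the telescoping inequality valid. We also need to check that the dead zone $\alpha$ in each $\psi_\alpha$ costs at most $s\alpha$ in total, which the choice $\alpha=a/(4(s+1))$ guarantees. Beyond this, the argument is routine.
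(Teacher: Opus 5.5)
Your proof is correct and is essentially the paper's argument. Both use the same telescoping $a=(a-\langle v_1,x\rangle)+\sum_{l=1}^s\langle v_l-v_{l+1},x\rangle+\langle v_{s+1},x\rangle$ with the dead zones costing at most $(s+\tfrac12)\alpha\le a/4$, the same case split (some $t_l$ exceeds $a$, or else Cauchy--Schwarz over the $s+1$ quadratic terms), and the substitution $a^2=R^2/(2(s+1))$. Two minor points: the identity $\langle v_l-v_{l+1},x\rangle=w_l-w_{l+1}$ is linearity rather than orthonormality, and keeping the links $l\le s$ uses $s\le r-1$, which holds since $r=2(s+1)$.
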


\begin{proof}
Put $t_0=|a-\langle v_1,x\rangle|$ and
$t_l=(|\langle v_l-v_{l+1},x\rangle|-\alpha)_+$ for $1\le l\le s$; note
that $s\le r-1$. Since
$a=(a-\langle v_1,x\rangle)+\sum_{l=1}^s\langle v_l-v_{l+1},x\rangle
+\langle v_{s+1},x\rangle$,
\[
    a\le t_0+\sum_{l=1}^s(t_l+\alpha)+\frac\alpha2,
    \qquad\text{hence}\qquad
    \sum_{l=0}^st_l\ge a-\Bigl(s+\frac12\Bigr)\alpha\ge\frac{3a}4 .
\]
The anchor term of $f_V$ equals $\eta_a(t_0)$, and the edge terms with
$1\le l\le s$ equal $\eta_a(t_l)$. If some $t_l\ge a$, then
$\eta_a(t_l)\ge a^2\ge\frac{9a^2}{16(s+1)}$. Otherwise
$\eta_a(t_l)=t_l^2$ for all $l\le s$, and the Cauchy--Schwarz inequality
gives $\sum_{l=0}^st_l^2\ge\frac{9a^2}{16(s+1)}$. In both cases
$f_V(x)\ge\frac{H_s}{256}\cdot\frac{9a^2}{16(s+1)}$, and
$a^2=\frac{R^2}{2(s+1)}$.
\end{proof}

\begin{theorem}[Lower bound with sample reuse]
\label{lo:combined}
Let $S\in\mathbb N\cup\{\infty\}$, $S\ge1$. For every
$d\ge d_{\mathrm{pool}}$,
\[
    \mathfrak M_d^{\mathrm{pool}}(S)
    \ge\frac{R^2}{10^6\log^2(em)}\min\left\{L,\;
        \frac{\min\{L,H_s/K\}}{N^2}+\frac{H_s}{\min\{S,mKN\}}\right\}.
\]
\end{theorem}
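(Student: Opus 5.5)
The plan is to combine two lower bounds, a communication term from the chain-with-coins construction and the statistical term of Lemma~\ref{lem:stat}, and then merge them exactly as in the proof of Theorem~\ref{qlow:allbudget}.

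\emph{Communication term.} Fix an algorithm and draw $V$ uniformly among orthonormal $r$-frames, independently of the pool and the seed. The strengthened algorithm can only do better, so it suffices to bound its risk. Let $E_{\rm geo}$ be the event of Lemma~\ref{lo:geometry}, which has probability at least $3/4$. On $E_{\rm geo}$ the true run coincides with the simulated run. The final common prefix length is $1+Z$. By Lemma~\ref{lo:progress} and Markov's inequality, $\mathbb P(Z\ge 4\nu)\le 1/4$. If $s-1=KN$, then $Z\le s-1$ holds surely, since $Z\le KN$. Otherwise $s-1=\lceil4\nu\rceil$, and $Z\le s-1$ holds with probability at least $3/4$. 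Either way, the event $E_{\rm geo}\cap\{1+Z\le s\}$ has probability at least $1/2$.

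On this event, consider the output $\widehat x$ and split into two cases.
\begin{itemize}
\item If $\|\widehat x\|<2R+\rho$, then $s+1$ exceeds the final prefix length. Lemma~\ref{lo:geometry} gives $|\langle v_{s+1},\widehat x\rangle|\le\alpha/2$. Then Lemma~\ref{lo:gap} and \eqref{lo:population-domination} give $F_V(\widehat x)\ge pf_V(\widehat x)\ge\frac{9pH_sR^2}{8192(s+1)^2}$.
\item If $\|\widehat x\|\ge2R+\rho$, then $F_V(\widehat x)=\frac L{16}(\|\widehat x\|-R)^2\ge LR^2/16$. This is at least the previous bound, because $pH_s\le L$ and $s\ge1$.
\end{itemize}
Averaging over $V$ yields a fixed frame with $\mathbb E F_V(\widehat x)\gtrsim pH_sR^2/(s+1)^2$.

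Next we evaluate the parameters. We have $pH_s=\min\{L,H_s\log(em)/K\}\ge\min\{L,H_s/K\}$. Also $\nu\le N[\log m+2\log(em)]\le3N\log(em)$, so $s+1\le 3+4\nu\le 15N\log(em)$. Hence the risk is at least $c\,R^2\min\{L,H_s/K\}/(N^2\log^2(em))$ with $c$ of order $10^{-6}$, and one checks that the constant fits within $10^6$ after the final halving.

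\emph{Statistical term and combination.} Lemma~\ref{lem:stat} gives $\frac{R^2}8\min\{L,H_s/\min\{S,mKN\}\}$ in every dimension. Take the average of the two bounds. Then use the elementary fact that $A+\min\{L,B\}\ge\min\{L,A+B\}$: if $B\le L$ the left side equals $A+B$, and otherwise it exceeds $L$. This yields the stated rate. The dimension requirement $d\ge d_{\mathrm{pool}}$ is exactly that of Lemma~\ref{lo:geometry}.

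\textbf{Main obstacle.} The delicate step is the case analysis guaranteeing $s+1>1+Z$ with constant probability, together with the fact that Lemma~\ref{lo:gap} needs $s\le r-1$. The choice $r=2(s+1)$ ensures the latter. The rest is bookkeeping of the constants.
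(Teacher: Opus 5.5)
Your proposal is correct and follows the paper's proof essentially step for step. The steps match: Markov on $Z$ with the case $s-1=KN$, intersection with the event of Lemma~\ref{lo:geometry} to get probability $1/2$, Lemma~\ref{lo:gap} together with \eqref{lo:population-domination} inside the ball, the same parameter bounds $pH_s\ge\min\{L,H_s/K\}$ and $s+1\le15N\log(em)$, and the same averaging with Lemma~\ref{lem:stat}. The one difference is minor: outside the ball you use the cruder bound $LR^2/16$ in place of the paper's $\frac L{16}\rho^2\ge64H_sR^2$, and this still dominates $\frac{9pH_sR^2}{8192(s+1)^2}$ because $pH_s\le L$.
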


\begin{proof}
\emph{The chain term.}
By Lemma~\ref{lo:progress}, $\mathbb{P}\left(Z\le s-1\right)\ge3/4$: if $s-1=KN$ this
holds surely, and otherwise $s-1=\lceil4\nu\rceil$ and Markov's inequality
applies ($\nu>0$ because $p>0$). The final common prefix length is $1+Z$.
Intersect $\{Z\le s-1\}$ with the event of Lemma~\ref{lo:geometry}; the
intersection has probability at least $1/2$. On it, the output
$\widehat x$ of the strengthened algorithm equals the simulated output,
and its final common prefix length is at most $s$. If
$\|\widehat x\|<2R+\rho$, then $|\langle v_{s+1},\widehat x\rangle|\le
\alpha/2$, and \eqref{lo:population-domination} and Lemma~\ref{lo:gap}
give $F_V(\widehat x)\ge\frac{9pH_sR^2}{8192(s+1)^2}$. If
$\|\widehat x\|\ge2R+\rho$, then
$F_V(\widehat x)=\frac L{16}(\|\widehat x\|-R)^2\ge\frac L{16}\rho^2
=\frac{64H_s^2R^2}{L}\ge64H_sR^2$, which is larger. Hence
\[
    \mathbb E F_V(\widehat x)\ge\frac{9pH_sR^2}{16384(s+1)^2},
\]
and averaging over $V$ selects a fixed instance. By
\eqref{eq:pool-parameters}, $Kp\le\log(em)$, so
$\nu\le eN\log(em)<3N\log(em)$ and
$s+1\le3+4\nu\le15N\log(em)$. Also
$pH_s=\min\{L,H_s\log(em)/K\}\ge\min\{L,H_s/K\}$. Therefore
\[
    \mathfrak M_d^{\mathrm{pool}}(S)
    \ge\frac{9R^2\min\{L,H_s/K\}}{16384\cdot225\,N^2\log^2(em)}
    =\frac{R^2\min\{L,H_s/K\}}{409600\,N^2\log^2(em)} .
\]
The pool size did not enter this argument: a finite pool consists of $S$
independent coin vectors, and invalid labels disclose nothing.

\emph{Combination.}
Lemma~\ref{lem:stat} gives
$\mathfrak M_d^{\mathrm{pool}}(S)\ge\frac{R^2}8\min\{L,\frac{H_s}{\min\{S,mKN\}}\}$.
The larger of the two bounds is at least their average, and
$\frac1{16}\ge\frac1{819200\log^2(em)}$, so
\begin{align*}
    \mathfrak M_d^{\mathrm{pool}}(S)
    &\ge\frac{R^2}{819200\log^2(em)}
    \left[\frac{\min\{L,H_s/K\}}{N^2}
      +\min\left\{L,\frac{H_s}{\min\{S,mKN\}}\right\}\right]\\
    &\ge\frac{R^2}{10^6\log^2(em)}\min\left\{L,\;
        \frac{\min\{L,H_s/K\}}{N^2}+\frac{H_s}{\min\{S,mKN\}}\right\},
\end{align*}
where the last inequality holds exactly as in the proof of
Theorem~\ref{qlow:allbudget}.
\end{proof}

\section{Upper bounds with fresh samples}
\label{app:appendix_B}

In this appendix the oracle returns, at every query $x$, a vector
$g(x,\xi)$ with a fresh sample $\xi$ and the two properties in
\eqref{eq:wg}, that is, Assumption~\ref{ass:wg} holds. By
Lemma~\ref{lem:relations}, every result below applies to the
sample-gradient oracle of Assumption~\ref{ass:sg}. We normalize
$F(x^*)=0$. If $\mathcal H$ denotes the history before a query $x$,
then $x$ is $\mathcal H$-measurable and $\xi$ is independent of
$\mathcal H$, so that \eqref{eq:wg} holds conditionally:
\begin{equation}
    \mathbb E[g(x,\xi)\mid\mathcal H]=\nabla F(x),
    \qquad
    \mathbb E\bigl[\|g(x,\xi)\|^2\mid\mathcal H\bigr]\le2H_sF(x).
    \label{up:wg}
\end{equation}

\subsection{Preliminaries}
\label{app:B-prelim}

\begin{lemma}[Smooth functions]
\label{lem:smooth}
Let $\psi:\R^d\to\R$ be differentiable with an $M$-Lipschitz gradient,
$M>0$, and let $u,v\in\R^d$.
\begin{enumerate}
    \item
    $\psi(v)\le \psi(u)+\langle\nabla \psi(u),v-u\rangle+\frac M2\|v-u\|^2$.

    \item
    If $\psi\ge0$, then $\|\nabla \psi(u)\|^2\le2M\psi(u)$ and
    $\psi(v)\le2\psi(u)+M\|v-u\|^2$.

    \item
    If $\psi$ is convex, then
    $\psi(v)-\psi(u)-\langle\nabla \psi(u),v-u\rangle
    \ge\frac1{2M}\|\nabla \psi(v)-\nabla \psi(u)\|^2$ and
    $\|\nabla \psi(v)-\nabla \psi(u)\|^2
    \le M\langle\nabla \psi(v)-\nabla \psi(u),v-u\rangle$.

    \item
    If $x^*$ minimizes $\psi$, then $\nabla \psi(x^*)=0$,
    $0\le \psi(u)-\psi(x^*)\le\frac M2\|u-x^*\|^2$, and
    $\|\nabla \psi(u)\|\le M\|u-x^*\|$.
\end{enumerate}
\end{lemma}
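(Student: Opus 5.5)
This is a standard lemma (Lemma~\ref{lem:smooth}); I would prove the four parts in order, each from the descent inequality of Part~1 and elementary optimization of a quadratic.

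\emph{Part~1.} Write
\[
\psi(v)-\psi(u)=\int_0^1\langle\nabla\psi(u+\tau(v-u)),v-u\rangle\,d\tau .
\]
Subtract $\langle\nabla\psi(u),v-u\rangle$ and bound the integrand by Cauchy--Schwarz and the Lipschitz property. This gives $M\tau\|v-u\|^2$, which integrates to $\frac M2\|v-u\|^2$.

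\emph{Part~2.}
\begin{itemize}
\item For the gradient bound, apply Part~1 at $v=u-\nabla\psi(u)/M$. Nonnegativity gives $0\le\psi(u)-\frac1{2M}\|\nabla\psi(u)\|^2$.
\item For the second inequality, use Part~1 together with Young's inequality:
\[
\langle\nabla\psi(u),v-u\rangle\le\frac1{2M}\|\nabla\psi(u)\|^2+\frac M2\|v-u\|^2 .
\]
Then insert the gradient bound just proved. This yields $\psi(v)\le2\psi(u)+M\|v-u\|^2$.
\end{itemize}

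\emph{Part~3.} This is the usual co-coercivity argument.
\begin{itemize}
\item Fix $u$ and set $\phi(w)=\psi(w)-\psi(u)-\langle\nabla\psi(u),w-u\rangle$. It is convex, has an $M$-Lipschitz gradient, and is minimized at $u$ with value $0$, so $\phi\ge0$.
\item Apply the first claim of Part~2 to $\phi$ at $v$. This gives $\|\nabla\psi(v)-\nabla\psi(u)\|^2\le2M\phi(v)$, which is the first inequality.
\item Write the same inequality with $u$ and $v$ swapped and add the two. The function values cancel, leaving the second inequality.
\end{itemize}

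\emph{Part~4.}
\begin{itemize}
\item The gradient vanishes at an interior minimizer of a differentiable function.
\item Apply Part~1 at $u=x^*$ to get $\psi(u)-\psi(x^*)\le\frac M2\|u-x^*\|^2$.
\item The Lipschitz property with $\nabla\psi(x^*)=0$ gives $\|\nabla\psi(u)\|\le M\|u-x^*\|$.
\end{itemize}

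None of the steps is a real obstacle. The only point needing care is Part~3: the first claim of Part~2 requires a nonnegative function, and the shifted function $\phi$ provides one because convexity makes $u$ its minimizer.
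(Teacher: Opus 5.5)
Your proposal is correct and follows the paper's proof essentially step for step: the integral form of Part~1, the choice $v=u-\nabla\psi(u)/M$ for the gradient bound, the shifted nonnegative function $\phi$ (the paper's $r_u$) together with the symmetrization for co-coercivity, and Part~1 at $x^*$ for Part~4. The only cosmetic difference is in the second claim of Part~2: you apply Young's inequality to the linear term before inserting the gradient bound, whereas the paper first uses Cauchy--Schwarz and the gradient bound and then $2\sqrt{ab}\le a+b$; the two routes are equivalent.
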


\begin{proof}
\emph{Part 1.}
By the fundamental theorem of calculus and the Lipschitz property of
$\nabla \psi$,
\[
    \psi(v)-\psi(u)-\langle\nabla \psi(u),v-u\rangle
    =\int_0^1\bigl\langle\nabla \psi(u+s(v-u))-\nabla \psi(u),v-u\bigr\rangle\,ds
    \le\int_0^1Ms\|v-u\|^2\,ds
    =\frac M2\|v-u\|^2.
\]

\emph{Part 2.}
Apply Part~1 with $v=u-\frac1M\nabla \psi(u)$:
\[
    0\le \psi\Bigl(u-\frac1M\nabla \psi(u)\Bigr)
    \le \psi(u)-\frac1M\|\nabla \psi(u)\|^2+\frac1{2M}\|\nabla \psi(u)\|^2
    =\psi(u)-\frac1{2M}\|\nabla \psi(u)\|^2 .
\]
This proves the first inequality. For the second, Part~1, the
Cauchy--Schwarz inequality, and the first inequality give
\[
    \psi(v)\le \psi(u)+\sqrt{2M\psi(u)}\,\|v-u\|+\frac M2\|v-u\|^2 .
\]
Since $2\sqrt{ab}\le a+b$ with $a=\psi(u)$ and $b=\frac M2\|v-u\|^2$, the
middle term is at most $\psi(u)+\frac M2\|v-u\|^2$.

\emph{Part 3.}
The function
$r_u(v)=\psi(v)-\psi(u)-\langle\nabla \psi(u),v-u\rangle$
is nonnegative by convexity and has the $M$-Lipschitz gradient
$\nabla r_u(v)=\nabla \psi(v)-\nabla \psi(u)$. Part~2 applied to $r_u$ at $v$
gives the first inequality. Adding it to the same inequality with $u$
and $v$ interchanged, and using
$r_u(v)+r_v(u)=\langle\nabla \psi(v)-\nabla \psi(u),v-u\rangle$, gives the
second.

\emph{Part 4.}
The gradient of a differentiable function vanishes at a minimizer. The
upper bound on $\psi(u)-\psi(x^*)$ is Part~1 at the point $x^*$, and
$\|\nabla \psi(u)\|=\|\nabla \psi(u)-\nabla \psi(x^*)\|\le M\|u-x^*\|$.
\end{proof}

\begin{proof}[Proof of Lemma~\ref{lem:relations} and of \eqref{eq:pool-fresh}]
Fix an instance of $\mathcal S_d$. For almost every $\xi$, the function
$f_\xi$ is convex and $\nabla f_\xi(x^*)=0$, so $x^*$ minimizes
$f_\xi$. Lemma~\ref{lem:smooth}, Parts~4 and~2, applied to the
nonnegative function $f_\xi-f_\xi(x^*)$, gives for every $x$
\begin{gather}
    0\le f_\xi(x)-f_\xi(x^*)\le\frac{H_s}2\|x-x^*\|^2,
    \qquad
    \|\nabla f_\xi(x)\|\le H_s\|x-x^*\|,
    \label{up:sample-bounds}\\
    \|\nabla f_\xi(x)\|^2\le2H_s\bigl(f_\xi(x)-f_\xi(x^*)\bigr).
    \label{up:sample-growth}
\end{gather}
Fix $x,u\in\R^d$. For $0<|s|\le1$, the mean value theorem and
\eqref{up:sample-bounds} give
\[
    \frac{|f_\xi(x+su)-f_\xi(x)|}{|s|}
    \le\sup_{|\sigma|\le1}\|\nabla f_\xi(x+\sigma u)\|\,\|u\|
    \le H_s\bigl(\|x-x^*\|+\|u\|\bigr)\|u\|,
\]
a bound independent of $\xi$ and $s$. By dominated convergence, the
directional derivative of $F$ at $x$ along $u$ equals
$\mathbb E\langle\nabla f_\xi(x),u\rangle$. Since $F$ is differentiable,
$\nabla F(x)=\mathbb E\nabla f_\xi(x)$. Taking expectations in
\eqref{up:sample-growth} gives
$\mathbb E\|\nabla f_\xi(x)\|^2\le2H_sF(x)=2H_s(F(x)-F(x^*))$. The
function $F$ is convex, has an $L$-Lipschitz gradient, and is minimized
at $x^*$. Hence the instance, with the oracle
$g(x,\xi)=\nabla f_\xi(x)$, belongs to $\mathcal W_d$, and to
$\mathcal W_d^{\mathrm{quad}}$ if $F$ is quadratic.

The algorithm observes the same vectors in both interpretations of the
instance, so every algorithm has the same risk in both, which gives
$\mathfrak M_d^{\mathrm{sg}}\le\mathfrak M_d^{\mathrm{wg}}$ and
$\mathfrak M_d^{\mathrm{sg,quad}}\le\mathfrak M_d^{\mathrm{wg,quad}}$.
For \eqref{eq:pool-fresh}, an algorithm
with fresh access is run with a pool of size $S=\infty$ by assigning the
label $((n-1)K+k-1)m+w$ to the $k$-th query of worker $w$ in round $n$.
These labels are distinct, so the replies are computed at independent
fresh samples and the run has the same law as with fresh access.
\end{proof}

\begin{lemma}[Moments]
\label{up:moments}
Consider an instance of $\mathcal W_d$ with $F(x^*)=0$.
\begin{enumerate}
    \item
    $F(x)\le\frac L2\|x-x^*\|^2$ and
    $\|\nabla F(x)\|^2\le 2LF(x)$.

    \item
    If $\bar g_b(x)$ is the average of $b\ge1$ independent fresh
    responses at the same point $x$ that is measurable with respect to
    the history $\mathcal H$ before these responses, then
    \begin{equation}
        \mathbb E[\bar g_b(x)\mid\mathcal H]=\nabla F(x),
        \qquad
        \mathbb E\!\left[
            \|\bar g_b(x)-\nabla F(x)\|^2\mid\mathcal H
        \right]
        \le \frac{2H_s}{b}F(x).
        \label{up:batch-variance}
    \end{equation}

    \item
    Every finite recurrence built from deterministic finite linear
    combinations of previous states and fresh oracle responses has
    finite second moments, provided its initial states have finite
    second moments.
\end{enumerate}
\end{lemma}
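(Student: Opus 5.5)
Part~1 follows directly from Lemma~\ref{lem:smooth}. Since $F$ is convex with an $L$-Lipschitz gradient and is minimized at $x^*$ with $F(x^*)=0$, Part~4 of that lemma gives $F(x)\le\frac L2\|x-x^*\|^2$. Part~2, applied to the nonnegative function $F$ with $M=L$, gives $\|\nabla F(x)\|^2\le2LF(x)$.

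For Part~2, let $g_1,\ldots,g_b$ be the $b$ responses at $x$. Their samples $\xi_1,\ldots,\xi_b$ are independent of each other and of $\mathcal H$, and $x$ is $\mathcal H$-measurable, so \eqref{up:wg} holds for each $g_i$ conditionally on $\mathcal H$. Averaging gives unbiasedness. The centered vectors $g_i-\nabla F(x)$ are conditionally independent given $\mathcal H$ and have conditional mean zero, so their cross terms vanish. Hence
\[
\mathbb E\bigl[\|\bar g_b-\nabla F(x)\|^2\mid\mathcal H\bigr]=\frac1{b^2}\sum_{i=1}^b\mathbb E\bigl[\|g_i-\nabla F(x)\|^2\mid\mathcal H\bigr].
\]
Each term satisfies
\[
\mathbb E\bigl[\|g_i-\nabla F(x)\|^2\mid\mathcal H\bigr]=\mathbb E\bigl[\|g_i\|^2\mid\mathcal H\bigr]-\|\nabla F(x)\|^2\le2H_sF(x),
\]
which gives \eqref{up:batch-variance}. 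To justify the cross-term cancellation I would write the conditional expectation as an integral over the product law of $(\xi_1,\ldots,\xi_b)$ for fixed $x$, using joint measurability of $g$. The integrability needed there is supplied by the second-moment bound whenever $F(x)<\infty$, which always holds.

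For Part~3, I would induct along the recurrence. Suppose all states up to a given step have finite second moments, and take a fresh response $g(x,\xi)$ at a query $x$ that is a finite linear combination of earlier states. By \eqref{up:wg} and Part~1,
\[
\mathbb E\|g(x,\xi)\|^2=\mathbb E\,\mathbb E[\|g\|^2\mid\mathcal H]\le2H_s\mathbb E F(x)\le H_sL\,\mathbb E\|x-x^*\|^2,
\]
and the right-hand side is finite because $\|x-x^*\|^2\le2\|x\|^2+2\|x^*\|^2$. A finite linear combination of square-integrable vectors with deterministic coefficients is square-integrable by $\|\sum_i c_iv_i\|^2\le(\sum_i c_i^2)\sum_i\|v_i\|^2$, which closes the induction.

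The main point requiring care is the conditional-independence bookkeeping in Part~2, in particular that the query is fixed by $\mathcal H$ before the $b$ fresh samples are drawn. The remaining steps are routine.
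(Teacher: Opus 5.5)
Your proposal is correct and follows essentially the same route as the paper. Part~1 comes from Lemma~\ref{lem:smooth}, Parts~4 and~2. Part~2 uses conditional independence to remove the cross terms and the identity $\mathbb E[\|g-\nabla F(x)\|^2\mid\mathcal H]=\mathbb E[\|g\|^2\mid\mathcal H]-\|\nabla F(x)\|^2$. Part~3 uses the bound $\mathbb E\|g(X,\xi)\|^2\le H_sL\,\mathbb E\|X-x^*\|^2$ together with a Cauchy--Schwarz bound for deterministic linear combinations. The differences are cosmetic. You bound each of the $b$ terms separately instead of invoking identical conditional laws, and your Cauchy--Schwarz step has the factor $\sum_i c_i^2$ where the paper has the number of terms $s$.
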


\begin{proof}
\emph{Part 1.}
This is Lemma~\ref{lem:smooth}, Parts~4 and~2, applied to $\psi=F$, which
is nonnegative, has an $L$-Lipschitz gradient, and is minimized at
$x^*$.

\medskip
\noindent
\emph{Part 2.}
Write
\[
    g_i=g(x,\xi_i),
    \qquad
    \bar g_b(x)=\frac1b\sum_{i=1}^b g_i,
    \qquad
    \zeta_i=g_i-\nabla F(x).
\]
By \eqref{up:wg},
\[
    \mathbb E[g_i\mid\mathcal H]=\nabla F(x),
    \qquad
    \mathbb E[\zeta_i\mid\mathcal H]=0,
    \qquad
    \mathbb E[\bar g_b(x)\mid\mathcal H]
    =\frac1b\sum_{i=1}^b
       \mathbb E[g_i\mid\mathcal H]
    =\nabla F(x).
\]
Conditional on $\mathcal H$, the vectors $\zeta_i$
are independent. Thus, for $i\ne j$,
\[
    \mathbb E[
        \langle\zeta_i,\zeta_j\rangle
        \mid\mathcal H
    ]
    =
    \left\langle
        \mathbb E[\zeta_i\mid\mathcal H],
        \mathbb E[\zeta_j\mid\mathcal H]
    \right\rangle
    =0.
\]
Expanding the squared norm therefore gives
\begin{align*}
    \mathbb E\!\left[
        \|\bar g_b(x)-\nabla F(x)\|^2
        \mid\mathcal H
    \right]
    &=
    \frac1{b^2}\sum_{i=1}^b
        \mathbb E[\|\zeta_i\|^2\mid\mathcal H]
    +\frac2{b^2}\sum_{i<j}
        \mathbb E[
            \langle\zeta_i,\zeta_j\rangle
            \mid\mathcal H
        ]
    =
    \frac1b\,
    \mathbb E\!\left[
        \|g(x,\xi)-\nabla F(x)\|^2
        \mid\mathcal H
    \right],
\end{align*}
where the last equality also uses that the $b$ responses have the same
conditional distribution. Expanding one centered second moment and using
\eqref{up:wg} once more,
\begin{align*}
    \mathbb E\!\left[
        \|g(x,\xi)-\nabla F(x)\|^2
        \mid\mathcal H
    \right]
    &=
    \mathbb E[\|g(x,\xi)\|^2\mid\mathcal H]
    -2\left\langle
        \mathbb E[g(x,\xi)\mid\mathcal H],
        \nabla F(x)
    \right\rangle
    +\|\nabla F(x)\|^2\\
    &=
    \mathbb E[\|g(x,\xi)\|^2\mid\mathcal H]
    -\|\nabla F(x)\|^2
    \le 2H_sF(x).
\end{align*}
Combining the two displays proves \eqref{up:batch-variance}.

\medskip
\noindent
\emph{Part 3.}
First consider a query $X$ that is measurable with respect to the
history $\mathcal H$ before it and satisfies $\mathbb E\|X\|^2<\infty$,
and let $\xi$ be a fresh sample. By \eqref{up:wg} and Part~1,
\begin{align*}
    \mathbb E\|g(X,\xi)\|^2
    &=\mathbb E\!\left[
        \mathbb E[\|g(X,\xi)\|^2\mid\mathcal H]
      \right]
    \le 2H_s\,\mathbb E F(X)
    \le H_sL\,\mathbb E\|X-x^*\|^2
    \le 2H_sL
       \bigl(\mathbb E\|X\|^2+\|x^*\|^2\bigr)
    <\infty.
\end{align*}
Thus a fresh response at a square-integrable query is itself
square-integrable. Next, let $Z_1,\ldots,Z_s$ be square-integrable
random vectors and let $a_1,\ldots,a_s$ be deterministic coefficients.
The triangle and Cauchy--Schwarz inequalities give
\[
    \left\|\sum_{j=1}^s a_jZ_j\right\|^2
    \le
    \left(\sum_{j=1}^s |a_j|\|Z_j\|\right)^2
    \le
    s\sum_{j=1}^s a_j^2\|Z_j\|^2,
\]
so every such linear combination is square-integrable; independence of
the $Z_j$ is not required. Starting from the square-integrable initial
states, forming a query, drawing a fresh response at it, and forming the
next states each preserve square integrability. Induction over the
finite sequence of these operations proves Part~3.
\end{proof}

\begin{lemma}[Serial SGD]
\label{lem:serial}
Let $s\ge1$. Set $X_0=x_0$ and
\[
    X_{k+1}=X_k-\frac1{2H_s}\,g(X_k,\xi_k),
    \qquad k=0,\ldots,s-1,
\]
with independent fresh samples $\xi_k$, and let
$\bar X=\frac1s\sum_{k=0}^{s-1}X_k$. Then
$\mathbb E F(\bar X)\le\frac{2H_s\|x_0-x^*\|^2}{s}$.
\end{lemma}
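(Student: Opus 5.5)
We run the standard one-step expansion of SGD around $x^*$ and exploit the weak growth condition to absorb the second-moment term into a fraction of $F(X_k)$. Write $g_k=g(X_k,\xi_k)$ and $\mathcal H_k$ for the history before the $k$-th query, so that $X_k$ is $\mathcal H_k$-measurable and \eqref{up:wg} holds conditionally. By Lemma~\ref{up:moments}, Part~3, all iterates are square-integrable, which justifies the expectations below.

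Expanding the square,
\[
    \|X_{k+1}-x^*\|^2=\|X_k-x^*\|^2-\frac1{H_s}\langle g_k,X_k-x^*\rangle+\frac1{4H_s^2}\|g_k\|^2 .
\]
Taking conditional expectations, the cross term becomes $\langle\nabla F(X_k),X_k-x^*\rangle\ge F(X_k)$ by convexity and $F(x^*)=0$. The last term is at most $\frac{2H_sF(X_k)}{4H_s^2}=\frac{F(X_k)}{2H_s}$ by \eqref{up:wg}. Hence
\[
    \mathbb E\bigl[\|X_{k+1}-x^*\|^2\mid\mathcal H_k\bigr]\le\|X_k-x^*\|^2-\frac{1}{2H_s}F(X_k).
\]

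Taking full expectations and telescoping over $k=0,\ldots,s-1$ gives
\[
    \frac1{2H_s}\sum_{k<s}\mathbb E F(X_k)\le\|x_0-x^*\|^2,
\]
that is, $\frac1s\sum_{k<s}\mathbb E F(X_k)\le 2H_s\|x_0-x^*\|^2/s$. Jensen's inequality for the convex $F$ then gives $F(\bar X)\le\frac1s\sum_k F(X_k)$, which yields the claim.

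The only delicate point is integrability: the conditional-expectation manipulations require finite second moments, so that the inner product has a well-defined conditional mean. This is exactly what Lemma~\ref{up:moments}, Part~3, supplies, since the recurrence is a deterministic linear combination of states and fresh responses. Beyond that, the argument is routine.
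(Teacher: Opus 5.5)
Your proposal is correct and follows the paper's proof essentially line for line. It uses the same one-step expansion around $x^*$, bounds the cross term by convexity and the second moment by weak growth to get the net decrease $\frac{1}{2H_s}F(X_k)$, then telescopes and applies Jensen, with square-integrability justified by Lemma~\ref{up:moments}, Part~3.
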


\begin{proof}
All iterates are square-integrable by Lemma~\ref{up:moments}, Part~3.
Conditional on $X_k$ and the preceding history, \eqref{up:wg} and the
convexity inequality $\langle\nabla F(X_k),X_k-x^*\rangle\ge F(X_k)$
give
\begin{align*}
    \mathbb E\bigl[\|X_{k+1}-x^*\|^2\mid X_k\bigr]
    &=\|X_k-x^*\|^2
      -\frac1{H_s}\langle\nabla F(X_k),X_k-x^*\rangle
      +\frac1{4H_s^2}\,\mathbb E\bigl[\|g(X_k,\xi_k)\|^2\mid X_k\bigr]\\
    &\le\|X_k-x^*\|^2-\frac1{H_s}F(X_k)+\frac1{2H_s}F(X_k)
    =\|X_k-x^*\|^2-\frac1{2H_s}F(X_k).
\end{align*}
Taking expectations and summing over $k=0,\ldots,s-1$ gives
$\sum_{k=0}^{s-1}\mathbb E F(X_k)\le2H_s\|x_0-x^*\|^2$. Convexity of
$F$ gives
$\mathbb E F(\bar X)\le\frac1s\sum_{k=0}^{s-1}\mathbb E F(X_k)$.
\end{proof}

\subsection{Quadratic populations}
\label{app:appendix_B_quadratic}

In this subsection
\[
    F(x)=\frac12\langle x-x^*,\mathsf A(x-x^*)\rangle,
    \qquad 0\preceq\mathsf A\preceq L\mathrm I,
\]
so that $\nabla F(x)=\mathsf A(x-x^*)$ and
$\langle\nabla F(x),x-x^*\rangle=2F(x)$.

\paragraph{One local block.}
Let $y$ be a point that is measurable with respect to the history
$\mathcal H$ at the start of a round, and write
$\mathbb E_{\mathcal H}[\cdot]=\mathbb E[\cdot\mid\mathcal H]$. A worker
performs $K$ fresh queries,
\begin{equation}
    z_0=y,
    \qquad
    z_{j+1}=z_j-\frac1{H_s}\,g(z_j,\xi_j),
    \qquad 0\le j<K,
    \label{quad:block}
\end{equation}
and forms
\[
    G=\frac{H_s}{K}(y-z_K),
    \qquad
    \bar z=\frac1K\sum_{j=0}^{K-1}z_j .
\]
The worker sends $G$ and $\bar z$. The quantity
\[
    \Phi(y)=\frac1K\sum_{j=0}^{K-1}\mathbb E_{\mathcal H}F(z_j)
\]
is used only in the analysis.

\begin{lemma}[Energy of one block]
\label{quad:energy}
If $\mathbb E\|y\|^2<\infty$, all states and responses in
\eqref{quad:block} are square-integrable, and
\begin{equation}
    \Phi(y)+\frac{K}{2H_s}\,\mathbb E_{\mathcal H}\|G\|^2
    \le\langle\mathbb E_{\mathcal H}G,\,y-x^*\rangle,
    \qquad
    \mathbb E_{\mathcal H}F(\bar z)\le\Phi(y).
    \label{quad:joint-energy}
\end{equation}
\end{lemma}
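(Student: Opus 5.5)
Square-integrability follows at once from Lemma~\ref{up:moments}, Part~3, since \eqref{quad:block} is a finite linear recurrence driven by fresh responses and starting at a square-integrable $y$. For the energy inequality, the plan is a per-step distance telescoping, exactly as in Lemma~\ref{lem:serial}, but with step size $1/H_s$. The larger step is affordable because for quadratics $\langle\nabla F(z),z-x^*\rangle=2F(z)$ rather than merely $\ge F(z)$.

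\emph{Step 1 (one step).} Conditioning on the history $\mathcal H_j$ before the $j$-th query and using \eqref{up:wg},
\[
    \mathbb E\bigl[\|z_{j+1}-x^*\|^2\mid\mathcal H_j\bigr]
    =\|z_j-x^*\|^2-\frac{4}{H_s}F(z_j)
     +\frac1{H_s^2}\mathbb E\bigl[\|g(z_j,\xi_j)\|^2\mid\mathcal H_j\bigr]
    \le\|z_j-x^*\|^2-\frac2{H_s}F(z_j)
     +\Bigl(\frac1{H_s^2}\mathbb E[\|g_j\|^2\mid\mathcal H_j]-\frac{2}{H_s}F(z_j)\Bigr).
\]
The bracket is nonpositive, but I keep it rather than discard it, since it is what must absorb $\|G\|^2$. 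Equivalently, I would write the exact identity
\[
    \|z_K-x^*\|^2=\|y-x^*\|^2-\frac2{H_s}\sum_j\langle g_j,z_j-x^*\rangle+\frac1{H_s^2}\sum_j\|g_j\|^2 .
\]

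\emph{Step 2 (relate to $G$).} Since $y-z_K=\frac1{H_s}\sum_j g_j=\frac{K}{H_s}G$, we have
\[
    \|z_K-x^*\|^2=\|y-x^*\|^2-\frac{2K}{H_s}\langle G,y-x^*\rangle+\frac{K^2}{H_s^2}\|G\|^2 .
\]
Equating the two expressions and taking $\mathbb E_{\mathcal H}$ (using $\mathbb E_{\mathcal H}\langle g_j,z_j-x^*\rangle=2\mathbb E_{\mathcal H}F(z_j)$) gives
\[
    \frac{2K}{H_s}\langle\mathbb E_{\mathcal H}G,y-x^*\rangle
    =\frac{4K}{H_s}\Phi(y)+\frac{K^2}{H_s^2}\mathbb E_{\mathcal H}\|G\|^2-\frac1{H_s^2}\sum_j\mathbb E_{\mathcal H}\|g_j\|^2 .
\]
By \eqref{up:wg}, $\sum_j\mathbb E_{\mathcal H}\|g_j\|^2\le2H_sK\Phi(y)$. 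Dividing by $2K/H_s$ then yields
\[
    \langle\mathbb E_{\mathcal H}G,y-x^*\rangle\ge\Phi(y)+\frac{K}{2H_s}\mathbb E_{\mathcal H}\|G\|^2,
\]
which is the first claim. The second claim, $\mathbb E_{\mathcal H}F(\bar z)\le\Phi(y)$, is Jensen's inequality for the convex $F$.

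The main obstacle is only the bookkeeping in Step~2: one must use the exact polarization identity instead of bounding each step separately, so that the cross terms between different $g_j$ are captured inside $\|G\|^2$ rather than lost. Integrability, needed to take these expectations, is supplied by the first paragraph.
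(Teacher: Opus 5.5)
Your proof is correct and is essentially the paper's argument. The paper applies the weak growth bound inside each step to get $\Phi(y)\le\frac{H_s}{2K}\bigl(\|y-x^*\|^2-\mathbb E_{\mathcal H}\|z_K-x^*\|^2\bigr)$ and then substitutes the exact expansion from $z_K=y-\frac{K}{H_s}G$, whereas you telescope exactly and apply the bound at the end; this is the same computation, and bounding each step separately is harmless as long as the final distance term is kept rather than discarded as in Lemma~\ref{lem:serial}.
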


\begin{proof}
Square integrability follows from Lemma~\ref{up:moments}, Part~3. Fix
$j<K$ and condition on the history up to $z_j$. By \eqref{up:wg} and
$\langle\mathsf A(z_j-x^*),z_j-x^*\rangle=2F(z_j)$,
\begin{align*}
    \mathbb E\bigl[\|z_{j+1}-x^*\|^2\mid z_j\bigr]
    &=\|z_j-x^*\|^2
      -\frac2{H_s}\langle\mathsf A(z_j-x^*),z_j-x^*\rangle
      +\frac1{H_s^2}\,\mathbb E\bigl[\|g(z_j,\xi_j)\|^2\mid z_j\bigr]\\
    &\le\|z_j-x^*\|^2-\frac4{H_s}F(z_j)+\frac2{H_s}F(z_j)
    =\|z_j-x^*\|^2-\frac2{H_s}F(z_j).
\end{align*}
Applying $\mathbb E_{\mathcal H}$ and summing over $j=0,\ldots,K-1$
gives
\[
    \Phi(y)\le\frac{H_s}{2K}
    \Bigl(\|y-x^*\|^2-\mathbb E_{\mathcal H}\|z_K-x^*\|^2\Bigr).
\]
Since $z_K=y-\frac K{H_s}G$,
\[
    \|z_K-x^*\|^2
    =\|y-x^*\|^2-\frac{2K}{H_s}\langle G,y-x^*\rangle
     +\frac{K^2}{H_s^2}\|G\|^2 ,
\]
and substituting this identity into the previous bound proves the first
inequality in \eqref{quad:joint-energy}. The second follows from
convexity of $F$.
\end{proof}

\paragraph{The spectral surrogate.}
Define
\begin{equation}
    \mathsf A_K=\frac{H_s}{K}
        \left[\mathrm I-\Bigl(\mathrm I-\frac{\mathsf A}{H_s}\Bigr)^K\right],
    \qquad
    F_K(y)=\frac12\langle y-x^*,\mathsf A_K(y-x^*)\rangle,
    \qquad
    L_K=\min\left\{L,\frac{H_s}K\right\}.
    \label{quad:surrogate}
\end{equation}
Neither $\mathsf A_K$ nor $F_K$ is used by the algorithm.

\begin{lemma}[Spectral mean of a block]
\label{quad:certificate}
The function $F_K$ is convex, has an $L_K$-Lipschitz gradient, and has
the same minimizers as $F$. For the block \eqref{quad:block},
\begin{equation}
    \mathbb E_{\mathcal H}G=\nabla F_K(y),
    \qquad
    \Phi(y)+\frac{K}{2H_s}\,\mathbb E_{\mathcal H}\|G\|^2\le2F_K(y).
    \label{quad:certificate-formula}
\end{equation}
\end{lemma}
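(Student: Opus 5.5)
The plan is to diagonalize $\mathsf A$ and reduce every claim to scalar inequalities on its eigenvalues. Write $\mathsf A=\sum_i\lambda_iP_i$ with $0\le\lambda_i\le L\le H_s$. Then $\mathsf A_K$ has eigenvalues
\[
\phi(\lambda)=\frac{H_s}{K}\bigl[1-(1-\lambda/H_s)^K\bigr].
\]
On $[0,H_s]$ we have $1-\lambda/H_s\in[0,1]$, so $\phi\ge0$, with $\phi(\lambda)=0$ exactly when $\lambda=0$. The inequality $1-(1-u)^K\le\min\{1,Ku\}$ gives $\phi(\lambda)\le\min\{H_s/K,\lambda\}\le L_K$. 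These facts give convexity of $F_K$, the $L_K$-Lipschitz gradient, and $\ker\mathsf A_K=\ker\mathsf A$. The last point means that the minimizer sets $x^*+\ker\mathsf A$ of $F$ and $F_K$ coincide.

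For the mean of $G$, write $g(z_j,\xi_j)=\mathsf A(z_j-x^*)+\zeta_j$, where $\mathbb E[\zeta_j\mid z_j]=0$. The recursion becomes
\[
z_{j+1}-x^*=(\mathrm I-\mathsf A/H_s)(z_j-x^*)-\zeta_j/H_s .
\]
Taking $\mathbb E_{\mathcal H}$, which is legitimate by the square integrability in Lemma~\ref{quad:energy}, and using the tower property, linearity yields
\[
\mathbb E_{\mathcal H}(z_K-x^*)=(\mathrm I-\mathsf A/H_s)^K(y-x^*).
\]
Hence $\mathbb E_{\mathcal H}G=\frac{H_s}{K}\bigl[\mathrm I-(\mathrm I-\mathsf A/H_s)^K\bigr](y-x^*)=\mathsf A_K(y-x^*)=\nabla F_K(y)$.

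The energy inequality then follows from Lemma~\ref{quad:energy}. Its right-hand side is $\langle\mathbb E_{\mathcal H}G,y-x^*\rangle=\langle\mathsf A_K(y-x^*),y-x^*\rangle=2F_K(y)$, because $y$ is $\mathcal H$-measurable. I do not expect a real obstacle here. The only points needing care are justifying the conditional expectations (integrability is already supplied) and checking that $1-\lambda/H_s\ge0$, which uses $\lambda\le L\le H_s$ and is needed for the bound $\phi\le H_s/K$.
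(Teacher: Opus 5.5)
Your proposal is correct and follows essentially the same route as the paper. It uses the eigenvalue bound $0\le\theta_K\le\min\{\theta,H_s/K\}\le L_K$ with $\theta_K=0$ iff $\theta=0$ to get convexity, smoothness and $\ker\mathsf A_K=\ker\mathsf A$. It gets the mean of $G$ from the tower-property identity $\mathbb E_{\mathcal H}(z_K-x^*)=(\mathrm I-\mathsf A/H_s)^K(y-x^*)$, and concludes from Lemma~\ref{quad:energy} via $\langle\mathsf A_K(y-x^*),y-x^*\rangle=2F_K(y)$. One small point of rigor: the centering $\mathbb E[\zeta_j\mid\cdot\,]=0$ should be taken given the full history up to $z_j$, as in \eqref{up:wg}, rather than given $z_j$ alone, so that the tower property down to $\mathcal H$ applies.
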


\begin{proof}
By \eqref{up:wg},
$\mathbb E[z_{j+1}-x^*\mid z_j]=(\mathrm I-\mathsf A/H_s)(z_j-x^*)$.
By the tower property,
$\mathbb E_{\mathcal H}(z_K-x^*)=(\mathrm I-\mathsf A/H_s)^K(y-x^*)$,
and therefore
\[
    \mathbb E_{\mathcal H}G
    =\frac{H_s}K\Bigl[(y-x^*)-\Bigl(\mathrm I-\frac{\mathsf A}{H_s}\Bigr)^K(y-x^*)\Bigr]
    =\mathsf A_K(y-x^*)=\nabla F_K(y).
\]
The matrices $\mathsf A$ and $\mathsf A_K$ have common eigenvectors. If
$\theta\in[0,L]$ is an eigenvalue of $\mathsf A$, the corresponding
eigenvalue of $\mathsf A_K$ is
\[
    \theta_K=\frac{H_s}{K}\left[1-\Bigl(1-\frac\theta{H_s}\Bigr)^K\right].
\]
Since $0\le\theta/H_s\le1$ and, by Bernoulli's inequality,
$0\le1-(1-c)^K\le\min\{Kc,1\}$ for $c\in[0,1]$, we get
$0\le\theta_K\le\min\{\theta,H_s/K\}\le L_K$; moreover, $\theta_K=0$ if
and only if $\theta=0$. Hence $0\preceq\mathsf A_K\preceq L_K\mathrm I$
and $\ker\mathsf A_K=\ker\mathsf A$. This proves convexity and smoothness
of $F_K$, and
$\arg\min F_K=x^*+\ker\mathsf A=\arg\min F$. Finally,
$\langle\nabla F_K(y),y-x^*\rangle=2F_K(y)$, so the inequality in
\eqref{quad:certificate-formula} follows from
\eqref{quad:joint-energy}.
\end{proof}

\paragraph{The algorithm.}

\begin{algorithm}[H]
\caption{Accelerated spectral blocks}
\label{alg:spectral-blocks}
\begin{algorithmic}[1]
\Require Initial point $x_0$, parameters $L,H_s>0$, integers
         $m,K,N\ge1$, and oracle $g$.
\State $\displaystyle L_K\gets\min\left\{L,\frac{H_s}K\right\}$,
       $\displaystyle\gamma\gets\min\left\{\frac1{L_K},
       \frac{mK}{2H_s(N+2)}\right\}$
\State $v_0\gets x_0$, $A_0\gets0$
\For{$t=1,\ldots,N$}
    \State $\displaystyle a_t\gets\frac{\gamma(t+1)}2$,
           $\displaystyle A_t\gets\frac{\gamma t(t+3)}4$
    \State $\displaystyle y_t\gets\frac{A_{t-1}x_{t-1}+a_tv_{t-1}}{A_t}$,
           and broadcast $y_t$
    \State Each worker $i$ runs the block \eqref{quad:block} from $y_t$
           with $K$ fresh queries and sends $(G_{i,t},\bar z_{i,t})$
    \State $\displaystyle\bar G_t\gets\frac1m\sum_{i=1}^mG_{i,t}$,
           $\displaystyle\hat z_t\gets\frac1m\sum_{i=1}^m\bar z_{i,t}$
    \State $x_t\gets y_t-\gamma\bar G_t$,
           $v_t\gets v_{t-1}-a_t\bar G_t$
\EndFor
\State \Return $\displaystyle\widehat x=\frac1{A_N}\sum_{t=1}^Na_t\hat z_t$
\end{algorithmic}
\end{algorithm}

Algorithm~\ref{alg:spectral-blocks} uses exactly $K$ fresh queries per
worker in each of its $N$ rounds; the weighted sum in the output is
accumulated online.

\begin{theorem}[Upper bound for quadratic populations]
\label{quad:rate}
Let $\|x_0-x^*\|\le R$. Algorithm~\ref{alg:spectral-blocks} satisfies
\begin{equation}
    \mathbb E F(\widehat x)\le\frac{R^2}{A_N}
    \le R^2\left[\frac{4L_K}{N^2}+\frac{8H_s}{mKN}\right].
    \label{quad:rate-formula}
\end{equation}
\end{theorem}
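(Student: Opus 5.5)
The plan is to run the standard estimate-sequence (AC-SA/Nesterov) analysis for the surrogate $F_K$, using Lemma~\ref{quad:certificate} as the single-step inequality. Then we transfer to $F$ through the averages $\hat z_t$, via the bound $\mathbb E_{\mathcal H}F(\bar z)\le\Phi(y)$.

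\emph{Per-round inequality.} Fix round $t$ and condition on $\mathcal H_{t-1}$, under which $y_t$ is measurable. The $m$ blocks are conditionally independent, each has mean $\nabla F_K(y_t)$, and each satisfies \eqref{quad:certificate-formula}. Averaging, $\bar G_t$ has mean $\nabla F_K(y_t)$ and
\[
\mathbb E\|\bar G_t\|^2\le\|\nabla F_K(y_t)\|^2+\frac1m\,\mathbb E\|G\|^2 .
\]
For a single block, \eqref{quad:certificate-formula} gives
\[
\frac{K}{2H_s}\mathbb E\|G\|^2\le 2F_K(y_t)-\Phi(y_t).
\]
Since $\langle\nabla F_K(y),y-x^*\rangle=2F_K(y)$, the certificate already has the form of a ``gradient-step'' inequality. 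Write $\hat\Phi_t=\frac1m\sum_i\Phi_i(y_t)$, which is at least $\mathbb E F(\hat z_t)$ by convexity.

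\emph{Lyapunov function.} I would take $E_t=A_t\,\mathbb E\,\mathcal F_t+\frac12\mathbb E\|v_t-x^*\|^2$, where $\mathcal F_t$ is a suitable quantity such as $F_K(x_t)$. The aim is to show
\[
E_t+a_t\,\mathbb E\hat\Phi_t\le E_{t-1}+(\text{something telescoping}),
\]
so that summing gives $\sum_t a_t\mathbb E\hat\Phi_t\le\frac12R^2+\ldots$. Expanding $\|v_t-x^*\|^2$ gives the cross term $-2a_t\langle\nabla F_K(y_t),v_{t-1}-x^*\rangle$ and the noise term $a_t^2\mathbb E\|\bar G_t\|^2$. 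The cross term is split as usual, using $v_{t-1}=y_t+\frac{A_{t-1}}{a_t}(y_t-x_{t-1})$ together with convexity of $F_K$ between $y_t$ and $x_{t-1}$. The step $x_t=y_t-\gamma\bar G_t$ with $\gamma\le 1/L_K$ yields the descent estimate $F_K(x_t)\le F_K(y_t)-\gamma\|\nabla F_K(y_t)\|^2+\frac{L_K\gamma^2}{2}\mathbb E\|\bar G_t\|^2$. The noise term $a_t^2\frac1m\mathbb E\|G\|^2$ is absorbed by $\frac{K}{2H_s}\mathbb E\|G\|^2$ from the certificate. This is possible because the choices $\gamma\le\frac{mK}{2H_s(N+2)}$ and $a_t\le\gamma(N+1)/2$ give $a_t^2/m\lesssim a_t\frac{K}{2H_s}$. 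The amount absorbed is exactly what is needed to free up the term $a_t\hat\Phi_t$, which plays the role of $a_tF(\text{point})$. The relation $a_t^2\le\gamma A_t$ holds, since $(t+1)^2\le t(t+3)+1$ is nearly equivalent to it. Up to the $+1$, this handles the $\|\nabla F_K\|^2$ part as in standard acceleration.

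\emph{Conclusion.} Summing and using $E_0=\frac12\|x_0-x^*\|^2$ and $F_K\ge0$ gives $\sum_t a_t\mathbb E\hat\Phi_t\le R^2$. Convexity of $F$ then gives
\[
\mathbb E F(\widehat x)\le\frac{1}{A_N}\sum_t a_t\,\mathbb E F(\hat z_t)\le\frac{R^2}{A_N}.
\]
Here $\sum_t a_t=A_N$ is a direct check: $\frac{\gamma}{4}\bigl[t(t+3)-(t-1)(t+2)\bigr]=\frac{\gamma(t+1)}{2}$. Finally, $A_N\ge\gamma N^2/4$, and $\frac1\gamma\le L_K+\frac{2H_s(N+2)}{mK}\le L_K+\frac{6H_sN}{mK}$. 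This gives $R^2/A_N\le\frac{4L_K}{N^2}+\frac{24H_s}{mKN}$; to reach the stated constant $8$, I would use $A_N\ge\gamma N(N+3)/4$ and $N+2\le N+3$, which gives $\frac{8H_s}{mKN}$.

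\emph{Main obstacle.} The main difficulty is the bookkeeping in the Lyapunov step. The ``function value'' term appearing in the certificate is $2F_K(y)-\Phi(y)$ rather than a clean $F_K$, so the coefficients must be arranged so that exactly $a_t\hat\Phi_t$ survives while the $F_K(y_t)$ terms cancel against the convexity split. I would first try the potential $A_tF_K(x_t)$. If the $+1$ slack in $a_t^2\le\gamma A_t$ fails, the fallback is to drop $x_t$ from the potential and use only the $v$-sequence, i.e.\ to treat the method as dual averaging. In that version the coupling with $x_{t-1}$ is handled by the bound $2F_K(y_t)\ge F_K(y_t)+\langle\nabla F_K(y_t),y_t-x^*\rangle/2$.
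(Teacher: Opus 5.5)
Your plan is the paper's proof. It uses the same potential $A_tF_K(x_t)+\frac12\|v_t-x^*\|^2$, the same split of the cross term (where the identity $\langle\nabla F_K(y_t),y_t-x^*\rangle=2F_K(y_t)$ leaves a surplus $-a_tF_K(y_t)$ that is spent through the certificate), and the same transfer to $F$ via $\hat z_t$ and evaluation of $1/A_N$.

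Two bookkeeping corrections:
\begin{itemize}
\item No fallback is needed, because $a_t^2\le\gamma A_t$ is exactly $(t+1)^2\le t(t+3)$, i.e.\ $t\ge1$.
\item Spending $a_tF_K(y_t)$ frees only $\frac{a_t}{2}\Phi(y_t)$ and $\frac{a_tK}{4H_s}\mathbb E\|G\|^2$, not $a_t\Phi$ and $\frac{a_tK}{2H_s}$. This still absorbs the full noise coefficient $\gamma A_t/m$, since $\frac{4\gamma A_tH_s}{a_tmK}=\frac{2\gamma H_s}{mK}\cdot\frac{t(t+3)}{t+1}\le\frac{2\gamma H_s(N+2)}{mK}\le1$, and it gives exactly your final bound $\sum_ta_t\mathbb E\hat\Phi_t\le R^2$.
\end{itemize}
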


\begin{proof}
All states have finite second moments by Lemma~\ref{quad:energy} and
induction over the rounds, since the server states are linear
combinations of the received vectors with deterministic coefficients.
Let $\mathcal H_t$ be the history at the beginning of round $t$, after
$y_t$ is formed, and write $\mathbb E_t[\cdot]=\mathbb E[\cdot\mid
\mathcal H_t]$. Conditional on $\mathcal H_t$, the pairs
$(G_{i,t},\bar z_{i,t})$, $i=1,\ldots,m$, are independent and identically
distributed, and Lemmas~\ref{quad:energy} and~\ref{quad:certificate}
apply to each of them with $y=y_t$. Consider the nonnegative potential
\[
    P_t=A_tF_K(x_t)+\frac12\|v_t-x^*\|^2,
    \qquad t\ge0 .
\]

\emph{One-step inequality.}
Lemma~\ref{lem:smooth}, Part~1, for $F_K$ and
$\mathbb E_t\bar G_t=\nabla F_K(y_t)$ give
\[
    \mathbb E_tF_K(x_t)
    \le F_K(y_t)-\gamma\|\nabla F_K(y_t)\|^2
       +\frac{L_K\gamma^2}2\,\mathbb E_t\|\bar G_t\|^2 .
\]
The update of $v_t$ gives
\[
    \frac12\mathbb E_t\|v_t-x^*\|^2
    =\frac12\|v_{t-1}-x^*\|^2
     -a_t\langle\nabla F_K(y_t),v_{t-1}-x^*\rangle
     +\frac{a_t^2}2\,\mathbb E_t\|\bar G_t\|^2 .
\]
Multiplying the first inequality by $A_t$ and adding the second, we
obtain
\begin{align}
    \mathbb E_tP_t-P_{t-1}
    \le{}&A_tF_K(y_t)-A_{t-1}F_K(x_{t-1})
          -a_t\langle\nabla F_K(y_t),v_{t-1}-x^*\rangle\notag\\
    &-A_t\gamma\|\nabla F_K(y_t)\|^2
     +\frac{A_tL_K\gamma^2+a_t^2}2\,\mathbb E_t\|\bar G_t\|^2 .
    \label{quad:potential-step}
\end{align}
Since $A_t=A_{t-1}+a_t$ and $A_ty_t=A_{t-1}x_{t-1}+a_tv_{t-1}$, we have
$a_t(v_{t-1}-x^*)=A_{t-1}(y_t-x_{t-1})+a_t(y_t-x^*)$. Together with
$\langle\nabla F_K(y_t),y_t-x^*\rangle=2F_K(y_t)$, this shows that the
first line of the right-hand side of \eqref{quad:potential-step} equals
\begin{align*}
    &A_{t-1}\bigl[F_K(y_t)-F_K(x_{t-1})
        -\langle\nabla F_K(y_t),y_t-x_{t-1}\rangle\bigr]
     +a_t\bigl[F_K(y_t)-\langle\nabla F_K(y_t),y_t-x^*\rangle\bigr]\\
    &\qquad=-A_{t-1}D_{F_K}(x_{t-1},y_t)-a_tF_K(y_t),
\end{align*}
where $D_{F_K}(u,v)=F_K(u)-F_K(v)-\langle\nabla F_K(v),u-v\rangle\ge0$.

\emph{Coefficients.}
For $t\ge1$, $(t+1)^2\le t(t+3)$, hence $a_t^2\le\gamma A_t$. Since
$\gamma L_K\le1$, also $A_tL_K\gamma^2\le\gamma A_t$, so the coefficient
of $\mathbb E_t\|\bar G_t\|^2$ in \eqref{quad:potential-step} is at most
$\gamma A_t$. The vectors $G_{i,t}-\nabla F_K(y_t)$ are conditionally
independent and centered, which gives the exact identity
\begin{equation}
    \mathbb E_t\|\bar G_t\|^2
    =\Bigl(1-\frac1m\Bigr)\|\nabla F_K(y_t)\|^2
     +\frac1m\,\mathbb E_t\|G_{1,t}\|^2 .
    \label{quad:pooling}
\end{equation}

\emph{Spending the certificate.}
By \eqref{quad:certificate-formula} with $y=y_t$,
\[
    -a_tF_K(y_t)\le-\frac{a_t}2\Phi(y_t)
    -\frac{a_tK}{4H_s}\,\mathbb E_t\|G_{1,t}\|^2 .
\]
Substituting this bound, the bound on the coefficient, and
\eqref{quad:pooling} into \eqref{quad:potential-step} yields
\[
    \mathbb E_tP_t-P_{t-1}
    \le-A_{t-1}D_{F_K}(x_{t-1},y_t)-\frac{a_t}2\Phi(y_t)
       -\frac{\gamma A_t}{m}\|\nabla F_K(y_t)\|^2
       +\left[\frac{\gamma A_t}m-\frac{a_tK}{4H_s}\right]
        \mathbb E_t\|G_{1,t}\|^2 .
\]
Since $t(t+3)\le(t+1)(t+2)$ and $t\le N$,
\[
    \frac{4\gamma A_tH_s}{a_tmK}
    =\frac{2\gamma H_s}{mK}\cdot\frac{t(t+3)}{t+1}
    \le\frac{2\gamma H_s(N+2)}{mK}\le1,
\]
so the last bracket is nonpositive. All terms except
$-\frac{a_t}2\Phi(y_t)$ on the right-hand side are therefore
nonpositive, and
\begin{equation}
    \mathbb E_tP_t+\frac{a_t}2\Phi(y_t)\le P_{t-1}.
    \label{quad:telescoping}
\end{equation}

\emph{The output.}
Taking expectations in \eqref{quad:telescoping}, summing over
$t=1,\ldots,N$, and using $P_N\ge0$ and $P_0=\frac12\|x_0-x^*\|^2\le
\frac{R^2}2$, we get $\sum_{t=1}^Na_t\mathbb E\Phi(y_t)\le R^2$. By
convexity and Lemma~\ref{quad:energy},
$\mathbb E_tF(\hat z_t)\le\frac1m\sum_{i=1}^m\mathbb E_tF(\bar z_{i,t})
\le\Phi(y_t)$. Since $\sum_{t=1}^Na_t=A_N$, convexity of $F$ gives
\[
    \mathbb E F(\widehat x)
    \le\frac1{A_N}\sum_{t=1}^Na_t\,\mathbb E F(\hat z_t)
    \le\frac{R^2}{A_N}.
\]
Finally,
\[
    \frac1{A_N}
    =\frac4{N(N+3)}\max\left\{L_K,\frac{2H_s(N+2)}{mK}\right\}
    \le\frac{4L_K}{N^2}+\frac{8H_s}{mKN}.
\]
\end{proof}

\begin{corollary}[Quadratic selector]
\label{quad:selector}
Selecting in advance between Algorithm~\ref{alg:spectral-blocks} and
returning $x_0$ gives
\begin{equation}
    \mathbb E F(\widehat x)
    \le R^2\min\left\{\frac L2,\;
       \frac{4\min\{L,H_s/K\}}{N^2}+\frac{8H_s}{mKN}\right\}
    \le8R^2\min\left\{L,\;
       \frac{\min\{L,H_s/K\}}{N^2}+\frac{H_s}{mKN}\right\}.
    \label{quad:selector-rate}
\end{equation}
\end{corollary}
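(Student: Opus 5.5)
The plan is to treat the selector as a deterministic choice, made from the public parameters $m,K,N,L,H_s,R$ before the run, of whichever of the two procedures has the smaller guaranteed risk. Since the choice does not depend on the oracle or the seed, the risk of the combined procedure is at most each of the two guarantees. Hence it is at most their minimum. Both procedures clearly belong to $\mathrm{IC}(m,K,N)$: returning $x_0$ makes no queries, and Algorithm~\ref{alg:spectral-blocks} uses exactly $K$ fresh queries per worker per round.

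For returning $x_0$, I would invoke Lemma~\ref{up:moments}, Part~1 (with the normalization $F(x^*)=0$). It gives $F(x_0)\le\frac L2\|x_0-x^*\|^2\le\frac{LR^2}2$, which is the first entry of the minimum. For the algorithm, Theorem~\ref{quad:rate} gives $\mathbb E F(\widehat x)\le R^2\bigl[\frac{4L_K}{N^2}+\frac{8H_s}{mKN}\bigr]$ with $L_K=\min\{L,H_s/K\}$, which is the second entry. This proves the first inequality in \eqref{quad:selector-rate}. Strictly, the selector compares the two explicit bounds, which are computable from the public parameters, and picks the smaller.

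The second inequality is elementary. First, $\frac L2\le8L$. Second, $\frac{4L_K}{N^2}+\frac{8H_s}{mKN}\le8\bigl(\frac{L_K}{N^2}+\frac{H_s}{mKN}\bigr)$. The minimum of two quantities is monotone in each, so $\min\{a,b\}\le\min\{8a',8b'\}=8\min\{a',b'\}$ whenever $a\le8a'$ and $b\le8b'$. There is no real obstacle here. The only point needing care is that the selection is made in advance and uses only public parameters. This ensures the expectation bound of the chosen method applies verbatim, rather than requiring a data-dependent comparison.
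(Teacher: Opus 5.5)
Your proposal is correct and follows the paper's proof essentially verbatim: Lemma~\ref{up:moments}, Part~1 bounds the $x_0$ branch by $\frac L2R^2$, Theorem~\ref{quad:rate} bounds the Algorithm~\ref{alg:spectral-blocks} branch, and the selection is made in advance because both guarantees depend only on public parameters. The second inequality follows, as you say, by raising all coefficients to $8$.
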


\begin{proof}
By Lemma~\ref{up:moments}, Part~1, $F(x_0)\le\frac L2R^2$, and
Theorem~\ref{quad:rate} bounds the risk of the other branch. Both bounds
depend only on the public parameters, so the selection is made before the
run. The second inequality follows by increasing all coefficients to
$8$.
\end{proof}

\subsection{General convex populations}
\label{app:appendix_B_convex}

The upper bound uses the following accelerated method with a fresh
batch at every update.

\begin{algorithm}[H]
\caption{Fresh-batch two-state accelerated recurrence}
\label{alg:fresh-batch-acsa}
\begin{algorithmic}[1]
\Require Initial point $x_0$, parameters $L,H_s>0$,
         integers $q,b\ge1$, and oracle $g$.
\State $\displaystyle
    \gamma \gets
    \min\left\{\frac{1}{48L},\frac{b}{48H_s q}\right\}$
\State $z_0\gets x_0$, $v_0\gets x_0$
\For{$t=0,\ldots,q-1$}
    \State $\displaystyle \alpha_t\gets\frac{6}{t+6}$,
           $\eta_t\gets\gamma(t+1)$
    \State $y_t\gets(1-\alpha_t)z_t+\alpha_t v_t$
    \State Draw $\xi_{t,1},\ldots,\xi_{t,b}$ independently
           from the oracle law, independently of the past.
    \State $\displaystyle
        \bar g_t\gets\frac1b\sum_{i=1}^b g(y_t,\xi_{t,i})$
    \State $v_{t+1}\gets v_t-\eta_t\bar g_t$
    \State $z_{t+1}\gets(1-\alpha_t)z_t+\alpha_t v_{t+1}$
\EndFor
\State \Return $z_q$
\end{algorithmic}
\end{algorithm}

The algorithm uses exactly $bq$ fresh oracle responses.
It requires no loss values, projections, or exact
population-gradient evaluations.

\begin{theorem}[Accelerated common-query weak-growth bound]
\label{thm:fresh-batch-acsa}
Consider an instance of $\mathcal W_d$ with $F(x^*)=0$ and
$\|x_0-x^*\|\le R$. Then Algorithm~\ref{alg:fresh-batch-acsa} satisfies
\begin{equation}
    \mathbb E F(z_q)
    \le
    288R^2
    \max\left\{
        \frac{L}{q^2},
        \frac{H_s}{bq}
    \right\}.
    \label{eq:fresh-batch-acsa-rate}
\end{equation}
\end{theorem}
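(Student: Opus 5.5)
We will prove \eqref{eq:fresh-batch-acsa-rate} by a potential argument for the two-state recurrence (an AC-SA/linear-coupling scheme), in the spirit of Theorem~\ref{quad:rate}. The difference is that $F$ is now only convex and $L$-smooth, so the exact identity $\langle\nabla F(y),y-x^*\rangle=2F(y)$ is unavailable. Following Lemma~\ref{up:moments}, Part~2, we write $\bar g_t=\nabla F(y_t)+\zeta_t$ with $\mathbb E[\zeta_t\mid\mathcal H_t]=0$ and $\mathbb E[\|\zeta_t\|^2\mid\mathcal H_t]\le\frac{2H_s}{b}F(y_t)$, where $\mathcal H_t$ is the history when $y_t$ is formed. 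All iterates are square integrable by Lemma~\ref{up:moments}, Part~3, so every conditional expectation below is legitimate.

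\emph{One-step inequality.} The update gives $z_{t+1}-y_t=\alpha_t(v_{t+1}-v_t)=-\alpha_t\eta_t\bar g_t$. Lemma~\ref{lem:smooth}, Part~1, then bounds $F(z_{t+1})$ by $F(y_t)+\alpha_t\langle\nabla F(y_t),v_{t+1}-v_t\rangle+\frac{L\alpha_t^2}{2}\|v_{t+1}-v_t\|^2$. Next we write $F(y_t)=(1-\alpha_t)F(y_t)+\alpha_tF(y_t)$ and apply convexity in two places: $F(y_t)+\langle\nabla F(y_t),z_t-y_t\rangle\le F(z_t)$ and $F(y_t)+\langle\nabla F(y_t),x^*-y_t\rangle\le0$. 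This produces the standard three-point term $\alpha_t\langle\nabla F(y_t),v_{t+1}-x^*\rangle$. We handle it by expanding $\|v_{t+1}-x^*\|^2$ as in mirror descent with step $\eta_t$ and recalling $\bar g_t=\nabla F(y_t)+\zeta_t$. After taking $\mathbb E[\cdot\mid\mathcal H_t]$, the cross term with $\zeta_t$ vanishes because $v_t-x^*$ is $\mathcal H_t$-measurable. For the potential $\Psi_t=\frac{1}{\alpha_{t-1}\eta_{t-1}}F(z_t)+\frac12\|v_t-x^*\|^2$ (suitably weighted), this leaves an error term of the form $c(\eta_t^2+L\alpha_t\eta_t^3)\,\mathbb E\|\bar g_t\|^2$. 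We bound it by $\mathbb E\|\bar g_t\|^2\le\|\nabla F(y_t)\|^2+\frac{2H_s}{b}F(y_t)\le(2L+\frac{2H_s}{b})F(y_t)$, using Lemma~\ref{up:moments}, Part~1.

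\emph{Absorbing the relative noise (main obstacle).} Unlike bounded variance, the error is proportional to $F(y_t)$, which is not in the potential; only $F(z_t)$ is. Our plan is to keep the $\alpha_t$-fraction of $F(y_t)$ that convexity produced, instead of discarding it. With the weight $w_t=\eta_t/\alpha_t\asymp\gamma(t+1)^2/6$, the favorable term is $\eta_tF(y_t)$ after multiplying through by $\eta_t$. The error is about $\eta_t^2(L+H_s/b)F(y_t)$, up to factors from $L\alpha_t\eta_t\le L\gamma\cdot6$. We need $\eta_t(L+H_s/b)\le\frac12$ for all $t<q$, that is $\gamma q(L+H_s/b)\lesssim1$. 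The choice $\gamma=\min\{\frac1{48L},\frac{b}{48H_sq}\}$ makes $\gamma qH_s/b\le\frac1{48}$. The $L$-part is $\gamma(t+1)L$, and this is not small for large $t$. We will try to repair it by splitting $\|\bar g_t\|^2$. The deterministic part $\|\nabla F(y_t)\|^2$ is controlled by the $-\eta_t\|\nabla F\|^2$-type gain from the descent step at $z_{t+1}$, as in AC-SA with $\alpha_t\eta_tL\le6\gamma L\le\frac18$. Only the noise part $\frac{2H_s}{b}F(y_t)$ needs absorption, and that requires just $\eta_tH_s/b\le\frac{1}{48}$. The bookkeeping of these constants (the $6/(t+6)$ schedule gives $w_{t+1}-w_t\le\eta_t$-type monotonicity) is where I expect the fiddliest work.

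\emph{Conclusion.} Telescoping gives $w_q\,\mathbb EF(z_q)\le\frac12\|x_0-x^*\|^2+w_0F(x_0)$. The $t=0$ term is handled by $\alpha_0=1$, so no $F(z_0)$ weight enters. Since $w_q\gtrsim\gamma q^2/6\cdot\frac{q+6}{q}$, we get $\mathbb EF(z_q)\lesssim\frac{R^2}{\gamma q^2}$. Substituting $\frac1\gamma=48\max\{L,\frac{H_sq}{b}\}$ yields $\frac{R^2}{\gamma q^2}\le48R^2\max\{\frac{L}{q^2},\frac{H_s}{bq}\}$. Tracking the numerical factor $6$ from the schedule then gives the constant $288$.
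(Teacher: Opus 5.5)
\textbf{There is a genuine gap at the step you flag as the main obstacle.} Your plan absorbs the relative noise with a favorable term $-\eta_tF(y_t)$, but that term does not exist for general convex $F$.

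After the two convexity inequalities, the only slack left in the $\alpha_t$-part is the Bregman remainder $\eta_t\bigl[\langle\nabla F(y_t),y_t-x^*\rangle-F(y_t)\bigr]$. By Lemma~\ref{lem:smooth}, Part~3, it is at least $\frac{\eta_t}{2L}\|\nabla F(y_t)\|^2$, but it is not bounded below by any multiple of $F(y_t)$. Keeping even a fraction $c\,\eta_tF(y_t)$ would require $(1+c)F(y)\le\langle\nabla F(y),y-x^*\rangle$. That is the quadratic identity you correctly said is unavailable, and it fails for smooth convex functions. For example, for the Huber function $h_\delta$ of the paper's hard chain and $|u|>\delta$, one has $u\,h_\delta'(u)-h_\delta(u)=\delta^2/2$, which stays bounded while $h_\delta(u)\to\infty$.

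So the noise contribution, of order $\frac{H_s\eta_t^2}{b}F(y_t)$, cannot be absorbed as you propose, and this is not a matter of bookkeeping. The schedule $\alpha_t=6/(t+6)$ does create slack, $A_{t-1}-(A_t-\eta_t)=\frac23\gamma t$ with $A_t=\eta_t/\alpha_t$. But that slack multiplies $F(z_t)$, not $F(y_t)$.

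\textbf{The missing idea is to move the noise from $y_t$ to $z_t$ and pay for it with a distance term.} Since $y_t-z_t=\alpha_t(v_t-z_t)$ and $F\ge0$, Lemma~\ref{lem:smooth}, Part~2, gives $F(y_t)\le2F(z_t)+L\alpha_t^2\|v_t-z_t\|^2$. The two pieces are then handled separately.
\begin{itemize}
\item The first piece is $\frac{4H_s\eta_t^2}{b}F(z_t)\le\frac13\gamma t\,F(z_t)$, using $\gamma\le b/(48H_sq)$. It is absorbed by the weight slack above.
\item The second piece is $\frac{2H_sL}{b}(\alpha_t\eta_t)^2\|v_t-z_t\|^2$. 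It needs an a priori bound on the iterates, which is not free because Algorithm~\ref{alg:fresh-batch-acsa} has no projection.
\end{itemize}

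The paper obtains that bound by a bootstrap. Put $M_t=\max\{R^2,2\max_{s\le t}\mathcal E_s\}$, where $\mathcal E_s$ is the expected potential. Because $z_t$ is a convex combination of $x_0,v_1,\dots,v_t$ with deterministic weights, $\mathbb E\|v_t-z_t\|^2\le4M_t$. Multiplying the two step-size constraints gives $\frac{288H_sL\gamma^2}{b}\le\frac1{8q}$. Hence $M_{t+1}\le(1+\frac1{4q})M_t$, so $\mathcal E_q<R^2$. Finally $A_{q-1}\ge\gamma q^2/6$ yields the constant $6\cdot48=288$.

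The factor $\alpha_t^2$ is essential here. Suppose you instead bound $F(y_t)\le(1-\alpha_t)F(z_t)+\alpha_tF(v_t)$ and use $F(v_t)\le\frac L2\|v_t-x^*\|^2$. This keeps only one factor of $\alpha_t$. The accumulated error is then about $3\gamma^2q^2\frac{H_sL}{b}$, which is of order $q/48^2$ when the two constraints on $\gamma$ balance, and so is not bounded.
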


\begin{proof}
By Lemma~\ref{up:moments}, Part~3, all iterates and oracle
responses have finite second moments.
Let $\mathcal F_t$ denote the history before the batch
at iteration $t$ is sampled, and write
\[
    \mathbb E_t[\cdot]
    =\mathbb E[\cdot\mid\mathcal F_t],
    \qquad
    A_t=\frac{\eta_t}{\alpha_t}
        =\frac{\gamma(t+1)(t+6)}6 .
\]
The vectors $z_t,v_t,y_t$ are $\mathcal F_t$-measurable.

\medskip
\noindent
\emph{Conditional variance.}
Let $\sigma_t^2=\mathbb E_t\|\bar g_t-\nabla F(y_t)\|^2$. By
\eqref{up:batch-variance},
\[
    \mathbb E_t\bar g_t=\nabla F(y_t),
    \qquad
    \sigma_t^2\le\frac{2H_s}{b}F(y_t).
\]
Since $y_t-z_t=\alpha_t(v_t-z_t)$, Lemma~\ref{lem:smooth}, Part~2,
applied to the nonnegative function $F$ gives
$F(y_t)\le2F(z_t)+L\alpha_t^2\|v_t-z_t\|^2$, and hence
\begin{equation}
    \sigma_t^2
    \le
    \frac{4H_s}{b}F(z_t)
    +\frac{2H_sL}{b}\alpha_t^2\|v_t-z_t\|^2.
    \label{eq:acsa-query-variance}
\end{equation}

\medskip
\noindent
\emph{One-step potential inequality.}
The two updates give
$z_{t+1}=(1-\alpha_t)z_t+\alpha_t(v_t-\eta_t\bar g_t)
=y_t-\alpha_t\eta_t\bar g_t$. Lemma~\ref{lem:smooth}, Part~1, together
with $\mathbb E_t\bar g_t=\nabla F(y_t)$ and
$\mathbb E_t\|\bar g_t\|^2=\|\nabla F(y_t)\|^2+\sigma_t^2$, implies
\begin{equation}
\label{eq:helper_b2_0}
    \mathbb E_t F(z_{t+1})
    \le
    F(y_t)-\alpha_t\eta_t\|\nabla F(y_t)\|^2
    +\frac{L\alpha_t^2\eta_t^2}{2}
       \bigl(\|\nabla F(y_t)\|^2+\sigma_t^2\bigr).
\end{equation}
Convexity at $y_t$, applied to $z_t$ and $x^*$
with weights $1-\alpha_t$ and $\alpha_t$, together with $F(x^*)=0$ and
$(1-\alpha_t)z_t+\alpha_tx^*-y_t=\alpha_t(x^*-v_t)$, gives
\begin{equation}
\label{eq:helper_b2_1}
    F(y_t)
    \le
    (1-\alpha_t)F(z_t)
    +\alpha_t\langle \nabla F(y_t),v_t-x^*\rangle .
\end{equation}
The update of the auxiliary sequence satisfies the exact identity
\begin{equation}
    \frac12\mathbb E_t\|v_{t+1}-x^*\|^2
    =
    \frac12\|v_t-x^*\|^2
    -\eta_t\langle \nabla F(y_t),v_t-x^*\rangle
    +\frac{\eta_t^2}{2}
       \bigl(\|\nabla F(y_t)\|^2+\sigma_t^2\bigr).
    \label{eq:helper_b2_2}
\end{equation}
Substitute \eqref{eq:helper_b2_1} into \eqref{eq:helper_b2_0}, multiply
the result by $A_t=\eta_t/\alpha_t$, and add \eqref{eq:helper_b2_2}.
The terms $\pm\eta_t\langle\nabla F(y_t),v_t-x^*\rangle$ cancel, and
$A_t\alpha_t\eta_t=\eta_t^2$, $A_tL\alpha_t^2\eta_t^2=\eta_t^2\,
L\alpha_t\eta_t$; this gives
\begin{align}
    \mathbb E_t\left[
        A_tF(z_{t+1})
        +\frac12\|v_{t+1}-x^*\|^2
    \right]
    \le{}&
    (A_t-\eta_t)F(z_t)
    +\frac12\|v_t-x^*\|^2\notag\\
    &-
    \frac{\eta_t^2}{2}
       (1-L\alpha_t\eta_t)\|\nabla F(y_t)\|^2
    +\frac{\eta_t^2}{2}
       (1+L\alpha_t\eta_t)\sigma_t^2.
    \label{eq:acsa-one-step}
\end{align}
Since
\[
    \alpha_t\eta_t
    =6\gamma\frac{t+1}{t+6}
    \le6\gamma,
    \qquad
    L\gamma\le\frac1{48},
\]
we have $L\alpha_t\eta_t\le1/8$.
Thus the coefficient of $\|\nabla F(y_t)\|^2$ is nonpositive,
and the coefficient of $\sigma_t^2$ is at most
$\eta_t^2$. Substituting
\eqref{eq:acsa-query-variance} into
\eqref{eq:acsa-one-step} yields
\begin{align}
    \mathbb E_t\left[
        A_tF(z_{t+1})
        +\frac12\|v_{t+1}-x^*\|^2
    \right]
    \le{}&
    \left(
        A_t-\eta_t+\frac{4H_s\eta_t^2}{b}
    \right)F(z_t)
    +\frac12\|v_t-x^*\|^2\notag\\
    &+
    \frac{2H_sL}{b}
       (\alpha_t\eta_t)^2\|v_t-z_t\|^2.
    \label{eq:acsa-reduced-step}
\end{align}

For $1\le t\le q-1$, direct calculation gives
\[
    A_{t-1}-(A_t-\eta_t)=\frac23\gamma t.
\]
Moreover, $(t+1)^2\le4t^2\le4tq$ and
$\gamma\le b/(48H_s q)$ imply
\[
    \frac{4H_s\eta_t^2}{b}
    =\frac{4H_s\gamma^2(t+1)^2}{b}
    \le\frac{16H_s\gamma^2tq}{b}
    \le\frac13\gamma t.
\]
Therefore
\begin{equation}
    A_t-\eta_t+\frac{4H_s\eta_t^2}{b}
    \le A_{t-1},
    \qquad 1\le t\le q-1.
    \label{eq:acsa-coefficient}
\end{equation}

\medskip
\noindent
\emph{Controlling the auxiliary states.}
For $t\ge1$, define
\[
    \mathcal E_t
    =
    \mathbb E\left[
        A_{t-1}F(z_t)
        +\frac12\|v_t-x^*\|^2
    \right],
    \qquad
    M_t
    =
    \max\left\{
        R^2,\;
        2\max_{1\le s\le t}\mathcal E_s
    \right\}.
\]
Since $F\ge0$, we have $\mathbb E\|v_s-x^*\|^2\le2\mathcal E_s\le M_t$
for $1\le s\le t$. Because $0<\alpha_s\le1$, the recurrence for $z_s$
expresses $z_t$ as a convex combination, with deterministic weights, of
$x_0,v_1,\ldots,v_t$. Jensen's inequality therefore gives
$\mathbb E\|z_t-x^*\|^2\le M_t$, and consequently
\[
    \mathbb E\|v_t-z_t\|^2
    \le
    2\mathbb E\|v_t-x^*\|^2
    +2\mathbb E\|z_t-x^*\|^2
    \le4M_t.
\]
Taking expectations in \eqref{eq:acsa-reduced-step} and using
\eqref{eq:acsa-coefficient} and $\alpha_t\eta_t\le6\gamma$, we obtain
\[
    \mathcal E_{t+1}
    \le
    \mathcal E_t
    +\frac{288H_sL\gamma^2}{b}M_t,
    \qquad 1\le t\le q-1.
\]
The two upper bounds defining $\gamma$ imply
\[
    \gamma^2\le\frac1{48L}\cdot\frac{b}{48H_sq},
    \qquad\text{hence}\qquad
    \frac{288H_sL\gamma^2}{b}\le\frac1{8q}.
\]
It follows that $2\mathcal E_{t+1}\le M_t+\frac{M_t}{4q}$, that is,
\[
    M_{t+1}\le\Bigl(1+\frac1{4q}\Bigr)M_t,
    \qquad 1\le t\le q-1.
\]

For the initial step, $\alpha_0=1$, $A_0=\eta_0$, and
$z_0=v_0=y_0=x_0$. By \eqref{up:batch-variance} and
Lemma~\ref{up:moments}, Part~1,
\[
    \sigma_0^2
    \le\frac{2H_s}{b}F(x_0)
    \le\frac{H_sL}{b}R^2.
\]
Using \eqref{eq:acsa-one-step} at $t=0$ gives
\[
    \mathcal E_1
    \le
    \left(\frac12+\frac{H_sL\gamma^2}{b}\right)R^2
    \le\left(\frac12+\frac1{8q}\right)R^2,
    \qquad\text{hence}\qquad
    M_1\le\Bigl(1+\frac1{4q}\Bigr)R^2 .
\]
Consequently, including the case $q=1$,
\[
    M_q
    \le\Bigl(1+\frac1{4q}\Bigr)^qR^2
    \le e^{1/4}R^2
    <2R^2,
\]
and therefore $\mathcal E_q<R^2$.

\medskip
\noindent
\emph{Final bound.}
By nonnegativity of the squared-distance term,
\[
    A_{q-1}\,\mathbb E F(z_q)
    \le\mathcal E_q\le R^2.
\]
Finally,
\[
    A_{q-1}
    =\frac{\gamma q(q+5)}6
    \ge\frac{\gamma q^2}{6},
\]
and hence
\[
    \mathbb E F(z_q)
    \le
    \frac{6R^2}{\gamma q^2}
    =
    288R^2
    \max\left\{
        \frac{L}{q^2},
        \frac{H_s}{bq}
    \right\}.
\]
\end{proof}

\begin{corollary}[Upper bound for general convex populations]
\label{cor:selector}
There is an algorithm in $\mathrm{IC}(m,K,N)$ such that, for every
instance of $\mathcal W_d$,
\begin{align}
    \mathbb E F(\widehat x)
    &\le R^2\min\left\{
          \frac L2,\;\frac{2H_s}{KN},\;
          288\max\left\{\frac{L}{N^2},\frac{H_s}{mKN}\right\}
       \right\}\notag\\
    &\le288R^2\min\left\{L,\;\frac{H_s}{KN},\;
          \frac{L}{N^2}+\frac{H_s}{mKN}\right\}.
    \label{up:selector-rate}
\end{align}
\end{corollary}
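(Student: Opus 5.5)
The algorithm selects, before the run and using only the public parameters $L,H_s,R,m,K,N$, whichever of three baselines has the smallest guaranteed bound. Its output is the one produced by that branch. Each branch is an $\mathrm{IC}(m,K,N)$ algorithm, so the selector is one too. Its risk is at most the minimum of the three bounds, because each bound holds for every instance of $\mathcal W_d$ and the choice does not depend on the instance.

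The three bounds are obtained as follows.

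\emph{Returning $x_0$.} Lemma~\ref{up:moments}, Part~1, with $F(x^*)=0$ gives $F(x_0)\le\frac L2R^2$.

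\emph{Serial SGD.} Worker $1$ runs the recurrence of Lemma~\ref{lem:serial} for $s=KN$ steps, making $K$ sequential queries in each of the $N$ rounds. It carries its state across rounds through its own memory, which is allowed because local memory is unrestricted, or equivalently through the messages. At the end it outputs $\bar X$, which gives $\mathbb E F(\bar X)\le 2H_sR^2/(KN)$. The other workers stay idle.

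\emph{Minibatch accelerated method.} We run Algorithm~\ref{alg:fresh-batch-acsa} with $q=N$ and $b=mK$. In round $t$ the state $(z_t,v_t)$ is common knowledge from the messages, so every worker computes $y_t$. Each worker queries the oracle $K$ times at $y_t$, and these $mK$ queries use independent fresh samples. The workers send their responses, and at the end of the round everyone forms $\bar g_t$ and updates. Since all queries in round $t$ sit at the same point $y_t$, the within-round sequential structure is trivial. The fresh samples are independent of the history, which is exactly the setting of Theorem~\ref{thm:fresh-batch-acsa}. That theorem gives $288R^2\max\{L/N^2,H_s/(mKN)\}$.

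Taking the minimum of the three bounds gives the first line of \eqref{up:selector-rate}. For the second line, we enlarge every constant to $288$ and use $\max\{a,b\}\le a+b$.

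The only point needing care, and the one I expect to be the main obstacle, is checking that the minibatch branch fits Definition~\ref{def:ic} while keeping the conditional independence used in \eqref{up:batch-variance}. Queries are allowed to depend on past messages, and each oracle call draws a fresh $\xi$ independent of everything else. So conditional on the history $\mathcal F_t$ at the start of round $t$, the $mK$ responses are i.i.d. evaluations at the $\mathcal F_t$-measurable point $y_t$, and \eqref{up:wg} holds conditionally. Everything else is bookkeeping.
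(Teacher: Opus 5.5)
Your proposal is correct and is essentially the paper's proof. It uses the same pre-run selection among returning $x_0$ (Lemma~\ref{up:moments}, Part~1), serial SGD with $s=KN$ on one worker (Lemma~\ref{lem:serial}), and Algorithm~\ref{alg:fresh-batch-acsa} with $q=N$, $b=mK$, run one iteration per round at a broadcast common point (Theorem~\ref{thm:fresh-batch-acsa}), followed by raising all constants to $288$ and using $\max\{a,b\}\le a+b$. One small remark: under Definition~\ref{def:ic} a query may depend only on earlier \emph{messages} and the current round's local history, so of your two justifications for carrying the serial state across rounds, only the one via messages literally fits the definition.
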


\begin{proof}
The algorithm selects, before the run, the branch with the smallest of
the following three guarantees; the selection uses only the public
parameters.

\emph{Zero queries.} Returning $x_0$ gives $F(x_0)\le\frac L2R^2$ by
Lemma~\ref{up:moments}, Part~1.

\emph{Serial branch.} One worker runs the recursion of
Lemma~\ref{lem:serial} with $s=KN$, making $K$ queries in each round
and keeping its state between rounds; the other workers are idle. Its
risk is at most $2H_sR^2/(KN)$.

\emph{Common-query branch.} Run Algorithm~\ref{alg:fresh-batch-acsa}
with $q=N$ and $b=mK$. At the start of round $t+1$ the point $y_t$ is
broadcast, every worker makes its $K$ queries at $y_t$, and at the end of
the round the responses are averaged and the updates of
Algorithm~\ref{alg:fresh-batch-acsa} are performed. Thus every iteration
is one round, and Theorem~\ref{thm:fresh-batch-acsa} bounds the risk by
$288R^2\max\{L/N^2,H_s/(mKN)\}$.

The second inequality in \eqref{up:selector-rate} follows by increasing
all coefficients to $288$ and using $\max\{a,b\}\le a+b$.
\end{proof}

\section{Upper bound with sample reuse}
\label{app:appendix_C}

Let Assumption~\ref{ass:pool} hold, and fix an instance of
$\mathcal S_d$. Replacing $f_\xi$ by
$f_\xi-f_\xi(x^*)$ does not change any gradient, so we assume that
$f_\xi\ge0$ and $f_\xi(x^*)=0$ for almost every $\xi$, and
$F=\mathbb E f_\xi$. Let
\[
    \mathcal C=\{x\in\R^d:\|x-x_0\|\le R\},
\]
which is known to the algorithm and contains $x^*$, and let
$\Pi_{\mathcal C}$ be the Euclidean projection onto $\mathcal C$. A
\emph{fresh batch} of size $n$ is a set of $n$ labels that have not been
queried before; since labels carry no information, the corresponding
samples are independent of the history and of each other.

\subsection{Acceleration with biased gradients}
\label{app:C-acceleration}

The following projected variant of
Algorithm~\ref{alg:fresh-batch-acsa} is the accelerated scheme of
\citet{woodworth2021even}; it is applied below both to the sample
functions and to their empirical envelopes.

\begin{lemma}[Projected acceleration with biased gradients]
\label{up:accelerated}
Let $q_\zeta$ be jointly measurable nonnegative convex functions of
$x\in\R^d$, indexed by a random sample $\zeta$, with $H'$-Lipschitz
gradients and a common zero $x^*\in\mathcal C$, and let
$Q=\mathbb E q_\zeta$ have an $L'$-Lipschitz gradient, where $L',H'>0$.
Fix integers $b,T\ge1$ and put
\[
    \gamma=\min\left\{\frac1{12L'},\frac b{24H'(T+1)}\right\},
    \qquad
    \alpha_t=\frac6{t+6},
    \qquad
    \eta_t=\gamma(t+1).
\]
Let $z_0=v_0=x_0$ and, for $t=0,\ldots,T-1$,
\begin{align*}
    y_t&=(1-\alpha_t)z_t+\alpha_tv_t,
    &
    v_{t+1}&=\Pi_{\mathcal C}(v_t-\eta_t\widetilde g_t),
    &
    z_{t+1}&=(1-\alpha_t)z_t+\alpha_tv_{t+1},
\end{align*}
where $\widetilde g_t=g_t+e_t$, $g_t$ is the average of
$\nabla q_\zeta(y_t)$ over $b$ samples drawn independently of the
history $\mathcal F_t$ before them, and the error $e_t$ may depend on
these samples and on additional randomness and satisfies
\begin{equation}
    \mathbb E\bigl[\|e_t\|\mid\mathcal F_t\bigr]\le\varepsilon .
    \label{up:bias-assumption}
\end{equation}
Then all $y_t,v_t,z_t$ belong to $\mathcal C$, and
\begin{equation}
    \mathbb E Q(z_T)
    \le\frac{36L'R^2}{T^2}+\frac{288H'R^2}{bT}+12R\varepsilon .
    \label{up:accel-rate}
\end{equation}
\end{lemma}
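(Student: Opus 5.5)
The plan is to follow the potential argument of Theorem~\ref{thm:fresh-batch-acsa}, with three changes. The unbiased batch gradient is replaced by $\widetilde g_t=g_t+e_t$. The $v$-update is projected onto $\mathcal C$. The auxiliary-state bookkeeping (the $M_t$ recursion) becomes trivial: since $\mathcal C$ has diameter $2R$, we have $\|v_t-z_t\|\le2R$ and $\|v_t-x^*\|\le2R$ surely.

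First, $v_{t+1}\in\mathcal C$ by projection. By induction $z_{t+1}$ and $y_t$ are convex combinations of points of $\mathcal C$ (recall $\alpha_t\le1$), so all iterates lie in $\mathcal C$. Next, $q_\zeta\ge0$ with common zero $x^*$, so Lemma~\ref{lem:smooth}, Part~2 gives $\|\nabla q_\zeta\|^2\le2H'q_\zeta$. Repeating the proof of Lemma~\ref{up:moments}, Part~2 then yields $\mathbb E_t g_t=\nabla Q(y_t)$ and $\sigma_t^2\le\frac{2H'}bQ(y_t)$.

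For the one-step inequality, write $z_{t+1}=y_t+\alpha_t(v_{t+1}-v_t)$. I then use the standard projected form rather than the closed-form identity. Smoothness gives $Q(z_{t+1})\le Q(y_t)+\alpha_t\langle\nabla Q(y_t),v_{t+1}-v_t\rangle+\frac{L'\alpha_t^2}2\|v_{t+1}-v_t\|^2$. The three-point property of the projection gives
\[
\eta_t\langle\widetilde g_t,v_{t+1}-x^*\rangle\le\tfrac12\|v_t-x^*\|^2-\tfrac12\|v_{t+1}-x^*\|^2-\tfrac12\|v_{t+1}-v_t\|^2.
\]
Next split $\nabla Q(y_t)=\widetilde g_t-(g_t-\nabla Q(y_t))-e_t$ and use convexity as in \eqref{eq:helper_b2_1}. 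Multiplying by $A_t=\eta_t/\alpha_t$ produces a potential $A_tQ(z_{t+1})+\frac12\|v_{t+1}-x^*\|^2$ with the following error terms.
\begin{enumerate}
\item The noise term $\eta_t\langle g_t-\nabla Q(y_t),v_{t+1}-v_t\rangle$, after using $\mathbb E_t\langle g_t-\nabla Q,v_t-x^*\rangle=0$. It is absorbed by Young's inequality into $\frac14\|v_{t+1}-v_t\|^2+\eta_t^2\sigma_t^2$. Since $L'\alpha_t\eta_t\le6L'\gamma\le\frac12$, the quadratic term $\frac{L'\alpha_t\eta_t}2\|v_{t+1}-v_t\|^2$ is also absorbed by $-\frac12\|v_{t+1}-v_t\|^2$.
\item The bias term $\eta_t\langle e_t,x^*-v_{t+1}\rangle$. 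It is bounded surely by $2R\eta_t\|e_t\|$, whose conditional expectation is at most $2R\eta_t\varepsilon$ by \eqref{up:bias-assumption}.
\end{enumerate}

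The variance is handled as in \eqref{eq:acsa-query-variance}. We have $Q(y_t)\le2Q(z_t)+L'\alpha_t^2\|v_t-z_t\|^2\le2Q(z_t)+4L'R^2\alpha_t^2$, so $\eta_t^2\sigma_t^2\le\frac{4H'\eta_t^2}bQ(z_t)+\frac{8H'L'R^2}b(\alpha_t\eta_t)^2$. The choice $\gamma\le\frac b{24H'(T+1)}$ gives $A_t-\eta_t+\frac{4H'\eta_t^2}b\le A_{t-1}$, exactly as in \eqref{eq:acsa-coefficient}; the constant $24$ replaces $48$ because $T+1$ replaces $q$. Telescoping from $t=0$ to $T-1$ then gives
\[
A_{T-1}\mathbb E Q(z_T)\le\tfrac12R^2+\sum_{t<T}\Bigl[\tfrac{8H'L'R^2}b(6\gamma)^2+2R\gamma(t+1)\varepsilon\Bigr].
\]
The first sum is at most $T\cdot\frac{288H'L'\gamma^2R^2}{b}\le\frac{R^2}{\cdot}$ times a constant, using $\gamma^2\le\frac{1}{12L'}\cdot\frac{b}{24H'(T+1)}$. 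The second sum is at most $R\gamma\varepsilon T(T+1)$.

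Finally divide by $A_{T-1}=\gamma T(T+5)/6\ge\gamma T^2/6$. This gives $\frac{cR^2}{\gamma T^2}+6R\varepsilon\frac{T+1}{T}$. Since $\frac{T+1}{T}\le2$, the bias contributes at most $12R\varepsilon$. The first part equals $\frac{cR^2}{T^2}\max\{12L',24H'(T+1)/b\}$, which yields the two terms of \eqref{up:accel-rate} after using $T+1\le2T$.

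The main obstacle is the constant bookkeeping. The $v_{t+1}-v_t$ terms must be absorbed without losing the factor in $36L'/T^2$, so I would tune the Young splitting to hit $36$ and $288$. If the constants do not match exactly, the structure of the bound is unaffected.
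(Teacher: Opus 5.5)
Your proposal follows the paper's proof step for step:
\begin{itemize}
\item the one-step inequality from smoothness along $z_{t+1}-y_t=\alpha_t(v_{t+1}-v_t)$, the three-point inequality for $\Pi_{\mathcal C}$, convexity at $y_t$, and multiplication by $A_t=\eta_t/\alpha_t$;
\item absorption of $\|v_{t+1}-v_t\|^2$ via $6L'\gamma\le\frac12$ and a $\frac14$ Young split;
\item the bias bound $2R\eta_t\|e_t\|$;
\item the comparison with $A_{t-1}-(A_t-\eta_t)=\frac23\gamma t$.
\end{itemize}
Your variance route ($\sigma_t^2\le\frac{2H'}{b}Q(y_t)$, then Part~2 of Lemma~\ref{lem:smooth} for $Q$) differs from the paper's (Part~3 for each $q_\zeta$ plus smoothness of $Q$). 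It gives the identical bound $\frac{4H'}{b}Q(z_t)+\frac{8H'L'R^2\alpha_t^2}{b}$, so it is fine.

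There are two small omissions.
\begin{itemize}
\item Unbiasedness needs $\nabla Q=\mathbb E\nabla q_\zeta$. The paper justifies this by dominated convergence using $\|\nabla q_\zeta(x)\|\le H'\|x-x^*\|$.
\item At $t=0$ there is no $A_{-1}$ to absorb $\frac{4H'\gamma^2}{b}Q(x_0)$, so that step must be bounded directly. Since $y_0=x_0$, one gets $\gamma^2\sigma_0^2\le\frac{H'L'R^2\gamma^2}{b}$, which fits inside the $t=0$ summand of your display.
\end{itemize}

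The real gap is the last step, which as written does not prove the stated constants. You bound the drift sum by $R^2$ through $\gamma^2\le\frac{1}{12L'}\cdot\frac{b}{24H'(T+1)}$. This gives $A_{T-1}\mathbb E Q(z_T)\le\frac32R^2+\dots$, hence $\mathbb E Q(z_T)\le\frac{9R^2}{\gamma T^2}+12R\varepsilon$. That yields only $\frac{108L'R^2}{T^2}+\frac{432H'R^2}{bT}+12R\varepsilon$, not $36$ and $288$.

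Your suggested remedy of retuning the Young split cannot fix this, for two reasons:
\begin{itemize}
\item When $L'\alpha_t\eta_t$ is near $\frac12$, only $\frac14\|v_{t+1}-v_t\|^2$ is available, so the coefficient of $\eta_t^2\|\omega_t\|^2$ cannot go below $1$.
\item More fundamentally, any drift charged as a multiple of $R^2$ to the numerator $\frac12R^2$ pushes the coefficient of $L'$ above $3\cdot 12=36$.
\end{itemize}

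The fix is to keep one factor of $\gamma$ in the drift term and divide by $A_{T-1}\ge\gamma T^2/6$ first.
\begin{itemize}
\item The drift then contributes $\frac{6}{\gamma T^2}\cdot T\cdot\frac{288H'L'R^2\gamma^2}{b}=\frac{1728H'L'R^2\gamma}{bT}$. Using only $\gamma\le\frac{1}{12L'}$, this is at most $\frac{144H'R^2}{bT}$.
\item The initial term gives $\frac{3R^2}{\gamma T^2}\le\frac{36L'R^2}{T^2}+\frac{144H'R^2}{bT}$, by $\gamma^{-1}\le 12L'+\frac{24H'(T+1)}{b}$ and $T+1\le 2T$.
\end{itemize}
Adding these gives exactly $36$ and $288$.

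The constants are not cosmetic here, so your remark that ``the structure of the bound is unaffected'' is not enough. Proposition~\ref{up:envelope-method} and Proposition~\ref{up:integers} carry them into the explicit factors $150$, $1152$, and $5\cdot 10^6$.
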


\begin{proof}
The iterates belong to $\mathcal C$ by induction and convexity of
$\mathcal C$; in particular, $\|v_t-z_t\|\le2R$ and
$\|v_{t+1}-x^*\|\le2R$. On $\mathcal C$ every $\nabla q_\zeta$ is bounded
by $2H'R$ (Lemma~\ref{lem:smooth}, Part~4), so all quantities below are
integrable, and, as in the proof of Lemma~\ref{lem:relations}, the bound
$\|\nabla q_\zeta(x)\|\le H'\|x-x^*\|$ justifies
$\nabla Q=\mathbb E\nabla q_\zeta$. Write $\mathbb E_t[\cdot]=\mathbb E[\cdot\mid\mathcal F_t]$,
$A_t=\eta_t/\alpha_t=\frac{\gamma(t+1)(t+6)}6$, and
$\omega_t=g_t-\nabla Q(y_t)$, so that $\mathbb E_t\omega_t=0$.

\emph{Variance.}
For fixed $x,y$, Lemma~\ref{lem:smooth}, Part~3, applied to each
$q_\zeta$, and the smoothness of $Q$ give
\[
    \mathbb E\|\nabla q_\zeta(x)-\nabla q_\zeta(y)\|^2
    \le H'\langle\nabla Q(x)-\nabla Q(y),x-y\rangle
    \le H'L'\|x-y\|^2 ,
\]
and Lemma~\ref{lem:smooth}, Part~2, gives
$\mathbb E\|\nabla q_\zeta(x)\|^2\le2H'Q(x)$. As in the proof of
Lemma~\ref{up:moments}, Part~2, averaging $b$ independent samples divides
the variance by $b$, and the variance is at most the second moment.
Using $\|y_t-z_t\|=\alpha_t\|v_t-z_t\|\le2R\alpha_t$, we obtain
\begin{equation}
    \mathbb E_t\|\omega_t\|^2
    \le\frac2b\,\mathbb E_t\|\nabla q_\zeta(y_t)-\nabla q_\zeta(z_t)\|^2
       +\frac2b\,\mathbb E_t\|\nabla q_\zeta(z_t)\|^2
    \le\frac{8H'L'R^2\alpha_t^2}{b}+\frac{4H'}{b}Q(z_t).
    \label{up:accel-variance}
\end{equation}
At $t=0$ we use instead $y_0=x_0$ and Lemma~\ref{lem:smooth}, Part~4,
for $Q$:
$\mathbb E\|\omega_0\|^2\le\frac{2H'}bQ(x_0)\le\frac{H'L'R^2}b$.

\emph{One step.}
As in \eqref{eq:helper_b2_1}, convexity gives
$Q(y_t)\le(1-\alpha_t)Q(z_t)+\alpha_t\langle\nabla Q(y_t),v_t-x^*\rangle$.
Since $z_{t+1}-y_t=\alpha_t(v_{t+1}-v_t)$, Lemma~\ref{lem:smooth},
Part~1, gives
\[
    Q(z_{t+1})
    \le Q(y_t)+\alpha_t\langle\nabla Q(y_t),v_{t+1}-v_t\rangle
       +\frac{L'\alpha_t^2}2\|v_{t+1}-v_t\|^2 .
\]
Combining the two inequalities and multiplying by $A_t$,
\[
    A_tQ(z_{t+1})
    \le(A_t-\eta_t)Q(z_t)
       +\eta_t\langle\nabla Q(y_t),v_{t+1}-x^*\rangle
       +\frac{L'\alpha_t\eta_t}2\|v_{t+1}-v_t\|^2 .
\]
The optimality condition of the projection,
$\langle v_t-\eta_t\widetilde g_t-v_{t+1},x^*-v_{t+1}\rangle\le0$, gives
\[
    \eta_t\langle\widetilde g_t,v_{t+1}-x^*\rangle
    \le\frac12\Bigl(\|v_t-x^*\|^2-\|v_{t+1}-x^*\|^2
        -\|v_{t+1}-v_t\|^2\Bigr).
\]
Since $\nabla Q(y_t)=\widetilde g_t-\omega_t-e_t$, the last two displays
yield
\begin{align}
    A_tQ(z_{t+1})+\frac12\|v_{t+1}-x^*\|^2
    \le{}&(A_t-\eta_t)Q(z_t)+\frac12\|v_t-x^*\|^2
       -\frac{1-L'\alpha_t\eta_t}2\|v_{t+1}-v_t\|^2\notag\\
    &+\eta_t\langle\omega_t,x^*-v_{t+1}\rangle
     +\eta_t\langle e_t,x^*-v_{t+1}\rangle .
    \label{up:one-step-pathwise}
\end{align}
We have $\alpha_t\eta_t\le6\gamma\le\frac1{2L'}$, so the coefficient of
$\|v_{t+1}-v_t\|^2$ is at most $-\frac14$. Split the noise term at $v_t$:
$\eta_t\langle\omega_t,x^*-v_t\rangle$ has zero conditional expectation,
and
$\eta_t\langle\omega_t,v_t-v_{t+1}\rangle\le\eta_t^2\|\omega_t\|^2
+\frac14\|v_{t+1}-v_t\|^2$. The bias term is at most
$2R\eta_t\|e_t\|$. Taking expectations and using
\eqref{up:bias-assumption},
\begin{equation}
    \mathbb E\Bigl[A_tQ(z_{t+1})+\frac12\|v_{t+1}-x^*\|^2\Bigr]
    \le\mathbb E\Bigl[(A_t-\eta_t)Q(z_t)+\frac12\|v_t-x^*\|^2\Bigr]
       +\eta_t^2\,\mathbb E\|\omega_t\|^2+2R\eta_t\varepsilon .
    \label{up:one-step-expected}
\end{equation}

\emph{Summation.}
As in the proof of Theorem~\ref{thm:fresh-batch-acsa},
$A_{t-1}-(A_t-\eta_t)=\frac23\gamma t$ for $t\ge1$, and $A_0=\eta_0$.
By \eqref{up:accel-variance}, the coefficient of $\mathbb E Q(z_t)$
contributed by $\eta_t^2\mathbb E\|\omega_t\|^2$ is
$\frac{4H'\gamma^2(t+1)^2}{b}\le\frac{\gamma(t+1)^2}{6(T+1)}
\le\frac{\gamma(t+1)}6\le\frac{\gamma t}3$ for $1\le t\le T-1$, where we
used $\gamma\le\frac b{24H'(T+1)}$ and $t+1\le2t$. Hence these terms are
absorbed when \eqref{up:one-step-expected} is summed over
$t=0,\ldots,T-1$, and, using the bound for $t=0$ and
$\|x_0-x^*\|\le R$,
\[
    A_{T-1}\,\mathbb E Q(z_T)
    \le\frac{R^2}2
       +\frac{8H'L'R^2}{b}\sum_{t=0}^{T-1}(\alpha_t\eta_t)^2
       +2R\varepsilon\sum_{t=0}^{T-1}\eta_t .
\]
Since $A_{T-1}\ge\frac{\gamma T^2}6$, $\alpha_t\eta_t\le6\gamma$, and
$\sum_{t<T}\eta_t=\frac{\gamma T(T+1)}2$,
\[
    \mathbb E Q(z_T)
    \le\frac{3R^2}{\gamma T^2}+\frac{1728H'L'R^2\gamma}{bT}
       +\frac{6R\varepsilon(T+1)}{T} .
\]
Finally, $\gamma^{-1}\le12L'+\frac{24H'(T+1)}b$, $\gamma\le\frac1{12L'}$,
and $T+1\le2T$ give \eqref{up:accel-rate}.
\end{proof}

With $q_\zeta=f_\xi$, $Q=F$, $L'=L$, $H'=H_s$, and $e_t=0$,
Lemma~\ref{up:accelerated} gives
\begin{equation}
    \mathbb E F(z_T)\le\frac{36LR^2}{T^2}+\frac{288H_sR^2}{bT}.
    \label{up:fresh-rate}
\end{equation}
For $T=N$ and $b\le mK$, the $b$ queries of every iteration are made at
the common point $y_t$ and distributed among the workers, at most $K$
per worker; every iteration is one round.

\subsection{Empirical envelopes}
\label{up:envelopes}

For a fresh batch with samples $\mathsf S=(\xi_1,\ldots,\xi_n)$ and
$\tau>0$, define
\begin{gather*}
    f_{\mathsf S}=\frac1n\sum_{i=1}^nf_{\xi_i},
    \qquad
    h_{\mathsf S,y}(u)=f_{\mathsf S}(u)+\frac\tau2\|u-y\|^2,
    \qquad
    p_{\mathsf S}(y)=\arg\min_uh_{\mathsf S,y}(u),\\
    \varphi_{\mathsf S}(y)=\min_uh_{\mathsf S,y}(u),
    \qquad
    \bar\varphi(y)=\mathbb E_{\mathsf S}\varphi_{\mathsf S}(y).
\end{gather*}
These functions are used only in the analysis; the algorithm has no
proximal oracle.

\begin{lemma}[Envelopes]
\label{up:envelope-properties}
For every batch, $\varphi_{\mathsf S}$ is nonnegative and convex, has the
$\tau$-Lipschitz gradient
\begin{equation}
    \nabla\varphi_{\mathsf S}(y)=\tau\bigl(y-p_{\mathsf S}(y)\bigr),
    \label{up:envelope-gradient}
\end{equation}
and satisfies $\varphi_{\mathsf S}(x^*)=0$,
$\varphi_{\mathsf S}(y)\le\frac\tau2\|y-x^*\|^2$, and
$\|p_{\mathsf S}(y)-x^*\|\le\|y-x^*\|$. The map
$(\mathsf S,y)\mapsto p_{\mathsf S}(y)$ is jointly measurable, and
$\bar\varphi$ is nonnegative and convex, has a $\tau$-Lipschitz gradient
$\nabla\bar\varphi=\mathbb E\nabla\varphi_{\mathsf S}$, and vanishes at
$x^*$. For $y\in\mathcal C$,
$h_{\mathsf S,y}(y)-\varphi_{\mathsf S}(y)\le2H_sR^2$.
\end{lemma}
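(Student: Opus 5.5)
The argument is the standard Moreau-envelope calculus, applied to the finite average $f_{\mathsf S}$, which is convex, nonnegative, $H_s$-smooth and vanishes at $x^*$ (these properties pass from the $f_{\xi_i}$ to their average). I will establish the claims for a fixed batch first and then pass to $\bar\varphi$ by averaging.

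\emph{Fixed batch.} The function $h_{\mathsf S,y}$ is $\tau$-strongly convex and continuous, so $p_{\mathsf S}(y)$ exists and is unique. Nonnegativity of $\varphi_{\mathsf S}$ is immediate from $f_{\mathsf S}\ge0$. Taking $u=x^*$ gives $\varphi_{\mathsf S}(y)\le\frac\tau2\|y-x^*\|^2$, and in particular $\varphi_{\mathsf S}(x^*)=0$. Convexity holds because $\varphi_{\mathsf S}$ is the partial minimum of the jointly convex function $(u,y)\mapsto h_{\mathsf S,y}(u)$.

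For the gradient formula \eqref{up:envelope-gradient} and its $\tau$-Lipschitz property, I will use the optimality condition
\[
\nabla f_{\mathsf S}(p)=\tau(y-p), \qquad p=p_{\mathsf S}(y).
\]
Write $p'=p_{\mathsf S}(y')$. Monotonicity of $\nabla f_{\mathsf S}$ gives $\langle (y-p)-(y'-p'),p-p'\rangle\ge0$. Hence $p_{\mathsf S}$ is firmly nonexpansive, and so is $\mathrm I-p_{\mathsf S}$. Differentiability of $\varphi_{\mathsf S}$ follows from the sandwich
\[
\varphi_{\mathsf S}(y')\le h_{\mathsf S,y'}(p)=\varphi_{\mathsf S}(y)+\tau\langle y-p,y'-y\rangle+\frac\tau2\|y'-y\|^2,
\]
together with the symmetric lower bound obtained by evaluating at $p'$. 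This yields $\nabla\varphi_{\mathsf S}(y)=\tau(y-p)$, which is $\tau$-Lipschitz because $\mathrm I-p_{\mathsf S}$ is nonexpansive. Since $\nabla f_{\mathsf S}(x^*)=0$, we have $p_{\mathsf S}(x^*)=x^*$, and nonexpansiveness then gives $\|p_{\mathsf S}(y)-x^*\|\le\|y-x^*\|$.

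\emph{Measurability and averaging.} Joint measurability of $(\mathsf S,y)\mapsto p_{\mathsf S}(y)$ is the one slightly technical point. The map is continuous in $y$ for each $\mathsf S$. To handle $\mathsf S$, I will obtain $p_{\mathsf S}(y)$ as a pointwise limit of the gradient iteration $u_{k+1}=u_k-\frac1{H_s+\tau}\nabla h_{\mathsf S,y}(u_k)$. This iteration converges for strongly convex smooth $h_{\mathsf S,y}$, and each iterate is jointly measurable because $\nabla f_\xi$ is. A Carathéodory argument (continuous in $y$, measurable in $\mathsf S$) would also suffice.

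For $\bar\varphi$, nonnegativity, convexity and vanishing at $x^*$ transfer by taking expectations. The bound $\varphi_{\mathsf S}(y)\le\frac\tau2\|y-x^*\|^2$ makes $\bar\varphi$ finite. The difference quotients of $\varphi_{\mathsf S}$ are dominated, since $\|\nabla\varphi_{\mathsf S}(y)\|\le\tau\|y-x^*\|$. Dominated convergence, as in the proof of Lemma~\ref{lem:relations}, therefore gives $\nabla\bar\varphi=\mathbb E\nabla\varphi_{\mathsf S}$, which is $\tau$-Lipschitz as an average of $\tau$-Lipschitz maps.

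\emph{Final bound.} For $y\in\mathcal C$, drop the quadratic term and use $f_{\mathsf S}\ge0$ to get
\[
h_{\mathsf S,y}(y)-\varphi_{\mathsf S}(y)\le f_{\mathsf S}(y)\le\frac{H_s}2\|y-x^*\|^2.
\]
Since both $y$ and $x^*$ lie in the ball $\mathcal C$ of radius $R$, $\|y-x^*\|\le2R$, and the right-hand side is at most $2H_sR^2$.

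The only genuine obstacle is measurability; every other step is routine.
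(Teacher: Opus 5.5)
Your proposal is correct and follows essentially the same route as the paper: optimality conditions plus monotonicity give (firm) nonexpansiveness of $p_{\mathsf S}$ and $\mathrm I-p_{\mathsf S}$, the two-sided evaluation gives the gradient formula, and $u=x^*$, partial minimization, dominated convergence for $\bar\varphi$, and dropping to $f_{\mathsf S}(y)\le\frac{H_s}2\|y-x^*\|^2$ give the remaining claims. The only difference is measurability. You take the pointwise limit of gradient-descent iterates, which needs the $H_s$-smoothness of $f_{\mathsf S}$ and hence holds only for almost every batch. The paper instead selects near-minimizers from a countable dense set and uses strong convexity, which works for every batch.
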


\begin{proof}
The function $h_{\mathsf S,y}$ is continuous and $\tau$-strongly convex,
so $p_{\mathsf S}(y)$ is well defined. Write $p=p_{\mathsf S}$. The
optimality conditions at the centers $y$ and $y'$,
$\nabla f_{\mathsf S}(p(y))=\tau(y-p(y))$, and monotonicity of
$\nabla f_{\mathsf S}$ give
$\langle p(y)-p(y'),y-y'\rangle\ge\|p(y)-p(y')\|^2$. Hence $p$ is
nonexpansive, and so is $\mathrm I-p$: with $\Delta=y-y'$ and
$\Delta_p=p(y)-p(y')$,
$\|\Delta-\Delta_p\|^2=\|\Delta\|^2-2\langle\Delta,\Delta_p\rangle
+\|\Delta_p\|^2\le\|\Delta\|^2-\|\Delta_p\|^2$.

Evaluating $h_{\mathsf S,y'}$ at $p(y)$ and $h_{\mathsf S,y}$ at $p(y')$
gives
\begin{align*}
    \varphi_{\mathsf S}(y')-\varphi_{\mathsf S}(y)
    &\le\tau\langle y-p(y),y'-y\rangle+\frac\tau2\|y'-y\|^2,\\
    \varphi_{\mathsf S}(y')-\varphi_{\mathsf S}(y)
    &\ge\tau\langle y'-p(y'),y'-y\rangle-\frac\tau2\|y'-y\|^2 .
\end{align*}
Since $\mathrm I-p$ is nonexpansive,
$|\tau\langle(y'-p(y'))-(y-p(y)),y'-y\rangle|\le\tau\|y'-y\|^2$, and
therefore
$|\varphi_{\mathsf S}(y')-\varphi_{\mathsf S}(y)
-\tau\langle y-p(y),y'-y\rangle|\le\frac{3\tau}2\|y'-y\|^2$. This proves
differentiability and \eqref{up:envelope-gradient}, and nonexpansiveness
of $\mathrm I-p$ gives the $\tau$-Lipschitz property. Convexity holds
because $\varphi_{\mathsf S}$ is a partial minimum of a jointly convex
function, and nonnegativity because $f_{\mathsf S}\ge0$. Taking $u=x^*$
in the minimum gives $\varphi_{\mathsf S}(y)\le\frac\tau2\|y-x^*\|^2$
and $\varphi_{\mathsf S}(x^*)=0$; since $x^*$ is then the unique minimizer
of $h_{\mathsf S,x^*}$, $p(x^*)=x^*$, and nonexpansiveness gives
$\|p(y)-x^*\|\le\|y-x^*\|$.

For measurability, enumerate a countable dense set $\{u_k\}\subseteq\R^d$.
The minimum of the continuous function $h_{\mathsf S,y}$ equals its
infimum over $\{u_k\}$ and is measurable in $(\mathsf S,y)$. The first
$u_k$ whose value is within $1/j$ of the minimum is a measurable function
of $(\mathsf S,y)$ and, by strong convexity, lies within
$\sqrt{2/(\tau j)}$ of $p(y)$; letting $j\to\infty$ shows that $p$ is
jointly measurable. The bound
$\|\nabla\varphi_{\mathsf S}(y)\|=\|\nabla\varphi_{\mathsf S}(y)
-\nabla\varphi_{\mathsf S}(x^*)\|\le\tau\|y-x^*\|$ justifies
differentiation under the expectation, as in the proof of
Lemma~\ref{lem:relations}, and the properties of $\bar\varphi$ follow.
Finally, for $y\in\mathcal C$, $\|y-x^*\|\le2R$ and
$h_{\mathsf S,y}(y)-\varphi_{\mathsf S}(y)\le f_{\mathsf S}(y)
\le\frac{H_s}2\|y-x^*\|^2\le2H_sR^2$ by \eqref{up:sample-bounds}.
\end{proof}

\begin{lemma}[From the envelope to the population]
\label{up:prox-transfer}
Let $\tau=8H_s/n$. For every fixed $y$,
\begin{equation}
    \mathbb E_{\mathsf S}F\bigl(p_{\mathsf S}(y)\bigr)\le2\bar\varphi(y).
    \label{up:exact-prox-transfer}
\end{equation}
If $y$ is a random point independent of the batch and $\widetilde p$
satisfies
$\mathbb E[h_{\mathsf S,y}(\widetilde p)-\varphi_{\mathsf S}(y)\mid y]
\le\delta$, then
\begin{equation}
    \mathbb E F(\widetilde p)\le4\,\mathbb E\bar\varphi(y)+\frac{2H_s\delta}\tau .
    \label{up:approx-prox-transfer}
\end{equation}
\end{lemma}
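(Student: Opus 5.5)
We plan to prove \eqref{up:exact-prox-transfer} by comparing $F$ at the proximal point with the empirical envelope. The envelope controls $f_{\mathsf S}$ at $p_{\mathsf S}(y)$, and the gap between $F$ and $f_{\mathsf S}$ at this data-dependent point is a generalization gap. That gap is controlled by stability, and the choice $\tau=8H_s/n$ makes it a fixed fraction of the envelope. We then obtain \eqref{up:approx-prox-transfer} from \eqref{up:exact-prox-transfer} using strong convexity of $h_{\mathsf S,y}$ and Lemma~\ref{lem:smooth}, Part~2.

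\emph{Exact step.} We use a replace-one argument. Let $\mathsf S^{(i)}$ be $\mathsf S$ with $\xi_i$ replaced by an independent copy $\xi_i'$. By exchangeability,
\[
    \mathbb E F(p_{\mathsf S}(y))=\frac1n\sum_i\mathbb E f_{\xi_i'}(p_{\mathsf S}(y))=\frac1n\sum_i\mathbb E f_{\xi_i}(p_{\mathsf S^{(i)}}(y)).
\]
So it suffices to bound $f_{\xi_i}(p^{(i)})-f_{\xi_i}(p)$, writing $p=p_{\mathsf S}(y)$ and $p^{(i)}=p_{\mathsf S^{(i)}}(y)$. Lemma~\ref{lem:smooth}, Part~2, applied to the nonnegative $H_s$-smooth $f_{\xi_i}$, gives
\[
    f_{\xi_i}(p^{(i)})\le 2f_{\xi_i}(p)+H_s\|p^{(i)}-p\|^2 .
\]
For the distance term we compare optimality conditions of the two $\tau$-strongly convex problems, which differ only in one term. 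This gives
\[
    \tau\|p-p^{(i)}\|\le \frac1n\bigl(\|\nabla f_{\xi_i}(p)\|+\|\nabla f_{\xi_i'}(p^{(i)})\|\bigr),
\]
and squaring and using the weak-growth bound \eqref{up:sample-growth},
\[
    \|p-p^{(i)}\|^2\le \frac{4H_s}{n^2\tau^2}\bigl(f_{\xi_i}(p)+f_{\xi_i'}(p^{(i)})\bigr).
\]
Averaging over $i$, the terms $f_{\xi_i'}(p^{(i)})$ have the same law as $F(p)$ terms. With $\tau=8H_s/n$ the coefficient $4H_s^2/(n^2\tau^2)=1/16$ is small, so we get
\[
    \mathbb E F(p)\le c_1\,\mathbb E f_{\mathsf S}(p)+\tfrac1{16}\,\mathbb E F(p),
\]
with $c_1$ slightly above $2$. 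Combining this with $f_{\mathsf S}(p)\le\varphi_{\mathsf S}(y)$ gives $\mathbb E F(p)\le c\,\bar\varphi(y)$.

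\emph{Main obstacle: the constant.} The crude constant from the step above exceeds $2$. To reach $2$ we would avoid the factor-$2$ smoothness bound. Instead we use a sharper expansion: bound $f_{\xi_i}(p^{(i)})\le f_{\xi_i}(p)+\langle\nabla f_{\xi_i}(p),p^{(i)}-p\rangle+\frac{H_s}2\|p^{(i)}-p\|^2$ and handle the cross term with Young's inequality against $\tau$. Alternatively, we would also retain the term $\frac\tau2\|p-y\|^2$ in $\varphi_{\mathsf S}(y)=f_{\mathsf S}(p)+\frac\tau2\|p-y\|^2$ to absorb the error, and tune the constants using the factor $8$ in $\tau$.

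\emph{Approximate step.} Let $p=p_{\mathsf S}(y)$. By $\tau$-strong convexity of $h_{\mathsf S,y}$,
\[
    \frac\tau2\|\widetilde p-p\|^2\le h_{\mathsf S,y}(\widetilde p)-\varphi_{\mathsf S}(y).
\]
Then Lemma~\ref{lem:smooth}, Part~2, for the nonnegative $L$-smooth $F$ (with $L\le H_s$) gives
\[
    F(\widetilde p)\le 2F(p)+H_s\|\widetilde p-p\|^2\le 2F(p)+\frac{2H_s}{\tau}\bigl(h_{\mathsf S,y}(\widetilde p)-\varphi_{\mathsf S}(y)\bigr).
\]
We take expectations conditionally on $y$, which is independent of the batch, and apply \eqref{up:exact-prox-transfer} at the fixed value of $y$. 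This yields $4\,\mathbb E\bar\varphi(y)+2H_s\delta/\tau$, which is \eqref{up:approx-prox-transfer}. Measurability of $p_{\mathsf S}(y)$ from Lemma~\ref{up:envelope-properties} justifies the conditioning.
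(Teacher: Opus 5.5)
Your overall plan, replace-one stability for \eqref{up:exact-prox-transfer} followed by strong convexity of $h_{\mathsf S,y}$ and Lemma~\ref{lem:smooth}, Part~2, for \eqref{up:approx-prox-transfer}, is the paper's plan, and your approximate step is exactly the paper's. The exact step, however, is not finished as written, and it contains an accounting slip. Since $p^{(i)}$ depends on $\xi_i'$, the term $f_{\xi_i'}(p^{(i)})$ is an in-sample loss with the law of $f_{\xi_i}(p)$, not of $F(p)$. So your inequality with $\frac1{16}\mathbb E F(p)$ on the right is not what your bounds give. With correct accounting, your crude route gives $\mathbb E F(p)\le\frac{17}{8}\,\mathbb E f_{\mathsf S}(p)$, which misses the constant $2$ (and hence $4$ in the second claim).

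The first of your two fixes does close the gap, and you should commit to it. Put $E=\mathbb E f_{\mathsf S}(p)$, $a_i=\|\nabla f_{\xi_i}(p)\|$ and $b_i=\|\nabla f_{\xi_i'}(p^{(i)})\|$. The gradient of the common part $\frac1n\sum_{j\ne i}f_{\xi_j}+\frac\tau2\|\cdot-y\|^2$ is $\tau$-strongly monotone, so your comparison of optimality conditions gives $\tau n\|p-p^{(i)}\|\le a_i+b_i$. By \eqref{up:sample-growth} and exchangeability, $\mathbb E a_i^2\le2H_sE$ and $\mathbb E b_i^2\le2H_sE$. With $\tau n=8H_s$, the smoothness expansion at $p$ and Cauchy--Schwarz yield
\[
    \mathbb E f_{\xi_i}(p^{(i)})
    \le E+\frac{\mathbb E[a_i(a_i+b_i)]}{\tau n}
      +\frac{H_s\,\mathbb E(a_i+b_i)^2}{2\tau^2n^2}
    \le E+\frac E2+\frac E{16}.
\]
Hence $\mathbb E F(p)\le\frac{25}{16}\bar\varphi(y)$, which is below $2\bar\varphi(y)$. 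If you use Young's inequality instead, its weight must be of order $H_s=\tau n/8$, not $\tau$; a weight of order $\tau$ produces a term growing like $n$. Your second alternative, retaining $\frac\tau2\|p-y\|^2$, is unnecessary.

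The paper argues differently at this step. It adds the two strong-convexity inequalities and uses convexity of $f_{\xi_i}$ and $f_{\xi_i'}$. This gives a stability bound with the out-of-sample gradients $\nabla f_{\xi_i}(p^{(i)})$ and $\nabla f_{\xi_i'}(p)$. It then bounds $f_{\xi_i}(p^{(i)})-f_{\xi_i}(p)$ by convexity alone, again with an out-of-sample gradient. All second moments are therefore controlled by $R_y=\mathbb E F(p)$, which leads to the self-bounding inequality $R_y\le\mathbb E f_{\mathsf S}(p)+\frac{R_y}2$. That step needs $R_y<\infty$ (from $\|p_{\mathsf S}(y)-x^*\|\le\|y-x^*\|$) and gives exactly $2$. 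Your in-sample version uses smoothness of $f_{\xi_i}$ in the expansion instead, needs no self-bounding step, and gives a slightly better constant.
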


\begin{proof}
Fix $y$ and $i\le n$, let $\xi_i'$ be an independent copy of $\xi_i$, and
let $\mathsf S'$ be the batch with $\xi_i$ replaced by $\xi_i'$. Put
$u=p_{\mathsf S}(y)$, $u'=p_{\mathsf S'}(y)$, and $\Delta=\|u-u'\|$.
Strong convexity of $h_{\mathsf S,y}$ and $h_{\mathsf S',y}$ at their
minimizers gives
$h_{\mathsf S,y}(u')-h_{\mathsf S,y}(u)\ge\frac\tau2\Delta^2$ and
$h_{\mathsf S',y}(u)-h_{\mathsf S',y}(u')\ge\frac\tau2\Delta^2$. Adding
these inequalities and using
$h_{\mathsf S,y}-h_{\mathsf S',y}=\frac1n(f_{\xi_i}-f_{\xi_i'})$ and
convexity of $f_{\xi_i}$ and $f_{\xi_i'}$,
\begin{equation}
    \tau\Delta^2
    \le\frac1n\Bigl(f_{\xi_i}(u')-f_{\xi_i}(u)+f_{\xi_i'}(u)-f_{\xi_i'}(u')\Bigr)
    \le\frac\Delta n\Bigl(\|\nabla f_{\xi_i}(u')\|+\|\nabla f_{\xi_i'}(u)\|\Bigr).
    \label{up:replace-one}
\end{equation}
Let $R_y=\mathbb E F(u)$, which is finite by
Lemma~\ref{up:envelope-properties}. The point $u'$ is independent of
$\xi_i$, the point $u$ is independent of $\xi_i'$, and $u,u'$ have the
same law. Hence, by \eqref{up:sample-growth},
$\mathbb E\|\nabla f_{\xi_i}(u')\|^2\le2H_sR_y$ and
$\mathbb E\|\nabla f_{\xi_i'}(u)\|^2\le2H_sR_y$, and squaring
\eqref{up:replace-one} gives
$\mathbb E\Delta^2\le\frac{8H_sR_y}{\tau^2n^2}$. Convexity and the
Cauchy--Schwarz inequality then give
\[
    \mathbb E\bigl[f_{\xi_i}(u')-f_{\xi_i}(u)\bigr]
    \le\sqrt{\mathbb E\|\nabla f_{\xi_i}(u')\|^2}\sqrt{\mathbb E\Delta^2}
    \le\frac{4H_s}{\tau n}R_y=\frac{R_y}2 .
\]
Since $u'$ is independent of $\xi_i$ and has the law of $u$,
$\mathbb E f_{\xi_i}(u')=R_y$. Averaging over $i$ gives
$R_y-\mathbb E f_{\mathsf S}(u)\le\frac{R_y}2$, so
$R_y\le2\mathbb E f_{\mathsf S}(u)\le2\mathbb E\varphi_{\mathsf S}(y)
=2\bar\varphi(y)$.

For the second claim, strong convexity gives
$\mathbb E[\|\widetilde p-p_{\mathsf S}(y)\|^2\mid y]\le2\delta/\tau$.
Lemma~\ref{lem:smooth}, Part~2, applied to $F$, which is nonnegative and
$H_s$-smooth, gives
$F(\widetilde p)\le2F(p_{\mathsf S}(y))+H_s\|\widetilde p-p_{\mathsf S}(y)\|^2$.
Taking expectations conditional on $y$ and using
\eqref{up:exact-prox-transfer} proves \eqref{up:approx-prox-transfer}.
\end{proof}

\subsection{The local solver}
\label{up:svrg}

The approximate proximal points are computed by the variance-reduced
method of \citet{johnson2013accelerating}; it uses only gradients.

\begin{lemma}[One epoch of SVRG]
\label{up:svrg-epoch}
Let $h=\frac1n\sum_{i=1}^nh_i$, where every $h_i$ is convex with an
$M$-Lipschitz gradient and $h$ is $\mu$-strongly convex with minimizer
$u$. Given a snapshot $\bar x$, store all $\nabla h_i(\bar x)$ and their
average $\nabla h(\bar x)$. Set $w_0=\bar x$ and, for $k=0,\ldots,I-1$,
select $i_k$ uniformly from $\{1,\ldots,n\}$, independently, and set
\[
    V_k=\nabla h_{i_k}(w_k)-\nabla h_{i_k}(\bar x)+\nabla h(\bar x),
    \qquad
    w_{k+1}=w_k-\beta V_k .
\]
Return $w_\kappa$ with $\kappa$ uniform on $\{0,\ldots,I-1\}$, independent
of everything else. If $0<2M\beta<1$, then
\begin{equation}
    \mathbb E\bigl[h(w_\kappa)-h(u)\mid\bar x\bigr]
    \le\left[\frac1{\mu\beta(1-2M\beta)I}
       +\frac{2M\beta}{1-2M\beta}\right]\bigl(h(\bar x)-h(u)\bigr).
    \label{up:svrg-contraction}
\end{equation}
\end{lemma}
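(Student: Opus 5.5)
The argument is the classical Johnson--Zhang one-epoch analysis, with the iterate at time $k$ compared to the minimizer $u$. Throughout, condition on $\bar x$ and let $\mathbb E_k$ denote expectation over $i_k$ given the past.

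\emph{Step 1: unbiasedness and variance of the estimator.} Since $\mathbb E_kV_k=\nabla h(w_k)$, the update gives
\[
    \mathbb E_k\|w_{k+1}-u\|^2=\|w_k-u\|^2-2\beta\langle\nabla h(w_k),w_k-u\rangle+\beta^2\mathbb E_k\|V_k\|^2 .
\]
For the second moment, write $V_k=[\nabla h_{i_k}(w_k)-\nabla h_{i_k}(u)]-[\nabla h_{i_k}(\bar x)-\nabla h_{i_k}(u)-\nabla h(\bar x)+\nabla h(u)]$, using $\nabla h(u)=0$. Apply $\|a+b\|^2\le2\|a\|^2+2\|b\|^2$, and bound the variance of the second bracket by its second moment. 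Then Lemma~\ref{lem:smooth}, Part~3, applied to each convex $M$-smooth $h_i$ at the pair $(\cdot,u)$, and averaging over $i$ (the linear terms average to $\langle\nabla h(u),\cdot\rangle=0$), give
\[
    \mathbb E_k\|V_k\|^2\le4M\bigl(h(w_k)-h(u)\bigr)+4M\bigl(h(\bar x)-h(u)\bigr).
\]

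\emph{Step 2: one-step recursion and telescoping.} Convexity of $h$ gives $\langle\nabla h(w_k),w_k-u\rangle\ge h(w_k)-h(u)$. Combining this with Step~1 yields
\[
    \mathbb E_k\|w_{k+1}-u\|^2\le\|w_k-u\|^2-2\beta(1-2M\beta)\bigl(h(w_k)-h(u)\bigr)+4M\beta^2\bigl(h(\bar x)-h(u)\bigr).
\]
Taking expectations, summing over $k=0,\ldots,I-1$, and discarding $\|w_I-u\|^2\ge0$ leads to
\[
    2\beta(1-2M\beta)\sum_{k<I}\mathbb E\bigl(h(w_k)-h(u)\bigr)\le\|\bar x-u\|^2+4M\beta^2I\bigl(h(\bar x)-h(u)\bigr).
\]
Strong convexity gives $\|\bar x-u\|^2\le\frac2\mu(h(\bar x)-h(u))$. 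Dividing by $2\beta(1-2M\beta)I$ and noting that the uniform $\kappa$ turns the average into $\mathbb E[h(w_\kappa)-h(u)\mid\bar x]$ produces exactly \eqref{up:svrg-contraction}.

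\emph{Main obstacle.} The only delicate point is Step~1: obtaining the constant $4M$ rather than something involving $\mu$ or a Lipschitz-squared term. This requires using Part~3 of Lemma~\ref{lem:smooth} componentwise at the common point $u$, where the averaged linear terms vanish, while the $h_i$ themselves need not be minimized at $u$. Finally, integrability must be checked. It holds because all iterates are finite combinations of gradients of Lipschitz-gradient functions at the fixed snapshot, so the conditional expectations are finite.
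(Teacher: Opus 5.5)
Your proof is correct and follows the paper's argument essentially line by line. You use the same splitting of $V_k$ around $u$ and the same componentwise bound from Lemma~\ref{lem:smooth}, Part~3, giving $\mathbb E_k\|V_k\|^2\le4M(h(w_k)-h(u))+4M(h(\bar x)-h(u))$, and you conclude by telescoping the distance recursion and using strong convexity at the snapshot, as the paper does.
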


\begin{proof}
By Lemma~\ref{lem:smooth}, Part~3, applied to each $h_i$ at $u$, and
$\nabla h(u)=0$,
\[
    \frac1n\sum_{i=1}^n\|\nabla h_i(x)-\nabla h_i(u)\|^2
    \le2M\bigl(h(x)-h(u)\bigr)
    \qquad\text{for every }x .
\]
Fix the inner history up to $w_k$ and write
$V_k=a_{i_k}-(c_{i_k}-\bar c)$ with
$a_i=\nabla h_i(w_k)-\nabla h_i(u)$, $c_i=\nabla h_i(\bar x)-\nabla h_i(u)$,
and $\bar c=\frac1n\sum_ic_i=\nabla h(\bar x)$. Then
$\mathbb E[V_k\mid w_k]=\nabla h(w_k)$, and since the variance of
$c_{i_k}$ is at most its second moment,
\[
    \mathbb E\bigl[\|V_k\|^2\mid w_k\bigr]
    \le2\,\mathbb E\|a_{i_k}\|^2+2\,\mathbb E\|c_{i_k}\|^2
    \le4M\bigl(h(w_k)-h(u)\bigr)+4M\bigl(h(\bar x)-h(u)\bigr).
\]
Expanding the square and using
$\langle\nabla h(w_k),w_k-u\rangle\ge h(w_k)-h(u)$,
\[
    \mathbb E\bigl[\|w_{k+1}-u\|^2\mid w_k\bigr]
    \le\|w_k-u\|^2-2\beta(1-2M\beta)\bigl(h(w_k)-h(u)\bigr)
       +4M\beta^2\bigl(h(\bar x)-h(u)\bigr).
\]
Summing over $k=0,\ldots,I-1$, discarding the final squared distance, and
using $\|\bar x-u\|^2\le\frac2\mu(h(\bar x)-h(u))$ by strong convexity,
\[
    2\beta(1-2M\beta)\sum_{k=0}^{I-1}\mathbb E\bigl[h(w_k)-h(u)\mid\bar x\bigr]
    \le\Bigl(\frac2\mu+4M\beta^2I\Bigr)\bigl(h(\bar x)-h(u)\bigr).
\]
Dividing by $2\beta(1-2M\beta)I$ proves \eqref{up:svrg-contraction},
because $w_\kappa$ is uniform over $w_0,\ldots,w_{I-1}$.
\end{proof}

\paragraph{The local routine.}
Fix integers $n\ge1$, $T\ge0$, and $j\ge1$. Given a point
$y\in\mathcal C$ and a fresh batch with samples $\mathsf S$, the
procedure \textsc{LocalProx} of Algorithm~\ref{alg:envelope} runs $j$
epochs of Lemma~\ref{up:svrg-epoch} on
$h=h_{\mathsf S,y}$ with
$h_i(x)=f_{\xi_i}(x)+\frac\tau2\|x-y\|^2$, the first snapshot $y$, every
later snapshot equal to the output of the preceding epoch, and
\begin{equation}
    \tau=\frac{8H_s}{n},
    \qquad
    \beta=\frac1{10(H_s+\tau)},
    \qquad
    I=\left\lceil\frac{100(H_s+\tau)}\tau\right\rceil .
    \label{up:local-parameters}
\end{equation}
It returns the output $\widetilde p$ of the last epoch and
$\widetilde g=\tau(y-\widetilde p)$. The quadratic part of the gradient is
computed locally, so
$V_k=\nabla f_{\xi_{i_k}}(w_k)-\nabla f_{\xi_{i_k}}(\bar x)
+\frac1n\sum_i\nabla f_{\xi_i}(\bar x)+\tau(w_k-y)$, and an epoch makes
$n$ queries at the snapshot and $I$ further queries, all with labels of
the batch.

\begin{lemma}[Local routine]
\label{up:local}
If
\[
    j\ge\left\lceil\log_2\bigl(625n(n+8)(T+1)^4\bigr)\right\rceil,
\]
then the local routine makes $j(n+I)\le114jn$ queries with $n$ distinct
labels, and
\begin{gather}
    \mathbb E\bigl[h_{\mathsf S,y}(\widetilde p)-\varphi_{\mathsf S}(y)
      \mid\mathsf S,y\bigr]\le\delta
    :=\frac{\tau^2R^2}{20000(H_s+\tau)(T+1)^4},
    \label{up:local-gap}\\
    \mathbb E\bigl[\|\widetilde g-\nabla\varphi_{\mathsf S}(y)\|\mid y\bigr]
    \le\frac{\tau R}{100(T+1)^2},
    \qquad
    \frac{2H_s\delta}\tau\le\frac{\tau R^2}{10^4(T+1)^4}.
    \label{up:local-gradient-error}
\end{gather}
\end{lemma}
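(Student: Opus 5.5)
The plan is to apply Lemma~\ref{up:svrg-epoch} to $h=h_{\mathsf S,y}$ epoch by epoch and iterate the contraction. Each $h_i=f_{\xi_i}+\frac\tau2\|\cdot-y\|^2$ is convex with an $M$-Lipschitz gradient for $M=H_s+\tau$, and $h$ is $\mu$-strongly convex with $\mu=\tau$. With $\beta=1/(10M)$ we have $2M\beta=1/5$ and $1-2M\beta=4/5$. Hence the first term in \eqref{up:svrg-contraction} is $\frac{1}{\tau\cdot\frac{1}{10M}\cdot\frac45 I}=\frac{25M}{2\tau I}\le\frac18$, using $I\ge100M/\tau$. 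The second term is $\frac{1/5}{4/5}=\frac14$. So every epoch contracts the expected suboptimality, conditionally on $(\mathsf S,y)$ and the previous snapshot, by the factor $3/8\le1/2$. By the tower property, after $j$ epochs
\[
    \mathbb E\bigl[h(\widetilde p)-\varphi_{\mathsf S}(y)\mid\mathsf S,y\bigr]\le2^{-j}\bigl(h(y)-\varphi_{\mathsf S}(y)\bigr)\le2^{-j}\,2H_sR^2,
\]
where the last step is the final assertion of Lemma~\ref{up:envelope-properties}, valid because $y\in\mathcal C$.

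Next I compare this with $\delta$. We need $2^{j}\ge 2H_sR^2/\delta=40000H_s(H_s+\tau)(T+1)^4/\tau^2$. With $\tau=8H_s/n$ this equals $40000\frac{n}{8}\cdot\frac{n+8}{8}(T+1)^4=625n(n+8)(T+1)^4$, which is exactly the threshold on $j$. This gives \eqref{up:local-gap}.

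For the query count, each epoch makes $n$ snapshot queries and $I$ inner queries, and $I\le 100(H_s+\tau)/\tau+1=100(n/8+1)+1=12.5n+101\le 113.5n$ for $n\ge1$. Hence $n+I\le114n$, and all labels come from the batch.

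For the gradient error, strong convexity gives $\|\widetilde p-p_{\mathsf S}(y)\|^2\le\frac2\tau(h(\widetilde p)-\varphi_{\mathsf S}(y))$. By \eqref{up:envelope-gradient}, $\widetilde g-\nabla\varphi_{\mathsf S}(y)=\tau(p_{\mathsf S}(y)-\widetilde p)$. Jensen then gives
\[
    \mathbb E[\|\widetilde g-\nabla\varphi_{\mathsf S}(y)\|\mid y]\le\tau\sqrt{2\delta/\tau}=\sqrt{2\tau\delta}=\frac{\tau R}{100(T+1)^2}\sqrt{\frac{\tau}{H_s+\tau}}\le\frac{\tau R}{100(T+1)^2}.
\]
Finally, $\frac{2H_s\delta}{\tau}=\frac{\tau H_sR^2}{10^4(H_s+\tau)(T+1)^4}\le\frac{\tau R^2}{10^4(T+1)^4}$.

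The only delicate point is the conditioning across epochs. The random snapshot of each epoch must be handled by applying \eqref{up:svrg-contraction} conditionally on the previous snapshot, then taking conditional expectations given $(\mathsf S,y)$. Because the contraction is linear in $h(\bar x)-h(u)$, the factors multiply. Everything else is arithmetic with the constants in \eqref{up:local-parameters}.
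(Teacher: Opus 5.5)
Your proposal is correct and follows the paper's proof step for step: the same per-epoch factor $\frac18+\frac14<\frac12$, the same initial gap $2H_sR^2$ from Lemma~\ref{up:envelope-properties}, the same identity $2H_sR^2/\delta=625n(n+8)(T+1)^4$, and the same strong-convexity and Jensen bound for $\widetilde g$. The one exception is an arithmetic slip in the query count. From $I\le113.5n$ you only get $n+I\le114.5n$, and indeed $13.5n+101>114n$ at $n=1$. Instead, use that $12.5n$ is a multiple of $\tfrac12$: then $I=100+\lceil12.5n\rceil\le100.5+12.5n$, so $n+I\le13.5n+100.5\le114n$ for all $n\ge1$.
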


\begin{proof}
Each $h_i$ is convex with an $(H_s+\tau)$-Lipschitz gradient, and $h$ is
$\tau$-strongly convex. With \eqref{up:local-parameters},
$2(H_s+\tau)\beta=\frac15$, and the factor in
\eqref{up:svrg-contraction} is at most
$\frac{12.5(H_s+\tau)}{\tau I}+\frac14\le\frac18+\frac14<\frac12$. By
Lemma~\ref{up:envelope-properties}, the gap at the first snapshot is at
most $2H_sR^2$, so iterating \eqref{up:svrg-contraction} over the epochs
gives the bound $2^{-j}\cdot2H_sR^2$ in \eqref{up:local-gap}. Since
$H_s/\tau=n/8$ and $(H_s+\tau)/\tau=(n+8)/8$,
\[
    \frac{2H_sR^2}{\delta}=\frac{40000H_s(H_s+\tau)(T+1)^4}{\tau^2}
    =625n(n+8)(T+1)^4\le2^j ,
\]
which proves \eqref{up:local-gap}. By \eqref{up:envelope-gradient},
$\|\widetilde g-\nabla\varphi_{\mathsf S}(y)\|=\tau\|\widetilde p
-p_{\mathsf S}(y)\|$, and strong convexity and Jensen's inequality give
the bound $\tau\sqrt{2\delta/\tau}=\sqrt{2\tau\delta}\le\frac{\tau R}{100(T+1)^2}$,
using $\tau\le H_s+\tau$. The last inequality in
\eqref{up:local-gradient-error} follows from $H_s\le H_s+\tau$. Finally,
$n+I=n+100+\lceil12.5n\rceil\le114n$.
\end{proof}

All maps of the routine are measurable, and its iterates have finite
moments of all orders: by \eqref{up:sample-bounds}, the norms of the
inner iterates obey a finite linear recursion with deterministic
coefficients.

\subsection{The distributed envelope method}
\label{up:distributed}

Algorithm~\ref{alg:envelope} combines the method of
Lemma~\ref{up:accelerated}, applied to the empirical envelopes, with the
local routine. Only $m'\le m$ workers are active; this is used for small
pools in Theorem~\ref{up:final-upper}.

\begin{algorithm}[H]
\caption{Accelerated envelope method with local SVRG}
\label{alg:envelope}
\begin{algorithmic}[1]
\Require Initial point $x_0$, parameters $R,H_s>0$, integers
         $1\le m'\le m$ and $n,j,N\ge1$, and a pool of labels.
\State $\displaystyle\tau\gets\frac{8H_s}n$,
       $\displaystyle\beta\gets\frac1{10(H_s+\tau)}$,
       $\displaystyle I\gets\left\lceil\frac{100(H_s+\tau)}\tau\right\rceil$,
       $T\gets N-1$
\State $\displaystyle\gamma\gets\min\left\{\frac1{12\tau},
       \frac{m'}{24\tau(T+1)}\right\}$, $z_0\gets x_0$, $v_0\gets x_0$
\For{$t=0,\ldots,T-1$} \Comment{round $t+1$}
    \State $\displaystyle\alpha_t\gets\frac6{t+6}$,
           $\eta_t\gets\gamma(t+1)$,
           $y_t\gets(1-\alpha_t)z_t+\alpha_tv_t$, and broadcast $y_t$
    \State Each worker $i\le m'$ takes a fresh batch $\mathsf S_{i,t}$ of
           $n$ labels and sends
           $\widetilde g_{i,t}\gets\tau\bigl(y_t-\Call{LocalProx}{y_t,\mathsf S_{i,t}}\bigr)$
    \State $\displaystyle\widetilde g_t\gets\frac1{m'}\sum_{i=1}^{m'}\widetilde g_{i,t}$
    \State $v_{t+1}\gets\Pi_{\mathcal C}(v_t-\eta_t\widetilde g_t)$,
           $z_{t+1}\gets(1-\alpha_t)z_t+\alpha_tv_{t+1}$
\EndFor
\State In round $N$, one worker takes a fresh batch $\mathsf S$ and
       \Return $\widehat x\gets\Call{LocalProx}{z_T,\mathsf S}$
\Statex
\Procedure{LocalProx}{$y$, batch with labels $i_1,\ldots,i_n$}
    \State $\bar x\gets y$
    \For{$e=1,\ldots,j$} \Comment{one SVRG epoch}
        \State Query and store $\nabla f_{\xi_{i_l}}(\bar x)$, $l=1,\ldots,n$,
               and set $\displaystyle\bar g\gets\frac1n\sum_{l=1}^n\nabla f_{\xi_{i_l}}(\bar x)$
        \State $w_0\gets\bar x$
        \For{$k=0,\ldots,I-1$}
            \State Draw $l$ uniformly from $\{1,\ldots,n\}$ and query
                   $\nabla f_{\xi_{i_l}}(w_k)$
            \State $V_k\gets\nabla f_{\xi_{i_l}}(w_k)-\nabla f_{\xi_{i_l}}(\bar x)
                   +\bar g+\tau(w_k-y)$
            \State $w_{k+1}\gets w_k-\beta V_k$
        \EndFor
        \State $\bar x\gets w_\kappa$ with $\kappa$ uniform on $\{0,\ldots,I-1\}$
    \EndFor
    \State \Return $\bar x$
\EndProcedure
\end{algorithmic}
\end{algorithm}

A fresh batch consists of labels that have not been queried before. For
$N=1$ the loop is empty and the output is computed at $z_0=x_0$. Every
call of \textsc{LocalProx} makes $j(n+I)$ queries.

\begin{proposition}[Envelope method]
\label{up:envelope-method}
Let $1\le m'\le m$, $n\ge1$, and
$j\ge\lceil\log_2(625n(n+8)N^4)\rceil$ satisfy $114jn\le K$, and let
$\tau=8H_s/n$. Then Algorithm~\ref{alg:envelope} belongs to
$\mathrm{IC}(m,K,N)$, uses at most $m'Nn$ distinct labels, and satisfies
\begin{equation}
    \mathbb E F(\widehat x)
    \le\frac{150\tau R^2}{(N-1)^2}+\frac{1152\tau R^2}{m'(N-1)}
    \quad\text{if }N\ge2,
    \qquad
    \mathbb E F(\widehat x)\le3\tau R^2
    \quad\text{if }N=1 .
    \label{up:distributed-rate}
\end{equation}
\end{proposition}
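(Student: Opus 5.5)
The plan is to verify three things in turn: membership in $\mathrm{IC}(m,K,N)$ with the label count, the $N=1$ case, and the $N\ge2$ case. The $N\ge2$ case applies Lemma~\ref{up:accelerated} to the empirical envelopes and then uses Lemma~\ref{up:prox-transfer} for the final proximal step.

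\emph{Membership and labels.} Each call of \textsc{LocalProx} makes $j(n+I)\le114jn\le K$ queries, all with the $n$ labels of its batch, and uses no communication. So every round consists of one broadcast, at most $K$ local queries per worker, and one message. Rounds $1,\dots,N-1$ use $m'n$ fresh labels each and round $N$ uses $n$, so the total is at most $m'Nn$. The local routine is run with $T=N-1$. Its condition on $j$ in Lemma~\ref{up:local} reads $j\ge\lceil\log_2(625n(n+8)N^4)\rceil$, which is exactly the hypothesis, so \eqref{up:local-gap} and \eqref{up:local-gradient-error} hold with $(T+1)=N$.

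\emph{Case $N=1$.} The output is $\widetilde p$ computed at $y=x_0$ on a fresh batch. By \eqref{up:approx-prox-transfer}, $\mathbb EF(\widehat x)\le4\bar\varphi(x_0)+2H_s\delta/\tau$. Lemma~\ref{up:envelope-properties} gives $\bar\varphi(x_0)\le\frac\tau2R^2$, so the first term is $2\tau R^2$. Lemma~\ref{up:local} gives $2H_s\delta/\tau\le\tau R^2/10^4$. Together these yield $3\tau R^2$.

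\emph{Case $N\ge2$.} I apply Lemma~\ref{up:accelerated} with $\zeta=\mathsf S$ (a batch of $n$ fresh samples), $q_\zeta=\varphi_{\mathsf S}$, $Q=\bar\varphi$, $L'=H'=\tau$, $b=m'$, and $T=N-1$. The hypotheses come from Lemma~\ref{up:envelope-properties}: the $\varphi_{\mathsf S}$ are nonnegative, convex, and $\tau$-smooth with common zero $x^*\in\mathcal C$, and $Q$ is $\tau$-smooth. With these choices the step size in the lemma is exactly the $\gamma$ of Algorithm~\ref{alg:envelope}. The exact average is $g_t=\frac1{m'}\sum_i\nabla\varphi_{\mathsf S_{i,t}}(y_t)$, computed over batches that are fresh and hence independent of $\mathcal F_t$. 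The error $e_t=\widetilde g_t-g_t$ is bounded by the triangle inequality and the first bound in \eqref{up:local-gradient-error}, which give $\mathbb E[\|e_t\|\mid\mathcal F_t]\le\varepsilon:=\tau R/(100N^2)$. The SVRG randomness is the allowed additional randomness. Then \eqref{up:accel-rate} gives
\[
\mathbb E\bar\varphi(z_T)\le\frac{36\tau R^2}{(N-1)^2}+\frac{288\tau R^2}{m'(N-1)}+\frac{12\tau R^2}{100N^2}.
\]
For the final step, $z_T$ is independent of the last fresh batch. Applying \eqref{up:approx-prox-transfer} conditionally gives $\mathbb EF(\widehat x)\le4\,\mathbb E\bar\varphi(z_T)+\tau R^2/(10^4N^4)$. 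Multiplying out, the first two terms become $144\tau R^2/(N-1)^2$ and $1152\tau R^2/(m'(N-1))$. The leftover terms are $0.48\tau R^2/N^2$ and $10^{-4}\tau R^2/N^4$, and both fit within the slack $6\tau R^2/(N-1)^2$. This gives \eqref{up:distributed-rate}.

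\emph{Main obstacle.} The delicate point is the conditioning in Lemma~\ref{up:accelerated}: its bias condition is stated conditional on $\mathcal F_t$ alone, with $e_t$ allowed to depend on the samples. Lemma~\ref{up:local} supplies the bound conditional on $(\mathsf S,y)$, and averaging over the batch gives the bound given $y_t$, which is determined by $\mathcal F_t$. I must also check that $\mathcal F_t$ contains only the previous rounds. This holds because the new labels have never been queried and therefore carry no information about their samples. Measurability of $p_{\mathsf S}$ is provided by Lemma~\ref{up:envelope-properties}.
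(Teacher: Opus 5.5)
Your proposal is correct and follows the paper's proof essentially step for step: you apply Lemma~\ref{up:accelerated} to the envelopes with $L'=H'=\tau$, $b=m'$, $T=N-1$ and bias $\varepsilon=\tau R/(100N^2)$ from Lemma~\ref{up:local}, then apply Lemma~\ref{up:prox-transfer} on the fresh final batch, and you treat $N=1$ and the query and label counts the same way the paper does. The differences are minor. You absorb the leftover $0.48\tau R^2/N^2$ and $10^{-4}\tau R^2/N^4$ terms at the end, whereas the paper first bounds $\mathbb E\bar\varphi(z_T)\le 37\tau R^2/T^2+288\tau R^2/(m'T)$. You also discuss the conditioning on $\mathcal F_t$ explicitly, which the paper leaves implicit.
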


\begin{proof}
Let $N\ge2$ and $T=N-1$. The loop of Algorithm~\ref{alg:envelope} is
the method of Lemma~\ref{up:accelerated} for the envelope samples $q_{\mathsf S}=\varphi_{\mathsf S}$ and their
mean $Q=\bar\varphi$, with $L'=H'=\tau$, $b=m'$, and $T$ iterations;
the hypotheses hold by Lemma~\ref{up:envelope-properties}. In
iteration $t$, every active worker runs the local routine on its own
fresh batch at the common point $y_t\in\mathcal C$ with horizon $T$, and
the average of the returned vectors is $\widetilde g_t$. The average of the
exact envelope gradients $\nabla\varphi_{\mathsf S}(y_t)$ over the
$m'$ batches is the average $g_t$ of Lemma~\ref{up:accelerated}, and by
the triangle inequality and \eqref{up:local-gradient-error} the error
$e_t=\widetilde g_t-g_t$ satisfies \eqref{up:bias-assumption} with
$\varepsilon=\frac{\tau R}{100(T+1)^2}$. Hence
\[
    \mathbb E\bar\varphi(z_T)
    \le\frac{36\tau R^2}{T^2}+\frac{288\tau R^2}{m'T}
       +\frac{12\tau R^2}{100(T+1)^2}
    \le\frac{37\tau R^2}{T^2}+\frac{288\tau R^2}{m'T}.
\]
In round $N$, the batch is fresh and hence independent of $z_T$, and
$\widehat x$ is the output $\widetilde p$ of the local routine at
$z_T\in\mathcal C$. Lemma~\ref{up:prox-transfer} and
\eqref{up:local-gradient-error} give
\[
    \mathbb E F(\widehat x)
    \le4\,\mathbb E\bar\varphi(z_T)+\frac{\tau R^2}{10^4(T+1)^4}
    \le\frac{150\tau R^2}{T^2}+\frac{1152\tau R^2}{m'T}.
\]
For $N=1$, the output is computed by the local routine at $x_0$ with
horizon $T=0$; since $\bar\varphi(x_0)\le\frac\tau2R^2$, Lemma~\ref{up:prox-transfer}
gives $\mathbb E F(\widehat x)\le2\tau R^2+\frac{\tau R^2}{10^4}\le3\tau R^2$.

In every round each active worker makes $j(n+I)\le114jn\le K$ queries,
the condition on $j$ in Lemma~\ref{up:local} holds because $T+1\le N$,
and no communication within a round is needed. The number of distinct
labels is at most $(m'(N-1)+1)n\le m'Nn$.
\end{proof}

\begin{proposition}[Integer parameters]
\label{up:integers}
Let $J$ be as in Theorem~\ref{thm:pool} and $1\le m'\le m$. There is an
algorithm in $\mathrm{IC}(m,K,N)$ that uses at most $m'KN$ distinct
labels and satisfies
\begin{equation}
    \mathbb E F(\widehat x)
    \le5\cdot10^6\,JH_sR^2\left(\frac1{KN^2}+\frac1{m'KN}\right).
    \label{up:expanding-final}
\end{equation}
\end{proposition}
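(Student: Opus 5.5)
We instantiate Proposition~\ref{up:envelope-method} with integer parameters chosen so that $114jn\le K$ and $n$ is as large as possible, namely $n\approx K/(114J)$. Concretely, take $j=J$ and
\[
    n=\max\Bigl\{1,\Bigl\lfloor\frac{K}{114J}\Bigr\rfloor\Bigr\}.
\]
We first check that $j=J$ satisfies the requirement $j\ge\lceil\log_2(625n(n+8)N^4)\rceil$. This holds because $n\le K$ and $N^4\le(N+1)^4$, and $J$ is defined with $K(K+8)(N+1)^4$. The number of distinct labels is at most $m'Nn\le m'KN$.

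\emph{Case $114J\le K$.} Here $n\ge1$ and $114Jn\le K$, so the proposition applies. Since $n\ge K/(228J)$, we get $\tau=8H_s/n\le 1824JH_s/K$. For $N\ge2$, use $N-1\ge N/2$ to obtain
\[
    \mathbb E F(\widehat x)\le 600\,\tau R^2/N^2+2304\,\tau R^2/(m'N).
\]
Substituting the bound on $\tau$ gives a constant times $JH_sR^2\bigl(\frac1{KN^2}+\frac1{m'KN}\bigr)$. The constant is about $1824\cdot2304\approx4.2\cdot10^6\le5\cdot10^6$. For $N=1$, the bound $3\tau R^2\le 5472JH_sR^2/K$ is dominated by the $\frac1{KN^2}$ term.

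\emph{Case $114J>K$.} Then $K<114J$, so $1/K>1/(114J)$. It suffices to exhibit any algorithm with risk at most $5\cdot10^6JH_sR^2/(KN^2)$, and it is enough to get $\lesssim H_sR^2/N^2$ up to the available factor $J/K\ge1/114$. The natural choice is $n=1$ with reduced per-round usage, but that needs $114J$ queries per round, which we do not have. I would instead fall back on the fresh-batch accelerated method \eqref{up:fresh-rate} on a single worker, which uses one query per round at a common point ($b=1$, $T=N$) and at most $N\le m'KN$ labels. It gives
\[
    36LR^2/N^2+288H_sR^2/N\le 324H_sR^2/N,
\]
but this is only $1/N$, not $1/N^2$.

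To get $1/N^2$, the right fallback is serial SGD (Lemma~\ref{lem:serial}) on one worker with $KN$ queries, which gives $2H_sR^2/(KN)$. Matching this to the target requires $\frac1{KN}\lesssim J\bigl(\frac1{KN^2}+\frac1{m'KN}\bigr)$. This holds when $m'\lesssim J$, but not in general. So the small-$K$ case is \textbf{the main obstacle}.

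I would try to resolve it by running \textsc{LocalProx} with a batch of size $n=1$ spread over several rounds. The local routine needs no communication, so its $114J$ queries can be split across $\lceil114J/K\rceil$ consecutive rounds. This groups rounds into blocks of $N'=\lfloor NK/(114J)\rfloor$ super-rounds, each with at least $114J$ queries per worker. Applying Proposition~\ref{up:envelope-method} to the super-rounds with $n=1$, $\tau=8H_s$, and the $J$ requirement still met gives
\[
    \tau R^2\Bigl(\frac1{N'^2}+\frac1{m'N'}\Bigr)\lesssim H_sR^2\Bigl(\frac{J^2}{K^2N^2}+\frac{J}{m'KN}\Bigr).
\]
This has the required form, since $J/K\le114$ absorbs the extra factor of $J/K$. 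If $N'=0$, meaning $NK<114J$, then returning $x_0$ (risk $\le LR^2/2\le H_sR^2/2$) suffices because $\frac1{KN^2}\ge\frac1{(114J)^2}\cdot K$, and I would settle the constants there by direct comparison.
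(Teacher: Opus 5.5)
Your case $K\ge114J$ is the paper's argument: $n=\lfloor K/(114J)\rfloor$, $j=J$, $\tau\le1824JH_s/K$, the constants $600\cdot1824$ and $2304\cdot1824=4\,202\,496$, and $5472JH_sR^2/K$ for $N=1$. The gap is in the case $K<114J$, and the super-round construction does not close it. With $n=1$ you have $\tau=8H_s$ and only $N'\le NK/(114J)$ outer iterations. The communication term $150\tau R^2/(N'-1)^2$ is therefore at least $1200\cdot114^2\,(J/K)\cdot JH_sR^2/(KN^2)$, which is the target $JH_sR^2/(KN^2)$ multiplied by $J/K$.

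You say $J/K\le114$ absorbs this factor, but the case hypothesis $K<114J$ gives the opposite inequality, $J/K>1/114$. For $K=1$ the factor equals $J\approx12.5+4\log_2(N+1)$, which is unbounded in $N$. So no fixed constant works. Already for $J\ge K$ the constant $1200\cdot114^2\approx1.6\cdot10^7$ exceeds $5\cdot10^6$.

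The missing step is simple: the fallback you tried first works once you use the full batch. You ran \eqref{up:fresh-rate} with $b=1$ and misdiagnosed why it failed. The $L/N^2$ term was already fine; the trouble was the variance term $288H_sR^2/(bN)$.

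Take instead $b=m'K$ and $T=N$. In each round, each of the $m'$ active workers makes its $K$ queries at the common point $y_t$ on unused labels, so $m'KN$ labels are used in total. This gives $36LR^2/N^2+288H_sR^2/(m'KN)$. Since $L\le H_s$ and $1<114J/K$, you get $36LR^2/N^2\le4104JH_sR^2/(KN^2)$, and $288\le288J$. This is exactly the paper's second case. It makes the super-round construction and the $N'=0$ discussion unnecessary.
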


\begin{proof}
If $K\ge114J$, apply Proposition~\ref{up:envelope-method} with
$n=\lfloor K/(114J)\rfloor$ and $j=J$. Then $1\le n\le K$,
$n\ge\frac K{228J}$, $114Jn\le K$, and
$J\ge\lceil\log_2(625n(n+8)N^4)\rceil$, so the proposition applies, the
number of labels is at most $m'Nn\le m'KN$, and
$\tau\le\frac{1824JH_s}K$. For $N\ge2$ we have $N-1\ge N/2$, and
\eqref{up:distributed-rate} gives
\[
    \mathbb E F(\widehat x)
    \le\frac{600\tau R^2}{N^2}+\frac{2304\tau R^2}{m'N}
    \le\frac{1\,094\,400\,JH_sR^2}{KN^2}
       +\frac{4\,202\,496\,JH_sR^2}{m'KN}.
\]
For $N=1$, \eqref{up:distributed-rate} gives
$\mathbb E F(\widehat x)\le3\tau R^2\le\frac{5472JH_sR^2}K$.

If $K<114J$, use \eqref{up:fresh-rate} with $b=m'K$ and $T=N$, that is,
$m'$ workers query a fresh label at every query. It uses $m'KN$ labels,
and, since $L\le H_s$ and $1<\frac{114J}K$,
\[
    \mathbb E F(\widehat x)
    \le\frac{36H_sR^2}{N^2}+\frac{288H_sR^2}{m'KN}
    \le\frac{4104\,JH_sR^2}{KN^2}+\frac{288H_sR^2}{m'KN}.
\]
In all cases \eqref{up:expanding-final} holds.
\end{proof}

\subsection{The full bound}
\label{up:fixed-pool}

\begin{theorem}[Upper bound with sample reuse]
\label{up:final-upper}
Let $S\in\mathbb N\cup\{\infty\}$, $S\ge1$. There is an algorithm in
$\mathrm{IC}(m,K,N)$ with access to a pool of size $S$ such that, for
every instance of $\mathcal S_d$,
\[
    \mathbb E F(\widehat x)\le10^7J\,R^2\min\left\{L,\;
        \frac{\min\{L,H_s/K\}}{N^2}+\frac{H_s}{\min\{S,mKN\}}\right\}.
\]
\end{theorem}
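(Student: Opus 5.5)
The algorithm will be a selector, fixed before the run from the public parameters $m,K,N,L,H_s,R,S$. Its branches are returning $x_0$, the common-query accelerated method \eqref{up:fresh-rate}, and the algorithm of Proposition~\ref{up:integers} with a suitable number $m'$ of active workers. Each branch must respect the pool size: it may use at most $S$ distinct labels. Write $n_S=\min\{S,mKN\}$ and $L_K=\min\{L,H_s/K\}$. The target is
\[
    10^7JR^2\min\{L,\;L_K/N^2+H_s/n_S\}.
\]
Returning $x_0$ gives $\frac L2R^2$ by Lemma~\ref{up:moments}, Part~1, which covers the $L$ branch. It therefore suffices to show that some branch attains $c\,JR^2(L_K/N^2+H_s/n_S)$ whenever this quantity is at most $L R^2$.

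\emph{Choice of $m'$.} If $S\ge mKN$, take $m'=m$. Otherwise take $m'=\max\{1,\lfloor S/(KN)\rfloor\}$, so that $m'KN\le S$ when $S\ge KN$. Then $m'KN\ge n_S/2$ in the regime $S\ge KN$. The regime $S<KN$ needs separate handling.

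\emph{Case $L_K=L$, i.e.\ $L\le H_s/K$.} Use \eqref{up:fresh-rate} with $T=N$ and $b=m'K$. This uses $m'KN$ labels and gives $36LR^2/N^2+288H_sR^2/(m'KN)$. Since $m'KN\ge n_S/2$, this is bounded by the target.

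\emph{Case $L_K=H_s/K$.} Use Proposition~\ref{up:integers} with the same $m'$. It gives $5\cdot10^6JH_sR^2(1/(KN^2)+1/(m'KN))$, which is again at most the target with $m'KN\ge n_S/2$. The constant $10^7$ absorbs the factor $2$.

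\emph{Small pool, $S<KN$.} Here $H_s/n_S=H_s/S\ge H_s/(KN)\ge L_K/N$, so the target is essentially $H_sR^2/S$. Every branch that uses at most $S$ labels can reuse them, but it is simpler to run a single worker. That worker performs serial SGD (Lemma~\ref{lem:serial}) on the empirical objective of the $S$ stored samples. Alternatively, run Proposition~\ref{up:envelope-method} with $m'=1$, $N'$ rounds and batch $n$ with $N'n\le S$: the rate $\tau/N'$ with $\tau=8H_s/n$ gives roughly $H_s/(nN')\approx H_s/S$. The simplest rigorous choice is the prox-transfer route: a single worker solves one empirical proximal problem with $n=S$ over the available $KN$ queries, in the manner of \textsc{LocalProx}. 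I would first try Proposition~\ref{up:envelope-method} with $m'=1$, $n=\lfloor S/N\rfloor$ or $n=S$ and $N=1$. By \eqref{up:distributed-rate} with $N=1$, the case $n=S$ yields $3\tau R^2=24H_sR^2/S$, provided $114jS\le KN$ can be arranged by spreading the single call over several rounds. Since \textsc{LocalProx} needs no communication, it can run sequentially on one worker across rounds.

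\emph{Main obstacle.} The main difficulty is this small-pool regime, specifically fitting the budget. When $114JS>KN$, fewer queries than the SVRG routine needs are available. In that case use $n=\lfloor KN/(114J)\rfloor<S$, which gives a risk of order $JH_s/(KN)\le JH_s/S\cdot(S/(KN))\le$ target. The remaining work is bookkeeping of the constants and the $\log$ condition on $j$ against $J$, as in the proof of Proposition~\ref{up:integers}.
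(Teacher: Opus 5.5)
For $S\ge KN$ your argument is the paper's. You use the same $m'$ with $m'KN\le S$ and $m'KN\ge n_S/2$, Proposition~\ref{up:integers} for the $H_s/K$ branch, \eqref{up:fresh-rate} for the $L$ branch, and $x_0$ for the term $L$. The only difference is that the paper takes $b=\min\{mK,\lfloor S/N\rfloor\}$ instead of $b=m'K$; both satisfy $bN\le S$ and $bN\ge n_S/2$.

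The regime $S<KN$, which you treat as the main obstacle, is the easiest one in the paper. A single worker runs Lemma~\ref{lem:serial} with $s=S$, querying each of the $S$ labels exactly once, at most $K$ per round. These samples are fresh, so the lemma applies to $F$ itself and gives $2H_sR^2/S$, with no factor $J$ and no budget constraint because $S<KN$.

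Your first suggestion, SGD with reuse on the empirical objective of the stored samples, is not covered by Lemma~\ref{lem:serial}: it would bound the empirical risk, not $F$.

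Your final envelope route is valid in most cases. Take one call of \textsc{LocalProx} at $x_0$ with $T=0$, $j=J$, and $n=\min\{S,\lfloor KN/(114J)\rfloor\}$, executed by one worker across rounds, with its state passed through its messages. By Lemmas~\ref{up:local} and~\ref{up:prox-transfer} this gives $\mathbb E F(\widehat x)\le3\tau R^2\le5472JH_sR^2/S$. The choice $j=J$ is enough since $n(n+8)\le K(K+8)N^2$.

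As written, however, this route fails when $KN<114J$, where $n=0$. That subcase has to be assigned to the $x_0$ branch. This suffices because then $JH_s/S>H_s/114\ge L/114$, so the target is at least $10^7LR^2/114$, which exceeds $F(x_0)\le LR^2/2$.

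With this patch your proof goes through. Still, the detour through \textsc{LocalProx} buys nothing here and costs the factor $J$ that one-pass SGD avoids.
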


\begin{proof}
Put $n_S=\min\{S,mKN\}$. In each branch below the algorithm assigns
disjoint sets of previously unqueried labels in $\{1,\ldots,S\}$ to the
batches and queries, so that every batch is fresh; the guarantees of the
branches depend only on the public parameters, and the algorithm selects
the branch with the smallest guarantee before the run. Returning $x_0$
gives $F(x_0)\le\frac L2R^2$ in all cases.

\emph{Case $S\le KN$.} Here $n_S=S$. One worker runs serial SGD
(Lemma~\ref{lem:serial}) with $s=S$ queries on distinct labels, at most
$K$ per round, which gives $\mathbb E F(\widehat x)\le\frac{2H_sR^2}S$.
Together with $x_0$, the guarantee is
$R^2\min\{\frac L2,\frac{2H_s}{S}\}\le2R^2\min\{L,\frac{H_s}{n_S}\}$,
which is at most the bound of the theorem.

\emph{Case $S>KN$.} Let $m'=\min\{m,\lfloor S/(KN)\rfloor\}$, with
$m'=m$ if $S=\infty$. Then $1\le m'\le m$, $m'KN\le S$, and
$m'KN\ge n_S/2$: if $m'=m$, then $m'KN=mKN\ge n_S$, and otherwise
$\lfloor S/(KN)\rfloor\ge\frac S{2KN}$. Proposition~\ref{up:integers}
gives
\[
    \mathbb E F(\widehat x)
    \le5\cdot10^6JH_sR^2\left(\frac1{KN^2}+\frac2{n_S}\right)
    \le10^7JR^2\left(\frac{H_s}{KN^2}+\frac{H_s}{n_S}\right).
\]
Next, let $b=\min\{mK,\lfloor S/N\rfloor\}$, with $b=mK$ if $S=\infty$.
Then $b\ge1$ and $bN\le S$, and $bN\ge n_S/2$ by the same argument. The
method \eqref{up:fresh-rate} with $T=N$ and $b$ queries per round at a
common point, distributed among the workers with at most $K$ each, gives
\[
    \mathbb E F(\widehat x)
    \le R^2\left(\frac{36L}{N^2}+\frac{576H_s}{n_S}\right).
\]
The smallest of the three guarantees is at most
\[
    10^7JR^2\min\left\{L,\;
      \frac{\min\{L,H_s/K\}}{N^2}+\frac{H_s}{n_S}\right\},
\]
since $J\ge1$. This proves the theorem in both cases.
\end{proof}

\section{Proofs of the main results}
\label{app:main-proofs}

\begin{proof}[Proof of Theorem~\ref{thm:weak-growth-convex}]
The upper bound is Corollary~\ref{cor:selector}, and the lower bound is
Theorem~\ref{wg:lower-theorem}.
\end{proof}

\begin{proof}[Proof of Theorem~\ref{thm:weak-growth-quadratic}]
The upper bound is Corollary~\ref{quad:selector}, whose proof uses only
\eqref{eq:wg} and the quadratic form of $F$. By
Lemma~\ref{lem:relations},
$\mathfrak M_d^{\mathrm{wg,quad}}\ge\mathfrak M_d^{\mathrm{sg,quad}}$,
and Theorem~\ref{qlow:allbudget} gives the lower bound for
$d\ge2048m(KN)^2$.
\end{proof}

\begin{proof}[Proof of Theorem~\ref{thm:sample-gradient-quadratic}]
By Lemma~\ref{lem:relations}, every instance of
$\mathcal S_d^{\mathrm{quad}}$ is an instance of
$\mathcal W_d^{\mathrm{quad}}$, so Corollary~\ref{quad:selector} gives the
upper bound in every dimension. The lower bound is
Theorem~\ref{qlow:allbudget}.
\end{proof}

\begin{proof}[Proof of Theorem~\ref{thm:sample-gradient-convex}]
By Lemma~\ref{lem:relations}, Corollary~\ref{cor:selector} applies to
every instance of $\mathcal S_d$ and gives the upper bound in every
dimension. Since $\mathcal S_d^{\mathrm{quad}}\subseteq\mathcal S_d$, we
have $\mathfrak M_d^{\mathrm{sg}}\ge\mathfrak M_d^{\mathrm{sg,quad}}$, and
Theorem~\ref{qlow:allbudget} gives the lower bound for
$d\ge2048m(KN)^2$.
\end{proof}

The lower bound of Theorem~\ref{thm:sample-gradient-convex} is also
implied, with the constant $10^6$ and for $d\ge d_{\mathrm{pool}}$, by
Theorem~\ref{lo:combined} with $S=\infty$ and \eqref{eq:pool-fresh}. That
construction has a non-quadratic population and remains hard under
arbitrary reuse of samples.

\begin{proof}[Proof of Theorem~\ref{thm:pool}]
The upper bound is Theorem~\ref{up:final-upper}, and the lower bound is
Theorem~\ref{lo:combined}.
\end{proof}

\end{document}